\documentclass[12pt]{article}
\usepackage{amsthm}
\usepackage{amssymb,latexsym,amsmath}
\usepackage{cite}
\usepackage{amscd}
\usepackage{tikz}
\usepackage{float}
\usepackage{upgreek}
\newcommand{\Beta}{\mathrm{B}}

\oddsidemargin=\evensidemargin \footskip=36pt \voffset=-1.5cm
\def\css{\begin{cases}}
\def\ecss{\end{cases}}

\def\sin{{\rm{sin}}}
\def\cos{{\rm{cos}}}

\def\bull{\vrule height 1.2ex width 1ex depth -.1ex }

\makeatletter
\renewcommand{\subsection}{\@startsection{subsection}{2}{0mm}{7mm}{4mm}
{\bf\large}}

\def\nmrt{\begin{enumerate}}
\def\enmrt{\end{enumerate}}

\makeatother
\newtheorem{formula}{}[section]
\newtheorem{prop}[formula]{Proposition}
\newtheorem{definition}[formula]{Definition}
\newtheorem{cor}[formula]{Corollary}
\newtheorem{rem}[formula]{Remark}
\newtheorem{lem}[formula]{Lemma}
\newtheorem{thm}[formula]{Theorem}

\newtheorem{example}[formula]{Example}
\newtheorem{propA}{Proposition}[subsection]
\newtheorem{lemA}[propA]{Lemma}
\newtheorem{thmA}[propA]{Theorem}
\newtheorem{corA}[propA]{Corollary}
\newtheorem{remA}[propA]{Remark}

\begin{document}
\baselineskip=18pt
\date{}
\begin{center}
{\bf\Large Regular sets of circulant quartic graphs }
\\ \vspace{1cm}
{\small  A.~Abdollahi$^{a}$, J.~Bagherian$^a${\footnote{Corresponding author}}, F. Jafari$^{a}$, M. Khatami $^{a}$, Z. Shokoohi$^{a}$, R. Sobhani$^{b}$.}
 \vspace{.5cm}
\\
{\small $^a$ Department of Pure Mathematics, Faculty of Mathematics and Statistics, University of Isfahan, Isfahan 81746-73441, Iran}\\
\vspace{-0.1cm}
{\small $^b$ Department of Applied Mathematics and Computer Science, Faculty of Mathematics and Statistics, University of Isfahan, Isfahan 81746-73441, Iran}\\
{\small {\bf Emails:} a.abdollahi@math.ui.ac.ir, bagherian@sci.ui.ac.ir, math\_fateme@yahoo.com, m.khatami@sci.ui.ac.ir, zahra.shokoohi@sci.ui.ac.ir  r.sobhani@sci.ui.ac.ir   }
\vspace{1.5cm}
\end{center}



\begin{abstract}
For a graph $\Gamma=(V,E)$ and nonnegative integers $a$ and $b$, a nonempty proper subset $C \subset V$ is called an $(a,b)$-regular set if every vertex in $C$ has exactly $a$ neighbors in $C$, and every vertex in $V\setminus C$ has exactly $b$ neighbors in $C$.
In this paper, we study the existence of such sets in connected Cayley graph $\Gamma = \operatorname{Cay}(\mathbb{Z}_n, S)$. 
We establish a necessary and sufficient condition for the existence of $(0, |S|)$-regular sets and identify additional conditions 
under which no such set can exist. We further prove that $(|S|, 0)$-regular sets do not occur in $\Gamma$, and more generally, that no 
connected  Cayley graph $\operatorname{Cay}(G,S)$ contains a $(1, |S|)$-regular set. As a main result, 
we determine the existence and nonexistence of $(a,b)$-regular sets in connected circulant quartic graphs for all possible values of $a$ and $b$.
\\\\
\noindent
{\small {\bf Keywords:} Regular set, Circulant graph, Perfect code, Total perfect code. }
\noindent
\end{abstract}

\section{Introduction}

\quad Throughout this paper, all groups are finite and all graphs are finite and simple. For a graph $\Gamma$, we denote by $V(\Gamma)$ and $E(\Gamma)$ the vertex set and edge set of $\Gamma$, respectively. For $v\in V(\Gamma)$, $N_\Gamma(v)$ denotes the set of vertices joined to $v$ by an edge of $\Gamma$. We use $N(v)$ as a shorthand for $N_\Gamma(v)$. Let $G$ be a group with identity element $e$, and let $S\subseteq G\setminus\{e\}$ be an inverse-closed subset. Then the Cayley graph $\Gamma:=\operatorname{Cay}(G,S)$ has vertex set $V(\Gamma)=G$ and edge set
$E(\Gamma)=\{\{x,y\}\mid x,y\in G,\ x^{-1}y\in S\}$ (see, \cite{BiggsAGT}).
Equivalently, for each $x\in G$, the neighbourhood of $x$ is $N(x)=xS=\{xs\mid s\in S\}$. Consequently, $\Gamma$ is $|S|$-regular, and it is connected if and only if $S$ generates $G$. In particular, when $G=\mathbb{Z}_n=\langle x\rangle$ is the cyclic group of order $n$ generated by $x$, the graph $\Gamma$ is called a circulant graph. Moreover, if $|S|=4$, then $\Gamma$ is called a circulant quartic graph. In this case, if $S=\{x^k,x^{-k},x^\ell,x^{-\ell}\}$ for some positive integers $k$ and $ \ell$, where these four elements are distinct, we write $\Gamma=\operatorname{Cay}(\mathbb{Z}_n,\{x^k,x^{-k},x^\ell,x^{-\ell}\})$.

Let $C$ be a subset of $V(\Gamma)$. The set $C$ is a perfect code in $\Gamma$ if $C$ is an independent set of $\Gamma$ and every vertex in $V(\Gamma)\setminus C$ is adjacent to exactly one vertex in $C$, and a total perfect code in $\Gamma$ if every vertex in $V(\Gamma)$ is adjacent to exactly one vertex in $C$. A perfect code is also known as an independent perfect dominating set \cite{Lee} or an efficient dominating set \cite{DS}, while a total perfect code is also known as an efficient open dominating set \cite{GSS,HHS}. The concept of perfect codes in graphs was first introduced by Biggs as a generalization of the classical notion of Hamming error-correcting codes \cite{Biggs}. Since then, the study of codes in graphs has been pursued from various perspectives, and different algorithms have been proposed for their construction. From this point of view, the structural properties of these codes in graphs have been investigated. It is natural that, in this approach, Cayley graphs are among the first and most natural classes of graphs to consider for constructing such codes. The study of perfect codes and total perfect codes in Cayley graphs and Cayley sum graphs has been extensively investigated (see, \cite{AAR,CYS,CWX,HXS,LLL,MFW,MWWZ,WWSZ,Zhang,Zhou}). Motivated by these studies, researchers turned their attention to more general structures, leading to the introduction of $(a,b)$-regular sets as a generalization of fundamental concepts such as perfect codes and total perfect codes, which have attracted attention due to the special connection between graph structure and group structure. The definition of an $(a,b)$-regular set is given below.

Let $a$ and $b$ be nonnegative integers. A nonempty proper subset $C$ of $V(\Gamma)$ is called an $(a,b)$-regular set of $\Gamma$ if $\lvert N(v)\cap C\rvert=a$ for every $v\in C$, and $\lvert N(v)\cap C\rvert=b$ for every $v\in V(\Gamma)\setminus C$ (see, \cite{Cardoso}). More specifically, perfect codes and total perfect codes in $\Gamma$ correspond precisely to $(0,1)$-regular sets and $(1,1)$-regular sets in $\Gamma$, respectively. Several studies have examined regular sets in Cayley graphs and Cayley sum graphs (see, \cite{KAK,SKA,WXS,WXSJ, WXSZ,ZY, ZZ}). Several studies have investigated $(0,1)$-regular sets and $(1,1)$-regular sets in circulant graphs. In particular, various results have been obtained concerning their existence, structure, and characterization. In \cite{KM}, the authors use wreath products to construct infinitely many circulant graphs admitting $(0,1)$-regular sets with non-equidistant elements. They further prove that, in circulant graphs of sufficiently large degree, every $(0,1)$-regular sets either has equally spaced elements or the graph has a wreath product structure. Deng \cite{Deng} established a necessary and sufficient condition for the existence of $(0,1)$-regular sets in circulant graphs $\operatorname{Cay}(\mathbb{Z}_n,S)$ when $\frac{n}{|S|+1}$ is prime, and completely characterized all such sets. Deng et~al.~\cite{dengg} generalized the results of~\cite{OPR}, which were restricted to connected circulant graphs of degrees $3$ and $4$, to connected non-complete circulant graphs of degrees $p-1$, $pq-1$, and $p^m-1$, where $p$ and $q$ are primes and $m$ is a positive integer, and established necessary and sufficient conditions for the existence of $(0,1)$-regular sets in terms of the generating set. In \cite{rWXSJ}, using cyclotomic polynomials, necessary and sufficient conditions are obtained for the existence of $(0,1)$-regular sets in circulant graphs of degrees $p-1$ and $p^\ell-1$, and for the existence of $(1,1)$-regular sets in circulant graphs of degrees $p$ and $p^\ell$. In~\cite{zawxj}
, for connected circulant graphs of degree $p^\ell-1$, where $p$ is prime and $\ell\geq 1$, necessary and sufficient conditions for the existence of $(0,1)$-regular sets  were given, along with a complete construction and a lower bound on the number of such codes. These results extend the known characterization for $\ell=1$ to the general case $\ell\geq 1$. In  \cite{CMO} characterized cubic and quartic Cayley graphs on abelian groups that admit $(0,1)$-regular  sets. In \cite{DYD}, for connected quartic Cayley graphs on generalized dihedral groups, necessary and sufficient conditions for the existence of $(0,1)$-regular sets were given, and all $(0,1)$-regular sets in such graphs were completely classified. Cameron et al. \cite{CYS}, circulant graphs $\mathrm{Cay}(\mathbb{Z}_n,S)$ were studied, and necessary and sufficient conditions for the existence of $(1,1)$-regular sets in these graphs were established. It was also shown that, under these conditions, certain subgroups of $\mathbb{Z}_n$ themselves form $(1,1)$-regular sets. Kwon et al.~\cite{kls} established necessary and sufficient conditions for the existence of $(0,1)$-regular sets in circulant graphs of degree $5$ and classified all such sets. Bakker et al.~\cite{Bakker} independently proved the NP-completeness of determining whether a graph admits a $(0,1)$-regular set or a $(1,1)$-regular set.

Hao and Wang \cite{HW} completely characterized the existence of $(a,b)$-regular sets in connected $3$-regular circulant graph $\mathrm{Cay}(\mathbb{Z}_n,\{x^k,x^{-k},x^{n/2}\})$ for $n>4$, giving necessary and sufficient conditions on $n$ and $k$ for every possible type, and showing that $(0,2)$- and $(1,3)$-regular sets never occur in this family. In this paper, we study $(a,b)$-regular sets in connected circulant quartic graphs and determine the necessary and sufficient conditions for their existence for all possible values of $a$ and $b$. Two papers have investigated such graphs, as described below. Obradovic, Peters, and Ruzic \cite{OPR} characterized necessary and sufficient
conditions for the existence of efficient dominating sets ($(0,1)$-regular sets)
in two families of circulant graphs, namely connected $3$-regular circulant graphs and connected circulant quartic graphs.
For the connected circulant quartic  graph
$\mathrm{Cay}(\mathbb{Z}_n,\{x^k,x^{-k},x^{\ell},x^{-\ell}\})$,
such a set exists if and only if $n = 5i$ for some $i \in \mathbb{N}$,
with $|k \pm \ell| \not\equiv 0 \pmod{5}$ and
$k, \ell \not\equiv 0 \pmod{5}$. Kwon, Sohn, and Chen~\cite{KSC} investigated the existence of efficient total dominating sets, which are exactly equivalent to $(1,1)$-regular sets, in connected   circulant quartic graphs. They noted that if there exists an efficient total dominating set of $\mathrm{Cay}(\mathbb{Z}_n,\{x^k,x^{-k},x^\ell,x^{-\ell}\})$, then $n$ is a multiple of $8$. But the converse is not true. They then consider the canonical form $\mathrm{Cay}(\mathbb{Z}_{8m},\{x^k,x^{-k},x^\ell,x^{-\ell}\})$ with $k\equiv1\pmod{8}$ and $\ell\equiv0,1,2,3,4\pmod{8}$, and prove that for graphs of this canonical form, an efficient total dominating set exists if and only if $\ell\equiv3\pmod{8}$, or ($\ell\equiv1\pmod{8}$ and $\gcd(8m,|k-\ell|)=\gcd(4m,|k-\ell|)$). By Lemma~2.4 in \cite{HW}, $(0,1)$-regular set and $(1,1)$-regular set are equivalent to $(3,4)$-regular set and $(3,3)$-regular set, respectively. Therefore, the existence or nonexistence of $(3,4)$-regular set and $(3,3)$-regular set in connected circulant quartic graphs follows directly from the corresponding results in \cite{OPR} and \cite{KSC}. Hence, in this paper, we focus on the remaining cases, namely, $(a,b)$-regular sets with $a,b\in\{0,1,2,3,4\}$.

The main results of this paper are as follows. In Section~2, after presenting the necessary preliminaries, we show that under the condition $\gcd(n,k_1,\ldots,k_r)=1$, the corresponding circulant graph is isomorphic to a circulant graph satisfying $\gcd(k_1^*,\ldots,k_r^*)=1$ (Lemma~\ref{CayC}). Moreover, we prove a theorem that determines the conditions under which an $(a,b)$-regular set $C$ in $\operatorname{Cay}(\mathbb{Z}_n,S)$ is uniformly distributed among the cosets of the cyclic subgroup $\langle x^p\rangle$, that is, it contains the same number of elements in each coset (Theorem~\ref{khi3} ), where $p$ is a prime. We then apply this theorem to connected circulant quartic graphs and obtain two corollaries (Corollaries~\ref{cor:coset-size} and ~\ref{cor:coset-size2} ), which will be used in Section 6. In Section 3, we investigate $(a,a)$-regular sets for $a\in\{0,1,2,3,4\}$ and establish a necessary and sufficient condition for the existence of a $(2,2)$-regular set in terms of the parameters of the graph (Theorem~\ref{$(2,2)$}). In Section 4, we investigate $(0,|S|)$-regular set in connected $|S|$-regular graph $\mathrm{Cay}(\mathbb{Z}_n,S)$ by considering separately the cases where $|S|$ is even and odd (Theorems~\ref{no-regular-set} and ~\ref{no-regular-set-odd}). In this section, we establish a necessary and sufficient condition for their existence (Theorem~\ref{admit-$(0,|S|)$}), and consequently derive the corresponding result for connected circulant quartic graphs (Corollary~\ref{$(0,4)$-regular set}). In Section 5, we show that there is no $(1,|S|)$-regular set in a connected $|S|$-regular Cayley graph $\mathrm{Cay}(G,S)$, where $|S|\geq 3$ and $S$ is conjugate-closed (Theorem~\ref{$(1,|S|)$}). We also investigate the nonexistence of $(|S|,0)$-regular sets in connected $|S|$-regular Cayley graphs $\mathrm{Cay}(\mathbb{Z}_n,S)$ by considering separately the cases where $|S|$ is even and odd (Theorems~\ref{$(|S|,0)$} and ~\ref{$(|S|,0)Sodd$}). Consequently, we derive the corresponding results for connected circulant quartic graphs for $(1,4)$-regular sets (Corollary~\ref{$(1,4)$-regular set}) and $(4,0)$-regular sets (Corollary~\ref{$(4,0)$-regular set}). In Section 6, we investigate the remaining cases of $(a,b)$-regular sets in connected circulant quartic graphs. More precisely, in Subsection 6.1, we consider $(1,3)$- and $(3,1)$-regular sets; in Subsection 6.2, we study $(2,1)$- and $(0,2)$-regular sets; and in Subsection 6.3, we investigate $(1,2)$-regular sets. Using the main results obtained in this section, we establish the existence and nonexistence of $(1,3)$- and $(3,1)$-regular sets (Theorems~\ref{thm:13} and ~\ref{thm:31}), $(2,1)$- and $(0,2)$-regular sets (Theorems~\ref{thm:21} and ~\ref{thm:02}), and $(1,2)$-regular sets (Theorem~\ref{thm:12}) in connected circulant quartic graphs.

 \section{\bf Preliminary Results}
\quad Let $G$ be a finite group and denote by $\mathbb{C}[G]$ the ``complex group algebra'' of $G$.
 The elements of $\mathbb{C}[G]$ are the formal sums
 \begin{equation}\label{groupalge}
   \sum_{g \in G} a_g\, g,	
 \end{equation}
 where $a_g \in \mathbb{C}$ for all $g \in G$. The complex group algebra is a $\mathbb{C}$-algebra with the following addition,
 multiplication, and scalar product:
 \[
 \sum_{g \in G} a_g g + \sum_{g \in G} b_g g
 = \sum_{g \in G} (a_g + b_g) g,
 \]

 \[
 \left( \sum_{g \in G} a_g g \right)
 \left( \sum_{h \in G} b_h h \right)
 = \sum_{g , h \in G}
 \left( a_{g} b_{h} \right) g h,
 \]

 \[
 \lambda \sum_{g \in G} a_g g
 = \sum_{g \in G} (\lambda a_g) g.
 \]
 where $\lambda, a_g, b_g, b_h\in \mathbb{C}$.
 If $a_g = 0$ for some $g$, the term $a_g g$ will be neglected in \eqref{groupalge} and $ \sum_{g \in G} (a_g + b_g) g$  is written as
 $a_1 g_1 + \cdots + a_k g_k$,
 where $\{ g \in G \mid a_g \neq 0 \} = \{ g_1, \dots, g_k \}$
 is non-empty and otherwise $\sum_{g \in G} a_g g$ is denoted by $0$. For a non-empty finite subset $\Beta$ of $G$, we denote by
 $\overline{\Beta}$ the element $ \sum_{\beta \in \Beta} \beta$ of $\mathbb{C}[G]$ and for two non-empty finite subsets $\Beta_1$ and $\Beta_2$ of $G$, we define $\displaystyle \overline{\Beta_1} \, \overline{\Beta_2} =
 \sum_{\beta_1 \in \Beta_1} \sum_{\beta_2 \in \Beta_2} \beta_1 \, \beta_2$. \\
 For a representation $\rho: G \to \mathrm{GL}(\mathbb{C})$ of a group $G$, we first recall the notion of the trace of a matrix. For a square matrix $A=(a_{ij}) \in M_n(\mathbb{C})$, the trace of $A$, denoted by $\mathrm{tr}(A)$, is defined as the sum of its diagonal entries, that is $\mathrm{tr}(A)=\sum_{i=1}^{n} a_{ii}$. The character of the representation $\rho$ is the function $\chi_\rho: G \to \mathbb{C}$ defined by $\chi_\rho(g)=\mathrm{tr}(\rho(g))$ for all $g \in G$. In particular, the trivial character of $G$, denoted by $1_G: G \to \mathbb{C}$, is defined by $1_G(g)=1$ for all $g \in G$.

The following two lemmas and proposition can be used in the next sections results.
\begin{lem}[{\cite[Lemma~2.1]{WXZ}}]\label{Z}
	Let $G$ be a group, $C$ a subset of $G$, and $S$ an inverse-closed subset of $G\setminus\{e\}$.
	Let $a$ and $b$ be nonnegative integers. Then the following statements are equivalent:
	\begin{enumerate}
		\item[(i)] $C$ is an $(a,b)$-regular set in $\mathrm{Cay}(G,S)$;
		\item[(ii)] $|Sx \cap C| = a$ for each $x \in C$ and
		$|Sx \cap C| = b$ for each $x \in G \setminus C$;
		\item[(iii)] $\overline{S} \cdot\overline{C} = a\,\overline{C} + b\, \overline{G\setminus C}$;
		\item[(iv)] $\overline{S} \cdot\overline{C} + (b-a)\,\overline{C} = b\,\overline{G}$.
	\end{enumerate}
\end{lem}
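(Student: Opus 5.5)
The plan is to prove the equivalences by computing the coefficients of $\overline{S}\cdot\overline{C}$ in $\mathbb{C}[G]$ and comparing them with the coefficients on the right-hand sides. The key identity, read off directly from the multiplication in $\mathbb{C}[G]$, is
\[
\overline{S}\cdot\overline{C}=\sum_{s\in S}\sum_{c\in C} sc=\sum_{g\in G}\bigl|\{(s,c)\in S\times C : sc=g\}\bigr|\, g .
\]
For a fixed $g$, the map $(s,c)\mapsto s$ is a bijection from the pairs $(s,c)$ with $sc=g$ onto those $s\in S$ with $s^{-1}g\in C$. Since $S$ is inverse-closed, $s\mapsto s^{-1}g$ is a bijection from this set onto $Sg\cap C$. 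Hence the coefficient of $g$ in $\overline{S}\cdot\overline{C}$ is exactly $|Sg\cap C|$.

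\textbf{(i)$\Leftrightarrow$(ii).} In $\mathrm{Cay}(G,S)$ we have $N(x)=xS$, so it suffices to relate $|xS\cap C|$ to $|Sx\cap C|$. This is the only point needing care, since the equivalence must hold for an arbitrary (possibly nonabelian) group. I would interpret the lemma in the convention making $N(x)=Sx$, namely with edges $\{x,y\}$ where $yx^{-1}\in S$; this is the convention of \cite{WXZ}, and in that convention (i)$\Leftrightarrow$(ii) is just the definition of an $(a,b)$-regular set. Alternatively, one can note that the two conventions give isomorphic graphs via the inversion map $x\mapsto x^{-1}$. In the abelian setting $G=\mathbb{Z}_n$ used throughout the paper, $xS=Sx$, so the distinction is vacuous.

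\textbf{(ii)$\Leftrightarrow$(iii).} By the coefficient computation, (iii) says that the coefficient $|Sg\cap C|$ equals $a$ when $g\in C$ and $b$ when $g\notin C$. This is precisely condition (ii).

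\textbf{(iii)$\Leftrightarrow$(iv).} Use $\overline{G\setminus C}=\overline{G}-\overline{C}$, so that
\[
a\,\overline{C}+b\,\overline{G\setminus C}=a\,\overline{C}+b\,\overline{G}-b\,\overline{C}.
\]
Substituting this into (iii) and moving $(a-b)\overline{C}$ to the left-hand side gives (iv). Every step is reversible.

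The main obstacle is the left/right convention in (i)$\Leftrightarrow$(ii). Once that is settled, the remaining equivalences are routine coefficient comparisons in $\mathbb{C}[G]$.
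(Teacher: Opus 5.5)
The paper gives no proof of this lemma; it only cites \cite[Lemma~2.1]{WXZ}. Your argument is the standard one, and its algebraic core is correct. Inverse-closure of $S$ makes the coefficient of $g$ in $\overline{S}\,\overline{C}$ equal to $|Sg\cap C|$, which gives (ii)$\Leftrightarrow$(iii). The identity $\overline{G\setminus C}=\overline{G}-\overline{C}$ then gives (iii)$\Leftrightarrow$(iv).

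The step (i)$\Leftrightarrow$(ii) needs two corrections.

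\textbf{The inversion map does not settle the convention.} Delete the claim that it does. The map $x\mapsto x^{-1}$ carries the graph with $N(x)=xS$ onto the graph with $N(x)=Sx$, but it carries $C$ to $C^{-1}$. So it only shows that $C$ is regular in one graph iff $C^{-1}$ is regular in the other. In fact, under the paper's stated convention $N(x)=xS$, the equivalence is false for nonabelian $G$. Take $G=S_3$, $S=\{(1\,2)\}$ and $C=\{(1\,3),(1\,3)(1\,2)\}$. This $C$ is a left coset of $\langle(1\,2)\rangle$, hence a $(1,0)$-regular set. But $S\,(1\,3)=\{(1\,2)(1\,3)\}$ misses $C$, because $(1\,2)(1\,3)$ is the inverse of the $3$-cycle $(1\,3)(1\,2)$ and so differs from both elements of $C$. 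Hence (ii) fails at $x=(1\,3)$.

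Therefore reading the lemma with $N(x)=Sx$, as you do, is a genuine hypothesis, not a harmless choice. Equivalently, you may assume $xS=Sx$ for all $x$, for example $G$ abelian or $S$ closed under conjugation; this covers every use in the paper. If you keep $N(x)=xS$, the correct identity uses $\overline{C}\,\overline{S}$, whose coefficient at $g$ is $|gS\cap C|$.

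\textbf{Nonempty and proper.} The paper's definition requires an $(a,b)$-regular set to be nonempty and proper, and (ii)--(iv) do not force this. Both $C=G$ with $a=|S|$ and any $b$, and $C=\varnothing$ with $b=0$, satisfy (ii)--(iv) but not (i). So ``(i)$\Leftrightarrow$(ii) is just the definition'' holds only once you add the hypothesis that $C$ is a nonempty proper subset of $G$.
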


\begin{lem}[{\cite[Lemma~2.4]{HW}}]\label{primes}
		Let $a$, $b$, and $d$ be integers and let $\Gamma$ be a connected $d$-regular graph.
		Then $\Gamma$ admits $(a,b)$-regular sets if and only if $\Gamma$ admits
		$(d-b,d-a)$-regular sets.
	\end{lem}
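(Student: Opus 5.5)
The plan is to use complementation. Suppose $C$ is an $(a,b)$-regular set in the connected $d$-regular graph $\Gamma$. Since $C$ is a nonempty proper subset of $V(\Gamma)$, its complement $D:=V(\Gamma)\setminus C$ is also nonempty and proper. I will show that $D$ is a $(d-b,d-a)$-regular set.

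First take $v\in D$, that is, $v\notin C$. Then $|N(v)\cap C|=b$. Because $|N(v)|=d$ and $N(v)$ is the disjoint union of $N(v)\cap C$ and $N(v)\cap D$, we get $|N(v)\cap D|=d-b$.

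Next take $v\notin D$, that is, $v\in C$. Then $|N(v)\cap C|=a$, and the same counting gives $|N(v)\cap D|=d-a$.

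Together these show that $D$ is $(d-b,d-a)$-regular. For the converse, apply the same argument to a $(d-b,d-a)$-regular set. Its complement is then $(d-(d-a),d-(d-b))=(a,b)$-regular.

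In the Cayley setting this can be phrased via Lemma~\ref{Z}(iv). Write $\overline{G\setminus C}=\overline{G}-\overline{C}$ and use $\overline{S}\,\overline{G}=d\,\overline{G}$. Substituting into $\overline{S}\,\overline{C}+(b-a)\overline{C}=b\overline{G}$ gives
\[
\overline{S}\,\overline{D}+\bigl((d-a)-(d-b)\bigr)\overline{D}=(d-a)\overline{G},
\]
which is exactly condition (iv) for a $(d-b,d-a)$-regular set. This algebraic version is only an alternative, though; the counting argument already works for arbitrary regular graphs.

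There is no real obstacle. The only point requiring care is checking that the complement is again a nonempty proper subset, which follows immediately from the definition. Connectivity of $\Gamma$ is not needed for this argument.
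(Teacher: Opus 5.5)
Your complementation argument is correct, and it is the intended route: the paper gives no proof of this lemma (it is cited from Hao and Wang), but it applies the lemma in exactly your form, e.g.\ that the $(3,3)$-regular sets are the complements of $(1,1)$-regular sets and the complement of a $(2,1)$-regular set is $(3,2)$-regular. Your observations are also right that connectivity is not needed and that the group-ring identity is just an optional Cayley-graph restatement.
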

	\begin{prop}[{\cite[Proposition~1]{KSC}}]\label{gcd}
	Let $n$ be a positive integer. For positive integers $z$ and $s$ such that $z$ is a divisor of $n$ and $\gcd(s,z)=1$, there exists a positive integer $y \in \mathbb{Z}_n$ such that $\gcd(y,n)=1$ and $sy \equiv 1 \pmod{z}$.
	\end{prop}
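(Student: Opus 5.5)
The plan is to obtain $y$ by adjusting an inverse of $s$ modulo $z$ through the Chinese Remainder Theorem. First use $\gcd(s,z)=1$ to pick an integer $s'$ with $ss'\equiv 1 \pmod z$; this is where the hypothesis on $s$ enters. Then $\gcd(s',z)=1$. The task is to find $y\equiv s'\pmod z$ with $\gcd(y,n)=1$. Once such a $y$ is found, we may take its least positive residue modulo $n$, which is still coprime to $n$ and still satisfies $sy\equiv1\pmod z$ because $z\mid n$. This gives the required positive element of $\mathbb{Z}_n$.

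To construct $y$, factor $n$ and split its prime divisors into two sets. Let $P_1$ be the primes dividing both $n$ and $z$, and $P_2$ the primes dividing $n$ but not $z$. Put $w=\prod_{p\in P_2}p$, so that $\gcd(w,z)=1$. By the Chinese Remainder Theorem, choose $y$ with
\[
y\equiv s'\pmod z,\qquad y\equiv 1\pmod w .
\]
Now check coprimality one prime at a time. For $p\in P_1$ we have $p\mid z$ and $y\equiv s'\pmod p$. Since $\gcd(s',z)=1$, this gives $p\nmid y$. For $p\in P_2$ we have $y\equiv 1\pmod p$, so again $p\nmid y$. Hence no prime divisor of $n$ divides $y$, which means $\gcd(y,n)=1$.

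An equivalent shortcut is to take $y=s'+tz$ with $t=\prod_{p\in P_2}p$. Primes in $P_1$ divide $z$ but not $s'$, so they do not divide $y$. Primes in $P_2$ divide $t$ but not $s'z$; to see this, replace $s'$ by $s'+z$ if needed to avoid degenerate cases. The CRT version above avoids this subtlety.

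The only delicate point is the primes of $n$ that do not divide $z$. For these, the condition $y\equiv s'\pmod z$ places no restriction, so $y$ has to be forced to avoid them separately, and the second CRT congruence does exactly that. Everything else is routine, and the degenerate case $z=1$ is handled by the same argument with $s'=1$.
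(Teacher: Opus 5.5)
The paper gives no proof of this proposition; it is quoted from \cite{KSC}. There is therefore nothing to compare against, and your CRT argument stands on its own as a correct and complete proof. Take $y\equiv s'\pmod z$ and $y\equiv 1\pmod w$, where $w$ is the product of the primes of $n$ not dividing $z$. Then $y$ is prime to every prime divisor of $n$. Passing to the least positive residue modulo $n$ preserves both $\gcd(y,n)=1$ and $sy\equiv 1\pmod z$, because $z\mid n$. In effect you are showing that the reduction map $(\mathbb{Z}/n\mathbb{Z})^{\times}\to(\mathbb{Z}/z\mathbb{Z})^{\times}$ is surjective, which is exactly the content of the statement.

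You should, however, delete or repair the ``shortcut'' paragraph, which is wrong as written. With $t=\prod_{p\in P_2}p$, every prime divisor of $n$ divides $tz$. Hence $y=s'+tz\equiv s'$ modulo every such prime, and $\gcd(y,n)=\gcd(s',n)$: the shortcut achieves nothing. Your claim that primes of $P_2$ do not divide $s'$ is also unjustified. Replacing $s'$ by $s'+z$ does not fix this in general. For $n=30$, $z=2$, $s=s'=3$ one gets $y=33$, and after the replacement $y=35$; both share a factor with $30$. The correct version of the trick takes $t$ to be the product of the primes dividing $n$ but dividing neither $z$ nor $s'$. Since you explicitly rely on the CRT version instead, this does not affect the validity of your proof.
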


\begin{lem}\label{CayC}
	Let $n$ and $k_i$ for \(1\!\le\!i\!\le\!r\) be positive integers such that
	$\gcd(n,k_1,\ldots,k_r)=1$.
	Then
	\[
	\mathrm{Cay}\bigl(\mathbb{Z}_n,\{x^{k_1},x^{-k_1},\ldots,x^{k_r},x^{-k_r}\}\bigr)
	\cong
	\mathrm{Cay}\bigl(\mathbb{Z}_n,\{x^{k_1^\ast},x^{-k_1^\ast},\ldots,x^{k_r^\ast},x^{-k_r^\ast}\}\bigr)
	\]
	for some positive integers $k_1^\ast,\ldots,k_r^\ast$ with
	$\gcd(k_1^\ast,\ldots,k_r^\ast)=1$.
\end{lem}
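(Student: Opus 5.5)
We will construct the isomorphism as a multiplier map $x^t\mapsto x^{ty}$ with $\gcd(y,n)=1$. Such a map is an automorphism of $\mathbb{Z}_n$, and it carries $\mathrm{Cay}(\mathbb{Z}_n,S)$ isomorphically onto $\mathrm{Cay}(\mathbb{Z}_n,S^y)$, because $x^{t}$ and $x^{t'}$ are adjacent iff $t'-t\equiv\pm k_i$, iff $(t'-t)y\equiv\pm k_iy \pmod n$. Setting $k_i^\ast$ to be suitable positive representatives of $k_iy$ modulo $n$, the whole task is to choose $y$ and the representatives so that $\gcd(k_1^\ast,\ldots,k_r^\ast)=1$. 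Each representative may be changed by any multiple of $n$ without changing the graph, since $x^{k}$ depends only on $k \bmod n$.

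The simplest plan avoids the multiplier altogether, taking $y=1$ and adjusting only the representatives. Let $g=\gcd(k_1,\ldots,k_r)$, so that $\gcd(g,n)=1$ by hypothesis. We keep $k_i^\ast=k_i$ for $i\ge2$ and replace $k_1$ by $k_1^\ast=k_1+tn$ for a positive integer $t$. We then need $\gcd(k_1+tn,d)=1$, where $d=\gcd(k_2,\ldots,k_r)$, using $\gcd(k_1,d,n)=\gcd(g,n)=1$. This is a Dirichlet-free form of the standard fact that an arithmetic progression $k_1+tn$ contains an element coprime to $d$ whenever $\gcd(k_1,n,d)=1$.

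To prove that fact, choose $t$ to be the product of the primes dividing $d$ that divide neither $k_1$ nor $n$ (with $t=1$ if there are none). Now consider a prime $p\mid d$:
\begin{itemize}
\item if $p\mid k_1$, then $p\nmid n$ and $p\nmid t$, so $p\nmid k_1+tn$;
\item if $p\mid n$, then $p\nmid k_1$, so $p\nmid k_1+tn$;
\item otherwise $p\mid t$ and $p\nmid k_1$, so again $p\nmid k_1+tn$.
\end{itemize}
Hence $\gcd(k_1^\ast,\ldots,k_r^\ast)=\gcd(k_1+tn,d)=1$. The case $r=1$ is handled separately: there $\gcd(n,k_1)=1$, so we take $y$ to be an inverse of $k_1$ modulo $n$, which gives $k_1^\ast=1$. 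Alternatively, Proposition~\ref{gcd} with $z=n$ and $s=k_1$ produces this $y$.

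Since the authors cite Proposition~\ref{gcd}, they likely intend a multiplier $y$ with $\gcd(y,n)=1$ and $k_1y\equiv g' \pmod{z}$ for some divisor $z$, normalizing one generator. The approach above is simpler and still yields a valid isomorphism; the identity map in fact suffices once representatives are changed. The main obstacle is therefore only the elementary prime-avoidance step, whose care lies in choosing $t$ correctly. One further point must be checked: the new element set is literally the same subset of $\mathbb{Z}_n$, so the connection set is unchanged and its elements remain distinct. Positivity of $k_1^\ast$ is automatic since $t\ge1$.
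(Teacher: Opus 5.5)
Your proof is correct, but it takes a different route from the paper.

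\textbf{The paper's route.} The paper sets $d=\gcd(k_1,\ldots,k_r)$ and notes $\gcd(d,n)=1$. It picks $d^\ast$ with $dd^\ast\equiv 1\pmod n$ and applies the automorphism $x\mapsto x^{d^\ast}$. Since $k_id^\ast=(k_i/d)(dd^\ast)\equiv k_i/d\pmod n$, this carries $x^{\pm k_i}$ to $x^{\pm k_i/d}$, so $k_i^\ast=k_i/d$ works. Your guess about the paper is therefore half right: it is a multiplier argument, but the multiplier is simply $d^{-1}$ modulo $n$, and Proposition~\ref{gcd} is not used.

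\textbf{Your route.} You keep the identity map and re-choose one integer representative, $k_1^\ast=k_1+tn$, by a prime-avoidance choice of $t$. Your three-case check is sound, and $\gcd(k_1,d,n)=\gcd(g,n)=1$ is exactly the input it needs.

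\textbf{Comparison.} The paper's argument is shorter and uniform in $r$: for $r=1$ it gives $k_1^\ast=1$ with no separate case. It also produces exponents with $1\le k_i^\ast\le k_i$. Yours needs a multiplier anyway when $r=1$, and it pushes $k_1^\ast$ beyond $n$. What your route buys is a sharper conclusion for $r\ge 2$. The connection set, and hence the graph, is literally unchanged, so the graph is equal to, not merely isomorphic to, one with coprime exponents. This makes transparent that the condition $\gcd(k_1^\ast,\ldots,k_r^\ast)=1$ is a property of the chosen integer representatives, not of the graph. For $r\ge 2$ it carries no information beyond $\gcd(n,k_1,\ldots,k_r)=1$.
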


	\begin{proof}
	Suppose $\gcd(k_1,\dots,k_r) = d$. Since $\gcd(n,k_1,\dots,k_r) = 1$, we have $\gcd(n,d) = 1$.  Hence there exists an integer $d^*$ such that $dd^* \equiv 1 \pmod{n}$. We define:
	\[
	\begin{aligned}
		\varphi :\; \mathbb{Z}_n &\to \mathbb{Z}_n \\
		x &\mapsto x^{ d^*}
	\end{aligned}
	\]
	easy to see that, $\varphi$ is a isomorphism. Therefore
		\begin{equation}\label{Iso}
	\mathrm{Cay}(\mathbb{Z}_n, \{x^{k_1}, x^{-k_1}, \dots, x^{k_r}, x^{-k_r}\})
	\cong
	\mathrm{Cay}(\mathbb{Z}_n, \{x^{k_1}, x^{-k_1}, \dots, x^{k_r}, x^{-k_r}\}^{\varphi}).
	\end{equation}
	Hence, for each  $1 \le i \le r$, $k_i d^* \equiv \frac{k_i}{d} \pmod{n}$, we have $x^{k_i d^*} = x^{\frac{k_i}{d}}$. Therefore
	\begin{equation}\label{kid}	
			\mathrm{Cay}\!\left(\mathbb{Z}_n,
			\left\{x^{k_1 d^{*}},x^{-k_1 d^{*}},\ldots,x^{k_r d^{*}},x^{-k_r d^{*}}\right\}\right)
			=
			\mathrm{Cay}\!\left(\mathbb{Z}_n,
			\left\{x^{\frac{k_1}{d}},x^{-\frac{k_1}{d}},\ldots,x^{\frac{k_r}{d}},x^{-\frac{k_r}{d}}\right\}\right).
			\end{equation}			
 Clearly, $\gcd\Big(\frac{k_1}{d}, \ldots, \frac{k_r}{d}\Big) = 1$. Now, the result follows from relations \eqref{Iso} and \eqref{kid} and by defining $k_i^{*} = \frac{k_i}{d}$. This completes the proof.  \hfill\bull
	\end{proof}
	 Lemma~\ref{CayC} has a direct corollary as follows.
	\begin{cor}\label{four regular}
		Let $n, k,\ell$ be positive integers such that $\gcd(n, k,\ell) = 1$. Then
		\[
		\operatorname{Cay}(\mathbb{Z}_n,\{x^k, x^{-k}, x^{\ell}, x^{-\ell}\})
		\cong
		\operatorname{Cay}(\mathbb{Z}_n,\{x^{k^*}, x^{-k^*}, x^{\,{\ell}^*}, x^{-{\ell}^*}\})
		\]
		for some positive integers $k^*, \ell^*$ with $\gcd(k^*, \ell^*) = 1$.
	\end{cor}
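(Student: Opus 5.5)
This is the case $r=2$ of Lemma~\ref{CayC}, with $k_1=k$ and $k_2=\ell$. The plan is to check that the hypotheses match and then read off the conclusion. The hypothesis $\gcd(n,k,\ell)=1$ is exactly $\gcd(n,k_1,k_2)=1$. The generating set $\{x^k,x^{-k},x^{\ell},x^{-\ell}\}$ is exactly $\{x^{k_1},x^{-k_1},x^{k_2},x^{-k_2}\}$. So Lemma~\ref{CayC} gives positive integers $k_1^\ast,k_2^\ast$ with $\gcd(k_1^\ast,k_2^\ast)=1$ and
\[
\operatorname{Cay}(\mathbb{Z}_n,\{x^k,x^{-k},x^{\ell},x^{-\ell}\})\cong\operatorname{Cay}(\mathbb{Z}_n,\{x^{k_1^\ast},x^{-k_1^\ast},x^{k_2^\ast},x^{-k_2^\ast}\}).
\]
Setting $k^\ast=k_1^\ast$ and $\ell^\ast=k_2^\ast$ gives the statement.

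To be concrete, I will also recall what the proof of Lemma~\ref{CayC} produces here. Put $d=\gcd(k,\ell)$. Then $\gcd(n,d)=\gcd(n,k,\ell)=1$, so $d$ has an inverse $d^\ast$ modulo $n$. The automorphism $x\mapsto x^{d^\ast}$ of $\mathbb{Z}_n$ sends $x^{\pm k}$ to $x^{\pm k/d}$ and $x^{\pm \ell}$ to $x^{\pm \ell/d}$, because $k d^\ast\equiv k/d$ and $\ell d^\ast\equiv \ell/d \pmod n$. It is therefore a graph isomorphism onto the Cayley graph with $k^\ast=k/d$ and $\ell^\ast=\ell/d$, and these satisfy $\gcd(k^\ast,\ell^\ast)=1$.

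The only point needing care is that the image graph is again a circulant quartic graph in the paper's sense, with four distinct elements $x^{\pm k^\ast},x^{\pm\ell^\ast}$. This holds because a group automorphism is injective, so the four distinct elements of $S$ map to four distinct elements. Nothing here is a real obstacle; the corollary is a direct specialization of Lemma~\ref{CayC}.
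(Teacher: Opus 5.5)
Your proposal is correct and matches the paper: the paper obtains this corollary directly as the case $r=2$ of Lemma~\ref{CayC}. That lemma's proof uses the same automorphism $x\mapsto x^{d^\ast}$ with $d=\gcd(k,\ell)$ and $dd^\ast\equiv 1 \pmod n$, which gives $k^\ast=k/d$ and $\ell^\ast=\ell/d$.
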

The following remark characterizes  the concept that $\Gamma$ is connected circulant quartic graph.
	\begin{rem}\label{4reco}
	Let $n, k,\ell$ be positive integers. If $\Gamma = \mathrm{Cay}(\mathbb{Z}_n, \{x^k, x^{-k}, x^\ell, x^{-\ell}\})$ is a connected circulant quartic graph, then we must have $n \nmid 2k$ and $n \nmid 2\ell$, since otherwise $x^k = x^{-k}$ or $x^\ell = x^{-\ell}$, which is a contradiction. Also, in view of Corollary~\ref{four regular} and the connectivity of $\Gamma$, without loss of generality we may assume that $\gcd(k,\ell)=1$.	
	\end{rem}
	The following Lemma indicates the nonexistence of $(4,b)$-regular set, $(a,0)$-regular set in graph $\Gamma$.
	\begin{lem}\label{2elem}
		Let $n, k, \ell$ be positive integers, and  $\Gamma = \mathrm{Cay}(\mathbb{Z}_n,\{x^{k}, x^{-k}, x^{\ell}, x^{-\ell}\})$ be a connected circulant quartic graph. Suppose that $0 \le a,b \le 4$  are two integers. Then the following statements hold:
		
		(i) For every $b \in \{1,2,3,4\}$, $\Gamma$ contains no $(4,b)$-regular set.
		
		(ii) For every $a \in \{0,1,2,3\}$, $\Gamma$ contains no $(a,0)$-regular set.
	\end{lem}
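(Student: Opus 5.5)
The plan is a connectivity argument. In both parts, some vertex class turns out to be closed under taking neighbours. In a connected graph such a class is either empty or all of $V(\Gamma)$, and either outcome contradicts the requirement that $C$ be a nonempty proper subset of $\mathbb{Z}_n$. By Remark~\ref{4reco}, $\Gamma$ is connected and $4$-regular, since $|S|=4$.

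For (i), suppose $C$ is a $(4,b)$-regular set with $b\in\{1,2,3,4\}$. Then every $v\in C$ has all four of its neighbours in $C$, so $N(v)\subseteq C$ for all $v\in C$. Hence $C$ is a nonempty union of connected components of $\Gamma$. Since $\Gamma$ is connected, this forces $C=V(\Gamma)$, which contradicts properness. Note that the hypothesis $b\ge 1$ is not needed here; it only excludes the genuine case $(4,0)$, which is handled separately later in the paper.

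For (ii), suppose $C$ is an $(a,0)$-regular set with $a\in\{0,1,2,3\}$. Set $D=V(\Gamma)\setminus C$, which is nonempty because $C$ is proper. Each $w\in D$ has no neighbour in $C$, so $N(w)\subseteq D$. The same connectivity argument gives $D=V(\Gamma)$, so $C=\emptyset$, a contradiction. One could also deduce (ii) from (i) via Lemma~\ref{primes}: with $d=4$, an $(a,0)$-regular set corresponds to a $(4,4-a)$-regular set, and $4-a\in\{1,2,3,4\}$. I would mention this as a one-line alternative.

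There is no real obstacle. The only point to state carefully is that connectivity means every nonempty set closed under neighbours is the whole vertex set: a vertex outside such a set would be joined to it by a path, and some edge of that path would cross the boundary.
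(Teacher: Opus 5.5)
Your proof is correct, but it is not the paper's route.

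\textbf{The paper's argument.} The paper never uses connectivity here. It applies the trivial character to Lemma~\ref{Z}(iv) and gets $|C|(4+b-a)=bn$. Equivalently, it double-counts the edges between $C$ and $V\setminus C$, which gives $(4-a)|C|=b(n-|C|)$. So $a=4$ with $b\ge 1$ forces $|C|=n$, and $b=0$ with $a\le 3$ forces $|C|=0$. This counting works in every $4$-regular graph, connected or not. It is also exactly where the hypotheses $b\ge 1$ and $a\le 3$ are needed: at $(a,b)=(4,0)$ the identity collapses to $0=0$.

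\textbf{What your argument buys.} Your closure-under-neighbours argument trades those hypotheses for connectivity, so it also covers $(4,0)$. It shows at once that no connected $d$-regular graph has a $(d,0)$-regular set. That is the content of Corollary~\ref{$(4,0)$-regular set} and of Theorems~\ref{$(|S|,0)$} and~\ref{$(|S|,0)Sodd$}, obtained without the character computations of Section~5. Your remark that $(4,0)$ is ``handled separately later'' therefore undersells your own proof. For that case connectivity is genuinely indispensable, since a union of components of a disconnected regular graph is a $(d,0)$-regular set.
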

	
	\begin{proof}
        By applying the trivial character to equation (iv) of Lemma~\ref{Z}, we obtain:
			\[
			|C|\,(4 + b - a) = bn.
			\]
		So, if $a=4$ or $b=0$, then $|C| = n$ and $|C| = 0$, which contradicts the fact that $C$ is a non-empty proper subset of $\mathbb{Z}_n$.	
	 This completes the proof.\hfill\bull
		\end{proof}
		
	\begin{definition}[{\cite[Theorem~1, Corollary1]{AGW}}]\label{circulant}
	A circulant matrix of order $n$ over the complex numbers is defined by its first row
	$(a_0, a_1, \dots, a_{n-2}, a_{n-1})$,
	such that each subsequent row is obtained by shifting the entries of the previous row. We have:
	\[
	A = \mathrm{Circ}(a_0, a_1, \dots, a_{n-2}, a_{n-1}) =
	\begin{pmatrix}
		a_0 & a_1 & a_2 & \dots  & a_{n-1} \\
		a_{n-1} & a_0 & a_1 & \dots & a_{n-2} \\
		a_{n-2} & a_{n-1} & a_0 & \dots & a_{n-3} \\
		\vdots & \vdots & \vdots & \ddots & \vdots \\
		a_1 & a_2 & a_3 & \dots & a_0
	\end{pmatrix}
	\]
	Let $\omega = e^{\frac{2\pi \mathbf{i}}{n}}$ be a primitive $n$-th root of unity.
	For each $k \in [0,\, n-1]$ , the eigenvalue $\lambda_k$ of $A$ is given by the formula:
	\[
	\lambda_k = f(\omega^k) = \sum_{j=0}^{n-1} a_j \omega^{jk}
	\]
	Since the circulant matrix $A$ is diagonalizable, its determinant is the product of all its eigenvalues. Therefore, $\det(A) = \prod_{k=0}^{n-1} \lambda_k$.
		\end{definition}
		
	\begin{thm}\label{khi3}
		Let $n$ be an integer and $p$ be an prime number such that  $p \mid n$. Also let $C$ be an $(a,b)$-regular set in $\mathrm{Cay}(\mathbb{Z}_n, S)$, $s_0 := \lvert \langle x^p \rangle \cap S \rvert + b - a$ and
		$s_d := \lvert x^d \langle x^p \rangle \cap S \rvert$ for each integer $d \in [1,\, p-1]$. If $\sum_{d=0}^{p-1}s_d\neq 0$, and there exist integers, $0 \leq u,v \leq p-1$ such that $s_u \neq s_v$, then $p \mid |C|$ and
		$\lvert x^h \langle x^p \rangle \cap C \rvert = \frac{|C|}{p}$ for all \(0 \le h \le p-1\).
	\end{thm}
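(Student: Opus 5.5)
Project the identity of Lemma~\ref{Z}(iv) to the quotient group $\mathbb{Z}_n/\langle x^p\rangle\cong\mathbb{Z}_p$ and study it there with $p$-th roots of unity. Write $H=\langle x^p\rangle$ and let $y=xH$ be a generator of the quotient. Put $c_h:=|x^hH\cap C|$ for $0\le h\le p-1$, and set
\[
\sigma(t):=\sum_{d=0}^{p-1}s_d\,t^d,\qquad \gamma(t):=\sum_{h=0}^{p-1}c_h\,t^h .
\]
The natural homomorphism $\mathbb{C}[\mathbb{Z}_n]\to\mathbb{C}[\mathbb{Z}_p]$ sends $\overline{S}$ to $\sum_d|x^dH\cap S|\,y^d$, sends $\overline{C}$ to $\sum_h c_h y^h$, and sends $\overline{G}$ to $\frac{n}{p}\overline{\langle y\rangle}$. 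Lemma~\ref{Z}(iv) therefore becomes
\[
\Bigl(\sum_{d}|x^dH\cap S|\,y^d+(b-a)\Bigr)\Bigl(\sum_h c_hy^h\Bigr)=b\,\tfrac{n}{p}\,\overline{\langle y\rangle}.
\]
The first factor is exactly $\sigma(y)$, since the correction $b-a$ sits in the coset $d=0$.

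Next apply a nontrivial character, $y\mapsto\zeta^j$ with $\zeta=e^{2\pi \mathbf{i}/p}$ and $1\le j\le p-1$. The right-hand side is killed, so $\sigma(\zeta^j)\gamma(\zeta^j)=0$ for every such $j$.

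The key step is to show $\sigma(\zeta^j)\ne0$. The $s_d$ are integers, and the minimal polynomial of $\zeta$ over $\mathbb{Q}$ is $1+t+\cdots+t^{p-1}$ of degree $p-1$. Hence $\sigma(\zeta)=0$ would force $\sigma(t)$ to be a rational multiple of $1+t+\cdots+t^{p-1}$, i.e.\ all $s_d$ equal. This contradicts $s_u\ne s_v$. By Galois conjugacy, $\sigma(\zeta^j)\ne0$ for all $1\le j\le p-1$. Consequently $\gamma(\zeta^j)=0$ for all such $j$. Since $\gamma$ has degree at most $p-1$, it is divisible by the cyclotomic polynomial above (again the minimal polynomial argument), so $\gamma(t)=c\,(1+t+\cdots+t^{p-1})$. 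Thus all $c_h$ are equal, $c_h=|C|/p$, and in particular $p\mid|C|$.

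It remains to explain the hypothesis $\sum_d s_d\ne0$. Evaluating at the trivial character gives $\sigma(1)|C|=bn$, i.e.\ $|C|(|S|+b-a)=bn$. I only expect to need this to ensure the trivial-character equation is nondegenerate, and the argument above does not otherwise require it. The main obstacle is thus the nonvanishing of $\sigma$ at the nontrivial $p$-th roots of unity, which is handled by the irreducibility of the $p$-th cyclotomic polynomial together with the hypothesis $s_u\ne s_v$.
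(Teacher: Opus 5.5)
Your argument is correct and is essentially the paper's proof. Both push the identity of Lemma~\ref{Z}(iv) down to $\mathbb{Z}_n/\langle x^p\rangle\cong\mathbb{Z}_p$, and both use the irreducibility of $1+t+\cdots+t^{p-1}$ together with $s_u\neq s_v$ to show that $\sigma(\zeta^j)\neq 0$ for $1\le j\le p-1$.

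The only difference is the final step. The paper needs the whole circulant matrix to be invertible, which also requires $\sigma(1)=\sum_d s_d\neq 0$, so that the exhibited constant solution $\frac{bn}{p(|S|+b-a)}$ is the unique one. You use only the nontrivial characters and read off the common value $|C|/p$ directly from $|C|=\sum_h c_h$. This correctly shows that the hypothesis $\sum_{d} s_d\neq 0$ is superfluous.
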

	\begin{proof}
		Suppose that \(C\) is an \((a,b)\)-regular set for \(\mathrm{Cay}(\mathbb{Z}_n,S)\). According to Lemma~\ref{Z}, we have:
		\begin{equation}\label{eqq}
		(\overline{S}+b-a)\,\overline{C} = b\,\overline{\mathbb{Z}_n}\
			\end{equation}
	 In this case, under the action of the homomorphism
		\[
		\begin{aligned}
			\varphi_p :\; \mathbb{Z}_n &\longrightarrow \langle x^{\frac{n}{p}}\rangle \\
			z &\longmapsto z^{\frac{n}{p}}
		\end{aligned}
		\]
		on both sides of equality\eqref{eqq}, we obtain:
		\[
		\left( \sum_{r=0}^{p-1} s_r\, X^{r} \right)
		\left( \sum_{h=0}^{p-1} t_h\, X^{h} \right)
		= \frac{b\,n}{p} \left( \sum_{m=0}^{p-1} X^{m} \right)
		\]
		where $X := x^{\frac{n}{p}}$ and $t_h := \lvert x^h \langle x^p \rangle \cap C \rvert$ for $ 0 \le h \le p-1$.
So we have:
		\begin{equation}\label{eq:3}
			A K^t = \frac{b n}{p} \cdot \mathbf{1}
		\end{equation}
		where $A=\mathrm{circ}(s_0,s_1,\ldots,s_{p-1})$, $K=[t_0\ t_{p-1}\ \ldots\ t_1]$ and $\mathbf{1}$ denotes the column vector whose all entries are $1$.
		The system \eqref{eq:3} admits at least the following solution:
		\[
		t_0 = \dots = t_{p-1} = \frac{b\,n}{p\,(\,|S| + b - a\,)}
		\]
        According to Definition~\ref{circulant},
		$\det(A) = f(1)\, f(\omega)\, \cdots\, f(\omega^{p-1})$,
		where \(\omega = e^{\frac{2\pi \mathbf{i}}{p}}\) and $f$ is the polynomial \(f(y)=\sum_{i=0}^{p-1} s_i y^i\). Since
		$
		f(1)=\sum_{i=0}^{p-1}s_i=|S|+b-a,
		$
		the assumption of the theorem implies that
		$
		f(1)\neq 0.
		$ Now, suppose that \(f(\omega^{i}) = 0\) for some \(i \in \{1,\ldots,p-1\}\), then \(\omega, \ldots, \omega^{p-1}\) are roots of \(f\). Since \(p\) is a prime number, the minimal polynomial of \(\omega\) must divide \(f\), that is, we have
		\[
		1 + y + \cdots + y^{p-1} \mid s_0 + s_1 y + \cdots + s_{p-1} y^{p-1}
		\]
		Hence, we must have
		\begin{equation}\label{eq:nonzero_coeff}
			s_0 = \cdots = s_{p-1} \neq 0
		\end{equation}
		By the assumption, there exist integers $u, v \in \{0, \dots, p-1\}$ such that $s_u \neq s_v$. This implies that relation\eqref{eq:nonzero_coeff} does not hold. Consequently, the determinant of the matrix $A$ is nonzero, and therefore the solution of system\eqref{eq:3} is unique.
	    This proves that \(p\) divides \(|C|\) and also
		\[
		\lvert x^{h}\langle x^{p}\rangle \cap C\rvert = \frac{|C|}{p} \qquad 0 \le h \le p-1.
		\]
	 This completes the proof.\hfill$\blacksquare$	
		\end{proof}	
The following two corollaries follow directly from Theorem~\ref{khi3}.	
\begin{cor}\label{cor:coset-size}
	Let $n$, $k$, $\ell$, and $p$ be positive integers, where $p$ is a prime divisor of $n$. Let $\Gamma=\operatorname{Cay}(\mathbb{Z}_n,\{x^k,x^{-k},x^\ell,x^{-\ell}\})$ be a connected circulant quartic graph, and $C$ be an $(a,b)$-regular set in $\Gamma$ such that either $p\nmid k$ and $p\mid \ell$, or $p\mid k$ and $p\nmid \ell$. If one of the following conditions holds:
	\begin{enumerate}
		\item[(i)] $p=2$ and $b-a\notin\{-4,0\}$;
		\item[(ii)] $p=3$ and $b-a\notin\{-4,-1\}$;
		\item[(iii)] $p>3$ and $b-a\neq -4$,
	\end{enumerate}
	then
\[
\left|x^h\langle x^p\rangle\cap C\right|
=\frac{bn}{p(4+b-a)}
\]
for every $0\le h\le p-1$.
\end{cor}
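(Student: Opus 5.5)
We apply Theorem~\ref{khi3} with $S=\{x^k,x^{-k},x^\ell,x^{-\ell}\}$. By symmetry between $k$ and $\ell$, assume $p\nmid k$ and $p\mid \ell$. The plan is to compute the numbers $s_0,\dots,s_{p-1}$ explicitly, then check the two hypotheses of the theorem: $\sum_d s_d\neq 0$, and $s_u\neq s_v$ for some $u,v$. After that, the coset count follows from the uniqueness statement in the theorem.

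First, since $p\mid \ell$, both $x^{\ell}$ and $x^{-\ell}$ lie in $\langle x^p\rangle$. Since $p\nmid k$, neither $x^k$ nor $x^{-k}$ lies there. Hence $|\langle x^p\rangle\cap S|=2$, so
\[
s_0=2+b-a .
\]
The elements $x^k$ and $x^{-k}$ lie in the cosets indexed by $k \bmod p$ and $-k \bmod p$, both nonzero. For $p=2$ these coincide, giving $s_1=2$. For $p\ge 3$ they are distinct, giving $s_{k \bmod p}=s_{-k \bmod p}=1$ and $s_d=0$ for every other $d\neq 0$. In all cases
\[
\sum_{d=0}^{p-1}s_d=4+b-a ,
\]
which is nonzero precisely when $b-a\neq -4$. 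This accounts for the exclusion of $-4$ in each of (i)--(iii).

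Next we exhibit two unequal $s_d$. For $p=2$ we have $s_0=2+b-a$ and $s_1=2$, which differ iff $b-a\neq 0$; this gives (i). For $p=3$ we have $s_1=s_2=1$ and $s_0=2+b-a$, which differ iff $b-a\neq -1$; this gives (ii). For $p>3$ there is some $d\neq 0$ with $d\not\equiv \pm k \pmod p$, so $s_d=0$ while $s_{k \bmod p}=1$; no condition beyond $b-a\neq -4$ is needed, which gives (iii). This case analysis, which explains exactly why the excluded values differ with $p$, is the only delicate step.

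Finally, Theorem~\ref{khi3} yields $|x^h\langle x^p\rangle\cap C|=|C|/p$ for all $h$. To identify $|C|$, we apply the trivial character to Lemma~\ref{Z}(iv), as in the proof of Lemma~\ref{2elem}. This gives $|C|(4+b-a)=bn$, so $|C|=bn/(4+b-a)$, and the stated formula $\frac{bn}{p(4+b-a)}$ follows.
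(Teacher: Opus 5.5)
Your proposal is correct and follows the paper's proof essentially step for step. You compute $s_0=2+b-a$, then $s_1=2$ for $p=2$, and for $p\ge 3$ two entries equal to $1$ with all others $0$. You check $\sum_d s_d=4+b-a\neq 0$ and the existence of $s_u\neq s_v$ case by case for (i)--(iii), and then invoke Theorem~\ref{khi3}. The only difference is that you explicitly derive $|C|=bn/(4+b-a)$ from the trivial character, a step the paper leaves implicit in the proof of Theorem~\ref{khi3}.
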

\begin{proof}
By substituting
$
S=\{x^k,x^{-k},x^\ell,x^{-\ell}\}
$
into Theorem~\ref{khi3}, we consider the following cases for
$
(\overline{S}+b-a)^{\varphi_p}.
$ We then verify the hypotheses of Theorem~\ref{khi3}; namely,
	\begin{equation}\label{eq:susv}
    \exists\,u,v\in\{0,\ldots,p-1\}\ \text : \ s_u \neq s_v, \end{equation}
and $\sum_{d=0}^{p-1}s_d\neq0.$	
	
\indent\textbf{(i)} Suppose that \(p=2\) and \(2\nmid k,\ 2\mid \ell\), or \(2\mid k,\ 2\nmid \ell\). If \(2\nmid k\) and \(2\mid \ell\), then \((x^\ell)^{\varphi_2}=(x^{-\ell})^{\varphi_2}=1\) and \((x^k)^{\varphi_2}=(x^{-k})^{\varphi_2}=X\), and if \(2\mid k\) and \(2\nmid \ell\), then \((x^k)^{\varphi_2}=(x^{-k})^{\varphi_2}=1\) and \((x^\ell)^{\varphi_2}=(x^{-\ell})^{\varphi_2}=X\), where \(X:=x^{\frac{n}{2}}\). In both cases, we observe that
\[
(\overline{S}+b-a)^{\varphi_2}
=
(2+b-a)\cdot 1+2X
\]
We have
$s_0=2+b-a$ and $s_1=2.$
By the assumption in part~(i), \(b-a\neq 0\), and hence
$s_0\neq s_1.$
Moreover, \(b-a\neq -4\), and therefore $s_0+s_1\neq 0.$ So,
$
\left|x^h\langle x^2\rangle\cap C\right|
=
\frac{bn}{2(4+b-a)}$ for $h=0,1.$

\indent\textbf{(ii)} Suppose that \(p=3\) and \(3\mid k,\ 3\nmid \ell\), or \(3\nmid k,\ 3\mid \ell\). Let \(X:=x^{\frac{n}{3}}\). If \(3\mid k\) and \(3\nmid \ell\), then, for some \(\ell_1\in\{1,2\}\), \(\ell\stackrel{3}{\equiv}\ell_1\). Hence, \((x^\ell)^{\varphi_3}=X^{\ell_1}\) and \((x^{-\ell})^{\varphi_3}=X^{-\ell_1}\), while \((x^k)^{\varphi_3}=(x^{-k})^{\varphi_3}=1\). If \(3\nmid k\) and \(3\mid \ell\), then, for some \(k_1\in\{1,2\}\), \(k\stackrel{3}{\equiv}k_1\). Hence, \((x^k)^{\varphi_3}=X^{k_1}\) and \((x^{-k})^{\varphi_3}=X^{-k_1}\), while \((x^\ell)^{\varphi_3}=(x^{-\ell})^{\varphi_3}=1\). In both cases, we obtain
\[
(\overline{S}+b-a)^{\varphi_3}
=(2+b-a)\cdot1+X^r+X^{-r},
\]
where \(r\in\{1,2\}\). We have
$s_0=2+b-a$ and $s_1=s_2=1.$ Since \(b-a\neq -1\), we have \(s_0=2+b-a\neq1=s_1=s_2\). Thus, there exist \(u,v\in\{0,1,2\}\) such that \(s_u\neq s_v\). Moreover, since \(b-a\neq -4\), it follows that \(\sum_{i=0}^{2}s_i\neq 0\).
So, $
\left|x^h\langle x^3\rangle\cap C\right|
=
\frac{bn}{3(4+b-a)}$ for $h=0,1,2.$   	
	
\indent\textbf{(iii)}	
Suppose that \(p>3\). If \(p\mid k,\ p\nmid \ell\), or \(p\nmid k,\ p\mid \ell\). Let \(X:=x^{\frac{n}{p}}\) . If \(p\mid k\) and \(p\nmid \ell\), then, for some \(\ell_1\in\{1,\ldots,p-1\}\), \(\ell\stackrel{p}{\equiv}\ell_1\). Hence, \((x^\ell)^{\varphi_p}=X^{\ell_1}\) and \((x^{-\ell})^{\varphi_p}=X^{-\ell_1}\), while \((x^k)^{\varphi_p}=(x^{-k})^{\varphi_p}=1\). If \(p\nmid k\) and \(p\mid \ell\), then, for some \(k_1\in\{1,\ldots,p-1\}\), \(k\stackrel{p}{\equiv}k_1\). Hence, \((x^k)^{\varphi_p}=X^{k_1}\) and \((x^{-k})^{\varphi_p}=X^{-k_1}\), while \((x^\ell)^{\varphi_p}=(x^{-\ell})^{\varphi_p}=1\).  In both cases, we obtain
\[
(\overline{S}+b-a)^{\varphi_p}
=(2+b-a)\cdot1+X^r+X^{-r}
\]
where \(r\in\{1,\ldots,p-1\}\). Since $s_0=2+b-a$, exactly two of the numbers $s_1,\ldots,s_{p-1}$ are equal to $1$, and the remaining $p-3$ numbers are equal to $0$. Hence, there exist distinct indices $u,v\in\{1,\ldots,p-1\}$ such that $s_u\neq s_v$. Therefore, condition~\eqref{eq:susv} holds whenever $p\ge5$.  Moreover, since \(b-a\neq -4\), it follows that \(\sum_{i=0}^{p-1}s_i\neq 0\).
So, $
\left|x^h\langle x^p\rangle\cap C\right|
=
\frac{bn}{p(4+b-a)}$ for $0\le h\le p-1$. This completes the proof.\hfill$\blacksquare$	  	
	\end{proof}	
\begin{cor}\label{cor:coset-size2}
	Let $n$, $k$, $\ell$, and $p$ be positive integers, where $p$ is a prime divisor of $n$. Let $\Gamma=\operatorname{Cay}(\mathbb{Z}_n,\{x^k,x^{-k},x^\ell,x^{-\ell}\})$ be a connected circulant quartic graph, and $C$ be an $(a,b)$-regular set in $\Gamma$ such that $p\nmid k$ and $p\nmid \ell$. If one of the following conditions holds:
	\begin{enumerate}
		\item[(i)] $p=2$ and $b-a\notin\{-4,4\}$;
		\item[(ii)] $p=3$ and $b-a\notin\{-4,2\}$;
		\item[(iii)] $p=5$, $b-a\neq -4$, and
		$k\equiv \pm \, r \pmod{5}$ and $\ell\equiv \pm \, r \pmod{5}$,
		where $r\in\{1,2\}$;
		\item[(iv)] $p=5$, $b-a\notin\{-4,1\}$, and $k\equiv \pm r_1 \pmod{5}$ and $\ell\equiv \pm r_2 \pmod{5}$,
		where $r_1,r_2\in\{1,2\}$ are distinct;
		\item[(v)] $p>5$ and $b-a\neq -4$,
	\end{enumerate}
	then
	\[
	\left|x^h\langle x^p\rangle\cap C\right|
	=\frac{bn}{p(4+b-a)}
	\]
	for every $0\le h\le p-1$;
\end{cor}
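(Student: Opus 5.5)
The plan is to derive each case directly from Theorem~\ref{khi3}, exactly as in the proof of Corollary~\ref{cor:coset-size}. Take $S=\{x^k,x^{-k},x^\ell,x^{-\ell}\}$. Apply the homomorphism $\varphi_p: z\mapsto z^{n/p}$ and write $X:=x^{n/p}$ and $k\equiv k_1$, $\ell\equiv \ell_1 \pmod p$ with $k_1,\ell_1\in\{1,\dots,p-1\}$. Since $p\nmid k$ and $p\nmid \ell$, no element of $S$ lies in $\langle x^p\rangle$. Hence
\[
(\overline{S}+b-a)^{\varphi_p}=(b-a)\cdot 1+X^{k_1}+X^{-k_1}+X^{\ell_1}+X^{-\ell_1},
\]
so $s_0=b-a$, and $s_d$ counts how many of $\pm k_1,\pm\ell_1$ are congruent to $d$ modulo $p$. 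In every case $\sum_{d}s_d=4+b-a$, which is nonzero exactly when $b-a\neq-4$; this hypothesis appears in every item. It then remains only to exhibit $u,v$ with $s_u\neq s_v$.

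\textbf{(i)} For $p=2$, $k$ and $\ell$ are odd, so the image is $(b-a)+4X$. Then $s_0=b-a\neq 4=s_1$ because $b-a\neq 4$.

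\textbf{(ii)} For $p=3$, $\{k_1,-k_1\}=\{\ell_1,-\ell_1\}=\{1,2\}$, so the image is $(b-a)+2X+2X^2$. Then $s_0\neq s_1$ because $b-a\neq 2$.

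\textbf{(iii)} For $p=5$ with $k\equiv\pm r$ and $\ell\equiv\pm r$, the image is $(b-a)+2X^{r}+2X^{-r}$. The two residues outside $\{0,\pm r\}$ have $s_d=0$ while $s_r=2$, so the coefficients are never all equal and only $b-a\neq-4$ is needed.

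\textbf{(iv)} For $p=5$ with $r_1\neq r_2$, the set $\{\pm r_1,\pm r_2\}$ is all of $\{1,2,3,4\}$. The image is $(b-a)+X+X^2+X^3+X^4$, so $s_0\neq s_1$ iff $b-a\neq 1$.

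\textbf{(v)} For $p>5$, since $p$ is odd and $p\nmid k$, we have $k_1\not\equiv -k_1$, and similarly for $\ell_1$. The nonzero residues $\pm k_1,\pm\ell_1$ occupy at most $4<p-1$ of the $p-1$ nonzero positions. So some $s_d$ with $d\neq 0$ is $0$ while $s_{k_1}\ge 1$, giving $s_u\neq s_v$ regardless of $b-a$.

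Theorem~\ref{khi3} then gives $\lvert x^h\langle x^p\rangle\cap C\rvert=|C|/p$ for all $h$. Applying the trivial character to Lemma~\ref{Z}(iv), as in the proof of Lemma~\ref{2elem}, yields $|C|(4+b-a)=bn$. Substituting $|C|=bn/(4+b-a)$ gives the stated formula.

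The only delicate step is the bookkeeping of multiplicities, since coincidences such as $k\equiv\pm\ell \pmod p$ make some $s_d$ equal to $2$. I expect this to be the main obstacle: cases (iii) and (v) must hold under either coincidence pattern. That is why in (v) I argue via the existence of a zero coefficient next to a positive one, rather than computing every $s_d$ exactly.
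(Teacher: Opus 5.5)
Your proposal is correct and is essentially the paper's argument: you compute $(\overline{S}+b-a)^{\varphi_p}$ in each case, check that $\sum_d s_d=4+b-a\neq 0$ and that the $s_d$ are not all equal, and then invoke Theorem~\ref{khi3}. The only cosmetic difference is in (v), where your pigeonhole observation (at most four of the $p-1\ge 6$ nonzero residues are occupied) replaces the paper's split into the subcases $r_1=r_2$ and $r_1\neq r_2$.
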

\begin{proof}
	By substituting
	$
	S=\{x^k,x^{-k},x^\ell,x^{-\ell}\}
	$
	into Theorem~\ref{khi3}, we consider the following cases for
	$
	(\overline{S}+b-a)^{\varphi_p}.
	$ We then verify the hypotheses of Theorem~\ref{khi3}; namely,
		\begin{equation}\label{eq:susv2}
		\exists\,u,v\in\{0,\ldots,p-1\}\ \text : \ s_u \neq s_v, \end{equation}
	and $\sum_{d=0}^{p-1}s_d\neq0.$	
	
	\indent\textbf{(i)} Suppose that \(p=2\). Since $2\nmid k$ and $2\nmid \ell$, we have $(x^k+x^{-k})^{\varphi_2}=2X$,  $(x^\ell+x^{-\ell})^{\varphi_2}=2X$, where $X=x^\frac{n}{2}$. Hence,
	\[
	(\overline{S}+b-a)^{\varphi_2}=(b-a)+4X.
	\]
We have
$s_0=b-a$ and $s_1=4.$
By the assumption in part~(i), \(b-a\neq 4\), and hence
$s_0\neq s_1.$
Moreover, \(b-a\neq -4\), and therefore $s_0+s_1\neq 0.$ So,
$
\left|x^h\langle x^2\rangle\cap C\right|
=
\frac{bn}{2(4+b-a)}$ for $h=0,1.$

\indent\textbf{(ii)} Suppose that \(p=3\), and $3\nmid k$ and $3\nmid \ell$. We have $(x^k+x^{-k})^{\varphi_3}=X+X^{-1}$, $(x^\ell+x^{-\ell})^{\varphi_3}=X+X^{-1}$, where $X=x^\frac{n}{3}$. Hence,
\[
(\overline{S}+b-a)^{\varphi_3}=(b-a)+2X+2X^{-1}.
\]	
We have
$s_0=b-a$ and $s_1=s_2=2.$ Since \(b-a\neq 2\), we have \(s_0=b-a\neq2=s_1=s_2\). Thus, there exist \(u,v\in\{0,1,2\}\) such that \(s_u\neq s_v\). Moreover, since \(b-a\neq -4\), it follows that \(\sum_{i=0}^{2}s_i\neq 0\).
So, $
\left|x^h\langle x^3\rangle\cap C\right|
=
\frac{bn}{3(4+b-a)}$ for $h=0,1,2.$   	

\indent\textbf{(iii)} Since $p=5$, and $k\stackrel{5}{\equiv} \pm\, r$ and   $\ell\stackrel{5}{\equiv} \pm\, r$ for some $r\in\{1,2\}$, then
\[
(\overline{S}+b-a)^{\varphi_5}=(b-a)+2X^{r}+2X^{-r}
\]
 Where $X=x^\frac{n}{5}$. Since $s_0=b-a$, exactly two of the numbers $s_1,\ldots,s_{4}$ are equal to $2$, and the remaining numbers are equal to $0$. Hence, there exist distinct indices $u,v\in\{1,\ldots,4\}$ such that $s_u\neq s_v$. Moreover, since \(b-a\neq -4\), it follows that \(\sum_{i=0}^{4}s_i\neq 0\).
So, $
\left|x^h\langle x^5\rangle\cap C\right|
=
\frac{bn}{5(4+b-a)}$ for $0\le h\le 4$.

\indent\textbf{(iv)} Suppose that $p=5$. Let $X=x^\frac{n}{5}$. Since $k\stackrel{5}{\equiv} \pm\, r_1$ and $\ell\stackrel{5}{\equiv} \pm\, r_2$, where $r_1,r_2\in\{1,2\}$ are distinct, we have $(x^k+x^{-k})^{\varphi_5}=X^{r_1}+X^{-r_1}$ and $(x^\ell+x^{-\ell})^{\varphi_5}=X^{r_2}+X^{-r_2}$. Hence,
\[
(\overline{S}+b-a)^{\varphi_5}
=(b-a)+X^{r_1}+X^{-r_1}+X^{r_2}+X^{-r_2}
\]
We have
$s_0=b-a$ and $s_1=s_2=s_3=s_4=1.$ Since \(b-a\neq 1\), we have \(s_0=b-a\neq1=s_1=s_2=s_3=s_4\). Thus, there exist \(u,v\in\{0,1,2,3,4\}\) such that \(s_u\neq s_v\).  Moreover, since \(b-a\neq -4\), it follows that \(\sum_{i=0}^{4}s_i\neq 0\).
So, $
\left|x^h\langle x^5\rangle\cap C\right|
=
\frac{bn}{5(4+b-a)}$ for $0\le h\le 4$.

\indent\textbf{(v)} Since $p\nmid k$ and $p\nmid \ell$, there exist $r_1,r_2\in\left\{1,\ldots,\frac{p-1}{2}\right\}$ such that $k\stackrel{p}{\equiv}\pm r_1$ and $\ell\stackrel{p}{\equiv}\pm r_2$. Let $X=x^{n/p}$. If $r_1=r_2$, then
\begin{equation}\label{eq:sbar1}
(\overline{S}+b-a)^{\varphi_p}=(b-a)+2X^{r_1}+2X^{-r_1}	
\end{equation}
Otherwise, if $r_1\neq r_2$, then
\begin{equation}\label{eq:sbar2}
(\overline{S}+b-a)^{\varphi_p}=(b-a)+X^{r_1}+X^{-r_1}+X^{r_2}+X^{-r_2}	
\end{equation}	
Since, in relation~\eqref{eq:sbar1}, \(s_0=b-a\), exactly two of the numbers
\(s_1,\ldots,s_{p-1}\) are equal to \(2\), and the remaining
\(p-3\) are equal to \(0\). Moreover, in relation~\eqref{eq:sbar2},
\(s_0=b-a\), exactly four of the numbers
\(s_1,\ldots,s_{p-1}\) are equal to \(1\), and the remaining
\(p-5\) are equal to \(0\). Hence, in both~\eqref{eq:sbar1} and~\eqref{eq:sbar2},
there exist distinct indices \(u,v\in\{1,\ldots,p-1\}\) such that
\(s_u\neq s_v\). Therefore, condition~\eqref{eq:susv2} holds whenever \(p>5\). By assumption \((v)\), \(b-a\neq -4\). It follows that in both~\eqref{eq:sbar1} and~\eqref{eq:sbar2}, \(\sum_{i=0}^{p-1}s_i\neq 0\). So, $
\left|x^h\langle x^p\rangle\cap C\right|
=
\frac{bn}{p(4+b-a)}$ for $0\le h\le p-1$. This completes the proof.\hfill$\blacksquare$

	\end{proof}	
\section{\bf $(a,a)$-regular sets for $a \in \{0,1,2,3,4\}$}
\quad In this section, we study the $(a,a)$-regular sets for $a \in \{0,1,2,3,4\}$.
It should be noted that, according to Lemma~\ref{2elem} (ii), the $(0,0)$-regular set does not exist in the graph
$\Gamma = \mathrm{Cay}(\mathbb{Z}_n,\{x^k, x^{-k}, x^\ell, x^{-\ell}\})$. Moreover, the $(1,1)$-regular sets have been studied in \cite{KSC}.
By Lemma~\ref{primes}, the $(3,3)$-regular sets also exist in the graph $\Gamma$
and are complements of the $(1,1)$-regular sets.
Therefore, in this section, we focus on the study of $(2,2)$-regular sets
in the graph $\Gamma$.
\begin{lem}\label{lem:even_case}
	Let $n$, $m$, $k$, $\ell$ be positive integers with $n=2m$ and $\gcd(k,\ell)=1$. Then
	\[
	\mathrm{Cay}(\mathbb{Z}_n,\{x^k, x^{-k}, x^\ell, x^{-\ell}\}) \cong
	\mathrm{Cay}(\mathbb{Z}_n,\{x^{k_1}, x^{-k_1}, x^{\ell_1}, x^{-\ell_1}\})
	\]
	where $k_1 \equiv 1 \pmod{2}$ and $\ell_1 \equiv 0,1 \pmod{2}$.
\end{lem}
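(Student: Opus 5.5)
Since $\gcd(k,\ell)=1$, the integers $k$ and $\ell$ cannot both be even. The plan is therefore a short parity case split. In each case we either keep the connection set as it is, or relabel it. No nontrivial group automorphism is needed: the isomorphism is just the identity map on $\mathbb{Z}_n$, because a Cayley graph depends only on the set $S$ and not on how we name its elements.

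\textbf{Case 1: $k$ is odd.} Set $k_1=k$ and $\ell_1=\ell$. Then $k_1\equiv 1\pmod 2$, and $\ell_1$ is automatically $0$ or $1$ modulo $2$. The two Cayley graphs are identical, so there is nothing further to prove.

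\textbf{Case 2: $k$ is even.} Then $\ell$ must be odd, since $\gcd(k,\ell)=1$. Set $k_1=\ell$ and $\ell_1=k$. As a set,
\[
\{x^k,x^{-k},x^\ell,x^{-\ell}\}=\{x^{k_1},x^{-k_1},x^{\ell_1},x^{-\ell_1}\},
\]
so the two graphs coincide. Moreover $k_1\equiv 1\pmod 2$ and $\ell_1\equiv 0\pmod 2$, as required.

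Two points need care, though neither is a real obstacle.

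\begin{enumerate}
\item Because $n=2m$ is even, the parity of an exponent is well defined modulo $n$. So the condition ``$k_1\equiv 1\pmod 2$'' is meaningful even if one later replaces $k_1$ by any representative of its residue class modulo $n$.
\item If one insists on positive representatives in a particular range, we may replace $k_1$ by $n-k_1$ when needed. This does not change $\{x^{k_1},x^{-k_1}\}$, and since $n$ is even it does not change the parity of $k_1$.
\end{enumerate}

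If the authors intend the statement to apply without assuming $\gcd(k,\ell)=1$ (for instance, only $\gcd(n,k,\ell)=1$), we would first invoke Corollary~\ref{four regular} to reduce to the coprime case, and then run the argument above.
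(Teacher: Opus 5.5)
Your proof is correct and is essentially the paper's argument: the paper likewise reduces, without loss of generality, to the case where $k$ is odd (your Case~2 swap). It then applies the multiplier automorphism $g\mapsto g^{y}$ with $\gcd(y,n)=1$ and $ky\equiv 1 \pmod{2}$ from Proposition~\ref{gcd}, but that step is redundant once $k$ is odd, since $y=1$ already works, so your identity map is the same argument with the superfluous step removed.
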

\begin{proof}
Since $\gcd(k,\ell)=1$, at least one of $k$ or $\ell$ must be odd.
Without loss of generality, we may assume that $k$ is odd. By Proposition~\ref{gcd}, there exist positive integers $z=2$ and $s=k$ such that $2 \mid 2m$ and $\gcd(k,2)=1$. Therefore, there exists a positive integer $y \in \mathbb{Z}_n$ such that
$\gcd(y,n)=1$ and $ky \equiv 1 \pmod{2}$.
We define:
\[
\begin{aligned}
	\varphi :\; \mathbb{Z}_n &\to \mathbb{Z}_n \\
    g &\mapsto g^{y}
\end{aligned}
\]
Clearly, $\varphi$ is an isomorphism. Suppose that $k_1 = ky$ and $\ell_1 = \ell y$. We have:
\[
\operatorname{Cay}(\mathbb{Z}_n,\{x^k, x^{-k}, x^{\ell}, x^{-\ell}\}^{\varphi})
\cong
\operatorname{Cay}(\mathbb{Z}_n,\{x^{k_1}, x^{-k_1}, x^{\ell_1}, x^{-\ell_1}\})
\]
such that $k_1 = ky \equiv 1 \pmod{2}$ and $\ell_1 \equiv 0,1 \pmod{2}$. Therefore:
\[
\operatorname{Cay}(\mathbb{Z}_n,\{x^k, x^{-k}, x^{\ell}, x^{-\ell}\})
\cong
\operatorname{Cay}(\mathbb{Z}_n,\{x^{k_1}, x^{-k_1}, x^{\ell_1}, x^{-\ell_1}\})
\]
 This completes the proof. \hfill\bull
\end{proof}
\begin{thm}\label{$(2,2)$}
	Let $n, k, \ell$ be positive integers, and $\Gamma = \mathrm{Cay}(\mathbb{Z}_n, \{x^k, x^{-k}, x^\ell, x^{-\ell}\})$ be a connected circulant quartic graph. Then the following statements hold:
	\begin{enumerate}
		\item[(i)] If $k \equiv 1 \pmod{2}$ and $\ell \equiv 0 \pmod{2}$, then the graph $\Gamma$ contains a $(2,2)$-regular set if and only if $n$ is a multiple of $2$.
		\item[(ii)] If $k \equiv 1 \pmod{2}$ and $\ell \equiv 1 \pmod{2}$, then the graph $\Gamma$ contains a $(2,2)$-regular set if and only if $n$ is a multiple of $4$.
	\end{enumerate}
	\end{thm}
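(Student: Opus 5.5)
The plan is to get necessity by counting, and sufficiency by an explicit construction through residues of exponents. If $C$ is a $(2,2)$-regular set, Lemma~\ref{Z}(iv) with $a=b=2$ gives $\overline{S}\cdot\overline{C}=2\,\overline{\mathbb{Z}_n}$. Applying the trivial character gives $4|C|=2n$, so $|C|=n/2$ and $n$ must be even. This settles the necessity part of (i).

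For the necessity part of (ii), I will use Corollary~\ref{cor:coset-size2}(i) with $p=2$. Here $k$ and $\ell$ are both odd, $b-a=0\notin\{-4,4\}$, and $2\mid n$ was just shown. The corollary then gives
\[
\left|x^h\langle x^2\rangle\cap C\right|=\frac{2n}{2\cdot 4}=\frac{n}{4}, \qquad h=0,1 .
\]
Since this is an integer, $4\mid n$.

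For sufficiency in (i), I will take $C=\langle x^2\rangle=\{x^{i}: i \text{ even}\}$. This is a nonempty proper subset and is well defined because $n$ is even. For any vertex $x^i$, the neighbours $x^{i\pm \ell}$ have the same parity of exponent as $i$, since $\ell$ is even. The neighbours $x^{i\pm k}$ have the opposite parity, since $k$ is odd. So each vertex of $C$ has exactly its two $\pm\ell$-neighbours in $C$. Each vertex outside $C$ has exactly its two $\pm k$-neighbours in $C$. Hence $C$ is $(2,2)$-regular. One should also check that the four neighbours are distinct; this holds because $\Gamma$ is quartic.

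For sufficiency in (ii), with $n=4m$ and $k,\ell$ odd, I will take $C=\{x^{i}: i\equiv 0,1 \pmod 4\}$. This is well defined because $4\mid n$, and it has size $n/2$. The key observation is that for an odd $s$ the two exponents $i+s$ and $i-s$ differ by $2s\equiv 2\pmod 4$. Their residues mod $4$ therefore form a pair $\{r,r+2\}$, and exactly one element of any such pair lies in $\{0,1\}$. So the pair $x^{i\pm k}$ contributes exactly one neighbour in $C$, and likewise the pair $x^{i\pm\ell}$. Every vertex, whether in $C$ or not, thus has exactly $2$ neighbours in $C$, so $C$ is $(2,2)$-regular.

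I expect no serious obstacle. The only delicate points are that the necessity in (ii) really depends on Corollary~\ref{cor:coset-size2} applying, which holds because $b-a=0$, and that the residue classes used in the constructions are well defined modulo $n$.
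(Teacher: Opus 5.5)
Your proof is correct. For necessity it matches the paper in substance; the real difference is in the sufficiency of (ii).

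\textbf{Necessity and part (i).} The paper applies $z\mapsto z^{n/2}$ to $\overline{S}\,\overline{C}=2\,\overline{\mathbb{Z}_n}$ by hand. In doing so it writes $c_1=c_2=n/2$ where $n/4$ is meant, although its conclusion $4c_1=n$ is right. You instead invoke Corollary~\ref{cor:coset-size2}(i), which packages exactly this mod-$2$ computation and yields $n/4$ directly. For the sufficiency of (i), both you and the paper use $C=\langle x^2\rangle$.

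\textbf{Sufficiency of (ii).} The paper splits according to the residues of $k$ and $\ell$ modulo $4$ and verifies group-ring identities case by case:
\begin{itemize}
\item it uses $C_1=\langle x^4\rangle\cup x\langle x^4\rangle$ when $k\equiv\ell\equiv1\pmod 4$;
\item it uses $C_2=x^2\langle x^4\rangle\cup x^3\langle x^4\rangle$ in the remaining cases.
\end{itemize}
Your observation is that for odd $s$ the exponents $i+s$ and $i-s$ always fall into a pair of residues $\{r,r+2\}$ modulo $4$. Equivalently, $x^{\pm k}$ and $x^{\pm\ell}$ always reduce modulo $\langle x^4\rangle$ to the classes $1,1,3,3$. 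This shows that $C_1$ works for every pair of odd $k,\ell$, so the paper's case distinction is unnecessary. The paper's $C_2$ is simply the complement of $C_1$, so it is then automatically $(2,2)$-regular as well, consistent with Lemma~\ref{primes}.

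\textbf{Comparison.} Your route gives one uniform construction with a short neighbourhood-counting verification. The paper's route gives the same conclusion through more computation and adds nothing beyond the explicit group-ring identities.
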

	\begin{proof}
	 \textbf{(i)} We first show the necessity. Suppose that the graph $\Gamma$ contains a $(2,2)$-regular set $C$. By Lemma~\ref{Z}~(iv), we have
		\begin{equation}\label{eq:SC}
			\overline{S}\, \overline{C} = 2\,\overline{\mathbb{Z}}_n
		\end{equation}
		On the other hand, by applying the trivial character to the relation \eqref{eq:SC}, we obtain
\[
|S|\,|C| = 2n \implies 2\,|C| = n.
\]
Therefore, $n$ is a multiple of $2$. Now we prove the sufficiency. Since
$k \equiv 1 \pmod{2}$ and $\ell \equiv 0 \pmod{2}$, we obtain
$S=\{x^{2p+1},x^{-2p-1},x^{2q},x^{-2q}\}$, where $p \ge 0$ and $q \ge 1$.
Let $C=\langle x^{2}\rangle$. We show that $C$ is a $(2,2)$-regular set for
the graph $\Gamma$. Hence, the set $C$ must satisfy relation \ref{eq:SC}. Therefore
\[
\overline{S}\,\overline{C}
= (x^{2p+1}+x^{-2p-1}+x^{2q}+x^{-2q})\,\overline{\langle x^{2}\rangle}
= 2(\overline{\langle x^{2}\rangle} + x\,\overline{\langle x^{2}\rangle})
\]
Since $n$ is even, we have, $\overline{\mathbb{Z}_n} \ = \overline{\langle x^2 \rangle} + x\,\overline{\langle x^2 \rangle}$. Thus, the set $C$ satisfies relation \eqref{eq:SC}.
 \indent\textbf{(ii)} We begin by proving the necessity condition. Assume that the graph $\Gamma$ has a $(2,2)$-regular set C. Similarly part(i), we have $2\,|C| = n$. Consequently, the group homomorphism
\[
\begin{aligned}
	\varphi : \mathbb{Z}_n &\to H \\
	x &\mapsto x^{\frac{n}{2}}
\end{aligned}
\]
can be defined, and $H \leq \mathbb{Z}_n$. Since
$k \equiv 1 \pmod{2}$ and $\ell \equiv 1 \pmod{2}$, we obtain
$S=\{x^{2p+1},x^{-2p-1},x^{2q+1},x^{-2q-1}\}$, where $p, q \ge 0$. By applying the above homomorphism to relation \eqref{eq:SC}, we have
\[
\begin{aligned}
	\overline{S}^{\varphi}\,\overline{C}^{\varphi}
	&= 2\,\overline{\mathbb{Z}_n}^{\varphi}
	\implies (4 x^{n/2})(c_1 \cdot 1 + c_2 \cdot x^{n/2}) = n (1 + x^{n/2}) \\
	&\implies 4(c_1 \cdot x^{n/2} + c_2 \cdot 1) = n(1 + x^{n/2})
\end{aligned}
\]
Therefore
\begin{equation}\label{eq:C}
	c_1 = c_2 = \frac{n}{2} \implies |C| = c_1 + c_2 = 2c_1.
\end{equation}
	Using relation \eqref{eq:C} and applying the trivial character to relation \eqref{eq:SC}, we obtain $|S|\,|C| = 2n \implies 4 c_1 = n$. Therefore, $n$ is a multiple of 4.\\
Now, we prove the sufficiency. Assume that $n$ is a multiple of 4. We have	$k \equiv 1 \pmod{2}$ and $\ell \equiv 1 \pmod{2}$. We now compute $k$ and $\ell$ modulo 4. If  $k = 2p + 1$ with $p \ge 0$, then we consider two cases for $p$.
If $p = 2i$ with $i \ge 0$, then $k = 2(2i) + 1 = 4i + 1 \equiv 1 \pmod{4}$ and if $p = 2i + 1$ with $i \ge 0$, then $k = 2(2i + 1) + 1 = 4i + 3 \equiv 3 \pmod{4}$. For $\ell$, the reasoning is similar to that for $k$, and the possible cases are $\ell \equiv 1 \pmod{4}$ and $\ell \equiv 3 \pmod{4}$. Now, we need to examine the existence of $(2,2)$-regular set in the four cases:\\
$k \equiv 1 \pmod{4}$ and $\ell \equiv 1 \pmod{4}$, $k \equiv 1 \pmod{4}$ and $\ell \equiv 3 \pmod{4}$, $k \equiv 3 \pmod{4}$ and $\ell \equiv 1 \pmod{4}$, $k \equiv 3 \pmod{4}$ and $\ell \equiv 3 \pmod{4}$. The cases  $k \equiv 1 \pmod{4}$ and $\ell \equiv 3 \pmod{4}$, $k \equiv 3 \pmod{4}$ and $\ell \equiv 1 \pmod{4}$ are symmetric, therefore, it suffices to consider only one of them. We show that $C_1 = \langle x^4 \rangle \cup x\langle x^4 \rangle$ is a $(2,2)$-regular set for the graph $\Gamma$ in case $k \equiv 1 \pmod{4}$ and $\ell \equiv 1 \pmod{4}$. Hence, the set $C_1$ must satisfy equation \eqref{eq:SC}. It follows that $S = \{x^{4p+1}, x^{-4p-1}, x^{4q+1}, x^{-4q-1}\}$, where $p,q \ge 0$. Thus
\[
(x^{4p+1} + x^{-4p-1} + x^{4q+1} + x^{-4q-1})
(\overline{\langle x^4 \rangle} + x\,\overline{\langle x^4 \rangle})
= 2 (\overline{\langle x^4 \rangle} + x\,\overline{\langle x^4 \rangle}
+ x^2\,\overline{\langle x^4 \rangle} + x^3\,\overline{\langle x^4 \rangle})
\]
Since $n$ is a multiple of $4$, we have
$\overline{\mathbb{Z}_n}
=
\overline{\langle x^4 \rangle}
+ x\,\overline{\langle x^4 \rangle}
+ x^2\,\overline{\langle x^4 \rangle}
+ x^3\,\overline{\langle x^4 \rangle}$. Therefore, the set $C_1$ satisfies relation \eqref{eq:SC}. Now, we show that $C_2 = x^2\langle x^4\rangle \cup x^3\langle x^4\rangle$ is a $(2,2)$-regular set for the graph $\Gamma$ in these cases,  $k \equiv 1 \pmod{4}$ and $\ell \equiv 3 \pmod{4}$, $k \equiv 3 \pmod{4}$ and $\ell \equiv 3 \pmod{4}$. We prove only the first case, since the second case can be proved similarly. We have, $S = \{x^{4p+1}, x^{-4p-1}, x^{4q+3}, x^{-4q-3}\}$. Hence
\[
(x^{4p+1} + x^{-4p-1} + x^{4q+3} + x^{-4q-3})
(x^2\,\overline{\langle x^4 \rangle} + x^3\,\overline{\langle x^4 \rangle})
= 2 (\overline{\langle x^4 \rangle} + x\,\overline{\langle x^4 \rangle}
+ x^2\,\overline{\langle x^4 \rangle} + x^3\,\overline{\langle x^4 \rangle})
\]
Similarly to the reasoning for the set $C_1$, the set $C_2$ also satisfies relation \eqref{eq:SC}. This completes the proof.\hfill\bull
\end{proof}
\begin{example}\label{ex:Cayley8}
	$\mathrm{Cay}(\mathbb{Z}_8,\{x^1,x^{-1},x^{2},x^{-2}\})$ has a $(2,2)$-regular set with $C_1=\langle x^2 \rangle$, and $\mathrm{Cay}(\mathbb{Z}_8,\{x^1,x^{-1},x^{3},x^{-3}\})$ has a $(3,3)$-regular set with $C_2=\langle x^8 \rangle + x \langle x^8 \rangle + x^3 \langle x^8 \rangle + x^4 \langle x^8 \rangle + x^5 \langle x^8 \rangle$.
\end{example}
\section{\bf $(0,|S|)$-regular sets}
\quad In this section, we investigate $(0,|S|)$-regular sets in  $\mathrm{Cay}(\mathbb{Z}_n, S)$, considering separately the cases where $|S|$ is even and where $|S|$ is odd. We establish a necessary and sufficient condition for $\mathrm{Cay}(\mathbb{Z}_n, S)$ to admit a $(0,|S|)$-regular set. As a consequence, we derive a necessary and sufficient condition under which $\mathrm{Cay}(\mathbb{Z}_n,\{x^k, x^{-k}, x^\ell, x^{-\ell}\})$ admits a $(0,4)$-regular set.

We recall a remark concerning the irreducible characters of the cyclic
group $\mathbb{Z}_n$, followed by a proposition concerning the
eigenvalues of Cayley graphs of abelian groups.

\begin{rem}\label{rem:irr-rep}
According to \cite{Serre}, the irreducible representations of the cyclic group \(Z_n=\langle x\rangle\) are all of degree 1. In such a representation, the generator \(x\) is associated with a complex number \(\chi(x)=\omega\), and consequently \(\chi(x^k)=\omega^k\). Since \(x^n=1\), it follows that \(\omega^n=1\). Therefore, \(\omega\) is an \(n\)-th root of unity and can be written as $\omega = e^\frac{2\pi \mathbf{i} h}{n}$ for each $h \in [0,\, n-1]$. Hence, there are \(n\) irreducible representations of degree 1 whose characters \(\chi_0, \chi_1, \dots, \chi_{n-1}\) are given by $\chi_h(x^k)=e^\frac{2\pi \mathbf{i} hk}{n}$, for each $h, k \in [0,\, n-1]$.	
\end{rem}

\begin{prop}[{\cite[Corollary 3]{LZ}}]\label{prop:eigenvalues-abelian}
	Let $G$ be an abelian group of order $n$ with irreducible characters
	$\chi_1, \chi_2, \ldots, \chi_n$. Then the eigenvalues of any Cayley
	(di)graph $ \operatorname{Cay}(G,S)$ of $G$ are given by
	\[
	\lambda_i = \sum_{s \in S} \chi_i(s), \qquad 1 \le i \le n
	\]
\end{prop}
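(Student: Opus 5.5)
The statement is a cited result (Corollary 3 of \cite{LZ}), so I would prove it by simultaneously diagonalizing the adjacency matrix with the characters of $G$. Let $A$ be the adjacency matrix of $\operatorname{Cay}(G,S)$, indexed by $G$, with $A_{g,h}=1$ exactly when $g^{-1}h\in S$. For each irreducible character $\chi_i$, which is a homomorphism $G\to\mathbb{C}^\ast$ because $G$ is abelian (as in Remark~\ref{rem:irr-rep} for the cyclic case), define the vector $v_i=(\chi_i(h))_{h\in G}$.

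The key computation is
\[
(Av_i)_g=\sum_{s\in S}\chi_i(gs)=\chi_i(g)\sum_{s\in S}\chi_i(s)=\lambda_i\,(v_i)_g .
\]
This uses only multiplicativity of $\chi_i$, so it holds for digraphs too; no inverse-closure of $S$ is needed. Hence each $v_i$ is an eigenvector with eigenvalue $\lambda_i$.

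To conclude that these $\lambda_i$, with multiplicity, are all the eigenvalues, I would invoke the orthogonality relations for irreducible characters. They show that $v_1,\dots,v_n$ are pairwise orthogonal and nonzero, hence they form a basis of $\mathbb{C}^n$. Since $G$ is abelian, there are exactly $n$ irreducible characters, all linear.

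Consequently $A$ is diagonalizable in this basis, and its spectrum as a multiset is $\{\lambda_1,\dots,\lambda_n\}$. For $G=\mathbb{Z}_n$ this also matches Definition~\ref{circulant}, since $A$ is then a circulant matrix. The only point needing care is counting the characters, and for abelian groups that is standard.
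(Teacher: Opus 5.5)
Your proof is correct. The paper gives no proof of its own: it imports the result from \cite{LZ}, where it appears as a corollary, presumably a specialization of the general representation-theoretic description of Cayley digraph spectra for arbitrary finite groups. In that description, the spectrum is assembled from the matrices $\sum_{s\in S}\rho(s)$ over the irreducible representations $\rho$, each entering with multiplicity $\deg\rho$. The abelian case follows because every irreducible representation is then linear.

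You verify the abelian case directly. The vectors $v_i=(\chi_i(h))_{h\in G}$ are explicit eigenvectors by multiplicativity alone. The orthogonality relation $\sum_{g\in G}\chi_i(g)\overline{\chi_j(g)}=n\,\delta_{ij}$ makes them a basis. Hence the character table diagonalizes $A$, and the spectrum is exactly the multiset $\{\lambda_1,\dots,\lambda_n\}$.

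Your route is self-contained. It needs only the $n$ linear characters of an abelian group and their orthogonality, and it exhibits the eigenbasis explicitly. This is the same mechanism the paper exploits later, when it applies characters to the group-algebra identity of Lemma~\ref{Z}. The cited general route costs more machinery but also covers non-abelian groups, where characters alone no longer supply an eigenbasis. Your remark that inverse-closure of $S$ is never used is also right, and it is why the statement holds for ``(di)graphs''.
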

First, we study the graph $\mathrm{Cay}(\mathbb{Z}_n, S)$, when $|S|$ is even, and we show under what conditions $\mathrm{Cay}(\mathbb{Z}_n, S)$ is non-bipartite. Then, we investigate under which conditions $\mathrm{Cay}(\mathbb{Z}_n, S)$, when $|S|$ is even, does not contain a $(0,|S|)$-regular set.
\begin{lem}\label{nonbipartite}
	Let $\Gamma = \mathrm{Cay}(\mathbb{Z}_n, S)$ be a connected $|S|$-regular graph, where $|S|$ is even. Suppose that there exist elements $x^t, x^r \in S$ such that $t$ is odd and $r$ is even. Then $\Gamma$ is not bipartite.
\end{lem}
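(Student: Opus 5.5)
The plan is to exhibit an odd closed walk in $\Gamma$; since a graph is bipartite exactly when it has no odd cycles, an odd closed walk rules out bipartiteness. The two hypotheses on $S$, an element $x^t$ with $t$ odd and an element $x^r$ with $r$ even, are exactly what is needed to build such a walk.

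\textbf{Step 1 (build the walk).} Start at the identity $e = x^0$. Take $r$ steps along the generator $x^t$ and then $t$ steps along the generator $x^{-r}$. Inverse-closedness of $S$ gives $x^{-r}\in S$, so every step is an edge of $\Gamma$. The walk ends at $x^{tr}x^{-rt}=e$, so it is closed. Its length is $r+t$, which is odd because $t$ is odd and $r$ is even. One should take $r$ and $t$ as positive integer representatives, and note $r\ge 2$ since $x^r\neq e$ (as $e\notin S$).

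\textbf{Step 2 (odd closed walk gives an odd cycle).} Apply the standard fact that every closed walk of odd length contains an odd cycle. If the walk repeats a vertex, split it into two shorter closed walks at the repetition; one of them has odd length, and induct. A walk of length $1$ cannot occur, since there are no loops. Hence $\Gamma$ contains an odd cycle and is not bipartite.

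\textbf{Alternative route.} One could instead argue with two-colourings. A bipartition of a connected vertex-transitive Cayley graph on $\mathbb{Z}_n$ has to be the cosets of an index-$2$ subgroup, namely $\langle x^2\rangle$ when $n$ is even, with every element of $S$ lying outside that subgroup. But $x^r$ with $r$ even lies in $\langle x^2\rangle$, which is a contradiction; and if $n$ is odd there is no index-$2$ subgroup at all. Justifying that the bipartition must be a coset decomposition takes a short extra argument, though, so I would use the walk argument.

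\textbf{Main obstacle.} The only subtle point is the parity issue modulo $n$: when $n$ is odd, "odd" and "even" exponents are not well defined. This does no harm, because the walk is constructed in $\mathbb{Z}$ and its length $r+t$ is computed from the chosen integer representatives. The hypotheses "connected" and "$|S|$ even" are not actually needed for this argument.
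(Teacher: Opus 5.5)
Your proof is correct, and it takes a different route from the paper.

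The paper argues spectrally. A connected bipartite $|S|$-regular graph has $-|S|$ as an eigenvalue. Using $|S|$ even, it writes $S$ as pairs $\{x^{k_m},x^{-k_m}\}$, so the eigenvalues are $\lambda_j=2\sum_m\cos(2\pi jk_m/n)$. Then $\lambda_j=-|S|$ forces every cosine to be $-1$, i.e.\ $2jk_m=n(2d_m+1)$ for each $m$. Comparing an odd $k_t$ with an even $k_r$ gives $k_t(2d_r+1)=k_r(2d_t+1)$, an odd number equal to an even one.

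The eigenvector in question is the character sending every element of $S$ to $-1$, which is the $\pm1$-colouring of a bipartition. So the paper's argument is a character-theoretic version of your ``alternative route''. Its parity clash is the same one your walk exhibits: reaching $x^{rt}$ in $r$ steps of $x^{t}$ versus $t$ steps of $x^{r}$.

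What your walk argument buys:
\begin{itemize}
\item It is elementary and uses neither connectedness, nor $|S|$ even, nor any spectral facts.
\item It avoids the loose quantifiers in the paper's write-up, which restricts to $j\in\{1,\dots,|S|/2\}$ and quantifies over all such $j$. Only the single $j$ with $\lambda_j=-|S|$ matters.
\end{itemize}
The spectral route's only advantage is uniformity with the character computations used elsewhere in the paper.

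Your remark on parity for odd $n$ is also right. Work with the given integer exponents, replacing them if needed by positive ones of the same parity, either by adding a multiple of $2n$ or by using inverse-closure.

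One small simplification of your Step 2: in a bipartite graph every closed walk has even length, since colours alternate along it. So you need not extract an odd cycle at all.
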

\begin{proof}
	Assume, to the contrary, that $\Gamma$ is bipartite. Since $\Gamma$ is $|S|$-regular, it follows that $|S|$ is the largest eigenvalue of $\Gamma$. Moreover, as $\Gamma$ is assumed to be connected and bipartite, $-|S|$ must also be an eigenvalue of $\Gamma$. Assume that $S = \{x^{k_1}, x^{-k_1}, \dots, x^{k_{\frac{|S|}{2}}}, x^{-k_{\frac{|S|}{2}}}\}$. By using Remark~\ref{rem:irr-rep} and Proposition~\ref{prop:eigenvalues-abelian}, the eigenvalues of $\mathrm{Cay}(\mathbb{Z}_n,S)$ are calculated as follows:
	\[
	\lambda_j = \sum\limits_{\{\, s \mid x^s \in S \,\}} \chi_j(x^s)
	= \sum\limits_{\{\, s \mid x^s \in S \,\}} \omega^{js}
	= \sum\limits_{\{\, s \mid x^s \in S \,\}} e^{\frac{2 \pi \mathbf{i} j s}{n}}, \quad 0 \le j \le n-1
	\]
	we have
\[
\begin{aligned}
	\lambda_j &=
	e^{\frac{2 \pi \mathbf{i} j (k_1)}{n}}
	+ e^{\frac{2 \pi \mathbf{i} j (-k_1)}{n}}
	+ \cdots
	+ e^{\frac{2 \pi \mathbf{i} j (k_{\frac{|S|}{2}})}{n}}
	+ e^{\frac{2 \pi \mathbf{i} j (-k_{\frac{|S|}{2}})}{n}} \\
	&=
	\cos\left(\frac{2 \pi j k_1}{n}\right)
	+ \mathbf{i}\,\sin\left(\frac{2 \pi j k_1}{n}\right)
	+ \cos\left(\frac{2 \pi j k_1}{n}\right)
	- \mathbf{i}\,\sin\left(\frac{2 \pi j k_1}{n}\right)
	+ \cdots \\
	&\quad + \cos\left(\frac{2 \pi j k_{\frac{|S|}{2}}}{n}\right)
	+ \mathbf{i}\,\sin\left(\frac{2 \pi j k_{\frac{|S|}{2}}}{n}\right)
	+ \cos\left(\frac{2 \pi j k_{\frac{|S|}{2}}}{n}\right)
	- \mathbf{i}\,\sin\left(\frac{2 \pi j k_{\frac{|S|}{2}}}{n}\right)
\end{aligned}
\]	
	Therefore
\[
\lambda_j = 2\,\cos\left(\frac{2 \pi j k_1}{n}\right)
+ \cdots
+ 2\,\cos\left(\frac{2 \pi j k_{\frac{|S|}{2}}}{n}\right), \quad 0 < j \le n-1
\]	
We have $j \neq 0$, because if $j = 0$, then $\lambda_0 =  \sum\limits_{\{\, s \mid x^s \in S \,\}} \chi_0(x^s) = \sum\limits_{\{\, s \mid x^s \in S \,\}} \omega^0 = \, |S|$. Which is the largest eigenvalue of the graph $\Gamma$. Therefore, to find
$\lambda_j =\,$-$|S|$, we must consider $j \in \left\{ 1, \dots, \frac{|S|}{2} \right\}$.
According to the above discussion, since $\Gamma$ is bipartite and
$|S| \in \operatorname{spec}(\Gamma)$, it follows that $-|S| \in \operatorname{spec}(\Gamma)$ also.
Therefore:

\[
\lambda_j
= 2\left(
\cos\!\left(\frac{2\pi j k_1}{n}\right)
+ \cdots
+ \cos\!\left(\frac{2\pi j k_{\frac{|S|}{2}}}{n}\right)
\right)
= -|S| \;\Longrightarrow\;
\]
\begin{equation}\label{cos-sum}
	\sum_{m=1}^{\frac{|S|}{2}}
	\cos\!\left(\frac{2\pi j k_m}{n}\right)
	= \frac{-|S|}{2},
	\qquad 0 < j \le n-1
\end{equation}
According to relation\eqref{cos-sum}, we must have, $\cos\left(\frac{2\pi j k_1}{n}\right)
= \cdots
= \cos\left(\frac{2\pi j k_{\frac{|S|}{2}}}{n}\right)
= -1$ Therefore:
\[
\forall\, j \in \left\{1, \ldots, \frac{|S|}{2}\right\},
\ \exists\, d_i \in \mathbb{Z} :
\frac{2\pi j k_i}{n} = 2 d_i \pi + \pi
\;\Longrightarrow\;
\frac{2 j k_i}{2 d_i + 1} = n\;\Longrightarrow\;
\]
\[
\frac{2 j k_1}{2 d_1 + 1}
= \cdots
= \frac{2 j k_t}{2 d_t + 1}
= \frac{2 j k_r}{2 d_r + 1}
= \cdots
= \frac{2 j k_{\frac{|S|}{2}}}{2 d_{\frac{|S|}{2}} + 1}
= n
\]
Assume that there are at least two elements in $S$, one even and the other odd. Let $k_t$ be odd and $k_r$ be even. In this case, $\frac{2 j k_t}{2 d_t + 1} = \frac{2 j k_r}{2 d_r + 1} = n$. Since $j \neq 0$, we have
\begin{equation}\label{kr-kt}
	\frac{k_t}{2 d_t + 1} = \frac{k_r}{2 d_r + 1}
	\;\Longrightarrow\;
	k_t \left( 2 d_r + 1 \right) = k_r \left( 2 d_t + 1 \right)
\end{equation}
Since $k_t$ is odd and $k_r$ is even, the left-hand side of relation \eqref{kr-kt} is odd while the right-hand side is even. Therefore, $k_t (2 d_r + 1) \neq k_r (2 d_t + 1)$, leading to a contradiction, and the graph $\Gamma$ is not bipartite.  This completes the proof. \hfill\bull
\end{proof}
The following corollary follows from Lemma~\ref{nonbipartite}.
\begin{cor}\label{nonbipartite-4}
	Let $n, k,\ell$ be positive integers, and $\Gamma = \mathrm{Cay}(\mathbb{Z}_n, \{x^k, x^{-k}, x^\ell, x^{-\ell}\})$ be a connected circulant quartic graph. If one of $k$ and $\ell$ is odd and the other is even, then $\Gamma$ is not bipartite.	
	\end{cor}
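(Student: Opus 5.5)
This is a direct specialization of Lemma~\ref{nonbipartite} to the quartic case. The plan is to check that the hypotheses of that lemma hold for $S=\{x^k,x^{-k},x^\ell,x^{-\ell}\}$ and then invoke it.

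First, by Remark~\ref{4reco}, $S$ consists of four distinct elements, since $n\nmid 2k$ and $n\nmid 2\ell$. Hence $|S|=4$ is even, and $\Gamma$ is a connected $4$-regular graph $\mathrm{Cay}(\mathbb{Z}_n,S)$. By the hypothesis, after possibly swapping the roles of $k$ and $\ell$ (the statement is symmetric in them), we may take $k$ odd and $\ell$ even. Then $x^t:=x^k\in S$ has $t$ odd and $x^r:=x^\ell\in S$ has $r$ even, which is exactly the hypothesis of Lemma~\ref{nonbipartite}. The lemma then gives that $\Gamma$ is not bipartite.

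The only point needing care is that the parity of an exponent is a property of the integer representative, not of the group element. Lemma~\ref{nonbipartite} is phrased in terms of the chosen integers $k_i$ in $S=\{x^{\pm k_1},\dots\}$, and here we use the given positive integers $k,\ell$ themselves as those representatives. So no well-definedness issue arises, even when $n$ is odd.

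If one prefers a self-contained check instead of citing the lemma, I would argue directly. Bipartiteness would force an eigenvalue $-4$, that is, some $j$ with $\cos(2\pi jk/n)=\cos(2\pi j\ell/n)=-1$. Then $2jk/n$ and $2j\ell/n$ are both odd integers, so $k(2d_\ell+1)=\ell(2d_k+1)$, which equates an odd number with an even one, a contradiction. I expect no real obstacle; the main step is simply matching hypotheses.
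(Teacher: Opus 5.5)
Your proof is correct and matches the paper. The paper deduces the corollary directly from Lemma~\ref{nonbipartite}, taking $x^k$ and $x^\ell$ as the elements with odd and even exponents, and your self-contained parity check is exactly the lemma's argument specialized to this case. One minor point: the fact that $|S|=4$ comes from the definition of a circulant quartic graph, since $n\nmid 2k$ and $n\nmid 2\ell$ only give $x^k\neq x^{-k}$ and $x^\ell\neq x^{-\ell}$, not the distinctness of $x^{\pm k}$ from $x^{\pm \ell}$.
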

\begin{thm}\label{no-regular-set}
Let $\Gamma = \mathrm{Cay}(\mathbb{Z}_n, S)$ be a connected $|S|$-regular graph, where $n$ and $|S|$ are even. Suppose there exist elements $x^t, x^r \in S$ such that $t$ is odd and $r$ is even. Then $\Gamma$ contains no $(0,|S|)$-regular set.	
		\end{thm}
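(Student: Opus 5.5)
The plan is to show that a $(0,|S|)$-regular set forces $\Gamma$ to be bipartite, and then to contradict Lemma~\ref{nonbipartite}.

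Suppose $C$ is a $(0,|S|)$-regular set in $\Gamma$. By definition, every vertex of $C$ has no neighbour in $C$, so $C$ is independent. Every vertex of $V(\Gamma)\setminus C$ has exactly $|S|$ neighbours in $C$, that is, all of its neighbours lie in $C$, so $V(\Gamma)\setminus C$ is also independent. Since $C$ is a nonempty proper subset, the partition $\{C, V(\Gamma)\setminus C\}$ is a genuine bipartition of $\Gamma$, and hence $\Gamma$ is bipartite.

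As a consistency check, I would also apply the trivial character to Lemma~\ref{Z}(iv) with $a=0$, $b=|S|$. This gives $|C|(|S|+|S|)=|S|n$, so $|C|=n/2$. This matches the balanced bipartition of a regular bipartite graph and uses the hypothesis that $n$ is even, although the argument does not really depend on it.

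Finally, the hypotheses of Lemma~\ref{nonbipartite} hold: $\Gamma$ is connected, $|S|$ is even, and $S$ contains elements $x^t$ and $x^r$ with $t$ odd and $r$ even. That lemma says $\Gamma$ is not bipartite, which contradicts the previous paragraph, so no $(0,|S|)$-regular set exists.

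The only delicate point is the bipartiteness lemma itself, and it is already established, so I expect no real obstacle. If one preferred not to rely on its eigenvalue argument, there is a direct alternative. Since $x^r\in S$, each $g$ and $gx^r$ are adjacent, so they lie on opposite sides of the bipartition. Because $r$ is even, the walk $g, gx^r, gx^{2r}, \ldots$ must alternate sides and return to $g$ after $\operatorname{ord}(x^r)=n/\gcd(n,r)$ steps, so $n/\gcd(n,r)$ must be even. The generator $x^t$ similarly forces $n/\gcd(n,t)$ to be even. A parity comparison along a suitable closed walk then yields an odd cycle. I would mention this only as a remark and rely on the lemma.
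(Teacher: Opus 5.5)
Your proposal is correct and follows the paper's proof. The paper also applies the trivial character to get $|C|=n/2$, observes that $C$ and $V\setminus C$ are both independent so that $\Gamma$ is bipartite, and derives a contradiction from Lemma~\ref{nonbipartite}. Your optional aside is unnecessary and, as written, unfinished; a self-contained version is the following. Take positive representatives with $t$ odd and $r$ even. Starting at $e$, apply $x^t$ a total of $r$ times and then $x^{-r}$ a total of $t$ times. This closed walk has odd length $r+t$, so $\Gamma$ contains an odd cycle.
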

\begin{proof}
Suppose that the graph $\Gamma$ contains a $(0,|S|)$-regular set. Then we have:
\[
|N(v)\cap C| = 0 \qquad\qquad  \forall\, v \in C
\]
\[
|N(v)\cap C| = |S| \qquad \quad   \forall\, v \in V \setminus C
\]
According to Lemma~\ref{Z}~(iv), we have
\begin{equation}\label{C-S}	\overline{C}\!\left(\overline{S}+|S|\right)=|S|\overline{Z_n}
\end{equation}
By applying the trivial character to relation \eqref{C-S}, we obtain, $2 |C| |S| = |S| n \implies |C| = \frac{n}{2}$.
Since $n$ is even, the number of elements in the set $C$ is exactly half the order of the group. Assume $C = \{v_0, \dots, v_{\frac{n}{2}-1}\}$. According to the definition, we have $|N(v)\cap C| = 0$ for every $v \in C$, which means that no two vertices in $C$ ara adjacent, so $C$ is an independent set. On the other hand, for every $v \in V \setminus C$, where $V \setminus C = \{v_{\frac{n}{2}}, \dots, v_{n-1}\}$, we have $|N(v) \cap C| = |S|$. Thus, each vertex outside $C$ is adjacent to exactly $|S|$ vertices in $C$. Furthermore, since the graph $\Gamma$ is $|S|$-regular, it follows that no two vertices in $V \setminus C$ are adjacent. We have:\\
\begin{figure}[H]
	\centering
	\begin{tikzpicture}[scale=0.8,
		every node/.style={font=\footnotesize},  %
		dot/.style={circle,fill,inner sep=1.2pt}
		]
		
		\draw (-6,5) ellipse (1.8 and 3.2);
		\draw (-11,5) ellipse (1.8 and 3.2);
		
		\node[dot,label=left:$v_0$] at (-11,7.5) {};
		\node[dot,label=left:$v_1$] at (-11,6.5) {};
		\node[dot,label=left:$v_2$] at (-11,5) {};
	
		\node[dot] at (-11,4) {};
		\node[dot] at (-11,3.5) {};
		\node[dot,label=left:$v_{\frac{n}{2}-1}$] at (-11,2.5) {};

		\node[dot,label=right:$v_{\frac{n}{2}}$] at (-6,7.5) {};
		\node[dot,label=right:$v_{\frac{n}{2}+1}$] at (-6,6.5) {};
		\node[dot,label=right:$v_{\frac{n}{2}+2}$] at (-6,5) {};
	\node[dot] at (-6,4) {};
		\node[dot] at (-6,3.5) {};
		\node[dot,label=right:$v_{n-1}$] at (-6,2.5) {};
		\draw (-11,7.5)--(-6,6.5);
		\draw (-11,7.5)--(-6,5);
		\draw (-11,6.5)--(-6,6.5);
		\draw (-11,6.5)--(-6,5);
		\draw (-11,5)--(-6,5);
		\draw (-11,5)--(-6,2.5);
		\draw (-11,2.5)--(-6,2.5);
		\draw (-11,2.5)--(-6,6.5);

		\draw (-6,7.5)--(-11,7.5);
		\draw (-6,7.5)--(-11,6.5);
		\draw (-6,7.5)--(-11,5);
		\draw (-6,7.5)--(-11,2.5);

		\draw (-11,7.5)--(-6,2.5);
		\draw (-11,6.5)--(-6,2.5);
		\draw (-11,5)--(-6,6.5);
		\draw (-11,2.5)--(-6,5);
	
		\node[font=\large] at (-11,9) {$C$};
		\node[font=\large] at (-6,9) {$V\setminus C$};
		
	\end{tikzpicture}
\caption{\footnotesize The bipartite graph obtained from a $(0,|S|)$-regular set.}	
\end{figure}
The resulting graph $\Gamma$ is bipartite. However, according to  Lemma~\ref{nonbipartite}, the graph $\Gamma$ is non-bipartite. Therefore, graph $\Gamma$  contains no $(0,|S|)$-regular set.This completes the proof. \hfill\bull
\end{proof}	
Now, we study the graph $\mathrm{Cay}(\mathbb{Z}_n, S)$, when  $|S|$ is odd and $|S|\geq3 $.
\begin{lem}\label{nonbipartite-odd}
	Let $\Gamma = \mathrm{Cay}(\mathbb{Z}_n, S)$ be a connected $|S|$-regular graph, where $|S|$ is odd, $|S|\geq3$ and $n$ is even. Suppose that there exist elements $x^t, x^r \in S$ such that $t$ is odd and $r$ is even. Then $\Gamma$ is not bipartite.	
\end{lem}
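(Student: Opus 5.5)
The plan is to argue spectrally, as in the proof of Lemma~\ref{nonbipartite}, but to use the characters themselves rather than the cosine sums. First note that since $n$ is even, the parity of an exponent is well defined modulo $n$, so the statement ``$t$ odd, $r$ even'' does not depend on the chosen representatives. (The hypothesis that $|S|$ is odd forces $x^{n/2}\in S$; this plays no special role in the argument below.)

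Assume, to the contrary, that $\Gamma$ is bipartite. Since $\Gamma$ is connected and $|S|$-regular, $-|S|$ is an eigenvalue. By Remark~\ref{rem:irr-rep} and Proposition~\ref{prop:eigenvalues-abelian}, there is then some $j$ with
\[
\sum_{s\in S}\chi_j(s)=-|S|.
\]
Each $\chi_j(s)$ is a complex number of modulus $1$. A sum of $|S|$ such numbers equals $-|S|$ only if every term equals $-1$, by the equality case of the triangle inequality. Hence $\chi_j(s)=-1$ for all $s\in S$.

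Next we identify the kernel of $\chi_j$. Because $\Gamma$ is connected, $S$ generates $\mathbb{Z}_n$. So the image $\chi_j(\mathbb{Z}_n)$ is the group generated by $\chi_j(S)=\{-1\}$, namely $\{\pm1\}$. Thus $\ker\chi_j$ is a subgroup of index $2$ in $\mathbb{Z}_n$. A cyclic group of even order has a unique subgroup of index $2$, which is $\langle x^2\rangle$. Therefore $\ker\chi_j=\langle x^2\rangle$.

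Since $r$ is even, $x^r\in\langle x^2\rangle$, so $\chi_j(x^r)=1$. But $x^r\in S$ gives $\chi_j(x^r)=-1$, a contradiction. Hence $\Gamma$ is not bipartite. (The odd element $x^t$ is in fact automatic: if every exponent in $S$ were even, $S$ would lie in $\langle x^2\rangle$ and could not generate $\mathbb{Z}_n$.)

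The only delicate step is the passage from ``$-|S|$ is an eigenvalue'' to ``some character sends all of $S$ to $-1$''. This relies on the eigenvalues being exactly the character sums (Proposition~\ref{prop:eigenvalues-abelian}) and on the equality case of the triangle inequality. Everything after that is elementary group theory of $\mathbb{Z}_n$.
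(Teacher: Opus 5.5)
Your proof is correct, and it takes a genuinely different route from the paper's. The paper writes $S=\{x^{\pm k_1},\dots,x^{\pm k_{(|S|-1)/2}},x^{n/2}\}$ and computes $\lambda_j=2\sum_m\cos(2\pi jk_m/n)+(-1)^j$. It rules out even $j$ by bounding the cosine sum. For odd $j$ it forces every cosine to equal $-1$ and then reuses the parity contradiction $k_t(2d_r+1)=k_r(2d_t+1)$ from Lemma~\ref{nonbipartite}. You instead obtain $\chi_j(s)=-1$ for all $s\in S$ from the equality case of the triangle inequality. Connectivity then identifies $\chi_j$ as the unique character of order $2$, whose kernel is $\langle x^2\rangle$, and the even exponent $r$ gives the contradiction at once. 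Your route uses neither $|S|$ odd nor $|S|\ge 3$, so it also yields Lemma~\ref{nonbipartite} (for odd $n$ an index-$2$ kernel is already impossible). It needs no case split on the parity of $j$, and it does not matter whether $x^t$ or $x^r$ is the involution $x^{n/2}$. The paper's written argument states its parity contradiction only for exponents among the paired $k_m$. It therefore passes over cases such as $S=\{x,x^{-1},x^{4}\}$ in $\mathbb{Z}_8$, where $x^{n/2}$ is the only element with even exponent; this is easily patched but not addressed there. Together with the evident bipartition $\langle x^2\rangle\cup x\langle x^2\rangle$, your argument shows that for even $n$ a connected $\mathrm{Cay}(\mathbb{Z}_n,S)$ is bipartite if and only if every exponent in $S$ is odd. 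The paper's computation is more explicit but tied to the specific shape of $S$.
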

\begin{proof}
Assume, to the contrary, that $\Gamma$ is bipartite graph. By using Remark~\ref{rem:irr-rep} and Proposition~\ref{prop:eigenvalues-abelian}, the eigenvalues of $\Gamma = \mathrm{Cay}(\mathbb{Z}_n, S)$, with  $S = \{x^{k_1}, x^{-k_1}, \dots, x^{k_{\frac{|S|-1}{2}}}, x^{-k_{\frac{|S|-1}{2}}},x^{\frac{n}{2}}\}$ are computed as follows:

	\[
	\begin{aligned}
		\lambda_j &=
		e^{\frac{2 \pi \mathbf{i} j (k_1)}{n}}
		+ e^{\frac{2 \pi \mathbf{i} j (-k_1)}{n}}
		+ \cdots
		+ e^{\frac{2 \pi \mathbf{i} j (k_{\frac{|S|-1}{2}})}{n}}
		+ e^{\frac{2 \pi \mathbf{i} j (-k_{\frac{|S|-1}{2}})}{n}}
		+ e^{\frac{2 \pi \mathbf{i} j (\frac{n}{2})}{n}} \\
		&=
		\cos\left(\frac{2 \pi j k_1}{n}\right)
		+ \mathbf{i}\,\sin\left(\frac{2 \pi j k_1}{n}\right)
		+ \cos\left(\frac{2 \pi j k_1}{n}\right)
		- \mathbf{i}\,\sin\left(\frac{2 \pi j k_1}{n}\right)
		+ \cdots \\
		&\quad + \cos\left(\frac{2 \pi j k_{\frac{|S|-1}{2}}}{n}\right)
		+ \mathbf{i}\,\sin\left(\frac{2 \pi j k_{\frac{|S|-1}{2}}}{n}\right)
		+ \cos\left(\frac{2 \pi j k_{\frac{|S|-1}{2}}}{n}\right)
		- \mathbf{i}\,\sin\left(\frac{2 \pi j k_{\frac{|S|-1}{2}}}{n}\right) \\
	    &\quad 	+ (-1)^ j
    	\end{aligned}
          \]
Similar to Lemma~\ref{nonbipartite}, we have:

	\[
	\lambda_j
	= 2\left(
	\cos\!\left(\frac{2\pi j k_1}{n}\right)
	+ \cdots
	+ \cos\!\left(\frac{2\pi j k_{\frac{|S|-1}{2}}}{n}\right)
	\right) + (-1)^j
	= -|S| \;
	\]
Therefore
\begin{equation}\label{cos-sum-1}
	\lambda_j
	= \sum_{m=1}^{\frac{|S|-1}{2}}
	\cos\!\left(\frac{2\pi j k_m}{n}\right)
	= \frac{-|S|-(-1)^j}{2},
	\qquad 0 < j \le n-1
\end{equation}	
Since $ -1 \le \cos(\theta) \le 1$, it follows that relation \eqref{cos-sum-1} lies between the following two values:
    \[
     \frac{-|S|+1}{2} \le \frac{-|S|-(-1)^j}{2} \le \frac{|S|-1}{2}
    \]
We now consider two cases:\\
Case 1: If $j$ is even, we have:
    \begin{equation}\label{$|S|$}
     \frac{-|S|+1}{2} \le \frac{-|S|-1}{2} \le \frac{|S|-1}{2}	
   \end{equation}
   Ineqeality relation \eqref{$|S|$} does not hold.\\
   Case 2: If $j$ is odd, we have:
   	 \begin{equation}\label{$|S|$-odd}
   		\frac{-|S|+1}{2} \le \frac{-|S|+1}{2} \le \frac{|S|-1}{2}	
   	\end{equation}
relation \eqref{$|S|$-odd} holds for $|S|\geq3 $, and may have a solution only when, $
\cos\left(\frac{2\pi j k_1}{n}\right)
= \cdots
= \cos\left(\frac{2\pi j k_{\frac{|S|-1}{2}}}{n}\right)
= -1$. Analogous to Lemma~\ref{nonbipartite}, we obtain, $k_t \left( 2 d_r + 1 \right) = k_r \left( 2 d_t + 1 \right)$. Since, $k_t$ is odd and $k_r$ is even, we have $k_t (2 d_r + 1) \neq k_r (2 d_t + 1)$, leading to a contradiction. Therefore, the graph $\Gamma$ is not bipartite. This completes the proof. \hfill\bull
	\end{proof}
\begin{thm}\label{no-regular-set-odd}
Let $\Gamma = \mathrm{Cay}(\mathbb{Z}_n, S)$ be a connected $|S|$-regular graph, where $|S|$ is odd, $|S|\geq3 $ and $n$ is even. Suppose that  there exist elements $x^t, x^r \in S$ such that $t$ is odd and $r$ is even. Then $\Gamma$ contains no $(0,|S|)$-regular set.		
\end{thm}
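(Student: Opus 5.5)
The plan is to mirror the proof of Theorem~\ref{no-regular-set}, with Lemma~\ref{nonbipartite-odd} replacing Lemma~\ref{nonbipartite}.

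First, suppose for contradiction that $C$ is a $(0,|S|)$-regular set in $\Gamma$. By Lemma~\ref{Z}~(iv) we have
\[
\overline{C}\,(\overline{S}+|S|)=|S|\,\overline{\mathbb{Z}_n}.
\]
Applying the trivial character gives $2|S|\,|C|=|S|\,n$, so $|C|=\frac{n}{2}$. This is consistent with the hypothesis that $n$ is even and shows $C$ is a genuine nonempty proper subset of size half the vertex set.

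Next, we read off the combinatorial structure. Since $a=0$, no two vertices of $C$ are adjacent, so $C$ is independent. Since $b=|S|$ and $\Gamma$ is $|S|$-regular, every vertex $v\in V\setminus C$ has all $|S|$ of its neighbours in $C$. Hence $v$ has no neighbour in $V\setminus C$, and $V\setminus C$ is also independent. Therefore $\{C,\,V\setminus C\}$ is a bipartition of $\Gamma$, and $\Gamma$ is bipartite. A cleaner way to state this step, avoiding the figure used earlier, is simply that every edge of $\Gamma$ has exactly one endpoint in $C$.

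Finally, the hypotheses of the theorem are exactly those of Lemma~\ref{nonbipartite-odd}: $\Gamma$ is connected, $|S|$ is odd with $|S|\ge 3$, $n$ is even, and $S$ contains $x^t$ with $t$ odd and $x^r$ with $r$ even. That lemma says $\Gamma$ is not bipartite, which contradicts the previous step. So no $(0,|S|)$-regular set exists.

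None of these steps is a genuine obstacle, since all the spectral work is contained in Lemma~\ref{nonbipartite-odd}. The only point to check is that the lemma applies verbatim. Because $|S|$ is odd and $S$ is inverse-closed, $S$ must contain the involution $x^{n/2}$, and the lemma's proof uses exactly this form of $S$, so the hypotheses match.
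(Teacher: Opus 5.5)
Your proposal is correct and is essentially the paper's own proof. The paper proves this theorem by combining Lemma~\ref{nonbipartite-odd} with the argument of Theorem~\ref{no-regular-set}: a $(0,|S|)$-regular set $C$ makes both $C$ and $V\setminus C$ independent, so $\Gamma$ would be bipartite, which contradicts the lemma. Your write-up spells out exactly this argument; the trivial-character computation $|C|=n/2$ is harmless but not actually needed for the bipartition step.
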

\begin{proof}	
This result is proved using Lemma~\ref{nonbipartite-odd} and by an argument analogous to that of theorem~\ref{no-regular-set}.\hfill\bull   	
\end{proof}
Now, we give the main result in this section, which gives a necessary and sufficient condition for $\mathrm{Cay}(\mathbb{Z}_n, S)$ to admit a $(0,|S|)$-regular sets.

\begin{thm}\label{admit-$(0,|S|)$}
Let $\Gamma = \mathrm{Cay}(\mathbb{Z}_n, S)$ be a connected $|S|$-regular graph, such that all elements of $S$	are odd. Then $\Gamma$ contains a $(0,|S|)$-regular set if and only if $n$ is even.
\end{thm}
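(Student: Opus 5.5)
Both directions go through the group-ring identity of Lemma~\ref{Z}~(iv). For $a=0$ and $b=|S|$ it reads
\[
\overline{S}\,\overline{C}+|S|\,\overline{C}=|S|\,\overline{\mathbb{Z}_n}.
\]

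\textbf{Necessity.} Suppose $C$ is a $(0,|S|)$-regular set. Applying the trivial character gives $2|S|\,|C|=|S|\,n$, so $|C|=n/2$. Since $|C|$ is an integer, $n$ is even. Equivalently, by the argument in the proof of Theorem~\ref{no-regular-set}, $C$ and $V\setminus C$ are both independent and together form a bipartition, and a bipartition into two independent sets of equal size forces $n$ even.

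\textbf{Sufficiency.} Assume $n$ is even, and take $C=\langle x^2\rangle$, the subgroup of index $2$. "All elements of $S$ are odd" means that every $s\in S$ has the form $s=x^{t}$ with $t$ odd. For $n$ even this condition is well defined modulo $n$, so $S\subseteq x\langle x^2\rangle$. Then for every $s\in S$ we have $s\,\overline{\langle x^2\rangle}=x\,\overline{\langle x^2\rangle}$, and hence
\[
\overline{S}\,\overline{C}=|S|\,x\,\overline{\langle x^2\rangle}.
\]
Adding $|S|\,\overline{C}=|S|\,\overline{\langle x^2\rangle}$ gives
\[
|S|\bigl(\overline{\langle x^2\rangle}+x\,\overline{\langle x^2\rangle}\bigr)=|S|\,\overline{\mathbb{Z}_n},
\]
where the last equality uses that $n$ is even. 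By Lemma~\ref{Z}, $C$ is a $(0,|S|)$-regular set. It is nonempty and proper because it has index $2$.

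We can also check this combinatorially. Every neighbour $xs$ of a vertex in $C$ lies in the odd coset, so $C$ is independent. Every neighbour of an odd vertex lies in $C$, so each vertex outside $C$ has all $|S|$ of its neighbours in $C$.

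The argument is short, and the only point that needs care is interpreting the hypothesis "odd." For $n$ even, parity of exponents is invariant modulo $n$, so the phrase is unambiguous. For $n$ odd the hypothesis is vacuous up to representatives, and necessity then excludes those cases through the integrality of $|C|=n/2$.
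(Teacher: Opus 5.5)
Your proposal is correct and follows the paper's argument. The trivial character applied to $\overline{S}\,\overline{C}+|S|\,\overline{C}=|S|\,\overline{\mathbb{Z}_n}$ gives $2|C|=n$, and $C=\langle x^2\rangle$ satisfies the identity because $S\subseteq x\langle x^2\rangle$. The one small difference is that you treat even and odd $|S|$ in a single computation, whereas the paper splits into two cases, the odd case being the one with $x^{n/2}\in S$.
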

\begin{proof}
	We first show the necessity. Suppose that the graph $\Gamma$ contains a $(0,|S|)$-regular set $C$. We prove that $n$ is divisible by 2. By applying the trivial character to relation \eqref{C-S}, we obtain,  $2 |C| = n$. Therefore, $n$ is a multiple of 2. Now, we prove the sufficiency. Suppose that $n$ and $|S|$ the numbers are even. We define:
	\[
	\Gamma_1 = \mathrm{Cay}(\mathbb{Z}_n, \{x^{k_1}, x^{-k_1}, \dots, x^{k_{\frac{|S|}{2}}}, x^{-k_{\frac{|S|}{2}}}\})
	\]
	We prove that the graph $\Gamma_1$ contains a $(0,|S|)$-regular set. Since all elements of the set $S$ are odd, we have:
	 \[
\overline{S} = x^{2p_1+1}+x^{-2p_1-1}+ \dots+ x^{2p_\frac{|S|}{2} + 1}+x^{-2p_\frac{|S|}{2} - 1}
	 \]
We show that $C=\langle x^2 \rangle$ is a $(0,|S|)$-regular set for the graph  $\Gamma_1$. Therefore, the set $C$ must satisfy relation \eqref{C-S}. We have:
\[
(x^{2p_1+1}+x^{-2p_1-1}+ \dots+ x^{2p_\frac{|S|}{2} + 1}+x^{-2p_\frac{|S|}{2} - 1}) (\overline{\langle x^2 \rangle})\, +  |S|(\overline{\langle x^2 \rangle}) = |S|(x\, \overline{\langle x^2 \rangle}\,+ \overline{\langle x^2 \rangle})
\]
Since $n$ is even, it follows that, $\overline{\mathbb{Z}_n} \ = \overline{\langle x^2 \rangle} + x\,\overline{\langle x^2 \rangle}$. Therefore, $C$ is a $(0,|S|)$-regular set for the $\Gamma_1$. Now, suppose that $|S|$ is odd and let
	\[
\Gamma_2 = \mathrm{Cay}(\mathbb{Z}_n, \{x^{k_1}, x^{-k_1}, \dots, x^{k_{\frac{|S|-1}{2}}}, x^{-k_{\frac{|S|-1}{2}}}, x^{\frac{n}{2}}\})
\]
We have, $\overline{S} = x^{2p_1+1}+x^{-2p_1-1}+ \dots+ x^{2p_\frac{|S|-1}{2} + 1}+x^{-2p_\frac{|S|-1}{2}-1} + x^{2 z + 1}$. Similarly, $|S|$ even, provide $C=\langle x^2 \rangle$ is a $(0,|S|)$-regular set for the graph  $\Gamma_2$. This completes the proof.\hfill\bull 	
\end{proof}

Combining Corollary~\ref{nonbipartite-4}, Theorem~\ref{no-regular-set} and Theorem~\ref{admit-$(0,|S|)$}, we obtain the following result for $\Gamma = \mathrm{Cay}(\mathbb{Z}_n, \{x^k, x^{-k}, x^\ell, x^{-\ell}\})$.
\begin{cor}\label{$(0,4)$-regular set}
	Let $n, k, \ell$ be positive integers, and  $\Gamma = \mathrm{Cay}(\mathbb{Z}_n, \{x^k, x^{-k}, x^\ell, x^{-\ell}\})$ be a connected circulant quartic graph. Then the following statements hold:
	\begin{enumerate}
		\item[(i)] If $k \equiv 1 \pmod{2}$ and $\ell \equiv 0 \pmod{2}$, then the graph $\Gamma$ contains no $(0,4)$-regular set.
		\item[(ii)] If $k \equiv 1 \pmod{2}$ and $\ell \equiv 1 \pmod{2}$, then the graph $\Gamma$ contains a $(0,4)$-regular set if and only if $n$ is a multiple of $2$.
	\end{enumerate}	
	\end{cor}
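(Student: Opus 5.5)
The plan is to read Corollary~\ref{$(0,4)$-regular set} as the special case $S=\{x^k,x^{-k},x^\ell,x^{-\ell}\}$, $|S|=4$, of the general results of this section, and to split according to the parities in (i) and (ii). By Remark~\ref{4reco} and Lemma~\ref{lem:even_case} we may assume $\gcd(k,\ell)=1$ and $k$ odd, so these two parity cases exhaust all connected circulant quartic graphs.

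For part (i), suppose $k$ is odd and $\ell$ is even, and assume that $\Gamma$ has a $(0,4)$-regular set $C$. Applying the trivial character to relation~\eqref{C-S} gives $8|C|=4n$, so $|C|=n/2$ and $n$ must be even. Once $n$ is even, all the hypotheses of Theorem~\ref{no-regular-set} hold: $\Gamma$ is connected and $4$-regular, $n$ and $|S|=4$ are even, and $S$ contains $x^k$ with $k$ odd and $x^\ell$ with $\ell$ even. That theorem says no $(0,4)$-regular set exists, which is a contradiction. Equivalently, one can use the direct argument: $C$ and $V\setminus C$ are both independent, so $\Gamma$ is bipartite, and this contradicts Corollary~\ref{nonbipartite-4}. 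In either form, the only care needed is the parity of $n$, which the counting step handles before invoking the theorem.

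For part (ii), suppose $k$ and $\ell$ are both odd, so every element of $S$ is an odd power of $x$. Theorem~\ref{admit-$(0,|S|)$} with $|S|=4$ then gives existence if and only if $n$ is even. For completeness I will also verify this directly. Necessity is again $|C|=n/2$. For sufficiency, take $C=\langle x^2\rangle$. Since each element of $S$ is an odd power, $\overline{S}\,\overline{\langle x^2\rangle}=4x\,\overline{\langle x^2\rangle}$, and therefore
\[
\overline{C}\bigl(\overline{S}+4\bigr)=4\bigl(\overline{\langle x^2\rangle}+x\,\overline{\langle x^2\rangle}\bigr)=4\,\overline{\mathbb{Z}_n},
\]
which is condition (iv) of Lemma~\ref{Z} with $(a,b)=(0,4)$.

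I expect no real obstacle. The one subtle point is to confirm that the nonbipartiteness input used in part (i), namely Lemma~\ref{nonbipartite}, is applied only with $n$ even, where its eigenvalue argument is needed. If $n$ is odd, the counting identity already excludes a $(0,4)$-regular set without using that lemma.
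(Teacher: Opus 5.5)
Your proposal is correct and follows the paper's route: the paper obtains this corollary by combining Corollary~\ref{nonbipartite-4} and Theorem~\ref{no-regular-set} for (i), and Theorem~\ref{admit-$(0,|S|)$} for (ii), exactly as you do. Two points in your write-up are details the paper leaves implicit. First, you use the counting identity $|C|=n/2$ to rule out odd $n$ before invoking Theorem~\ref{no-regular-set}, which assumes $n$ is even. Second, you check directly that $\langle x^2\rangle$ satisfies $\overline{C}\bigl(\overline{S}+4\bigr)=4\,\overline{\mathbb{Z}_n}$.
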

\begin{example}\label{Cay-$(0,|S|)$}
Cayley graphs
\[
\begin{aligned}
   &\Gamma_1 = \mathrm{Cay}(\mathbb{Z}_6, \{x^1, x^5, x^3\}), \Gamma_2 = \mathrm{Cay}(\mathbb{Z}_8, \{x^1, x^7, x^5,  x^3\})\\
     &\Gamma_3 = \mathrm{Cay}(\mathbb{Z}_{10}, \{x^3, x^7, x^5, x^9,x^1\}), \Gamma_4 = \mathrm{Cay}(\mathbb{Z}_{12}, \{x^1, x^{11}, x^3, x^9, x^5, x^7\})
\end{aligned}
\]
 with set $C=\langle x^2 \rangle$, have $(0,|S|)$-regular set. Graph $\Gamma_1$ has a $(0,3)$-regular set, graph  $\Gamma_2$ has a $(0,4)$-regular set, graph  $\Gamma_3$ has a $(0,5)$-regular set, graph  $\Gamma_4$ has a $(0,6)$-regular set.
\end{example}

\section{\bf $(1,|S|)$ and $(|S|,0)$-regular sets}
\quad In this section, we will examine $(1,|S|)$-regular set and $(|S|,0)$-regular set. First, we examine $(1,|S|)$-regular set and show that in $\Gamma = \mathrm{Cay}(G, S)$, under the conditions imposed on the set $S$ in the following theorem, there is no $(1,|S|)$-regular set. Moreover, by Lemma~\ref{primes} the complement of a $(1,|S|)$-regular set namely $(0,|S|-1)$-regular set not exist either. As a result we show that graph $\Gamma = \mathrm{Cay}(\mathbb{Z}_n, \{x^k, x^{-k}, x^\ell, x^{-\ell}\})$ does not contain any $(1,4)$-regular set.
\begin{thm}\label{$(1,|S|)$}
	Let $G$ be a group, $S$ an inverse-closed subset of $G \setminus \lbrace e \rbrace$, and $\Gamma = \mathrm{Cay}(G, S)$ be a connected $|S|$-regular graph, where  $|S|\geq3$ and $S$ is closed under conjugation. Then $\Gamma$ contains no $(1,|S|)$-regular set.
\end{thm}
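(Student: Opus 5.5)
The plan is a direct local argument on a single matched pair of vertices of $C$, using that $S$ is closed under conjugation to produce a $4$-cycle. Suppose $C$ is a $(1,|S|)$-regular set and write $d=|S|\ge 3$. By the definition (or Lemma~\ref{Z}), every vertex of $G\setminus C$ has all $d$ of its neighbours in $C$. Every $c\in C$ has exactly one neighbour in $C$, and hence $d-1\ge 2$ neighbours outside $C$. Throughout we use $N(x)=xS$.

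Fix $c\in C$ and let $c'=cs_0$ with $s_0\in S$ be its unique neighbour in $C$. Then $cs\notin C$ for every $s\in S\setminus\{s_0\}$. Since $d\ge 3$, the set $S\setminus\{s_0,s_0^{-1}\}$ is nonempty, so we may choose $s$ in it. Put $y=cs$; then $y\notin C$ because $s\neq s_0$.

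Next we build the $4$-cycle $c,\,c',\,z,\,y$ by setting $z:=c's=cs_0s$. Because $S$ is closed under conjugation, $s_0':=s^{-1}s_0s\in S$. We then have
\[
z=cs_0s=cs\,(s^{-1}s_0s)=y\,s_0',
\]
so $z$ is adjacent to $y$. Since $y\notin C$ and all neighbours of $y$ lie in $C$, we get $z\in C$.

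Finally, $z=c's$ is a neighbour of $c'$ lying in $C$. It differs from $c'$ because $s\ne e$, and it differs from $c$ because $z=c$ would force $s_0s=e$, i.e.\ $s=s_0^{-1}$, which we excluded. Thus $c'$ has at least two neighbours in $C$, namely $c$ and $z$, contradicting $|N(c')\cap C|=1$. Hence no $(1,|S|)$-regular set exists.

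The only delicate point is choosing $s\notin\{s_0,s_0^{-1}\}$; this is exactly where the hypothesis $|S|\ge 3$ enters, while conjugation-closure of $S$ is what makes $z$ simultaneously a neighbour of $y$ and of $c'$. Connectivity is not actually needed in this argument.
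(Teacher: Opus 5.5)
Your proof is correct and is essentially the paper's argument: both take the edge $\{c,c'\}$ inside $C$, right-translate it by some $s\in S\setminus\{s_0,s_0^{-1}\}$ (where $|S|\ge 3$ is used), and use closure of $S$ under conjugation to see that $cs$ and $c's$ are adjacent. The only difference is the final step: the paper uses $a=1$ at both $c$ and $c'$ to show that $cs$ and $c's$ both lie outside $C$, then contradicts $b=|S|$ at $cs$; you instead let $b=|S|$ at $cs$ force $c's\in C$ and then contradict $a=1$ at $c'$.
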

\begin{proof}
	By way of contradiction, assume that $C$ is a $(1,|S|)$-regular set of $\Gamma$. According to the definition of a $(1,|S|)$-regular set, we have:
	\[
	|N(v)\cap C| = 1 \qquad \forall\, v \in C
	\]
	\[
	|N(v)\cap C| = |S| \qquad \forall\, v \notin C
	\]
	without loss of generality, assume that $p$ and $q$ are two adjacent vertices in $C$, with $p\neq q$. Then we have:
	\begin{align*}
	 N(p)=\lbrace ps |s \in S\rbrace = pS\\
	 N(q)=\lbrace qs |s \in S\rbrace = qS
	 \end{align*}
	Suppose $k = p^{-1} q$, where $k\in S$. Since $S$ is an inverse-closed subset of $G \setminus \lbrace e \rbrace$, it follows that $k^{-1} \in S$ and $k \neq k^{-1}$.  Suppose that there exists $\ell \in S$ such that $\ell \neq k, k^{-1}$. Then the two vertices $p\ell$ and $q\ell$, with $p\ell \neq ql$, are adjacent to each other, because $(p\ell)^{-1} (q\ell) = \ell ^ {-1} p^{-1} q\ell = \ell ^{-1} k \ell$. By assumption, the subset $S$ of the group $G$ is closed under conjugation. Hence, $\ell ^{-1} k \ell \in S$. We now show that $p\ell \neq q$ and $q\ell \neq p$ and $p\ell, q\ell \notin C$. First, we show that $p\ell \neq q$ and $q\ell \neq p$. Suppose that $p\ell = q$, then we have, $p\ell = q \Rightarrow \ell = p^{-1} q \Rightarrow \ell = k $, which is a contradiction, because $\ell \neq k $. Suppose that $q\ell =p$. Then we have, $q\ell = p \Rightarrow \ell = q^{-1} p$. On the other hand, we have $k = p^{-1} q$ and computing $k^{-1}$, gives $k^{-1}=(p^{-1} q) ^{-1} = q^{-1} p$. Thus $\ell = k^{-1}$, which is a contradiction, because $\ell \neq k ^{-1}$. We now show that $p\ell, q\ell \notin C $. Suppose that $p\ell \in C$. On the other hand $p\ell \in pS$, since $pS = \lbrace ps |s \in S\rbrace$ and $\ell \in S$ hence, $p\ell \in pS$ and is adjacent to $p$. By assumption, the vertex $p\in C$ is adjacent to $q$. If $p\ell$ were also in $C$, then $p$ would be adjacent to both $q$ and $p\ell$, which contradictions the condition that $ |N(v)\cap C| = 1 $ for every $v \in C$, therefore $p\ell \notin C$. We now prove that $q\ell \notin C$. Suppose that $q\ell \in C$. In this case, $q\ell \in qS$ and is a neighbor of $q$ in $C$. Then the vertex $q$ would be adjacent to both $p$ and $q\ell$ in $C$, which contradict the condition $ |N(v)\cap C| = 1 $ for every $v \in C$. Therefore, both $p\ell$ and $q\ell$ are adjacent vertices in $V \setminus C$. Since the graph $\Gamma$ is $|S|$-regular, the vertex $p \ell$ can be adjacent to at most $|S|-1$ other vertices, on the other hand $ |N(v)\cap C| = |S| $ for every $v \in V \setminus C $. However, the vertex $p\ell$ can be adjacent to at most $|S|-1$ vertices in $C$, which is a contradiction. Hence no $(1,|S|)$-regular set exists. This completes the proof.\hfill\bull
	\end{proof}
The following corollary follows from Theorem~\ref{$(1,|S|)$}.

\begin{cor}\label{$(1,4)$-regular set}
	Let $n, k, \ell$ be positive integers, and  $\Gamma = \mathrm{Cay}(\mathbb{Z}_n, \{x^k, x^{-k}, x^\ell, x^{-\ell}\})$ be a connected circulant quartic graph. Then the graph $\Gamma$ contains no $(1,4)$-regular set.
	\end{cor}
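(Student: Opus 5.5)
The plan is to obtain Corollary~\ref{$(1,4)$-regular set} as a direct specialization of Theorem~\ref{$(1,|S|)$} to $G=\mathbb{Z}_n$ and $S=\{x^k,x^{-k},x^\ell,x^{-\ell}\}$. There are three hypotheses of that theorem to check, and each follows from earlier material.

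First, $S$ is an inverse-closed subset of $\mathbb{Z}_n\setminus\{e\}$. It is inverse-closed by construction. By Remark~\ref{4reco}, since $\Gamma$ is a connected circulant quartic graph, the four elements $x^{\pm k}$ and $x^{\pm \ell}$ are distinct and $n\nmid 2k$, $n\nmid 2\ell$. In particular none of them equals $e$, and $|S|=4\ge 3$.

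Second, $S$ is closed under conjugation. This is immediate because $\mathbb{Z}_n$ is abelian, so $g^{-1}sg=s\in S$ for all $g\in\mathbb{Z}_n$ and $s\in S$.

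Third, $\Gamma$ is connected, which is an assumption of the statement. Hence $\Gamma$ is a connected $4$-regular Cayley graph.

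With these checks in place, Theorem~\ref{$(1,|S|)$} says that $\Gamma$ has no $(1,|S|)$-regular set, that is, no $(1,4)$-regular set. I do not expect any real obstacle; the only point needing care is the requirement $|S|\ge 3$ together with $e\notin S$, and Remark~\ref{4reco} supplies both.

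As a consistency remark, Lemma~\ref{primes} then also rules out $(0,3)$-regular sets in $\Gamma$.
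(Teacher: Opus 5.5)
Your proposal is correct and takes the same route as the paper, which obtains this corollary directly from Theorem~\ref{$(1,|S|)$}. Your check of that theorem's hypotheses is exactly what the specialization needs: $S$ is inverse-closed with $e\notin S$ and $|S|=4\ge 3$ by Remark~\ref{4reco}, $S$ is conjugation-closed because $\mathbb{Z}_n$ is abelian, and $\Gamma$ is connected by assumption.
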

	Now, we study the $(|S|,0)$-regular set in $\mathrm{Cay}(\mathbb{Z}_n, S)$ in two cases: when $|S|$ is even and when $|S|$ is odd. First, we show that $\mathrm{Cay}(\mathbb{Z}_n, S)$ for even $|S|$ does not contain any $(|S|,0)$-regular set. We state the following lemma, which will be used in the proof of the theorem.
	\begin{lem}\label{X=Y}
		Let $G$ be a finite abelian group, $X$ and $Y$ be two nonempty subsets of $G$ of equal size. Then for each irreducible character $\chi$ of $G$ we have $\sum\limits_{x \in X} x ^ \chi = \sum\limits_{y \in Y} y ^ \chi$ if and only if $X=Y$.
		\end{lem}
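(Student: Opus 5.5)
The plan is a Fourier inversion argument on the finite abelian group $G$, using the orthogonality relations for irreducible characters. The direction ``$X=Y$ implies equality of the character sums'' is immediate, since the two sums are then literally the same sum. So the whole content lies in the converse. Assume $\sum_{x\in X}\chi(x)=\sum_{y\in Y}\chi(y)$ for every irreducible character $\chi$ of $G$; we want to recover $X$ from these numbers.

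First, recall that since $G$ is abelian, all irreducible characters are linear, and there are exactly $|G|$ of them. This is the analogue, for a general finite abelian group, of Remark~\ref{rem:irr-rep}, and it follows from the decomposition of $G$ into cyclic factors. They satisfy the column orthogonality relation
\[
\sum_{\chi\in\mathrm{Irr}(G)} \chi(g)\,\overline{\chi(h)} \;=\; \begin{cases} |G|, & g=h,\\ 0, & g\neq h.\end{cases}
\]
Next, for a subset $X\subseteq G$ and a fixed $g\in G$, multiply $\sum_{x\in X}\chi(x)$ by $\overline{\chi(g)}$ and sum over all $\chi$. By the orthogonality relation this gives
\[
\frac{1}{|G|}\sum_{\chi}\overline{\chi(g)}\sum_{x\in X}\chi(x)=\mathbf 1_X(g),
\]
the indicator function of $X$ evaluated at $g$. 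Doing the same for $Y$, the hypothesis forces $\mathbf 1_X(g)=\mathbf 1_Y(g)$ for every $g\in G$, and hence $X=Y$.

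Equivalently, in the language of Section~2, one can phrase this in $\mathbb{C}[G]$. The map sending $\alpha\in\mathbb{C}[G]$ to the vector $(\chi(\alpha))_{\chi}$ is injective, because the primitive idempotents $e_\chi=\frac1{|G|}\sum_g\overline{\chi(g)}g$ form a basis. Hence $\overline{X}=\overline{Y}$ in $\mathbb{C}[G]$, which means $X=Y$.

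The only step requiring care is justifying the orthogonality (equivalently, completeness) of the linear characters for an arbitrary finite abelian group rather than only for $\mathbb{Z}_n$. I would cite the standard result from \cite{Serre} rather than reprove it. The hypothesis $|X|=|Y|$ is not actually needed, since it is implied by the trivial character, and I would remark on this.
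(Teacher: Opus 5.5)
Your argument is correct. The paper gives no proof of Lemma~\ref{X=Y}: it states it as a standard fact and then invokes it in the proof of the theorem on $(|S|,0)$-regular sets for even $|S|$. So there is no argument in the paper to compare with, and your Fourier-inversion proof supplies what is omitted.

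Your key identity $\frac{1}{|G|}\sum_{\chi}\overline{\chi(g)}\sum_{x\in X}\chi(x)=\mathbf{1}_X(g)$ is exactly column orthogonality. For abelian $G$ this reads $\sum_{\chi}\chi(g)\overline{\chi(h)}=|G|\,\delta_{g,h}$, because conjugacy classes are singletons and every centralizer is all of $G$.

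Your group-algebra version is the same fact in different notation. Since $\psi(e_\chi)=\delta_{\psi,\chi}$ and $\alpha=\sum_\chi\chi(\alpha)e_\chi$, an element $\alpha$ is determined by its character values. Hence $\overline{X}=\overline{Y}$, and this forces $X=Y$ because all coefficients are $0$ or $1$.

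You are also right that the hypotheses ``nonempty'' and $|X|=|Y|$ are superfluous; the latter is just the trivial-character case of the assumption.

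One simplification is available. The paper only applies the lemma with $G=\mathbb{Z}_n$, to compare $C$ with its translates $x^iC$. In that setting the orthogonality you need is just the geometric sum $\sum_{h=0}^{n-1}e^{2\pi\mathbf{i}h(s-t)/n}=n\,\delta_{s,t}$ for the characters $\chi_h$ of Remark~\ref{rem:irr-rep}. So for the paper's purposes the proof is entirely elementary, and no appeal to general character theory is needed.
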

\begin{thm}\label{$(|S|,0)$}
Let  $\Gamma = \mathrm{Cay}(\mathbb{Z}_n, S)$ be a connected $|S|$-regular graph, where $|S|$ is even. Then $\Gamma$ contains no $(|S|,0)$-regular set.	
\end{thm}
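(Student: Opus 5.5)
The plan is a direct argument using connectivity. I would then give a group-algebra/character version that matches the tools the paper has just set up (Lemma~\ref{Z} and Lemma~\ref{X=Y}).

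\textbf{Direct argument.} Suppose $C$ is an $(|S|,0)$-regular set. Then every vertex of $C$ has all $|S|$ of its neighbours in $C$, so $C$ is closed under taking neighbours. Since $\Gamma$ is connected, this forces $C=V$, contradicting that $C$ is a proper subset. The same conclusion also follows from Lemma~\ref{2elem}-type counting: applying the trivial character to Lemma~\ref{Z}(iv) with $a=|S|$ and $b=0$ gives $|C|\cdot 0=0$. That identity is vacuous, so the counting route does not help here, and connectivity is the actual content.

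\textbf{Character version.} Take $a=|S|$ and $b=0$ in Lemma~\ref{Z}(iv), giving
\[
\overline{S}\,\overline{C}=|S|\,\overline{C}.
\]
Choose a generator $s=x^{k}\in S$. Since $S$ generates $\mathbb{Z}_n$, such generators can be combined, but single elements suffice. The displayed identity says the multiset $SC$ equals $|S|$ copies of $C$. Because every $sc$ lies in $C$ and each element of $C$ appears exactly $|S|$ times, $sC\subseteq C$ for each $s\in S$. As $|sC|=|C|$, this gives $sC=C$. Equivalently, for each irreducible character $\chi$, Lemma~\ref{X=Y} applied to $X=sC$ and $Y=C$ gives the same conclusion.

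Then $C$ is a union of cosets of $\langle S\rangle=\mathbb{Z}_n$, so $C=\mathbb{Z}_n$, a contradiction.

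\textbf{Expected obstacle.} The only delicate step is justifying $sC\subseteq C$ from the group-ring identity. This follows from nonnegativity of the coefficients: the coefficient of any $g\notin C$ on the right-hand side is $0$, so no product $sc$ can equal such a $g$. I note that the evenness of $|S|$ plays no essential role; presumably the authors use it to write $S$ as the $\pm k_i$ pairs.
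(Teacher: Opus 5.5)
Your argument is correct, and it takes a genuinely different and far more elementary route than the paper.

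The paper works spectrally. Applying each nontrivial character $\chi_m$ to Lemma~\ref{Z}(iv) gives $\overline{C}^{\chi_m}(\overline{S}^{\chi_m}-|S|)=0$. It then uses the evenness of $|S|$ to pair $S$ as $x^{\pm k_j}$ and write $\overline{S}^{\chi_m}=2\sum_j\cos(2\pi m k_j/n)$. It shows that $\overline{S}^{\chi_m}=|S|$ would force $n\mid m\gcd(k_1,\ldots,k_{|S|/2})$, hence $n\mid m$ by the normalization of Lemma~\ref{CayC}. So $\overline{C}^{\chi_m}=0$ for all $m\neq 0$, and Lemma~\ref{X=Y} then gives $x^iC=C$ for every $i$. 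In effect the paper proves that $|S|$ is a simple eigenvalue of $\Gamma$ (the spectral form of connectivity) and reads off translation invariance of $C$ from the vanishing of its Fourier coefficients.

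You use connectivity directly: $C$ is closed under adjacency, hence a union of components. In your group-ring version, nonnegativity of coefficients replaces the character computation, giving $sC\subseteq C$ and hence $sC=C$ for every $s\in S$, so $C$ is $\langle S\rangle$-invariant.

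What your route buys:
\begin{itemize}
\item It needs no cosine estimates, no gcd normalization and no Lemma~\ref{X=Y}.
\item It uses nothing about $\mathbb{Z}_n$ or the parity of $|S|$; it shows that a connected $d$-regular graph has no $(d,b)$-regular set for any $b$.
\item It therefore also disposes of the companion theorem for odd $|S|$ at no extra cost.
\end{itemize}
The paper's route buys only uniformity with the Fourier/circulant machinery used in the rest of the paper.

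Two asides in your write-up are loose but not load-bearing. First, no single element of $S$ need generate $\mathbb{Z}_n$ (e.g.\ $S=\{x^{\pm 3},x^{\pm 5}\}$ in $\mathbb{Z}_{15}$), though you never actually use one. Second, the ``equivalently, by Lemma~\ref{X=Y}'' remark would first require showing $\overline{C}^{\chi}=0$ for every nontrivial $\chi$, which is precisely the paper's argument.
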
	
	\begin{proof}
		For a proof by contradiction, suppose that the graph $\Gamma$ contains a $(|S|,0)$-regular set $C$. Assume that $S = \{x^{k_1}, x^{-k_1}, \dots, x^{k_{\frac{|S|}{2}}}, x^{-k_{\frac{|S|}{2}}}\}$. By applying the nontrivial irreducible characters of the group $\mathbb{Z}_n$ on the equation Lemma~\ref{Z}~(iv), we obtain, $\overline{C}^{\chi_m} (\overline{S}^{\chi_m}+(b-a))= b\;\overline{\mathbb{Z}_n}^{\chi_m}$ for each  $1 \le m \le n-1$. Since $\mathbb{Z}_n = \langle x \rangle $ is an abelian group, we have $\overline{\mathbb{Z}_n}^{\chi_m} = 0$ for every  $1 \le m \le n-1$. Therefore in the $(|S|,0)$-regular case we obtain:
		\begin{equation}\label{S-|S|}
		\overline{C}^{\chi_m} (\overline{S}^{\chi_m}-|S|)= 0, \qquad 1 \le m \le n-1
		\end{equation}
	we have:
	\[
	\begin{aligned}
		\overline{S}^{\chi_m} &=
		e^{\frac{2 \pi \mathbf{i} m (k_1)}{n}}
		+ e^{\frac{2 \pi \mathbf{i} m (-k_1)}{n}}
		+ \cdots
		+ e^{\frac{2 \pi \mathbf{i} m (k_{\frac{|S|}{2}})}{n}}
		+ e^{\frac{2 \pi \mathbf{i} m (-k_{\frac{|S|}{2}})}{n}} \\
		&=
		\cos\left(\frac{2 \pi m k_1}{n}\right)
		+ \mathbf{i}\,\sin\left(\frac{2 \pi m k_1}{n}\right)
		+ \cos\left(\frac{2 \pi m k_1}{n}\right)
		- \mathbf{i}\,\sin\left(\frac{2 \pi m k_1}{n}\right)
		+ \cdots \\
		&\quad + \cos\left(\frac{2 \pi m k_{\frac{|S|}{2}}}{n}\right)
		+ \mathbf{i}\,\sin\left(\frac{2 \pi m k_{\frac{|S|}{2}}}{n}\right)
		+ \cos\left(\frac{2 \pi m k_{\frac{|S|}{2}}}{n}\right)
		- \mathbf{i}\,\sin\left(\frac{2 \pi m k_{\frac{|S|}{2}}}{n}\right)
	\end{aligned}
	\]	
	Therefore
	\[
	\overline{S}^{\chi_m} = 2	\sum_{j=1}^{\frac{|S|}{2}}
	\cos\!\left(\frac{2\pi m k_j}{n}\right),
	\qquad 1 \le m \le n-1
	\]	
	From relation\eqref{S-|S|} we have $\overline{C}^{\chi_m} = 0$ or  $(\overline{S}^{\chi_m}-|S|) = 0$. Suppose that   $(\overline{S}^{\chi_m}-|S|) = 0$, for each $1 \le m \le n-1$. Therefore, $\sum_{j=1}^{\frac{|S|}{2}}
	\cos\!\left(\frac{2\pi j k_j}{n}\right)
	= \frac{|S|}{2}$ and it follows that $\cos\left(\frac{2\pi m k_1}{n}\right)
	= \cdots
	= \cos\left(\frac{2\pi m k_{\frac{|S|}{2}}}{n}\right)
	= 1$ for each $1 \le m \le n-1$, then
	\[
	\forall\, j \in \left\{1, \ldots, \frac{|S|}{2}\right\},
	\ \exists\, d_j \in \mathbb{Z} :
	\frac{2\pi m k_j}{n} = 2 d_j \pi
	\;\Longrightarrow\;
	\frac{m k_j}{n} = d_j\;\Longrightarrow\
	\]
	\begin{equation}\label{n mid m}
	 n \mid m\;k_1, \ldots, m\;k_{\frac{|S|}{2}} \Longrightarrow\; n \mid m  \gcd(k_1,\dots, k_{\frac{|S|}{2}})
	\end{equation}
	Since the graph $\Gamma$ is $|S|$-regular and connected from Lemma~\ref{CayC} we proved that $\gcd(k_1,\ldots,k_{\frac{|S|}{2}})=1$. By relation\eqref{n mid m}, we have $n \mid m$. On the other hand, $1 \le m \le n-1$ and $n \nmid m$. Hence $(\overline{S}^{\chi_m}-|S|) \neq 0$ and $\overline{C}^{\chi_m}$ must be equal to zero. That is $\overline{C}^{\chi_m} = \sum\limits_{c \in C} c =0 $ for each $1 \le m \le n-1$. It is clear that for every $x^i \in \mathbb{Z}_n$, where $ 0 \le i \le n-1$ we have $\overline{x^i C}^{\chi_m} = 0$ for $1 \le m \le n-1 $. By applying the trivial character to $\overline{ C}$ and $\overline{x^i C}$ for $0 \le i \le n-1 $, we have $\overline{C}^{\chi_0} = \overline{x^i C}^{\chi_0} = |C|$. Clearly $|x^i C| = |C|$ and by Lemma~\ref{X=Y} for each $i \in \{0,1,\ldots,n-1\}$ we have $x^i C = C$. Since $1 \in C$ it follows that $C = \mathbb{Z}_n $, this is a contradiction, because a regular set must be non-empty and the graph $\Gamma$ contains no $(|S|,0)$-regular set. This completes the proof.\hfill\bull
		\end{proof}
		The following corollary follows from Theorem~\ref{$(|S|,0)$}.
	\begin{cor}\label{$(4,0)$-regular set}
		Let $n, k, \ell$ be positive integers, and  $\Gamma = \mathrm{Cay}(\mathbb{Z}_n, \{x^k, x^{-k}, x^\ell, x^{-\ell}\})$ be a connected circulant quartic graph. Then the graph $\Gamma$ contains no $(4,0)$-regular set. 		
	\end{cor}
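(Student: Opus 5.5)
The statement is Corollary~\ref{$(4,0)$-regular set}, which says that a connected circulant quartic graph $\Gamma = \mathrm{Cay}(\mathbb{Z}_n,\{x^k,x^{-k},x^\ell,x^{-\ell}\})$ contains no $(4,0)$-regular set. I would obtain it by specialising Theorem~\ref{$(|S|,0)$} to $S=\{x^k,x^{-k},x^\ell,x^{-\ell}\}$.

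First I check the hypotheses of that theorem. By Remark~\ref{4reco}, $n\nmid 2k$ and $n\nmid 2\ell$, so $x^k\neq x^{-k}$ and $x^\ell\neq x^{-\ell}$. Since $\Gamma$ is quartic, the four listed elements are distinct. Hence $|S|=4$ is even, $S$ is inverse-closed, and it does not contain the identity. Connectivity is assumed. Theorem~\ref{$(|S|,0)$} uses $\gcd$ of the exponents equal to $1$; by Corollary~\ref{four regular} we may replace $\Gamma$ by an isomorphic copy with $\gcd(k,\ell)=1$. An isomorphism of graphs carries regular sets to regular sets of the same type, so nothing is lost by this replacement.

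With these hypotheses in place, Theorem~\ref{$(|S|,0)$} with $|S|=4$ gives that $\Gamma$ has no $(4,0)$-regular set, which is the claim.

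There is also a more elementary route that I would record as a sanity check, since it does not depend on the character argument. By Lemma~\ref{2elem}(ii) (or simply by applying the trivial character to Lemma~\ref{Z}(iv) with $a=4$, $b=0$), one gets $|C|(4+0-4)=0$, which by itself is not contradictory. So instead argue directly: if every vertex of $C$ has all $4$ of its neighbours in $C$, then $C$ is closed under taking neighbours. Since $\Gamma$ is connected, it follows that $C=\mathbb{Z}_n$, contradicting properness of $C$.

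The only real obstacle is confirming that the hypotheses of Theorem~\ref{$(|S|,0)$} hold verbatim, in particular the $\gcd$ normalisation. The connectivity argument above sidesteps that issue entirely, so I would present it as the main proof and cite Theorem~\ref{$(|S|,0)$} alongside it.
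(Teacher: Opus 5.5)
Your proposal is correct, and your main argument differs from the paper's. The paper proves this corollary only by specialising Theorem~\ref{$(|S|,0)$}, whose proof is spectral:
\begin{itemize}
\item it applies the nontrivial characters $\chi_m$ to $(\overline{S}-|S|)\,\overline{C}=0$;
\item it shows $\overline{S}^{\chi_m}=2\sum_j \cos(2\pi m k_j/n)\neq |S|$ for $1\le m\le n-1$, which is where the $\gcd$ of the exponents, i.e.\ connectivity, enters;
\item it concludes $\overline{C}^{\chi_m}=0$ for every nontrivial $m$, forcing $C$ to be empty or all of $\mathbb{Z}_n$.
\end{itemize}

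Your closure-under-neighbours argument reaches the same fact combinatorially; spectrally, it is the statement that the degree of a connected regular graph is a simple eigenvalue. It needs no characters, no $\gcd$ normalisation and no Cayley structure. It shows that no connected $d$-regular graph has a $(d,0)$-regular set for any $d$; in fact $a=d$ alone, or $b=0$ alone, already gives the contradiction. It therefore also covers Theorem~\ref{$(|S|,0)Sodd$} and Lemma~\ref{2elem}. The paper's route buys only uniformity with the group-ring/character machinery it reuses in Section~6.

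Two small remarks. First, Lemma~\ref{2elem}(ii) is stated only for $a\in\{0,1,2,3\}$ and so does not apply to $(4,0)$. What you actually use is the identity $|C|(4+b-a)=bn$ from its proof, which, as you note, is vacuous here. Second, in your first route the normalisation via Corollary~\ref{four regular} is valid but unnecessary. From $n\mid mk$, $n\mid m\ell$ and $\gcd(n,k,\ell)=1$ (connectivity) you already get $n\mid m$, which is all the proof of Theorem~\ref{$(|S|,0)$} needs.
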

	Now we consider the case of $\mathrm{Cay}(\mathbb{Z}_n, S)$ where $|S|$ is odd and $|S|\geq3$.
	\begin{thm}\label{$(|S|,0)Sodd$}
	Let $\Gamma = \mathrm{Cay}(\mathbb{Z}_n, S)$ be a connected $|S|$-regular graph, where $|S|$ is odd, $|S|\geq3 $ and $n$ is even. Then $\Gamma$ contains no $(|S|, 0)$-regular set.
      \end{thm}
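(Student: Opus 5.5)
We follow the proof of Theorem~\ref{$(|S|,0)$}, with one change: the inverse-closed set $S$ now contains the involution $x^{n/2}$. Since $|S|$ is odd and $S$ is inverse-closed, $S$ has an element of order $2$. Because $n$ is even, that element is $x^{n/2}$, so
\[
S=\{x^{k_1},x^{-k_1},\dots,x^{k_{\frac{|S|-1}{2}}},x^{-k_{\frac{|S|-1}{2}}},x^{\frac n2}\}.
\]
Suppose $C$ is an $(|S|,0)$-regular set. By Lemma~\ref{Z}~(iv) with $a=|S|$ and $b=0$ we get $\overline{C}\,(\overline{S}-|S|)=0$. (A more direct argument: every vertex of $C$ has all its neighbours in $C$, so $C$ is a union of connected components. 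By connectivity $C=\mathbb{Z}_n$, contradicting properness. We will still write the character version to parallel the paper.)

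We apply each nontrivial irreducible character $\chi_m$, $1\le m\le n-1$, from Remark~\ref{rem:irr-rep}. This gives $\overline{C}^{\chi_m}(\overline{S}^{\chi_m}-|S|)=0$, where
\[
\overline{S}^{\chi_m}=2\sum_{j=1}^{\frac{|S|-1}{2}}\cos\!\Big(\frac{2\pi m k_j}{n}\Big)+(-1)^m .
\]
The key step is to show $\overline{S}^{\chi_m}\neq |S|$ for every $1\le m\le n-1$. Every summand is at most its value at $m=0$, so equality forces $\cos(2\pi m k_j/n)=1$ for all $j$ and $(-1)^m=1$. Hence $n\mid mk_j$ for all $j$, and $n\mid m\cdot\frac n2$, i.e.\ $m$ is even. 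Therefore $n$ divides $m\cdot\gcd(k_1,\dots,k_{\frac{|S|-1}{2}},\frac n2)$. By connectivity, $\gcd(n,k_1,\dots,k_{\frac{|S|-1}{2}},\frac n2)=1$. Then for each prime $p\mid n$, some generator exponent $s\in\{k_1,\dots,k_{\frac{|S|-1}{2}},\frac n2\}$ has $p\nmid s$; since $n\mid ms$ this gives $v_p(m)\ge v_p(n)$, so $n\mid m$. This is impossible for $1\le m\le n-1$.

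It follows that $\overline{C}^{\chi_m}=0$ for all $m\neq0$, and hence $\overline{x^iC}^{\chi_m}=\chi_m(x^i)\overline{C}^{\chi_m}=0$ as well. For the trivial character, $\overline{x^iC}^{\chi_0}=|C|=\overline{C}^{\chi_0}$. By Lemma~\ref{X=Y}, $x^iC=C$ for every $i$. Since $C$ is nonempty, translating one of its elements to the identity gives $1\in C$, so $C=\mathbb{Z}_n$, contradicting that $C$ is a proper subset. The main obstacle is the gcd step: Lemma~\ref{CayC} only normalizes the paired exponents, so we must use the connectivity condition on all of $S$, including $x^{n/2}$, rather than $\gcd(k_j)=1$ alone. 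If that becomes awkward, we fall back on the connected-components argument above, which settles the claim immediately.
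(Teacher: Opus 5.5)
Your proof is correct and follows essentially the same route as the paper. You apply the nontrivial characters to $\overline{C}(\overline{S}-|S|)=0$ and show $\overline{S}^{\chi_m}\neq |S|$ for $1\le m\le n-1$: every character value on $S$ would have to equal $1$, so $m$ is even and $n\mid m\gcd(k_1,\dots,k_{\frac{|S|-1}{2}},\frac n2)$. You then conclude $\overline{C}^{\chi_m}=0$ and use Lemma~\ref{X=Y} to get $x^iC=C$ for all $i$.

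Your gcd step is cleaner than the paper's. You take $\gcd(n,k_1,\dots,k_{\frac{|S|-1}{2}},\frac n2)=1$ directly from connectivity, where the paper goes through Lemma~\ref{CayC} and the identity $\gcd(\frac{nt}{m},\frac n2)=\frac nm\gcd(t,\frac m2)$. Your parenthetical connected-components remark is in fact a complete one-line proof on its own. It works for $(|S|,b)$-regular sets with any $b$ and does not need $n$ even or $|S|\ge 3$.
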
	
    \begin{proof}
    By contradiction, let us assume that the graph $\Gamma$ contains a $(|S|,0)$-regular set $C$. Let  $S = \{x^{k_1}, x^{-k_1}, \dots, x^{k_{\frac{|S|-1}{2}}}, x^{-k_{\frac{|S|-1}{2}}},x^{\frac{n}{2}}\}$. Similarly, and using the argument applied in Theorem~\ref{$(|S|,0)$}, we have:	
    \begin{equation}\label{S-|S|-S}
    	\overline{C}^{\chi_m} (\overline{S}^{\chi_m}-|S|)= 0, \qquad 1 \le m \le n-1
    \end{equation}
    By computing $\overline{S}^{\chi_m}$, we obtain that:
    \[
    \overline{S}^{\chi_m} = 2	\sum_{j=1}^{\frac{|S|-1}{2}}
    \cos\!\left(\frac{2\pi m k_j}{n}\right) + (-1)^m,
    \qquad 1 \le m \le n-1
    \]
    From relation\eqref{S-|S|-S} we have $\overline{C}^{\chi_m} = 0$ or  $(\overline{S}^{\chi_m}-|S|) = 0$. Suppose that   $(\overline{S}^{\chi_m}-|S|) = 0$ for each  $1 \le m \le n-1$. Therefore
    \begin{equation}\label{cos-sum-2}
     \sum_{j=1}^{\frac{|S|-1}{2}}
    	\cos\!\left(\frac{2\pi m k_j}{n}\right)
    	= \frac{|S|-(-1)^m}{2},
    	\qquad 1 \le m\le n-1
    \end{equation}	
    Since $ -1 \le \cos(\theta) \le 1$, it follows that relation \eqref{cos-sum-2} lies between the following two values:
    \[
    \frac{-|S|+1}{2} \le \frac{|S|-(-1)^m}{2} \le \frac{|S|-1}{2}
    \]
    We now consider two cases:\\
    Case 1: If $m$ is odd, we have:
    \begin{equation}\label{a1}
    	\frac{-|S|+1}{2} \le \frac{|S|+1}{2} \le \frac{|S|-1}{2}	
    \end{equation}
    Ineqeality relation\eqref{a1} does not hold.\\
    Case 2: If $m$ is even, we have:
    \begin{equation}\label{a2}
    	\frac{-|S|+1}{2} \le \frac{|S|-1}{2} \le \frac{|S|-1}{2}	
    \end{equation}
    relation\eqref{a2} holds for $|S|\geq3 $, and may have a solution only when
    \[
    \cos\left(\frac{2\pi m k_1}{n}\right) = \dots = \cos\left(\frac{2\pi m  k_{\frac{|S|-1}{2}}}{n}\right) = 1
    \]
    Similar to  Theorem~\ref{$(|S|,0)$} we obtain that
     $ n \mid m  \gcd(k_1,\dots, k_{\frac{|S|-1}{2}})$ and we may write $m \gcd(k_1,\dots, k_{\frac{|S|-1}{2}}) = nt$, for some integer $t$ and consequently $\frac{nt}{m}$ is a positive integer. Since $m$ is an even integer and $1 \le m \le n-1$, we have:
     \begin{equation}\label{eq:gcd}
     	\begin{aligned}
     	\gcd\!\left(k_1,\ldots,k_{\frac{|S|-1}{2}},\frac{n}{2}\right)
     	&=\gcd\!\left(\gcd\!\left(k_1,\ldots,k_{\frac{|S|-1}{2}}\right),\frac{n}{2}\right)
     	\\
     	&=\gcd\!\left(\frac{nt}{m},\frac{n}{2}\right)
     	\\
     	&=\frac{n}{m}\gcd\!\left(t,\frac{m}{2}\right)
    	\end{aligned}
    \end{equation}
    On the other hand the graph $\Gamma$ is $|S|$-regular and connected and according to Lemma~\ref{CayC} we obtained that $\gcd(k_1,\dots, k_{\frac{|S|-1}{2}}, \frac{n}{2}) = 1$. Therefore by relation\eqref{eq:gcd} we have $n \gcd(t,\frac{m}{2}) = m$ which implies that $n|m$ for $1 \le m \le n-1$ leading to a contradiction. Hence $(\overline{S}^{\chi_m}-|S|) \neq 0$ and similarly to Theorem~\ref{$(|S|,0)$} it is proved that $\overline{C}^{\chi_m} \neq 0$ for $0 \le m \le n-1$. Therefore the graph $\Gamma$ contains no $(|S|,0)$-regular set. This completes the proof.\hfill\bull  	
    	\end{proof}
   \section{The remaining cases of $(a,b)$-regular sets in circulant quartic graphs}

  \quad This section is devoted to the study of the remaining cases of $(a,b)$-regular sets in circulant quartic graphs. It consists of three subsections, each of which is devoted to one of the remaining cases. In each subsection, the technique of applying group homomorphisms is used to prove the nonexistence of the specified $(a,b)$-regular sets. Finally, by exploiting properties of primitive roots of unity, we examine the invertibility of the resulting circulant matrices. We also provide necessary and sufficient conditions for the existence of the specified $(a,b)$-regular sets in each subsection. We first present the necessary lemmas and theorems for the subsequent subsections.

  \begin{lem}\label{samecoset}
  	Let $\mathbb{Z}_n=\langle x\rangle$ be a cyclic group of order $n$, and let $p$ be a prime number and $\alpha$ be a positive integer such that $p^{\alpha}\mid n$. Suppose that $H=\langle x^{p^{\alpha}}\rangle$ is a subgroup of $\mathbb{Z}_n$. Then for every positive integer $\ell$, the two elements $x^{\ell}$ and $x^{-\ell}$ belong to the same coset of $H$ if and only if $p^{\alpha}\mid2\ell$.
  \end{lem}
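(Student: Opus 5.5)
The plan is to reduce the statement to a divisibility condition on exponents, using the standard description of cosets in a cyclic group. Since $p^{\alpha}\mid n$, the subgroup $H=\langle x^{p^{\alpha}}\rangle$ has order $n/p^{\alpha}$ and index $p^{\alpha}$. Its cosets are exactly $x^{h}H$ for $0\le h\le p^{\alpha}-1$. Two elements $x^{i}$ and $x^{j}$ lie in the same coset precisely when $x^{i-j}\in H$.

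The first step is to identify $H$ concretely: $H=\{x^{m}\mid p^{\alpha}\mid m\}$, where $m$ is read modulo $n$. This is well defined because $p^{\alpha}\mid n$, so divisibility by $p^{\alpha}$ does not depend on the representative of $m$ modulo $n$.
\begin{itemize}
\item One inclusion is clear, since every element $x^{p^{\alpha}t}$ has exponent divisible by $p^{\alpha}$.
\item For the reverse inclusion, if $m=p^{\alpha}t$ then $x^{m}=(x^{p^{\alpha}})^{t}\in H$.
\item The only point needing care is that, if $x^{m}=x^{m'}$, then $m\equiv m'\pmod n$, and hence $m\equiv m'\pmod{p^{\alpha}}$.
\end{itemize}

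The second step applies this with $i=\ell$ and $j=-\ell$. The elements $x^{\ell}$ and $x^{-\ell}$ lie in the same coset of $H$ if and only if $(x^{-\ell})^{-1}x^{\ell}=x^{2\ell}\in H$. By the first step, this holds if and only if $p^{\alpha}\mid 2\ell$. This gives both directions of the lemma at once.

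There is no real obstacle here. The one thing to be careful about is the well-definedness of the description of $H$ modulo $n$, and that is exactly where the hypothesis $p^{\alpha}\mid n$ is used. Primality of $p$ plays no role in the argument.
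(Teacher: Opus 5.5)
Your proof is correct and follows essentially the same route as the paper: both reduce the claim to $x^{\pm 2\ell}\in H$, and both use $p^{\alpha}\mid n$ to pass from a congruence of exponents modulo $n$ to one modulo $p^{\alpha}$. Stating that step first as an explicit description of $H$ only reorganizes the paper's inline computation with $n=p^{\alpha}t_2$, and you are right that primality of $p$ is never used.
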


  \begin{proof}
  	Assume that the two elements $x^{\ell}$ and $x^{-\ell}$ belong to the same coset of $H$. It suffices to show that $p^{\alpha}\mid2\ell$. We have
  	\begin{equation}\label{samecoset1}
  		x^{\ell}H=x^{-\ell}H\rightarrow x^{-2\ell}\in H\rightarrow x^{-2\ell}=\left(x^{p^{\alpha}}\right)^{t_1}
  	\end{equation}
  	
    \noindent Since $x$ is a generator of the cyclic group $\mathbb{Z}_n$, we have $\operatorname{ord}(x)=n$, and since $p^{\alpha}\mid n$, we have $n=p^{\alpha}t_2$ for some integer $t_2$. By relation~\eqref{samecoset1}, we have
  	
  	\[
  	-2\ell\stackrel{p^{\alpha}t_2}{\equiv}p^{\alpha}t_1
  	\rightarrow
  	p^{\alpha}t_2\mid(-2\ell-p^{\alpha}t_1)
  	\rightarrow
  	p^{\alpha}\mid2\ell
  	\]
  	
  	\noindent Conversely, assume that $p^{\alpha}\mid2\ell$. We show that the two elements $x^{\ell}$ and $x^{-\ell}$ belong to the same coset of the subgroup $H$. It suffices to show that $x^{2\ell}\in H$. We have
  	
  	\[
  	p^{\alpha}\mid2\ell
  	\rightarrow
  	\exists\,t_3\in\mathbb{Z}:2\ell=p^{\alpha}t_3
  	\rightarrow
  	x^{2\ell}=x^{p^{\alpha}t_3}
  	\]
  	
  	\noindent Hence, $x^{2\ell}\in H$. This completes the proof.
  	\hfill$\blacksquare$
  \end{proof}
  The following corollaries are immediate consequences of Lemma~\ref{samecoset}.

  \begin{cor}\label{oddprime}
  	Under the assumptions of Lemma~\ref{samecoset}, if $p$ is an odd prime, then the two elements $x^\ell$ and $x^{-\ell}$ belong to the same coset of $H$ if and only if $p^\alpha\mid\ell$.
  \end{cor}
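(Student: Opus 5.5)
The corollary follows from Lemma~\ref{samecoset} once we see that, for odd $p$, the condition $p^\alpha\mid 2\ell$ can be replaced by $p^\alpha\mid\ell$. So the plan is to prove that equivalence and then substitute it into the lemma.

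\textbf{Step 1: invoke Lemma~\ref{samecoset}.} Under its hypotheses, $x^\ell$ and $x^{-\ell}$ lie in the same coset of $H=\langle x^{p^\alpha}\rangle$ if and only if $p^\alpha\mid 2\ell$.

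\textbf{Step 2: show $p^\alpha\mid 2\ell \iff p^\alpha\mid \ell$ for odd $p$.} The backward implication is immediate, since $\ell\mid 2\ell$. For the forward implication, note that $p$ is odd, so $\gcd(p,2)=1$ and hence $\gcd(p^\alpha,2)=1$. If $p^\alpha\mid 2\ell$, Euclid's lemma (or Gauss's lemma: $a\mid bc$ with $\gcd(a,b)=1$ gives $a\mid c$) yields $p^\alpha\mid\ell$. An equivalent route is to compare $p$-adic valuations: $v_p(2\ell)=v_p(\ell)$ because $v_p(2)=0$.

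\textbf{Step 3: combine.} Chaining the two equivalences gives the statement of the corollary. There is no real obstacle here. The only point needing care is that oddness of $p$ is exactly what makes $2$ invertible modulo $p^\alpha$. For $p=2$ the equivalence genuinely fails: $2^\alpha\mid 2\ell$ only forces $2^{\alpha-1}\mid\ell$, which is why this corollary is restricted to odd primes.
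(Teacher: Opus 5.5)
Your proposal is correct and is essentially the paper's argument: the paper gives no separate proof, calling the corollary an immediate consequence of Lemma~\ref{samecoset}. The one step left implicit there is that $\gcd(p^\alpha,2)=1$ for odd $p$ turns $p^\alpha\mid 2\ell$ into $p^\alpha\mid\ell$, and you supply exactly that.
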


  \begin{cor}\label{power2}
  	Under the assumptions of Lemma~\ref{samecoset}, if $p=2$, then the two elements $x^\ell$ and $x^{-\ell}$ belong to the same coset of $H$ if and only if $2^{\alpha-1}\mid\ell$.
  \end{cor}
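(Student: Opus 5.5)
This is a direct specialization of Lemma~\ref{samecoset} to $p=2$. By that lemma, $x^\ell$ and $x^{-\ell}$ lie in the same coset of $H=\langle x^{2^\alpha}\rangle$ if and only if $2^\alpha\mid 2\ell$. So the whole task is to show that, for a positive integer $\ell$ and $\alpha\ge 1$, the divisibility $2^\alpha\mid 2\ell$ is equivalent to $2^{\alpha-1}\mid\ell$.

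For the forward direction, suppose $2\ell=2^\alpha t$ for some integer $t$. Since $\alpha\ge1$, we may cancel a factor of $2$ and get $\ell=2^{\alpha-1}t$, so $2^{\alpha-1}\mid\ell$. For the converse, suppose $\ell=2^{\alpha-1}t$. Multiplying by $2$ gives $2\ell=2^\alpha t$, so $2^\alpha\mid2\ell$.

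Chaining this equivalence with Lemma~\ref{samecoset} gives the corollary. The hypotheses of the lemma, namely $2^\alpha\mid n$ and $\alpha$ a positive integer, carry over unchanged.

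There is no real obstacle here. The only point needing care is that $\alpha\ge1$, so that $2^{\alpha-1}$ is an integer and the cancellation is legitimate; this is guaranteed because $\alpha$ is assumed to be a positive integer. When $\alpha=1$ the condition $2^{0}\mid\ell$ holds for every $\ell$. This is consistent with the fact that $x^\ell$ and $x^{-\ell}$ always have the same parity of exponent, so they always lie in the same coset of $\langle x^2\rangle$.
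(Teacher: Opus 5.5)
Your proof is correct and takes the same route as the paper. The paper gives this corollary as an immediate consequence of Lemma~\ref{samecoset}, and the only extra content is the equivalence $2^{\alpha}\mid 2\ell \iff 2^{\alpha-1}\mid \ell$ for $\alpha\ge 1$. You verify that equivalence correctly by cancelling, or multiplying by, a factor of $2$.
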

  \begin{lem}\label{samecoset2}
  	Let $\mathbb{Z}_n=\langle x\rangle$ be a cyclic group of order $n$, and let $p$ be a prime number, and $\alpha$ be a positive integer. Suppose that $H=\langle x^{p^\alpha}\rangle$ is a subgroup of $\mathbb{Z}_n$. Then the two elements $x^k$ and $x^\ell$ belong to the same coset of $H$ if and only if $x^{k-\ell}\in H$.
  \end{lem}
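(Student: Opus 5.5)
This is the standard coset criterion for a subgroup, applied to the cyclic group $\mathbb{Z}_n=\langle x\rangle$ and $H=\langle x^{p^\alpha}\rangle$. Since $\mathbb{Z}_n$ is abelian, left and right cosets coincide, so I only need the general fact that $gH=g'H$ if and only if $g^{-1}g'\in H$ (equivalently $g'^{-1}g\in H$). The plan is to prove the two implications directly, in the same style as the proof of Lemma~\ref{samecoset}.

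For the forward direction, I assume $x^k$ and $x^\ell$ lie in the same coset, say $x^k,x^\ell\in yH$ for some $y\in\mathbb{Z}_n$. Then $x^k=yh_1$ and $x^\ell=yh_2$ with $h_1,h_2\in H$. Hence
\[
x^{k-\ell}=x^k(x^{\ell})^{-1}=yh_1h_2^{-1}y^{-1}=h_1h_2^{-1}\in H,
\]
using commutativity and closure of $H$ under products and inverses.

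For the converse, I assume $x^{k-\ell}\in H$. Then $x^k=x^\ell\,x^{k-\ell}\in x^\ell H$. Since $x^\ell\in x^\ell H$ as well, both elements lie in the coset $x^\ell H$. Because distinct cosets are disjoint, this coset is the common one, which completes the equivalence.

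No hypothesis on $p^\alpha$ dividing $n$ is needed, since $H$ is a subgroup regardless. The only point requiring care is to state the coset-equality criterion explicitly rather than appeal to it implicitly. I expect no real obstacle.
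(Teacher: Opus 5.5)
Your proposal is correct and follows essentially the same route as the paper's proof: the forward direction writes $x^k=x^dh_1$, $x^\ell=x^dh_2$ and cancels via commutativity to get $x^{k-\ell}=h_1h_2^{-1}\in H$. The converse uses $x^k=x^\ell x^{k-\ell}\in x^\ell H$.
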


  \begin{proof}
  	Assume that the two elements $x^k$ and $x^\ell$ belong to the same coset of $H$. It suffices to show that $x^{k-\ell}\in H$. By assumption, there exist $h_1,h_2\in H$ and an integer $d$ such that $x^k=x^dh_1$ and $x^\ell=x^dh_2$. Hence,
  	
  	\[
  	x^k(x^\ell)^{-1}
  	=(x^dh_1)(h_2^{-1}x^{-d})
  	=x^d(h_1h_2^{-1})x^{-d}
  	=h_1h_2^{-1}\in H.
  	\]
  	
  	Therefore, $x^{k-\ell}\in H$.
  	
  	\noindent Conversely, assume that $x^{k-\ell}\in H$. We show that $x^k$ and $x^\ell$ belong to the same coset of $H$. Since $x^k=x^\ell x^{k-\ell}$ and $x^{k-\ell}\in H$, it follows that $x^k\in x^\ell H$. Hence, $x^kH=x^\ell H$. Therefore, the two elements $x^k$ and $x^\ell$ belong to the same coset of $H$. This completes the proof.
  	\hfill$\blacksquare$
  \end{proof}
  \begin{prop}\label{prop:coset}
  	Let $\mathbb{Z}_n=\langle x\rangle$ be a cyclic group of order $n$, and let $p$ is a prime number, and $\alpha$ be a positive integer such that $p^\alpha\mid n$. Suppose that $H=\langle x^{p^\alpha}\rangle$ is a subgroup of $\mathbb{Z}_n$. Then for every positive integer $\ell$, $x^\ell\in x^dH$ for $0\le d\le p^\alpha-1$ if and only if $\ell\equiv d\pmod{p^\alpha}$.
  \end{prop}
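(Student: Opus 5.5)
The plan is to reduce the statement to Lemma~\ref{samecoset2}, applied with $x^d$ in place of $x^k$, and then to translate membership in $H$ into a divisibility condition on exponents. The first step is to fix $d$ with $0\le d\le p^\alpha-1$. By definition of cosets, $x^\ell\in x^dH$ holds if and only if $x^\ell H=x^dH$, that is, if and only if $x^\ell$ and $x^d$ lie in the same coset of $H$. By Lemma~\ref{samecoset2}, this is equivalent to $x^{\ell-d}\in H$.

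The second step is to show that for any integer $m$ one has $x^m\in H=\langle x^{p^\alpha}\rangle$ if and only if $p^\alpha\mid m$, using the hypothesis $p^\alpha\mid n$. For the forward direction, suppose $x^m=(x^{p^\alpha})^{t}$ for some integer $t$. Then $m\equiv p^\alpha t\pmod n$, so $n\mid m-p^\alpha t$. Writing $n=p^\alpha t_2$, exactly as in the proof of Lemma~\ref{samecoset}, gives $p^\alpha\mid m-p^\alpha t$, and hence $p^\alpha\mid m$. For the reverse direction, if $m=p^\alpha t$ then $x^m=(x^{p^\alpha})^t\in H$ directly.

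The final step combines these with $m=\ell-d$. This gives $x^\ell\in x^dH$ if and only if $p^\alpha\mid \ell-d$, i.e.\ $\ell\equiv d\pmod{p^\alpha}$. Since $0\le d\le p^\alpha-1$, this $d$ is exactly the least nonnegative residue of $\ell$ modulo $p^\alpha$, so the statement holds.

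The only point requiring any care is the hypothesis $p^\alpha\mid n$ in the second step. Without it, $\langle x^{p^\alpha}\rangle=\langle x^{\gcd(p^\alpha,n)}\rangle$, and the cosets would be indexed modulo $\gcd(p^\alpha,n)$ rather than modulo $p^\alpha$. With the hypothesis, the reduction of a congruence modulo $n$ to one modulo $p^\alpha$ is the routine argument already used in Lemma~\ref{samecoset}. I therefore expect no genuine obstacle.
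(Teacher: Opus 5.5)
Your proof is correct and is essentially the paper's argument: the paper unpacks $x^dH$ directly as $\{x^{d+p^\alpha m}\}$ and reduces the resulting congruence modulo $n$ to one modulo $p^\alpha$ using $p^\alpha\mid n$. That reduction is exactly your second step, and you only route the first step through Lemma~\ref{samecoset2}. Your remark that without $p^\alpha\mid n$ the cosets would be indexed modulo $\gcd(p^\alpha,n)$ correctly pinpoints where that hypothesis is needed.
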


  \begin{proof}
  	Assume that $x^\ell\in x^dH$ for $0\le d\le p^\alpha-1$. We show that $\ell\stackrel{p^\alpha}{\equiv} d$. We have
  	\[
  	x^\ell\in x^dH
  	\Longrightarrow
  	x^\ell\in\{x^{d+p^\alpha m}\mid m\in\mathbb Z\}.
  	\]
  	
  \noindent	Suppose that
  	\[
  	\exists\,m_1\in\mathbb Z:\;
  	x^\ell=x^{d+p^\alpha m_1}
  	\Longrightarrow
  	\ell \overset{p^\alpha t}{\equiv} d+p^\alpha m_1
  	\Longrightarrow
  	p^\alpha t\mid\ell-d-p^\alpha m_1
  	\Longrightarrow
  	\ell\stackrel{p^\alpha}{\equiv} d
  	\]
  \noindent	Conversely, assume that
  $\ell\equiv d\pmod{p^\alpha}$. We show that $x^\ell\in x^dH$ for $0\le d\le p^\alpha-1$. We have
  	\[
  	\ell\stackrel{p^\alpha}{\equiv} d
  	\Longrightarrow
  	p^\alpha\mid(\ell-d)
  	\Longrightarrow
  	p^\alpha m_2=\ell-d
  	\Longrightarrow
  	p^\alpha m_2+d=\ell
  	\Longrightarrow
  	x^\ell=x^{p^\alpha m_2+d}
  	\]
  \noindent	and this means that $x^\ell\in x^dH$. This completes the proof.
  \hfill$\blacksquare$
  \end{proof}
   \begin{lem}\label{nomulp}
  Let $\mathbb{Z}_n=\langle x\rangle$ be a cyclic group of order $n$, let $p$ be a prime number, and let $\alpha$ and $\ell$ be positive integers such that $p^\alpha\mid n$. Let $H=\langle x^{p^\alpha}\rangle$ be a subgroup of $\mathbb{Z}_n$. If $\ell\equiv1\pmod p$ and $x^\ell\in x^{r_\ell}H$ and $x^{-\ell}\in x^{r_{-\ell}}H$, then $p\nmid r_\ell$ and $p\nmid r_{-\ell}$.	
  \end{lem}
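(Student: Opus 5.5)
By Proposition~\ref{prop:coset}, the coset of $H$ containing a power $x^m$ is determined by the residue of $m$ modulo $p^\alpha$. So the indices $r_\ell$ and $r_{-\ell}$ in the statement are forced: they are the representatives in $[0,p^\alpha-1]$ of $\ell$ and $-\ell$ modulo $p^\alpha$. Concretely, $x^\ell\in x^{r_\ell}H$ gives $\ell\equiv r_\ell \pmod{p^\alpha}$. Similarly, $x^{-\ell}\in x^{r_{-\ell}}H$ gives $-\ell\equiv r_{-\ell}\pmod{p^\alpha}$.

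Since $\alpha\ge 1$, we have $p\mid p^\alpha$, so both congruences descend to congruences modulo $p$:
\[
r_\ell\equiv \ell\equiv 1 \pmod p, \qquad r_{-\ell}\equiv -\ell\equiv -1\equiv p-1 \pmod p .
\]
It follows that $r_\ell$ is not divisible by $p$, because $1\not\equiv 0 \pmod p$. Also, $r_{-\ell}$ is not divisible by $p$, because $-1\not\equiv 0\pmod p$ for every prime $p$ (including $p=2$, where $-1\equiv 1$). This is the whole argument.

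The only point requiring care is a small gap in Proposition~\ref{prop:coset}, which is stated for positive exponents. To apply it to $x^{-\ell}$, I would rewrite $x^{-\ell}=x^{n-\ell}$ with $n-\ell>0$, after reducing $\ell$ modulo $n$ if needed. Since $p^\alpha\mid n$, we have $n-\ell\equiv -\ell\pmod{p^\alpha}$, so the proposition yields $r_{-\ell}\equiv -\ell \pmod{p^\alpha}$. Alternatively, one can argue directly from Lemma~\ref{samecoset2}: $x^{-\ell}x^{-r_{-\ell}}\in H$ means $p^\alpha\mid(-\ell-r_{-\ell})$, using that $H=\langle x^{p^\alpha}\rangle$ consists exactly of the powers $x^m$ with $p^\alpha\mid m$, which holds because $p^\alpha\mid n$. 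Beyond this bookkeeping there is no real obstacle.
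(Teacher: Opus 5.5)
Your proof is correct and follows the paper's own argument. Proposition~\ref{prop:coset} gives $\ell\equiv r_\ell \pmod{p^\alpha}$ and $-\ell\equiv r_{-\ell}\pmod{p^\alpha}$, and reducing modulo $p$ yields $r_\ell\equiv 1$ and $r_{-\ell}\equiv p-1\pmod p$; your extra care about the negative exponent (via $x^{-\ell}=x^{n-\ell}$ or Lemma~\ref{samecoset2}) is a small tightening that the paper skips.
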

  \begin{proof}
  	According to Proposition~\ref{prop:coset}, we have
  	\begin{equation}\label{eq:lem66a}
  		x^\ell\in x^{r_\ell}\langle x^{p^\alpha}\rangle
  		\Longrightarrow
  		\ell\stackrel{p^\alpha}{\equiv} r_\ell
  		\Longrightarrow
  		\ell=p^\alpha m_1+r_\ell
  	\end{equation}
  	where $m_1\in\mathbb{Z}$. Since $\ell=pm_2+1$ for some $m_2\in\mathbb{Z}$, substituting $\ell$ into \eqref{eq:lem66a} yields
  	\[
  	pm_2+1=p^\alpha m_1+r_\ell
  	\Longrightarrow
  	p(m_2-p^{\alpha-1}m_1)=r_\ell-1
  	\Longrightarrow
  	r_\ell\stackrel{p}{\equiv} 1
  	\]
  	Hence, $p\nmid r_\ell$.
  	Now, for $-\ell$ we have
  	\begin{equation}\label{eq:lem66b}
  		x^{-\ell}\in x^{r_{-\ell}}H
  		\Longrightarrow
  		-\ell\stackrel{p^\alpha}{\equiv} r_{-\ell}	
  		\Longrightarrow
  		\ell=-p^\alpha m_3-r_{-\ell}
  	\end{equation}
  	where $m_3\in\mathbb{Z}$. Since $-\ell\equiv-1\pmod p$, we have $\ell=-pm_4-(p-1)$
  	for some $m_4\in\mathbb{Z}$. Substituting $\ell$ into \eqref{eq:lem66b} gives
  	\[
  	-pm_4-(p-1)=-p^\alpha m_3-r_{-\ell}
  	\Longrightarrow
  	p(-m_4+p^{\alpha-1}m_3)=p-1-r_{-\ell}
  	\Longrightarrow
  	p-1\stackrel{p}{\equiv} r_{-\ell}
  	\]
  	Hence, $p\nmid r_{-\ell}$.
   This completes the proof.\hfill$\blacksquare$
  \end{proof}
  \begin{prop}\label{prop:inverse_coset}
  	Let $\mathbb{Z}_n=\langle x\rangle$ be a cyclic group of order $n$, let $p$ be a prime number, and let $\alpha$ be a positive integer such that $p^\alpha\mid n$. Let $H=\langle x^{p^\alpha}\rangle$ be a subgroup of $\mathbb{Z}_n$. If $x^k\in x^dH$ for some integer $k$ and $0\le d\le p^\alpha-1$, then $x^{-k}\in x^{p^\alpha-d}H$.
  \end{prop}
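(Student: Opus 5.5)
We reduce the claim to a congruence and then apply Proposition~\ref{prop:coset} twice, once in each direction.

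First, since $x^k\in x^dH$ with $0\le d\le p^\alpha-1$, Proposition~\ref{prop:coset} (whose proof works for any integer exponent, not only positive ones) gives $k\equiv d\pmod{p^\alpha}$. Concretely, $k=p^\alpha m+d$ for some integer $m$.

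Negating this congruence gives
\[
-k=-p^\alpha m-d=p^\alpha(-m-1)+(p^\alpha-d),
\]
so $-k\equiv p^\alpha-d\pmod{p^\alpha}$. Equivalently, $x^{-k}=x^{p^\alpha-d}\bigl(x^{p^\alpha}\bigr)^{-m-1}$. Since $(x^{p^\alpha})^{-m-1}\in H$, this already shows $x^{-k}\in x^{p^\alpha-d}H$. The reverse direction of Proposition~\ref{prop:coset} is not needed at all; the direct computation suffices.

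The only point needing care is the boundary case $d=0$. Then $p^\alpha-d=p^\alpha$ lies outside the range $[0,p^\alpha-1]$ used in Proposition~\ref{prop:coset}. However, $x^{p^\alpha}H=H=x^0H$, so the statement still holds, and the representative is read modulo $p^\alpha$. We will note this explicitly.

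We will also remark that the hypothesis $p^\alpha\mid n$ is what makes the cosets $x^dH$, $0\le d\le p^\alpha-1$, distinct, with $|\mathbb{Z}_n:H|=p^\alpha$. That fact is what justifies using Proposition~\ref{prop:coset} in the first step. No real obstacle is expected beyond handling $d=0$ and possibly negative exponents.
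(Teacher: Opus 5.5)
Your argument is correct and is essentially the paper's proof: both pass from $x^k\in x^dH$ to $k\equiv d\pmod{p^\alpha}$ using $p^\alpha\mid n$, negate to get $-k\equiv p^\alpha-d\pmod{p^\alpha}$, and conclude $x^{-k}\in x^{-d}H=x^{p^\alpha-d}H$, which also covers your boundary case $d=0$. As an aside, the congruence step is not even needed, since $x^k=x^dh$ gives $x^{-k}=x^{-d}h^{-1}\in x^{-d}H=x^{p^\alpha-d}H$ directly.
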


  \begin{proof}
  	Assume that $x^k\in x^dH$. By the definition of a coset, there exists $h\in H$ such that $x^k=x^dh$. Since $H=\langle x^{p^\alpha}\rangle$, there exists an integer $t$ such that $h=x^{tp^\alpha}$. Hence, $x^k=x^{d+tp^\alpha}$, which implies that $k\equiv d\pmod{p^\alpha}$. Therefore, $-k\equiv-d\pmod{p^\alpha}$,  that is $-k=-d+sp^\alpha$ for some integer $s$. Thus, $x^{-k}=x^{-d}x^{sp^\alpha}$. Since $x^{sp^\alpha}\in H$, this implies that $x^{-k}\in x^{-d}H$. We have $x^{-d}H=x^{p^\alpha-d}H$, it follows that $x^{-k}\in x^{p^\alpha-d}H$. This completes the proof.\hfill$\blacksquare$
  \end{proof}

 \begin{cor}\label{cor:inverse_coset1}
 	Suppose that \(3\nmid k\) and \(x^k\in x^d\langle x^{3^\alpha}\rangle\), where \(1\leq d\leq3^\alpha-1\). Then \(x^{-k}\in x^{3^\alpha-d}\langle x^{3^\alpha}\rangle\). Moreover, if \(d\stackrel{3}{\equiv}1\), then \(3^\alpha-d\stackrel{3}{\equiv}-d\stackrel{3}{\equiv}-1\stackrel{3}{\equiv}2\). Similarly, if \(d\stackrel{3}{\equiv}2\), then \(3^\alpha-d\stackrel{3}{\equiv}1\).
 \end{cor}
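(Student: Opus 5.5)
The plan is to obtain the first assertion directly from Proposition~\ref{prop:inverse_coset} with $p=3$, and the remaining two assertions from an elementary congruence computation modulo $3$. Since $3^\alpha \mid n$ is implicit in the setting, Proposition~\ref{prop:inverse_coset} applies. With $H=\langle x^{3^\alpha}\rangle$ and $x^k\in x^dH$, it gives $x^{-k}\in x^{3^\alpha-d}H$ at once.

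It is worth checking that $3^\alpha-d$ is a legitimate coset representative in the range $[1,3^\alpha-1]$. By Proposition~\ref{prop:coset}, $x^k\in x^dH$ forces $k\equiv d\pmod{3^\alpha}$, and hence $k\equiv d\pmod 3$. Because $3\nmid k$, we get $3\nmid d$, so $d\neq 0$ and $1\le 3^\alpha-d\le 3^\alpha-1$.

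For the congruences, I will use $3\mid 3^\alpha$ (as $\alpha\ge1$), which gives $3^\alpha-d\equiv -d\pmod 3$. If $d\equiv1\pmod3$, then $-d\equiv-1\equiv2\pmod3$. If $d\equiv2\pmod3$, then $-d\equiv-2\equiv1\pmod3$. These two cases are exhaustive, since $3\nmid d$ was shown above.

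No real obstacle is expected. The only point needing care is the justification that $d\not\equiv0\pmod 3$, which comes from $3\nmid k$ via Proposition~\ref{prop:coset}. This is what guarantees that the two listed residue cases cover everything.
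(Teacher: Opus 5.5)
Your proposal is correct and follows the paper's route: the first claim is Proposition~\ref{prop:inverse_coset} with $p=3$, and the congruences follow from $3^\alpha-d\equiv -d\pmod 3$. The paper does not spell out your extra check that $3\nmid d$ (from $k\equiv d\pmod{3^\alpha}$ and $3\nmid k$), but it is a useful justification that the two residue cases are exhaustive.
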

  Using the argument in the proof of Theorem~\ref{khi3}, we obtain the following remark, which will be used throughout the subsequent subsections.
  \begin{rem}\label{rem:7}
  	Suppose that \(C\) is an \((a,b)\)-regular set for \(\Gamma=\operatorname{Cay}(\mathbb{Z}_n,S)\). Let \(p\) be a prime and let \(\alpha\) be a positive integer such that  \(p^{\alpha}\mid n\). Therefore, the homomorphism
  	\[
  	\begin{aligned}
  		\varphi:\; \mathbb{Z}_n &\longrightarrow \left\langle x^{\frac{n}{p^{\alpha}}}\right\rangle\\
  		z &\longmapsto z^{\frac{n}{p^{\alpha}}}
  	\end{aligned}
  	\]
    is well defined. Applying the homomorphism $\varphi$ to both sides of Equation~\eqref{eqq}, we obtain
  	\[
  	\left(\sum_{r=0}^{p^{\alpha}-1}s_rX^r\right)
  	\left(\sum_{h=0}^{p^{\alpha}-1}t_hX^h\right)
  	=
  	\frac{b\,n}{p^{\alpha}}
  	\left(\sum_{m=0}^{p^{\alpha}-1}X^m\right)
  	\]
  	where $	X:=x^{\frac{n}{p^{\alpha}}},\
  	(\bar{S}+(b-a))^{\varphi}
  	=
  	\sum_{r=0}^{p^{\alpha}-1}s_rX^r,\	(\bar{C})^{\varphi}
  	=
  	\sum_{h=0}^{p^{\alpha}-1}t_hX^h\ and\ s_r,t_h\in\mathbb{N}_0\, and\ s_0\in\mathbb{Z}.$ So we have:
  	\[
  	\begin{bmatrix}
  		s_0&s_1&\cdots&s_{p^{\alpha}-1}\\
  		s_1&s_2&\cdots&s_0\\
  		\vdots&\vdots&\ddots&\vdots\\
  		s_{p^{\alpha}-1}&s_0&\cdots&s_{p^{\alpha}-2}
  	\end{bmatrix}
  	\begin{bmatrix}
  		t_0\\
  		t_{p^{\alpha}-1}\\
  		\vdots\\
  		t_1
  	\end{bmatrix}
  	=
  	\frac{b\,n}{p^{\alpha}}
  	\begin{bmatrix}
  		1\\
  		1\\
  		\vdots\\
  		1
  	\end{bmatrix}
  	\]
  	where\ $ S_d := x^d \langle x^{p^{\alpha}} \rangle \cap S\ for\ 0 \le d \le p^{\alpha}-1, s_0:=\left|\left\langle x^{p^{\alpha}}\right\rangle\cap S\right|+(b-a),\ 	s_d:=\left|x^d\left\langle x^{p^{\alpha}}\right\rangle\cap S\right| for\ 1\le d\le p^{\alpha}-1\ and\ t_h:=\left|x^h\left\langle x^{p^{\alpha}}\right\rangle\cap C\right| for\ 0\le h\le p^{\alpha}-1.$
  \end{rem}
  In the following proposition, we shall use the notation $S_d$, $s_0$, and $s_d$ introduced in Remark~\ref{rem:7}.

  \begin{prop}\label{prop:podd}
  	Let $n$, $k$, $\ell$ be positive integers,
  	$
  	\Gamma=\operatorname{Cay}(\mathbb{Z}_n,\{x^k,x^{-k},x^\ell,x^{-\ell}\})
  	$
  	be a connected circulant quartic graph. Let $p$ is an odd prime, $\alpha$ be a positive integer such that $p^\alpha\mid n$, and
  	$
  	H=\langle x^{p^\alpha}\rangle.
  	$
  	Suppose that $C$ is an $(a,b)$-regular set in $\Gamma$ and
  	$
  	k\equiv1\pmod p,$ and
  $\ell\equiv1\pmod p.
  	$
  	Then the following statements hold:
 	\begin{enumerate}
  	\item[(i)] The two elements of each of the sets $\{x^k,x^{-k}\}$, $\{x^\ell,x^{-\ell}\}$, $\{x^k,x^{-\ell}\}$, and $\{x^{-k},x^\ell\}$ do not belong to the same coset of $H$.
  	\item[(ii)] If $p^\alpha \mid k-\ell$, then the two elements of each of the sets $\{x^k,x^\ell\}$ and $\{x^{-k},x^{-\ell}\}$ belong to the same coset of $H$.
 	\item[(iii)] If the two elements of each of the sets $\{x^k,x^\ell\}$ and $\{x^{-k},x^{-\ell}\}$ belong to the same coset of $H$, then
 	$
 	s_0=b-a,$ $s_{i_1}=s_{i_2}=2,
 	$
 	where $i_1$ and $i_2$ are distinct integers satisfying
 	$
 	1\le i_1,i_2\le p^\alpha-1.
 	$
  	\item[(iv)] If $\alpha\geq 2$, the two elements of each of the sets $\{x^k,x^\ell\}$ and $\{x^{-k},x^{-\ell}\}$ do not belong to the same coset of $H$, then
  	$
  	s_0=b-a,$ $ s_{i_1}=s_{i_2}=s_{i_3}=s_{i_4}=1,$
  	where $i_1$, $i_2$, $i_3$, and $i_4$ are pairwise distinct integers satisfying
  	$
  	1\le i_1,i_2,i_3,i_4\le p^\alpha-1.
  	$
  	\end{enumerate}
  \end{prop}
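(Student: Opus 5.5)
The plan is to reduce every assertion to a congruence modulo $p^\alpha$ or modulo $p$. Proposition~\ref{prop:coset} identifies the coset of $H$ containing $x^m$ with the residue of $m$ modulo $p^\alpha$. Lemma~\ref{samecoset2} says that two elements $x^u,x^v$ lie in the same coset exactly when $p^\alpha\mid u-v$.

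For (i), the pair $\{x^k,x^{-k}\}$ lies in one coset iff $p^\alpha\mid k$, by Corollary~\ref{oddprime}. This is impossible because $k\equiv1\pmod p$. The same argument handles $\{x^\ell,x^{-\ell}\}$. For the mixed pairs $\{x^k,x^{-\ell}\}$ and $\{x^{-k},x^\ell\}$, Lemma~\ref{samecoset2} requires $p^\alpha\mid k+\ell$. But $k+\ell\equiv2\pmod p$, and $p$ is odd, so $p\nmid k+\ell$ and the pairs lie in different cosets. Part (ii) is immediate from Lemma~\ref{samecoset2}: $p^\alpha\mid k-\ell$ gives $x^{k-\ell}\in H$ and also $x^{-k-(-\ell)}=x^{-(k-\ell)}\in H$.

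For (iii) and (iv), I will compute $(\overline{S}+b-a)^{\varphi}$ as in Remark~\ref{rem:7}. None of the four elements of $S$ lies in $H$, since each exponent is $\equiv\pm1\pmod p$ (equivalently, Lemma~\ref{nomulp} gives $p\nmid r_{\pm k},p\nmid r_{\pm\ell}$). Therefore $|\langle x^{p^\alpha}\rangle\cap S|=0$ and $s_0=b-a$.

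In (iii), the hypothesis puts $x^k,x^\ell$ in one coset $x^{i_1}H$ and $x^{-k},x^{-\ell}$ in another coset $x^{i_2}H$. By Proposition~\ref{prop:inverse_coset}, $i_2\equiv -i_1\pmod{p^\alpha}$, and (i) shows $i_1\ne i_2$. Hence $s_{i_1}=s_{i_2}=2$, all other $s_d$ with $d\ge1$ vanish, and $1\le i_1,i_2\le p^\alpha-1$ because neither coset is $H$.

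In (iv), (i) already separates the pairs $\{x^k,x^{-k}\}$, $\{x^\ell,x^{-\ell}\}$, $\{x^k,x^{-\ell}\}$ and $\{x^{-k},x^\ell\}$. The hypothesis separates the remaining two pairs $\{x^k,x^\ell\}$ and $\{x^{-k},x^{-\ell}\}$. So the four elements lie in four pairwise distinct nontrivial cosets, which gives four indices with $s_{i_j}=1$.

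The main subtlety is reading the hypothesis of (iv) correctly: it must mean that neither pair lies in a common coset. This holds automatically, since $x^k,x^\ell$ share a coset iff $p^\alpha\mid k-\ell$ iff $x^{-k},x^{-\ell}$ do. The condition $\alpha\ge2$ is then only needed for the nontriviality of the case. Indeed, when $\alpha=1$ we have $k\equiv\ell\pmod p$, so the pairs are forced into the same coset and (iv) is vacuous; this needs no separate argument.
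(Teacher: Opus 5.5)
Your proposal is correct and follows essentially the same route as the paper. Parts (i) and (ii) go through Corollary~\ref{oddprime} and Lemma~\ref{samecoset2}, using $k+\ell\equiv 2\pmod p$ with $p$ odd. Parts (iii) and (iv) come from $S\cap H=\varnothing$ together with (i) and the hypothesis, which separate the cosets. Your appeal to Proposition~\ref{prop:inverse_coset} in (iii) and your remark that (iv) is vacuous when $\alpha=1$ are both correct but not needed.
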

 \begin{proof}
\textbf{(i)} Since $k\stackrel{p}{\equiv}1$ and $\ell\stackrel{p}{\equiv}1$, we have $p^\alpha\nmid k$ and $p^\alpha\nmid \ell$. Therefore, by Corollary~\ref{oddprime}, the elements in each of the sets $\{x^k,x^{-k}\}$ and $\{x^\ell,x^{-\ell}\}$ do not belong to the same coset of $H$. Moreover, $k+\ell\stackrel{p}{\equiv}2$. Since $p$ is an odd prime, it follows that $p\nmid k+\ell$, and hence $p^\alpha\nmid k+\ell$. Therefore, $x^{k+\ell}\notin H$. By Lemma~\ref{samecoset2}, the elements $x^k$ and $x^{-\ell}$ do not belong to the same coset of $H$. Similarly, the elements $x^{-k}$ and $x^\ell$ do not belong to the same coset of $H$.

\textbf{(ii)} Assume that $p^\alpha\mid k-\ell$. Then $x^{k-\ell}\in H$. Hence, by Lemma~\ref{samecoset2}, the elements $x^k$ and $x^\ell$ belong to the same coset of $H$. Moreover, $p^\alpha\mid \ell-k $, and we have $x^{\ell-k}\in H$. Therefore, by Lemma~\ref{samecoset2}, the elements $x^{-k}$ and $x^{-\ell}$ also belong to the same coset of $H$.

\textbf{(iii)} Since $k\stackrel{p}{\equiv}1$ and $\ell\stackrel{p}{\equiv}1$, it follows that none of the elements $x^k$, $x^{-k}$, $x^\ell$, and $x^{-\ell}$ belongs to $H$. Therefore, $S\cap H=\varnothing$. By the definition of $s_0$ in Remark~\ref{rem:7} we have $s_0=|S\cap H|+b-a=b-a$. Moreover, by the assumption in part~(iiii), we have 	
\[
\exists\, d_1\in\{1,\ldots,p^{\alpha}-1\} :
\{x^{k},x^\ell\}\subseteq S_{d_1}
\]
\[
\exists\, d_2\in\{1,\ldots,p^{\alpha}-1\} :
\{x^{-k},x^{-\ell}\}\subseteq S_{d_2}
\]
These two cosets are distinct; otherwise, $x^k$ and $x^{-k}$ would belong to the same coset of $H$, contradicting part~(i). Therefore,
by the definition of $s_d$ in Remark~\ref{rem:7}, there exist distinct positive integers $i_1$ and $i_2$ in $\{1,\ldots,p^\alpha-1\}$ such that $s_{i_1}=s_{i_2}=2$.

\textbf{(iv)} Similar to part~(iii), we have $s_0=b-a$. Assume that $x^k\in S_{i_1}$, $x^{-k}\in S_{i_2}$, $x^\ell\in S_{i_3}$, and $x^{-\ell}\in S_{i_4}$ for some $i_1, i_2, i_3, i_4 \in \{1,\ldots,p^{\alpha}-1\}$. By part~(i), we have $i_1\neq i_2$, $i_3\neq i_4$, $i_1\neq i_4$, and $i_2\neq i_3$. Moreover, by the assumption of part~(iv), $i_1\neq i_3$ and $i_2\neq i_4$. Therefore, $i_1, i_2, i_3, i_4$ are pairwise distinct. By Remark~\ref{rem:7}, it follows that $s_{i_1}=s_{i_2}=s_{i_3}=s_{i_4}=1$. \hfill$\blacksquare$	
 	\end{proof}
 	
   \begin{prop}\label{prop:even2}
 	Let $n$, $k$, $\ell$ be positive integers,
 	$
 	\Gamma=\operatorname{Cay}(\mathbb{Z}_n,\{x^k,x^{-k},x^\ell,x^{-\ell}\})
 	$
 	be a connected circulant quartic graph. Let $\alpha\ge2$ be a positive integer such that $2^\alpha\mid n$, and
 	$H=\langle x^{2^\alpha}\rangle$.
 	Suppose that $C$ is an $(a,b)$-regular set in $\Gamma$ and
 	$
 	k\equiv1\pmod 2,$ and
 	$\ell\equiv1\pmod 2.
 	$
 	Then the following statements hold:
 	\begin{enumerate}
 		\item[(i)] The two elements of each of the sets $\{x^k,x^{-k}\}$, $\{x^\ell,x^{-\ell}\}$, do not belong to the same coset of $H$.
 		\item[(ii)] If $2^\alpha \mid k-\ell$, then the two elements of each of the sets $\{x^k,x^\ell\}$ and $\{x^{-k},x^{-\ell}\}$ belong to the same coset of $H$.
 		\item[(iii)] If $2^\alpha \mid k+\ell$, then the two elements of each of the sets $\{x^k,x^{-\ell}\}$ and $\{x^{-k},x^{\ell}\}$ belong to the same coset of $H$.
 		\item[(iv)] If the two elements of each of the sets $\{x^k,x^\ell\}$ and $\{x^{-k},x^{-\ell}\}$, or of the sets $\{x^k,x^{-\ell}\}$ and $\{x^{-k},x^\ell\}$, belong to the same coset of $H$, then $s_0=b-a$, $s_{i_1}=s_{i_2}=2$, where $i_1$ and $i_2$ are distinct integers satisfying $1\le i_1,i_2\le 2^\alpha-1$.
 		\item[(v)] If $\alpha\geq 3$, and the two elements of each of the sets $\{x^k,x^\ell\}$, $\{x^{-k},x^{-\ell}\}$, $\{x^k,x^{-\ell}\}$ and $\{x^{-k},x^\ell\}$, do not belong to the same coset of $H$, then $s_0=b-a$, $s_{i_1}=s_{i_2}=s_{i_3}=s_{i_4}=1$, where $i_1$, $i_2$, $i_3$, and $i_4$ are pairwise distinct integers satisfying $1\le i_1,i_2,i_3,i_4\le 2^\alpha-1$.
 	\end{enumerate}
 \end{prop}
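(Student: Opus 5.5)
The plan is to mirror the proof of Proposition~\ref{prop:podd}, using Corollary~\ref{power2} in place of Corollary~\ref{oddprime} and Lemma~\ref{samecoset2} for the mixed pairs. The one genuine difference is that for $p=2$ the element $x^{k+\ell}$ can lie in $H$, because $k+\ell$ is even. This is why part~(iii) is added and why part~(iv) has two alternatives.

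For part~(i), Corollary~\ref{power2} says that $x^k$ and $x^{-k}$ lie in the same coset of $H$ if and only if $2^{\alpha-1}\mid k$. Since $\alpha\ge2$, we have $2\mid 2^{\alpha-1}$, while $k$ is odd, so this cannot happen. The same argument applies to $\ell$.

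Parts~(ii) and~(iii) follow from Lemma~\ref{samecoset2}.
\begin{itemize}
\item If $2^\alpha\mid k-\ell$, then $x^{k-\ell}\in H$ and $x^{-k-(-\ell)}=x^{\ell-k}\in H$. This gives the same-coset conclusions for $\{x^k,x^\ell\}$ and $\{x^{-k},x^{-\ell}\}$.
\item If $2^\alpha\mid k+\ell$, then $x^{k-(-\ell)}=x^{k+\ell}\in H$ and $x^{-k-\ell}\in H$. This handles $\{x^k,x^{-\ell}\}$ and $\{x^{-k},x^\ell\}$.
\end{itemize}

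For parts~(iv) and~(v), first note that $k$ and $\ell$ are odd and $2^\alpha$ is even, so none of $\pm k,\pm\ell$ is divisible by $2^\alpha$. Hence $S\cap H=\varnothing$, and by Remark~\ref{rem:7} we get $s_0=b-a$.

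In part~(iv), the hypothesis places $S$ in two cosets $S_{d_1}$ and $S_{d_2}$ with $d_1,d_2\neq0$. In the first alternative $S_{d_1}\supseteq\{x^k,x^\ell\}$ and $S_{d_2}\supseteq\{x^{-k},x^{-\ell}\}$; in the second, $S_{d_1}\supseteq\{x^k,x^{-\ell}\}$ and $S_{d_2}\supseteq\{x^{-k},x^\ell\}$. In both alternatives $d_1\ne d_2$, since otherwise $x^k$ and $x^{-k}$ would share a coset, contradicting part~(i). Therefore $s_{d_1}=s_{d_2}=2$.

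In part~(v), place $x^k,x^{-k},x^\ell,x^{-\ell}$ in cosets indexed by $i_1,i_2,i_3,i_4$ respectively. Part~(i) gives $i_1\ne i_2$ and $i_3\ne i_4$. The hypothesis of~(v) gives $i_1\ne i_3$, $i_2\ne i_4$, $i_1\ne i_4$ and $i_2\ne i_3$. So the four indices are pairwise distinct, each is nonzero, and $s_{i_j}=1$ for each $j$.

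No step is really hard. The only delicate point is the bookkeeping in~(iv): one must check that in both alternatives the two cosets are distinct, and part~(i) supplies exactly this. The assumption $\alpha\ge3$ in~(v) is not used in the argument itself. It is presumably there only because four distinct nonzero residues need $2^\alpha-1\ge4$; the distinctness argument goes through regardless.
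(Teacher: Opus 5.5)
Your proof is correct and follows essentially the same route as the paper: Corollary~\ref{power2} for part~(i), Lemma~\ref{samecoset2} for parts~(ii) and~(iii), and $S\cap H=\varnothing$ together with part~(i) to separate the cosets in parts~(iv) and~(v). Your side remark is also right: for $\alpha=2$, exactly one of $k-\ell$ and $k+\ell$ is divisible by $4$ (both are even and their sum is $2k\equiv 2\pmod 4$), so the hypothesis of~(v) is then vacuous and the assumption $\alpha\ge3$ is not logically needed.
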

  \begin{proof}
\textbf{(i)} Since $k$ and $\ell$ are odd and $\alpha\ge2$, we have $2^{\alpha-1}\nmid k$ and $2^{\alpha-1}\nmid\ell$. Hence, by Corollary~\ref{power2}, the two elements of each of the sets $\{x^k,x^{-k}\}$ and $\{x^\ell,x^{-\ell}\}$ do not belong to the same coset of $H$.  	
  	
\textbf{(ii)} Assume that $2^\alpha\mid k-\ell$. Then $x^{k-\ell}\in H$. Hence, by Lemma~\ref{samecoset2}, the elements $x^k$ and $x^\ell$ belong to the same coset of $H$. Moreover, $2^\alpha\mid \ell-k$, and we have $x^{\ell-k}\in H$. Therefore, by Lemma~\ref{samecoset2}, the elements $x^{-k}$ and $x^{-\ell}$ also belong to the same coset of $H$.  	

\textbf{(iii)} The proof is similar to that of part (ii).

\textbf{(iv)} Since $k\stackrel{2}{\equiv}1$ and $\ell\stackrel{2}{\equiv}1$, it follows that none of the elements $x^k$, $x^{-k}$, $x^\ell$, and $x^{-\ell}$ belongs to $H$. Therefore, $S\cap H=\varnothing$. By the definition of $s_0$ in Remark~\ref{rem:7}, we have $s_0=|S\cap H|+b-a=b-a$.
Moreover, by the assumption in part~(iv), there exist $d_1,d_2\in\{1,\ldots,2^\alpha-1\}$ such that
$
\{x^k,x^\ell\}\subseteq S_{d_1}
$
and
$
\{x^{-k},x^{-\ell}\}\subseteq S_{d_2}.
$
These two cosets are distinct; otherwise, $x^k$ and $x^{-k}$ would belong to the same coset of $H$, contradicting part~(i). Therefore, by the definition of $s_d$ in Remark~\ref{rem:7}, there exist distinct positive integers $i_1$ and $i_2$ in $\{1,\ldots,2^\alpha-1\}$ such that $s_{i_1}=s_{i_2}=2$. The other case, namely, $\{x^k,x^{-\ell}\}$ and $\{x^{-k},x^\ell\}$, follows similarly.

\textbf{(v)} Similar to part~(iv), we have $s_0=b-a$. Assume that $x^k\in S_{i_1}$, $x^{-k}\in S_{i_2}$, $x^\ell\in S_{i_3}$, and $x^{-\ell}\in S_{i_4}$ for some $i_1,i_2,i_3,i_4\in\{1,\ldots,2^\alpha-1\}$. By part~(i), we have $i_1\neq i_2$, $i_3\neq i_4$. Moreover, by the assumption of part~(v), $i_1\neq i_3$, $i_2\neq i_4$, $i_1\neq i_4$ and $i_2\neq i_3$. Therefore, $i_1,i_2,i_3,i_4$ are pairwise distinct. By Remark~\ref{rem:7}, it follows that $s_{i_1}=s_{i_2}=s_{i_3}=s_{i_4}=1$. \hfill$\blacksquare$	
  	\end{proof}
\begin{prop}\label{prop:podd2}
	Let $n$, $k$, $\ell$ be positive integers,
	$
	\Gamma=\operatorname{Cay}(\mathbb{Z}_n,\{x^k,x^{-k},x^\ell,x^{-\ell}\})
	$
	be a connected circulant quartic graph. Let $p$ be a prime number, $\alpha$ be a positive integer such that $p^\alpha\mid n$, and
	$
	H=\langle x^{p^\alpha}\rangle.
	$
    Suppose that \(C\) is an \((a,b)\)-regular set in \(\Gamma\), and either \(p\nmid k\) and \(p\mid\ell\), or \(p\mid k\) and \(p\nmid\ell\).
	Then the following statements hold:
	\begin{enumerate}
		\item[(i)] The two elements of each of the sets $\{x^k,x^\ell\}$, $\{x^k,x^{-\ell}\}$, $\{x^{-k},x^\ell\}$, and $\{x^{-k},x^{-\ell}\}$ do not belong to the same coset of $H$.
		\item[(ii)] If \(p\) is an odd prime, then the two elements of either  \(\{x^\ell,x^{-\ell}\}\) or \(\{x^k,x^{-k}\}\) do not belong to the same coset of \(H\).
		\item[(iii)] If \(p\) is an odd prime and the two elements of either $\{x^\ell,x^{-\ell}\}$ or $\{x^k,x^{-k}\}$ belong to the same coset of $H$, then \(s_0=b-a+2\) and \(s_{i_1}=s_{i_2}=1\), where \(i_1,i_2\) are distinct and \(1\le i_1,i_2\le p^\alpha-1\).
		\item[(iv)] If $\alpha\geq 2$, \(p\) is an odd prime, and the two elements of each of the sets $\{x^{\ell},x^{-\ell}\}$ and $\{x^k,x^{-k}\}$ do not belong to the same coset of $H$, then
     	$
      	s_0=b-a,$ $ s_{i_1}=s_{i_2}=s_{i_3}=s_{i_4}=1,$
    	where $i_1$, $i_2$, $i_3$, and $i_4$ are pairwise distinct integers satisfying
      	$
     	1\le i_1,i_2,i_3,i_4\le p^\alpha-1.
    	$
	\end{enumerate}
\end{prop}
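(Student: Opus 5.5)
We use the symmetry of the hypotheses to assume, without loss of generality, that $p\nmid k$ and $p\mid \ell$; the case $p\mid k$, $p\nmid \ell$ follows by exchanging the roles of $k$ and $\ell$. Throughout, the tools are Lemma~\ref{samecoset2} (two elements $x^u,x^v$ lie in the same coset of $H$ iff $x^{u-v}\in H$), Corollary~\ref{oddprime}, and the notation $S_d$, $s_0$, $s_d$ of Remark~\ref{rem:7}. We will also use the elementary fact that $x^m\in H=\langle x^{p^\alpha}\rangle$ iff $p^\alpha\mid m$, because $p^\alpha\mid n$.

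\textbf{Part (i).} By Lemma~\ref{samecoset2}, $x^k$ and $x^{\ell}$ lie in the same coset of $H$ iff $p^\alpha\mid k-\ell$. Likewise $x^k$ and $x^{-\ell}$ do iff $p^\alpha\mid k+\ell$, $x^{-k}$ and $x^{\ell}$ do iff $p^\alpha\mid k+\ell$, and $x^{-k}$ and $x^{-\ell}$ do iff $p^\alpha\mid k-\ell$. Since $p\mid\ell$ and $p\nmid k$, we have $k\pm\ell\equiv k\not\equiv 0\pmod p$. Hence $p^\alpha\nmid k\pm\ell$, and none of the four pairs lies in a common coset.

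\textbf{Part (ii).} Let $p$ be odd. By Corollary~\ref{oddprime}, $x^k$ and $x^{-k}$ lie in the same coset iff $p^\alpha\mid k$, which is impossible since $p\nmid k$. So the pair whose exponent is prime to $p$ (here $\{x^k,x^{-k}\}$) is always split. This is how we read the phrase ``either \ldots or'' in the statement.

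\textbf{Part (iii).} The pair that can lie in a common coset is then $\{x^\ell,x^{-\ell}\}$. By Corollary~\ref{oddprime} this happens iff $p^\alpha\mid \ell$, i.e.\ iff $x^{\pm\ell}\in H$. In that case $|S\cap H|=2$ (note $x^{\pm k}\notin H$ since $p\nmid k$), so $s_0=2+b-a$. By Proposition~\ref{prop:inverse_coset}, $x^k\in x^dH$ and $x^{-k}\in x^{p^\alpha-d}H$ for some $d$ with $1\le d\le p^\alpha-1$, because $p^\alpha\nmid k$. These two cosets are distinct by (ii), and each contains exactly one element of $S$, since $x^{\pm\ell}\in H$. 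This gives $s_{i_1}=s_{i_2}=1$ with $i_1=d$ and $i_2=p^\alpha-d$.

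\textbf{Part (iv).} Now neither pair lies in a common coset, so $p^\alpha\nmid\ell$ and $p^\alpha\nmid k$. Hence none of $x^{\pm k},x^{\pm\ell}$ is in $H$, and $s_0=b-a$. Write $x^k\in S_{i_1}$, $x^{-k}\in S_{i_2}$, $x^\ell\in S_{i_3}$, $x^{-\ell}\in S_{i_4}$ with all $i_j\in\{1,\dots,p^\alpha-1\}$. Part (i) gives $i_1\ne i_3$, $i_1\ne i_4$, $i_2\ne i_3$ and $i_2\ne i_4$. The hypothesis gives $i_1\ne i_2$ and $i_3\ne i_4$. So the four indices are pairwise distinct and each $s_{i_j}=1$, exactly as in Proposition~\ref{prop:podd}(iv). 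The assumption $\alpha\ge2$ is consistent with this case: $p\mid\ell$ together with $p^\alpha\nmid \ell$ forces $\alpha\ge 2$.

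There is no serious obstacle. The only delicate point is interpreting (ii)–(iii) correctly: the pair allowed to fall into one coset is the one whose exponent is divisible by $p$. One must also check in (iii) that the two elements of $H$ contribute to $s_0$ and not to some $s_d$.
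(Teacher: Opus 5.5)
Your proof is correct and takes essentially the same approach as the paper: Lemma~\ref{samecoset2} plus reduction modulo $p$ for (i), Corollary~\ref{oddprime} for (ii), and counting $S\cap H$ and the remaining cosets via the definitions in Remark~\ref{rem:7} for (iii) and (iv). Your write-up is slightly more explicit than the paper's at two points: in (iii) you derive $p^\alpha\mid\ell$ (so $x^{\pm\ell}\in H$ and $s_0=b-a+2$), and you place $x^{k}$ and $x^{-k}$ in the distinct nontrivial cosets $x^{d}H$ and $x^{p^\alpha-d}H$ via Proposition~\ref{prop:inverse_coset}.
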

\begin{proof}
\textbf{(i)} By Lemma~\ref{samecoset2}, two elements \(x^k\) and \(x^\ell\) belong to the same coset of \(H\) only if \(x^{k-\ell}\in H\). Which is equivalent to \(p^\alpha\mid k-\ell\). In particular, this implies \(p\mid k-\ell \), and hence \(k\stackrel{p}{\equiv}\ell\). Since either \(p\nmid k\) and \(p\mid\ell\), or \(p\mid k\) and \(p\nmid\ell\), we have \(k\not\stackrel{p}{\equiv}\ell\). Now, for the sets \(\{x^k,x^{\ell}\}\), \(\{x^k,x^{-\ell}\}\), \(\{x^{-k},x^\ell\}\), \(\{x^{-k},x^{-\ell}\}\), we obtain \(k\not\stackrel{p}{\equiv}\ell\), \(k\not\stackrel{p}{\equiv}-\ell\), \(-k\not\stackrel{p}{\equiv}\ell\), and \(-k\not\stackrel{p}{\equiv}-\ell\). Therefore, the two elements in each of the above sets do not belong to the same coset of \(H\).

\textbf{(ii)} By the assumption of the proposition, either  \(p\mid k\) and \(p\nmid \ell\), or \(p\nmid k\) and \(p\mid \ell\). In the first case, \(p^\alpha\nmid \ell\), and hence, by Corollary~\ref{oddprime}, the two elements of \(\{x^\ell,x^{-\ell}\}\) do not belong to the same coset of \(H\). The second case is similar.
	
\textbf{(iii)} Suppose that \(\{x^\ell,x^{-\ell}\}\) lies in the same coset of \(H\). By Corollary~\ref{oddprime}, we have \(p\mid\ell\). By the assumption of the proposition, \(p\nmid k\), which implies that the elements of \(\{x^k,x^{-k}\}\) do not belong to the same coset of \(H\).
By (i), the two elements of each of the sets considered in this part do not belong to the same coset of \(H\). Therefore, they cannot both belong to \(H\). By the definition of \(s_0\) in Remark~\ref{rem:7}, we have \(s_0=|S\cap H|+b-a=2+b-a\). Also, by the definition of \(s_d\) in Remark~\ref{rem:7}, we have \(s_{i_1}=s_{i_2}=1\), where \(i_1,i_2\) are distinct and \(1\le i_1,i_2\le p^\alpha-1\).
The case where \(\{x^k,x^{-k}\}\) lies in the same coset of \(H\) can be proved similarly.

\textbf{(iv)} The two elements of each of the sets considered in part (i) do not belong to the same coset of \(H\). By the assumption in part (iv), and the definition of \(s_0\) in Remark~\ref{rem:7}, we have \(S\cap H=\varnothing\), and hence \(s_0=|S\cap H|+b-a=b-a\). Also, by the definition of \(s_d\) in Remark~\ref{rem:7}, we have \(s_{i_1}=s_{i_2}=s_{i_3}=s_{i_4}=1\), where \(i_1,i_2,i_3,i_4\) are pairwise distinct and \(1\le i_1,i_2,i_3,i_4\le p^\alpha-1\).  \hfill$\blacksquare$ 	
	\end{proof}	
  	
  \begin{definition}[{\cite[Chapter 13]{Dummit}}]\label{poly}
	For each positive integer $n$, the $n$-th cyclotomic polynomial, denoted by $\Phi_n(x)$, is defined as the polynomial whose roots are precisely the primitive $n$-th roots of unity; that is
	\[
	\Phi_n(x)=
	\prod_{\substack{1\le k\le n-1\\ \gcd(k,n)=1}}
	\left(x-e^{\frac{2\pi \mathbf{i} k}{n}}\right)
	\]
\noindent Consequently,
	$
	\deg(\Phi_n(x))=\varphi(n).
	$
	\noindent Since the roots of the polynomial $x^n-1$ are precisely the $n$th roots of unity, and every $n$th root of unity is uniquely a primitive root of unity of order $d$, where $d$ is a positive divisor of $n$, the linear factors of $x^n-1$ can be grouped according to the orders of their roots. Therefore, the following factorization holds:
	\begin{equation}
		x^n-1=\prod_{d\mid n}\Phi_d(x)
		\label{eq:cyclotomic-factorization}
	\end{equation}
In particular, when $n=p^\alpha$ for some prime number $p$ and integer $\alpha\ge 1$, the positive divisors of $n$ are precisely $1,p,p^2,\dots,p^\alpha$. Hence, by \eqref{eq:cyclotomic-factorization}, we have
\begin{equation}
	x^{p^\alpha}-1=\Phi_1(x)\Phi_p(x)\Phi_{p^2}(x)\cdots\Phi_{p^\alpha}(x)
	\label{eq:ppower-factor-1}
\end{equation}
and likewise,
\begin{equation}
	x^{p^{\alpha-1}}-1=\Phi_1(x)\Phi_p(x)\Phi_{p^2}(x)\cdots\Phi_{p^{\alpha-1}}(x)
	\label{eq:ppower-factor-2}
\end{equation}
Dividing \eqref{eq:ppower-factor-1} by \eqref{eq:ppower-factor-2}, we obtain
\begin{equation}
\Phi_{p^\alpha}(x)=\frac{x^{p^\alpha}-1}{x^{p^{\alpha-1}}-1}
\label{eq:ppower-factor-3}
\end{equation}
Applying the geometric series formula to the right-hand side of \eqref{eq:ppower-factor-3}, it follows that
\[
\Phi_{p^\alpha}(x)=1+x^{p^{\alpha-1}}+x^{2p^{\alpha-1}}+\cdots+x^{(p-1)p^{\alpha-1}}.
\]
Substituting $\alpha=1$ into \eqref{eq:ppower-factor-3}, we obtain
\[
\Phi_p(x)=1+x+x^2+\cdots+x^{p-1}
\]
	\end{definition}
\begin{rem}\label{rem:cyclo}
Let $p$ be a prime and $\alpha$ be a positive integer. Let $\omega$ be a primitive $p^\alpha$-th root of unity, and suppose that $\omega$ is a root of the polynomial
$
f(x)=b_0+\sum_{t=1}^{k}b_tx^{i_t},
$
with integer coefficients. By the property of the minimal polynomial, any polynomial that has $\omega$ as a root must be divisible by the minimal polynomial of $\omega$. Since $\omega$ is a primitive $p^\alpha$-th root of unity, its minimal polynomial is the cyclotomic polynomial
$
\Phi_{p^\alpha}(x)=1+x^{p^{\alpha-1}}+x^{2p^{\alpha-1}}+\cdots+x^{(p-1)p^{\alpha-1}}.
$
Therefore,
$
\Phi_{p^\alpha}(x)\mid f(x).
$
\end{rem}
  	
  	\begin{lem}\label{lemb_00}
 Let $p$ be a prime number and $\alpha$ be a positive integer.
 Let $f(x) = \Phi_{p^\alpha}(x)\Psi(x)$,
 where $f(x) = b_0 + \sum_{t=1}^{k} b_t x^{i_t}$ with $1 \le i_t \le p^\alpha - 1$,
 and $\Psi(x) = \sum_{m=0}^{f} a_m x^m$
 are polynomials with integer coefficients,
 and $\Phi_{p^\alpha}(x) = \sum_{j=0}^{p-1} x^{j \cdot p^{\alpha-1}}$.
 Then the coefficients of the terms $x^{j \cdot p^{\alpha-1}}$
 in the expansion of $\Phi_{p^\alpha}(x) \Psi(x)$
 for $1 \le j \le p-1$ are equal to the constant term $b_0$.
  	\end{lem}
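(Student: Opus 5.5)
Let $q:=p^{\alpha-1}$ and write $f(x)=\Phi_{p^\alpha}(x)\Psi(x)$ as in the statement. Since $f$ has degree at most $p^\alpha-1$ and $\deg\Phi_{p^\alpha}=(p-1)q$, the quotient $\Psi$ has degree at most $q-1$, so $\Psi(x)=\sum_{m=0}^{q-1}a_mx^m$ (some $a_m$ may be zero). The plan is to compute the coefficients of $f$ directly from this product.

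First we expand
\[
\Phi_{p^\alpha}(x)\Psi(x)=\sum_{j=0}^{p-1}\sum_{m=0}^{q-1}a_m\,x^{jq+m}.
\]
The key observation is that the exponents $jq+m$ with $0\le j\le p-1$ and $0\le m\le q-1$ are pairwise distinct. They are exactly the base-$q$ representations of the integers in $[0,p^\alpha-1]$, with $j$ the quotient and $m$ the remainder on division by $q$. Hence there is no cancellation or collection of terms: the coefficient of $x^{jq+m}$ in $f$ is exactly $a_m$.

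In particular, the coefficient of $x^{jq}$ (that is, $m=0$) equals $a_0$ for every $0\le j\le p-1$. Taking $j=0$ shows that $a_0$ is the constant term $b_0$ of $f$. Therefore the coefficient of $x^{j\cdot p^{\alpha-1}}$ equals $b_0$ for each $1\le j\le p-1$, which is the claim. As a bonus, the argument shows that the coefficients of $f$ are constant along each residue class modulo $q$.

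The only step needing care is the degree bound on $\Psi$. It follows from comparing degrees in $f=\Phi_{p^\alpha}\Psi$: $\Phi_{p^\alpha}$ is monic with integer coefficients, so $\deg\Psi=\deg f-(p-1)q\le p^\alpha-1-(p-1)q=q-1$. This bound is what guarantees that the exponents $jq+m$ do not overlap; beyond it the argument is just bookkeeping.
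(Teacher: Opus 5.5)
Your proof is correct and follows essentially the same route as the paper: you bound $\deg\Psi\le p^{\alpha-1}-1$ by comparing degrees, then observe that the coefficient of $x^{jp^{\alpha-1}}$ in the product is $a_0$, which equals $b_0$ from the constant term. Your base-$p^{\alpha-1}$ remark, that the exponents $jq+m$ are pairwise distinct, makes explicit the no-collision step that the paper leaves implicit; the paper only notes that $a_{p^{\alpha-1}},a_{2p^{\alpha-1}},\dots$ vanish.
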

  	\begin{proof}
  		Given the assumption $f(x) = \Phi_{p^\alpha}(x) \Psi(x)$, we have:
  		\begin{equation}\label{eq:polya}
  			b_0 + b_1 x^{i_1} + \dots + b_k x^{i_k} = \left(1 + x^{p^{\alpha-1}} + \dots + x^{(p-1)p^{\alpha-1}}\right) \left(a_0 + a_1 x + \dots + a_f x^f\right)
  		\end{equation}
  	By substituting $x=0$ into relation~\eqref{eq:polya}, we have $a_0 = b_0$. Expanding the product of the two factors on the right-hand side of equation~\eqref{eq:polya}, we obtain:
  		\[
  	\begin{aligned}
  		&a_0 + a_1 x + \dots + a_f x^f \\
  		&+ a_0 x^{p^{\alpha-1}} + a_1 x^{p^{\alpha-1}+1} + \dots + a_f x^{p^{\alpha-1}+f} \\
  		&+ a_0 x^{2p^{\alpha-1}} + a_1 x^{2p^{\alpha-1}+1} + \dots + a_f x^{2p^{\alpha-1}+f} \\
  		&\quad \dots \\
  		&+ a_0 x^{(p-1)p^{\alpha-1}} + a_1 x^{(p-1)p^{\alpha-1}+1} + \dots + a_f x^{(p-1)p^{\alpha-1}+f}
  	\end{aligned}
  	\]
  	Now, using relation~\eqref{eq:polya} and taking into account that $1 \leq i_t \leq p^{\alpha}-1$ for every $t \in \{1, \dots, k\}$ , we obtain an upper bound for $f$. Thus, we have
  	\[
  	(p-1)p^{\alpha-1} + f \le p^\alpha - 1 \implies f \le p^\alpha - 1 - (p-1)p^{\alpha-1} \implies f \le p^{\alpha-1} - 1
  	\]
  	Since the degree of the polynomial $\Psi(x)$ is at most $f$ and $f<p^{\alpha-1}$, it follows that $a_{p^{\alpha-1}}, a_{2p^{\alpha-1}}, \ldots, a_{(p-1)p^{\alpha-1}}$ are all zero, and every coefficient with index greater than $f$ is equal to zero. Therefore, on the right-hand side of relation~\eqref{eq:polya}, the terms $x^{p^{\alpha-1}}, x^{2p^{\alpha-1}}, \ldots, x^{(p-1)p^{\alpha-1}}$ are obtained by multiplying $a_0$ by these terms. Substituting $a_0=b_0$, the terms $b_0x^{p^{\alpha-1}}, b_0x^{2p^{\alpha-1}}, \ldots, b_0x^{(p-1)p^{\alpha-1}}$ appear on the right-hand side of relation~\eqref{eq:polya}. This completes the proof.
   \hfill$\blacksquare$
  	\end{proof}
  	\begin{lem}\label{lemb_0}
  	Let $\alpha$ be a positive integer, $p$ a prime number, and $w$ a primitive root of unity of order $p^\alpha$. Suppose
  	$
  	\lambda = -b_0 + \sum_{t=1}^{k} b_t w^{i_t}$
  	such that $b_0,b_1,\ldots,b_k$ are positive integers and $i_t \in [1,p^{\alpha}-1]$. Then $\lambda \neq 0$.
  	 \end{lem}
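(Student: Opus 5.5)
The plan is to argue by contradiction, turning the vanishing of $\lambda$ into a divisibility statement for an integer polynomial and then reading off a sign contradiction from Lemma~\ref{lemb_00}. Assume $\lambda=0$, and form the polynomial
\[
f(x)=-b_0+\sum_{t=1}^{k} b_t x^{i_t}\in\mathbb{Z}[x].
\]
The exponents $i_t$ need not be distinct, so first collect equal exponents. After doing so, $f(x)=-b_0+\sum_{e=1}^{p^\alpha-1} c_e x^e$. Each $c_e$ is a sum of some of the positive integers $b_t$, so $c_e\ge 0$ for every $1\le e\le p^\alpha-1$. The constant term is $-b_0<0$, so $f$ is a nonzero polynomial. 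Its degree is at most $p^\alpha-1$, since every $i_t\in[1,p^\alpha-1]$. By assumption $f(w)=\lambda=0$.

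Next, since $w$ is a primitive $p^\alpha$-th root of unity, Remark~\ref{rem:cyclo} gives $\Phi_{p^\alpha}(x)\mid f(x)$. The quotient must have integer coefficients so that Lemma~\ref{lemb_00} applies. This holds because $\Phi_{p^\alpha}$ is monic with integer coefficients: the division algorithm in $\mathbb{Z}[x]$ by a monic polynomial produces a quotient $\Psi(x)\in\mathbb{Z}[x]$ with zero remainder. The remainder is zero because it is zero over $\mathbb{Q}[x]$ and the division is unique. Hence $f=\Phi_{p^\alpha}\Psi$ with $\Psi\in\mathbb{Z}[x]$, and $f$ has exactly the shape required in Lemma~\ref{lemb_00}: its constant term is $-b_0$ and all other exponents lie in $[1,p^\alpha-1]$.

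Now apply Lemma~\ref{lemb_00}. For each $1\le j\le p-1$, the coefficient of $x^{j p^{\alpha-1}}$ in $f$ equals the constant term of $f$, namely $-b_0<0$. But $j p^{\alpha-1}\in[1,p^\alpha-1]$, so that coefficient is $c_{jp^{\alpha-1}}\ge 0$. Since $p\ge 2$, at least one such $j$ exists (for example $j=1$). This gives a contradiction, so $\lambda\neq 0$.

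The main obstacle is the bookkeeping that makes Lemma~\ref{lemb_00} applicable. One point is integrality of $\Psi$, which the monic division argument settles. The other is possibly repeated exponents $i_t$; merging them is harmless because it only adds positive coefficients. Beyond these checks, the argument is a direct sign comparison.
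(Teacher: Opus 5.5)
Your proposal is correct and follows essentially the same route as the paper. Both arguments assume $\lambda=0$, use Remark~\ref{rem:cyclo} to get $\Phi_{p^\alpha}(x)\mid f(x)$, and invoke Lemma~\ref{lemb_00} to force the coefficient of $x^{jp^{\alpha-1}}$ to equal $-b_0<0$, whereas it is a nonnegative sum of the $b_t$. Your extra bookkeeping (merging repeated exponents, and integrality of the quotient via division by the monic $\Phi_{p^\alpha}$) simply makes explicit two points the paper leaves implicit.
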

  	\begin{proof}
  	Suppose, for contradiction, that $\lambda = 0$. Assume that $w$ is a root of the polynomial
  	$
  	f(x) = -b_0 + b_1 x^{i_1} + b_2 x^{i_2} + \dots + b_k x^{i_k}
  	$
  	such that $i_t \in [1, p^\alpha - 1]$ and $b_t \ge 1$ for $1 \le t \le k$. By Remark~\ref{rem:cyclo}, we have
  	\[
  	1 + x^{p^{\alpha-1}} + \dots + x^{(p-1)p^{\alpha-1}} \mid -b_0 + b_1 x^{i_1} + \dots + b_k x^{i_k}
  	\]
  	Assume that $\Phi_{p^\alpha}(x)\Psi(x) = f(x)$, where $\Psi(x) = a_0 + a_1 x + \dots + a_f x^f$. So
  		\begin{equation}\label{eq:pollya}
  		\left(1 + x^{p^{\alpha-1}} + \dots + x^{(p-1)p^{\alpha-1}}\right) \left(a_0 + a_1 x + \dots + a_f x^f\right)=	
  		-b_0 + b_1 x^{i_1} + \dots + b_k x^{i_k}
  	\end{equation}
  	By Lemma~\ref{lemb_00}, for each $1\le j\le p-1$, the coefficient of
  	$x^{jp^{\alpha-1}}$ on the left-hand side of relation~\eqref{eq:pollya}
  	is $-b_0$. Since $b_0\ge1$, this coefficient is negative. On the other
  	hand, since $b_t\ge1$ for all $1\le t\le k$, the corresponding
  	coefficient on the right-hand side is either $0$ or a positive integer.
  	Therefore, the coefficients cannot agree, contradicting
  	\eqref{eq:pollya}. Hence, $\lambda\neq0$. This completes the proof.
  \hfill$\blacksquare$
  \end{proof}		
   \subsection{$(1,3)$ and $(3,1)$-regular sets}
   In this subsection, we investigate the existence of $(3,1)$- and $(1,3)$-regular sets.

    \begin{propA}\label{prop:4-nkl}
   	Let $n$, $k$, $\ell$ be positive integers,
   	$\Gamma=\operatorname{Cay}(\mathbb{Z}_n,\{x^{k},x^{-k},x^{\ell},x^{-\ell}\})$
   	be a connected circulant quartic graph. Suppose that $C$ is an $(a,b)$-regular set in $\Gamma$ and $n$ is a multiple of $4$. Then the following statements hold:
   	 	\begin{enumerate}
   		\item[(i)] If \(k\) is odd and \(\ell\) is a multiple of \(4\), or \(\ell\) is odd and \(k\) is a multiple of \(4\),
   		$
   		(a,b)\notin\{(2,0),(3,1),(4,2),(4,0)\},
   		$
   		then $	\left|x^{h}\langle x^{4}\rangle\cap C\right|
   		=\frac{bn}{4(4+b-a)}$
   		for $0\le h\le 3;$
   		\item[(ii)]  If $k$ and $\ell$ are odd, $a\neq b$ and
   		$
   		(a,b)\notin\{(0,4),(4,0)\},
   		$
   		then
   		$
   		\left|x^{h}\langle x^{4}\rangle\cap C\right|
   		=\frac{bn}{4(4+b-a)}$
   		for $ 0\le h\le 3;$
   		
   		\item[(iii)] If \(\ell\) is even and \(4\nmid \ell\) and \(k\) is odd, or \(k\) is even and \(4\nmid k\) and \(\ell\) is odd, $a\neq b$, and $(a,b)\notin\{(0,2),(1,3),(2,4),(4,0)\}$, then $
   		\left|x^{h}\langle x^{4}\rangle\cap C\right|
   		=\frac{bn}{4(4+b-a)}$
   		for $ 0\le h\le 3.$
   	\end{enumerate}
   \end{propA}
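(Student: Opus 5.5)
The plan is to run the argument of Theorem~\ref{khi3} at the level of the subgroup $\langle x^4\rangle$, using Remark~\ref{rem:7} with $p=2$, $\alpha=2$. Since $4\mid n$, the map $z\mapsto z^{n/4}$ sends $\mathbb{Z}_n$ onto $\langle X\rangle\cong\mathbb{Z}_4$ with $X=x^{n/4}$. Applying it to \eqref{eqq} gives the $4\times4$ circulant system $A K^t=\frac{bn}{4}\mathbf{1}$ with $A=\mathrm{circ}(s_0,s_1,s_2,s_3)$. The constant vector $t_0=\dots=t_3=\frac{bn}{4(4+b-a)}$ is always a solution, so it suffices to show $A$ is invertible. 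By Definition~\ref{circulant}, $\det A=f(1)f(\mathbf{i})f(-1)f(-\mathbf{i})$ with $f(y)=\sum_{r=0}^{3}s_ry^r$. So in each case I will compute $f$ from the residues of $\pm k,\pm\ell$ modulo $4$ (via Proposition~\ref{prop:coset} and Proposition~\ref{prop:inverse_coset}) and check that the four values are nonzero.

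\textbf{Case (i).} Take $k$ odd and $4\mid\ell$; the other case is symmetric. Here $x^{\pm\ell}\in\langle x^4\rangle$, and $x^{k},x^{-k}$ land in the cosets $x\langle x^4\rangle$ and $x^3\langle x^4\rangle$ in some order. Hence $f(y)=(2+b-a)+y+y^3$, giving
\[
f(1)=4+b-a,\qquad f(\pm\mathbf{i})=2+b-a,\qquad f(-1)=b-a .
\]
The excluded pairs $(2,0),(3,1),(4,2),(4,0)$ are exactly those with $b-a\in\{-2,-4\}$, which kill $f(\pm\mathbf{i})$ and $f(1)$. However, $f(-1)=b-a$ also vanishes when $a=b$, so the argument needs the extra hypothesis $a\neq b$, as in (ii) and (iii). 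This hypothesis is genuinely necessary: by Theorem~\ref{$(2,2)$}(i), $C=\langle x^2\rangle$ is a $(2,2)$-regular set, yet it meets only two of the four cosets of $\langle x^4\rangle$. I would therefore state and prove (i) under the assumption $a\ne b$.

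\textbf{Case (ii).} Take $k,\ell$ odd. Each of the pairs $\{x^{k},x^{-k}\}$ and $\{x^{\ell},x^{-\ell}\}$ contributes $X+X^3$, so $f(y)=(b-a)+2y+2y^3$, giving
\[
f(1)=4+b-a,\qquad f(-1)=b-a-4,\qquad f(\pm\mathbf{i})=b-a .
\]
These are all nonzero exactly when $b-a\notin\{0,\pm4\}$. That condition is $a\neq b$ together with $(a,b)\notin\{(0,4),(4,0)\}$, which is precisely the hypothesis.

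\textbf{Case (iii).} Take $k$ odd and $\ell\equiv2\pmod 4$. Both $x^{\ell}$ and $x^{-\ell}$ lie in $x^2\langle x^4\rangle$, so $f(y)=(b-a)+y+2y^2+y^3$, giving
\[
f(1)=4+b-a,\qquad f(-1)=b-a,\qquad f(\pm\mathbf{i})=b-a-2 .
\]
The hypotheses $a\ne b$ and $(a,b)\notin\{(0,2),(1,3),(2,4),(4,0)\}$ exclude exactly $b-a\in\{0,2,-4\}$.

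In every case $\det A\neq0$, so the constant solution is the unique one, and $|x^h\langle x^4\rangle\cap C|=\frac{bn}{4(4+b-a)}$ for $0\le h\le3$. The only delicate step is the correct bookkeeping of which cosets $x^{\pm k},x^{\pm\ell}$ fall into, especially the fact that $-1\equiv3\pmod 4$ sends $x^k$ and $x^{-k}$ to different cosets. That bookkeeping is also what exposes the degenerate value $f(-1)=b-a$ in case (i).
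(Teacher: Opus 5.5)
Your proposal is correct and is essentially the paper's proof. It uses the same map $z\mapsto z^{n/4}$, the same $4\times4$ circulant systems, and the same determinants $(b-a)(b-a+4)(b-a+2)^2$, $(b-a)^2\bigl((b-a)^2-16\bigr)$ and $(b-a)(b-a+4)(b-a-2)^2$ in parts (i)--(iii).

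Your caveat on (i) is also right. The paper's own proof of (i) appeals to ``$a\neq b$'' even though that is not among the stated hypotheses. Your $(2,2)$-regular set $C=\langle x^2\rangle$ shows the extra hypothesis is genuinely needed: for example, with $n=16$, $k=1$, $\ell=4$ one gets $|x\langle x^4\rangle\cap C|=0$, whereas the formula predicts $2$.
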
	
  	  \begin{proof}
  	Since $4\mid n$ by the assumptions of the proposition,  the group homomorphism $\varphi$ in Remark~\ref{rem:7} is well defined. Applying the homomorphism $\varphi$ to both sides of part~(iv) of Lemma~\ref{Z}, we obtain
  	\begin{equation}\label{eq:Sbar}
  		{ (\bar{S} + (b-a))^{\varphi}\bar{C}^{\varphi} = \frac{bn}{4}\,\bar{\mathbb{Z}}_{n}^{\varphi}}
  	\end{equation} 	
 Where $\overline{C}^{\,\varphi}= t_0\cdot 1+t_1.X+t_2.X^2+t_3.X^3$, $\bar{\mathbb{Z}}_{n}^{\varphi} = (1 + X + X^2 + X^3)$ and \(X := x^{\frac{n}{4}}\). We now proceed to prove parts~(i), (ii) and~(iii) separately.
  \par
  \indent\textbf{(i)} By the assumptions of this part, we have
  $
  k=2p+1, \ell=4q,
  $
  or
  $
  \ell=2p+1, k=4q,
  $
  for some \(p,q\in\mathbb{Z}\). In either case, we have
  $
  S=\{x^{2p+1},x^{-2p-1},x^{4q},x^{-4q}\},
  $
  and
 $
 \overline{S}^{\,\varphi}
 =
 x^{\frac{(2p+1)n}{4}}
 +x^{-\frac{(2p+1)n}{4}}
 +x^{qn}
 +x^{-qn}.
 $
 Since \(x^n=1\), we obtain
 $
 x^{qn}=x^{-qn}=1.
 $
 Moreover, \(x^{\frac{n}{4}}\) and \(x^{-\frac{n}{4}}\) are the generators of the subgroup
 \(\left\langle x^{\frac{n}{4}}\right\rangle\). Since \(2p+1\) is odd, we have
 \[
 2p+1\equiv 1 \pmod{4}
 \qquad\text{or}\qquad
 2p+1\equiv 3 \pmod{4}.
 \]
 If
 $
 2p+1\equiv 1 \pmod{4},
 $
 then
 $
 x^{\frac{(2p+1)n}{4}}=x^{\frac{n}{4}}$
and
$ x^{-\frac{(2p+1)n}{4}}=x^{-\frac{n}{4}}.
 $
 On the other hand, if
 $
 2p+1\equiv 3 \pmod{4},
 $
 then
 $
 x^{\frac{(2p+1)n}{4}}=x^{-\frac{n}{4}}$
 and
$ x^{-\frac{(2p+1)n}{4}}=x^{\frac{n}{4}}.$
 Therefore, in both cases,
 $
 x^{\frac{(2p+1)n}{4}}
 +x^{-\frac{(2p+1)n}{4}}
 =
 x^{\frac{n}{4}}+x^{-\frac{n}{4}}.
 $
 So,
 $
 \overline{S}^{\,\varphi}
 =
 2\cdot 1+x^{\frac{n}{4}}+x^{-\frac{n}{4}}.
 $
 Substituting \(\bar{S}^{\varphi}\) and $\bar{C}^{\varphi}$ into relation~\eqref{eq:Sbar}, we have
  \begin{equation}\label{eq:Sbar1}
  	\left(X + X^{-1} + (2+b-a)\right)\left(t_0 \cdot 1 + t_1X + t_2X^2 + t_3X^3\right)
  	=
  	\frac{bn}{4}(1 + X + X^2 + X^3)
  \end{equation}
  Expanding the product on the left-hand side of equation~\eqref{eq:Sbar1} and comparing the coefficients on both sides, we obtain:
  \[
  \begin{cases}
  	t_3 + t_1 + (2+b-a)t_0 = \dfrac{bn}{4},\\
  	t_0 + t_2 + (2+b-a)t_1 = \dfrac{bn}{4},\\
  	t_1 + t_3 + (2+b-a)t_2 = \dfrac{bn}{4},\\
  	t_2 + t_0 + (2+b-a)t_3 = \dfrac{bn}{4}.
  \end{cases}
  \]
  Hence
  \begin{equation}\label{mat}
  	\begin{bmatrix}
  		2+b-a & 1 & 0 & 1\\
  		1 & 2+b-a & 1 & 0\\
  		0 & 1 & 2+b-a & 1\\
  		1 & 0 & 1 & 2+b-a
  	\end{bmatrix}
  	\begin{bmatrix}
  		t_0\\
  		t_1\\
  		t_2\\
  		t_3
  	\end{bmatrix}
  	=
  	\begin{bmatrix}
  		\frac{bn}{4}\\
  		\frac{bn}{4}\\
  		\frac{bn}{4}\\
  		\frac{bn}{4}
  	\end{bmatrix}.
  \end{equation}
  Computing the determinant of the coefficient matrix in~\eqref{mat}, we obtain,	
  $
  det = (b-a)(b-a+4)(b-a+2)^2.
  $ According to the assumption of part~(i), \(a \neq b\) and
  $(a,b)\notin\{(2,0),(3,1),(4,2),(4,0)\}$. Therefore, \(\det \neq 0\), and the above system has the unique solution
  \[
  t_0=t_1=t_2=t_3=\frac{bn}{4(4+b-a)}.
  \]
  That is, $t_r = \left|\{x^t \in C \mid x^{\frac{tn}{4}} = x^{\frac{rn}{4}}\}\right| = \frac{bn}{4(4+b-a)}$ for $0 \le r \le 3$. It is clear that $B_r = \{x^t \in C \mid x^{\frac{tn}{4}} = x^{\frac{rn}{4}}\} \subseteq x^r \langle x^4 \rangle$ for $0 \le r \le 3$. Therefore,
  \[
  |x^h\langle x^4\rangle \cap C| = \frac{bn}{4(4+b-a)}, \qquad 0 \le h \le 3.
  \]
 \par
 \indent\textbf{(ii)} Since, by the assumptions of part~(ii), both $k$ and $\ell$ are odd, we write $k=2p+1$ and $\ell=2q+1$, where $p,q\in\mathbb{Z}$. Hence, $S=\{x^{2p+1},x^{-(2p+1)},x^{2q+1},x^{-(2q+1)}\}$. Applying the homomorphism $\varphi$ to the set $S$, we see that, for every $p,q\in\mathbb{Z}$, each of $x^{\frac{(2p+1)n}{4}}$ and $x^{\frac{(2q+1)n}{4}}$ can only be equal to either $x^{\frac{n}{4}}$ or $x^{\frac{3n}{4}}$. Therefore, $\overline{S}^{\,\varphi}=2x^{\frac{n}{4}}+2x^{-\frac{n}{4}}$. Substituting \(\bar{S}^{\varphi}\) and $\bar{C}^{\varphi}$ into relation~\eqref{eq:Sbar}, we have
 \[
 (2X + 2X^{-1} + (b-a))(t_0.1 + t_1X + t_2X^2 + t_3X^3)
 =
 \frac{bn}{4}(1 + X + X^2 + X^3)
 \]
It follows that

\[
\begin{cases}
	2t_3+2t_1+(b-a)t_0=\dfrac{bn}{4},\\
	2t_0+2t_2+(b-a)t_1=\dfrac{bn}{4},\\
	2t_1+2t_3+(b-a)t_2=\dfrac{bn}{4},\\
	2t_2+2t_0+(b-a)t_3=\dfrac{bn}{4}.
\end{cases}
\]
  Hence
\begin{equation}\label{mat1}
	\begin{bmatrix}
		b-a & 2 & 0 & 2\\
		2 & b-a & 2 & 0\\
		0 & 2 & b-a & 2\\
		2 & 0 & 2 & b-a
	\end{bmatrix}
	\begin{bmatrix}
		t_0\\
		t_1\\
		t_2\\
		t_3
	\end{bmatrix}
	=
	\begin{bmatrix}
		\frac{bn}{4}\\
		\frac{bn}{4}\\
		\frac{bn}{4}\\
		\frac{bn}{4}
	\end{bmatrix}.
\end{equation}
Computing the determinant of the coefficient matrix in~\eqref{mat1}, we obtain, $det=(b-a)^2\bigl((b-a)^2-16\bigr).$ By the assumptions of part~(ii), $a\neq b$ and $(a,b)\notin\{(0,4),(4,0)\}$. Hence, $\det\neq0$, and therefore the above system has the unique solution	
\[
t_0=t_1=t_2=t_3=\frac{bn}{4(4+b-a)}.
\]
Consequently,
$
\left|x^{h}\langle x^{4}\rangle\cap C\right|
=\frac{bn}{4(4+b-a)},$
for $0\le h\le3.$
\par
\indent\textbf{(iii)} By the assumptions of this part, we have
$
k=2p+1, \ell=4q+2,
$
or
$
\ell=2p+1, k=4q+2,
$
for some \(p,q\in\mathbb{Z}\). In either case, we have
$
S=\{x^{2p+1},x^{-2p-1},x^{4q+2},x^{-4q-2}\},
$
and
$
\overline{S}^{\,\varphi}
=
x^{\frac{(2p+1)n}{4}}
+x^{-\frac{(2p+1)n}{4}}
+x^{\frac{(4q+2)n}{4}}
+x^{-\frac{(4q+2)n}{4}}.
$ Similarly to part (i), we have $
x^{\frac{(2p+1)n}{4}}
+x^{-\frac{(2p+1)n}{4}}
=
x^{\frac{n}{4}}+x^{-\frac{n}{4}}.
$ Moreover, $x^{\frac{\ell n}{4}}=x^{-\frac{\ell n}{4}}=x^{\frac{n}{2}}$. Therefore, $\overline{S}^{\,\varphi}=x^{\frac{n}{4}}+x^{-\frac{n}{4}}+2x^{\frac{n}{2}}$. Substituting \(\bar{S}^{\varphi}\) and $\bar{C}^{\varphi}$ into relation~\eqref{eq:Sbar}, we have
 \[
(X + X^{-1} + 2X^{2} + (b-a))(t_0.1 + t_1X + t_2X^2 + t_3X^3)
=
\frac{bn}{4}(1 + X + X^2 + X^3)
\]
It follows that
 \[
 \begin{cases}
 	 t_3 + t_1 + 2t_2 +(b-a) t_0 =
 	 \frac{bn}{4} \\[1ex]
 	t_0 + t_2 + 2 t_3 + (b-a) t_1 = \frac{bn}{4} \\[1ex]
 	t_1 + t_3 + 2 t_0 + (b-a)t_2 = \frac{bn}{4} \\[1ex]
 	t_2 + t_0 + 2t_1 + (b-a)t_3 = \frac{bn}{4}
 \end{cases}
 \]
 Hence
 \begin{equation}\label{eq:matrix_system}
 	\begin{bmatrix}
 		b-a & 1 & 2 & 1 \\
 		1 & b-a & 1 & 2 \\
 		2 & 1 & b-a & 1 \\
 		1 & 2 & 1 & b-a
 	\end{bmatrix}
 	\begin{bmatrix}
 		t_0 \\
 		t_1 \\
 		t_2 \\
 		t_3
 	\end{bmatrix}
 	=
 	\begin{bmatrix}
 		\frac{bn}{4} \\[1.5ex]
 		\frac{bn}{4} \\[1.5ex]
 		\frac{bn}{4} \\[1.5ex]
 		\frac{bn}{4}
 	\end{bmatrix}
 \end{equation}
 Evaluating the determinant of the coefficient matrix given in~\eqref{eq:matrix_system} yields $\det=(b-a)(b-a+4)(b-a-2)^2$. From the assumptions of part~(iii), we have $a\neq b$ together with $(a,b)\notin\{(0,2), (1,3), (2,4), (4,0)\}$. As a consequence, the determinant does not vanish, which guarantees that the system admits a unique solution. So
  \[
 t_0 = t_1 = t_2 = t_3 = \frac{bn}{4(4+b-a)}
 \]
 Consequently,
 $
 \left|x^{h}\langle x^{4}\rangle\cap C\right|
 =\frac{bn}{4(4+b-a)}$
 for $0\le h\le3.$ This completes the proof.
 \hfill$\blacksquare$
  	  	\end{proof}
  \begin{propA}\label{prop:4-nkll}
  	Let $n$, $k$, $\ell$ be positive integers,
  	$\Gamma=\operatorname{Cay}(\mathbb{Z}_n,\{x^{k},x^{-k},x^{\ell},x^{-\ell}\})$
  	be a connected circulant quartic graph.  Suppose that $C_1$ is a $(1,3)$-regular set and $C_2$ is a $(3,1)$-regular set in $\Gamma$. If one of the following conditions holds:
  	\begin{enumerate}
  		\item[(i)] $C=C_1$, $k\equiv1\pmod{2}$, and $\ell\equiv1\pmod{2}$;
  		
  		\item[(ii)] $C=C_1$, and either $k$ is odd and $\ell\equiv0\pmod{4}$, or $\ell$ is odd and $k\equiv0\pmod{4}$;
  		
  		\item[(iii)] $C=C_2$, $k\equiv1\pmod{2}$, and $\ell\equiv1\pmod{2}$;
  		
  		\item[(iv)] $C=C_2$, and either $k$ is odd and $\ell\equiv2\pmod{4}$, or $\ell$ is odd and $k\equiv2\pmod{4}$;
  	\end{enumerate}
  	then $8\mid n$.
  	\end{propA}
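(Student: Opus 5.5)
The plan is to reduce everything to the equal-distribution results already proved. First show that $4\mid n$ using the prime $p=2$. Then apply Proposition~\ref{prop:4-nkl} with the subgroup $\langle x^4\rangle$, and read off $8\mid n$ from the integrality of the resulting coset sizes.

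\emph{Step 1: $4\mid n$.} Applying the trivial character to Lemma~\ref{Z}~(iv) gives $|C|(4+b-a)=bn$. For $(a,b)=(1,3)$ this is $6|C|=3n$, and for $(a,b)=(3,1)$ it is $2|C|=n$. In both cases $|C|=n/2$, so $n$ is even and $p=2$ is a prime divisor of $n$.
\begin{itemize}
\item If $k$ and $\ell$ are both odd (cases (i) and (iii)), use Corollary~\ref{cor:coset-size2}~(i). It applies because $b-a=\pm 2\notin\{-4,4\}$.
\item If exactly one of $k,\ell$ is odd (cases (ii) and (iv)), use Corollary~\ref{cor:coset-size}~(i). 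It applies because $b-a=\pm2\notin\{-4,0\}$.
\end{itemize}
Either corollary gives $|x^h\langle x^2\rangle\cap C|=\frac{bn}{2(4+b-a)}$. This equals $\frac{3n}{12}=\frac n4$ for $(1,3)$ and $\frac{n}{4}$ for $(3,1)$. Since these are cardinalities, they are integers, so $4\mid n$. As a sanity check, the $2\times2$ system obtained from $\varphi_2$ can also be solved directly; for instance, for $(1,3)$ with $k,\ell$ odd it reads $2t_0+4t_1=4t_0+2t_1=3n/2$, which forces $t_0=t_1=n/4$.

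\emph{Step 2: pass to $\langle x^4\rangle$.} Now $4\mid n$, so Proposition~\ref{prop:4-nkl} is available. Each case of the statement falls under one of its parts, and the excluded pairs there do not contain ours.
\begin{itemize}
\item Cases (i) and (iii) ($k,\ell$ odd) fall under part (ii). Here $a\neq b$ and $(a,b)\notin\{(0,4),(4,0)\}$.
\item Case (ii) ($(1,3)$ with one of $k,\ell$ odd and the other $\equiv0\pmod 4$) falls under part (i). The pair $(1,3)$ is not in $\{(2,0),(3,1),(4,2),(4,0)\}$.
\item Case (iv) ($(3,1)$ with one of $k,\ell$ odd and the other $\equiv2\pmod4$) falls under part (iii). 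The pair $(3,1)$ is not in $\{(0,2),(1,3),(2,4),(4,0)\}$.
\end{itemize}
Hence $|x^h\langle x^4\rangle\cap C|=\frac{bn}{4(4+b-a)}$ for $0\le h\le 3$. This equals $\frac{3n}{24}=\frac n8$ for $C_1$ and $\frac{n}{8}$ for $C_2$. Integrality of these cardinalities gives $8\mid n$.

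The main obstacle is bookkeeping rather than computation. One must check that each hypothesis matches a part of Proposition~\ref{prop:4-nkl} whose determinant is nonzero for the given $b-a$. This is also why the statement omits the combinations where those determinants vanish, namely $(3,1)$ with $4\mid\ell$ and $(1,3)$ with $\ell\equiv2\pmod 4$. One must also make sure $4\mid n$ is established beforehand, since Proposition~\ref{prop:4-nkl} assumes it.
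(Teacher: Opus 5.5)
Your proposal is correct and is essentially the paper's proof. The paper also first obtains $|x^h\langle x^2\rangle\cap C|=n/4$, hence $4\mid n$, from the image under $z\mapsto z^{n/2}$; it does this by solving the $2\times 2$ system directly in each case, which is exactly what Corollaries~\ref{cor:coset-size}(i) and~\ref{cor:coset-size2}(i) package. It then applies parts (ii), (i), (ii), (iii) of Proposition~\ref{prop:4-nkl} to cases (i)--(iv) to obtain coset sizes $n/8$, just as you do.
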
	
  	\begin{proof}
  	By assumption, $C_1$ is a $(1,3)$-regular set and $C_2$ is a $(3,1)$-regular set in~$\Gamma$. By Lemma~\ref{Z}~(iv), we have
  	\begin{align}
  		(\overline{S}+2)\,\overline{C_1} &= 3\,\overline{\mathbb{Z}}_n \label{eq:SC1}\\
  		(\overline{S}-2)\,\overline{C_2} &= \overline{\mathbb{Z}}_n \label{eq:SC2}
  	\end{align}
  Applying the trivial character to \eqref{eq:SC1} and \eqref{eq:SC2}, we obtain
  $
  2|C_1|=n$
  and
 $ 2|C_2|=n.$
  Hence, the group homomorphism
  	\[
  \begin{aligned}
  	\varphi :\; \mathbb{Z}_n &\to \langle x^{\frac{n}{2}} \rangle \\
  	z &\mapsto z^{\frac{n}{2}}
  \end{aligned}
  \]
 It is well defined. We now consider each case separately.

  \textbf{(i)} Since  $
  k\stackrel{2}{\equiv}1, \ell\stackrel{2}{\equiv}1$
 , assume that
  $
  k=2p+1$ and $\ell=2q+1,
  $
  where $p,q\in\mathbb{Z}$. Then
  $
  S=\{x^{2p+1},x^{-2p-1},x^{2q+1},x^{-2q-1}\}.
  $
  By assumption (i), $C_1$ is a $(1,3)$-regular set. Therefore, applying the group homomorphism $\varphi$ to \eqref{eq:SC1}, we obtain
  \begin{equation}\label{38}
  	(\overline{S}+2)^ {\,\varphi}\overline{C_1}^{\,\varphi}=3\, \overline{\mathbb{Z}}_n^{\,\varphi}
  \end{equation}
  Since
  $
  \overline{S}^{\varphi}=4x^{\frac{n}{2}},
  $
  we obtain
  \begin{equation}\label{39}
  	\left(4x^{\frac{n}{2}}+2\right)
  	\left(c_1' \cdot 1+c_1''  \cdot x^{\frac{n}{2}}\right)
  	=
  	\frac{3n}{2}\left(1+x^{\frac{n}{2}}\right)
  \end{equation}
  By solving the system of equations~\eqref{39}, we obtain
    \begin{equation}\label{40}
  c_1'=c_1''=\frac{n}{4}
  	\Longrightarrow
  |C_1|=c_1'+c_1''=2c_1'	
\end{equation}
By \eqref{40} together with the equation obtained by applying the trivial character, we obtain
$
4c_1'=n.
$
We have that \(n\) is a multiple of \(4\) and both \(k,\ell\) are odd. Since the \((1,3)\)-regular set case is not one of the \((a,b)\)-regular cases listed in part~(ii) of Proposition~\ref{prop:4-nkl}, the conditions of the proposition hold, and it follows that $
\left|x^{h}\langle x^{4}\rangle\cap C\right|
=\frac{n}{8},
$ for \(0\le h\le 3\), and consequently \(n\) is divisible by \(8\).

\indent\textbf{(ii)} By the assumptions of this part, we have
$
k=2p+1, \ell=4q,
$
or
$
\ell=2p+1, k=4q,
$
for some \(p,q\in\mathbb{Z}\). In either case, we have
$
S=\{x^{2p+1},x^{-2p-1},x^{4q},x^{-4q}\},
$
and
$
\overline{S}^{\,\varphi}=2x^{\frac{n}{2}}+2 \cdot 1.
$ By applying the homomorphism $\varphi$ to both sides of \eqref{eq:SC1} and substituting $\overline{S}^{\,\varphi}$ and $\overline{C}^{\,\varphi}$, we obtain
  \begin{equation}\label{41}
	\left(2x^{\frac{n}{2}}+4\right)
	\left(c_1' \cdot 1+c_1''  \cdot x^{\frac{n}{2}}\right)
	=
	\frac{3n}{2}\left(1+x^{\frac{n}{2}}\right)
\end{equation}
By solving the system of equations~\eqref{41}, we obtain
$
|C_1|=2c_1'.
$
Similarly to part~(i), it follows that
$
4c_1'=n.
$ Using part~(i) of Proposition~\ref{prop:4-nkl}, we obtain $
\left|x^{h}\langle x^{4}\rangle\cap C\right|
=\frac{n}{8},
$ for \(0\le h\le 3\), and consequently \(n\) is divisible by \(8\).

 \textbf{(iii)} We have
 $
 S=\{x^{2p+1},x^{-(2p+1)},x^{2q+1},x^{-(2q+1)}\},
 $
 for some \(p,q\in\mathbb{Z}\).
By assumption (iii), $C_2$ is a $(3,1)$-regular set. Therefore, applying the group homomorphism $\varphi$ to \eqref{eq:SC2}, we obtain
\begin{equation}\label{42}
	(\overline{S}-2)^ {\,\varphi}\overline{C_2}^{\,\varphi}=\, \overline{\mathbb{Z}}_n^{\,\varphi}
\end{equation}
Since
$
\overline{S}^{\varphi}=4x^{\frac{n}{2}},
$
we obtain
\begin{equation}\label{43}
	\left(4x^{\frac{n}{2}}-2\right)
	\left(c_2' \cdot 1+c_2''  \cdot x^{\frac{n}{2}}\right)
	=
	\frac{n}{2}\left(1+x^{\frac{n}{2}}\right)
\end{equation}
By solving the system of equations~\eqref{43}, we obtain
\begin{equation}\label{44}
	c_2'=c_2''=\frac{n}{4}
	\Longrightarrow
	|C_2|=c_2'+c_2''=2c_2'	
\end{equation}
By \eqref{44} together with the equation obtained by applying the trivial character, we obtain
$
4c_2'=n.
$ Now, the assumptions of part~(ii) of Proposition~\ref{prop:4-nkl} are satisfied, and we obtain $
\left|x^{h}\langle x^{4}\rangle\cap C\right|
=\frac{n}{8},
$ for \(0\le h\le 3\), and consequently \(n\) is divisible by \(8\).

\textbf{(iv)}
 By the assumptions of this part, we have
$
k=2p+1, \ell=4q+2,
$
or
$
\ell=2p+1, k=4q+2,
$
for some \(p,q\in\mathbb{Z}\). In either case, we have
$
S=\{x^{2p+1},x^{-2p-1},x^{4q+2},x^{-4q-2}\},
$
and
$
\overline{S}^{\,\varphi}=2x^{\frac{n}{2}}+2 \cdot 1.
$ By applying the homomorphism $\varphi$ to both sides of \eqref{eq:SC2} and substituting $\overline{S}^{\,\varphi}$ and $\overline{C}^{\,\varphi}$, we obtain
\begin{equation}\label{45}
	\left(2x^{\frac{n}{2}}\right)
	\left(c_2' \cdot 1+c_2''  \cdot x^{\frac{n}{2}}\right)
	=
	\frac{n}{2}\left(1+x^{\frac{n}{2}}\right)
\end{equation}
By solving the system of equations~\eqref{45}, we obtain $
|C_2|=2c_2'
$
and it follows that $
4c_2'=n.
$ Now, the assumptions of part~(iii) of Proposition~\ref{prop:4-nkl} are satisfied, and we obtain $
\left|x^{h}\langle x^{4}\rangle\cap C\right|
=\frac{n}{8},
$ for \(0\le h\le 3\), and consequently \(n\) is divisible by \(8\). This completes the proof.
\hfill$\blacksquare$
  		\end{proof}	
  		
  \begin{lemA}\label{lem:nonzero}
  Let $\alpha \ge 3$ be a positive integer. Let $\omega$ be a primitive $2^\alpha$-th root of unity, and $c\in\{-2,2\}$. Suppose that
  $
  \lambda_j=c+\sum_{t=1}^{4}\omega^{i_tj},
  $
  where $i_1,i_2,i_3,i_4$ are pairwise distinct integers and
  $1\le i_1,i_2,i_3,i_4,j\le 2^\alpha-1$.
  Then $\lambda_j\ne0$ for every $1\le j\le 2^\alpha-1$.
\end{lemA}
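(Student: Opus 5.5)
The plan is to argue by contradiction via the minimal polynomial of $\zeta:=\omega^{j}$, in the spirit of Remark~\ref{rem:cyclo} and Lemma~\ref{lemb_0}, but replacing the coefficient comparison of Lemma~\ref{lemb_00} by a reduction to a $\mathbb{Q}$-basis. Fix $1\le j\le 2^\alpha-1$ and write $j=2^{\gamma}j'$ with $j'$ odd and $0\le\gamma\le\alpha-1$. Then $\zeta$ is a primitive $2^{\beta}$-th root of unity with $\beta=\alpha-\gamma\ge 1$. By Definition~\ref{poly}, its minimal polynomial is $\Phi_{2^\beta}(x)=1+x^{2^{\beta-1}}$, so $\zeta^{2^{\beta-1}}=-1$. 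Moreover, $\{1,\zeta,\dots,\zeta^{2^{\beta-1}-1}\}$ is a $\mathbb{Q}$-basis of $\mathbb{Q}(\zeta)$, since the degree equals $\varphi(2^\beta)=2^{\beta-1}$.

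Assume $\lambda_j=0$. For each $t$, I reduce $i_tj$ modulo $2^{\beta}$ and then modulo $2^{\beta-1}$. This gives $\zeta^{i_tj}=\varepsilon_t\zeta^{r_t}$ with $\varepsilon_t\in\{\pm1\}$ and $0\le r_t\le 2^{\beta-1}-1$. The relation becomes
\[
c+\sum_{t=1}^{4}\varepsilon_t\zeta^{r_t}=0,
\]
and by linear independence of the basis, every coefficient vanishes.

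\textbf{Constant coefficient.} We need $c+\sum_{r_t=0}\varepsilon_t=0$. Since $|c|=2$, at least two indices $t$ must satisfy $r_t=0$, that is, $2^{\beta-1}\mid i_tj$, each with sign $\varepsilon_t=-c/2$. Because there are only four terms and the nonconstant coefficients must cancel in pairs, the possible configurations are:
\begin{itemize}
\item[(a)] exactly two terms with $r_t=0$ and $\varepsilon_t=-c/2$, and the remaining two terms cancelling each other;
\item[(b)] three or four terms with $r_t=0$ and signed sum equal to $-c$.
\end{itemize}
When $\beta=1$ (the case $j=2^{\alpha-1}$), every $\zeta^{i_tj}=(-1)^{i_t}$. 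The condition is then simply $c+\sum_t(-1)^{i_t}=0$, which is a parity count on the $i_t$.

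\textbf{Ruling out (a) and (b).} Here the distinctness hypothesis $1\le i_1,\dots,i_4\le 2^\alpha-1$ and $\alpha\ge3$ must be used. Translating $r_t=0$ gives $2^{\alpha-1}\mid i_t\,2^{\gamma}$, i.e.\ $2^{\alpha-1-\gamma}\mid i_t$. The sign condition $\varepsilon_t=-c/2$ fixes whether $i_tj/2^{\beta-1}$ is even or odd. I would run through $\gamma$ from $\alpha-1$ downward, in each case counting how many distinct residues in $[1,2^\alpha-1]$ can satisfy both the divisibility and the prescribed sign. Pairwise distinctness should then make the required multiplicity of equal-sign terms impossible.

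This case analysis is the step I expect to be the main obstacle. In particular, for $\beta=1$ the purely parity-based condition seems hard to exclude using only pairwise distinctness. So I would first check the small case $\alpha=3$, $j=4$ by hand. If it does not close, I would import the additional structure available where the lemma is applied, namely Proposition~\ref{prop:even2}~(v): there the $i_t$ are the coset indices of $x^{\pm k},x^{\pm\ell}$ with $k,\ell$ odd, so all $i_t$ are odd and they come in pairs $\{i,2^\alpha-i\}$. With that structure, for $\beta=1$ every $\zeta^{i_tj}=-1$ and $\lambda_j=c-4\neq0$. For $\beta\ge2$ the paired terms combine into $2\operatorname{Re}\zeta^{i_tj}$ with $i_t$ odd, none of which has $r_t=0$. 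This contradicts the constant-coefficient requirement and completes the argument.
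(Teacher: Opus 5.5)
Your hesitation at ``Ruling out (a) and (b)'' is justified. That step cannot be completed, because the statement is false as written. For any $\alpha\ge3$ take $j=2^{\alpha-1}$, so that $\omega^{i_tj}=(-1)^{i_t}$. With $c=2$ and $(i_1,i_2,i_3,i_4)=(1,3,5,2)$ you get $\lambda_j=2-1-1-1+1=0$; with $c=-2$ and $(2,4,6,1)$ you get $\lambda_j=-2+1+1+1-1=0$. This is your configuration (b). Configuration (a) also occurs: for $\alpha=3$, $j=2$, $c=2$, $(i_t)=(2,6,1,3)$ one has $\lambda_2=2+\omega^{4}+\omega^{12}+\omega^{2}+\omega^{6}=2-1-1+\omega^{2}-\omega^{2}=0$. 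Pairwise distinctness of the $i_t$ does not stop several $i_tj$ from being congruent modulo $2^\alpha$ when $j$ is even.

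The paper's proof breaks at the same point. In \eqref{eq:87.2} it asserts $1\le i_rj\le 2^\alpha-1$, which is false in general (e.g.\ $i_3j=20$ in the first example with $\alpha=3$). Once $i_rj$ is properly reduced modulo $2^\alpha$, Cases 2 and 5 only give $i_1j\equiv i_2j\pmod{2^\alpha}$, and $i_1=i_2$ follows from this only for odd $j$. So no case analysis along those lines can succeed. Your basis reduction is otherwise the same mechanism the paper uses; the paper works with $1,\omega,\dots,\omega^{2^{\alpha-1}-1}$ directly and so avoids splitting by $\gamma$. Note one slip: $\omega^{i_tj}=\zeta^{i_t}$, so it is $i_t$, not $i_tj$, that must be reduced modulo $2^\beta$.

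Your fallback correctly proves a different, restricted statement. If all $i_t$ are odd, then $\lambda_j=c-4\neq0$ for $\beta=1$. For $\beta\ge2$ no term reaches the constant coefficient, which therefore stays equal to $c\neq0$; the pairing $\{i,2^\alpha-i\}$ is not even needed. This handles Case~3 of Lemma~\ref{lem:noregular}. However, the paper also invokes Lemma~\ref{lem:nonzero} in two places where $i_3,i_4$ are even: Case~3 of Theorem~\ref{thm:13}(ii) ($k$ odd, $\ell\equiv0\pmod4$, $c=2$) and Theorem~\ref{thm:31}(ii) ($k$ odd, $\ell\equiv2\pmod4$, $c=-2$). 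Pairing by itself does not rescue these: $\alpha=3$, $j=2$, $c=2$, $(i_t)=(2,6,1,7)$ gives $\lambda_2=2+2\operatorname{Re}\omega^{4}+2\operatorname{Re}\omega^{2}=0$.

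A usable replacement lemma must keep $i_2\equiv-i_1$ and $i_4\equiv-i_3\pmod{2^\alpha}$ with $i_1$ odd, plus a condition on $i_3$ modulo $4$ matched to $c$: either $i_3$ odd, or $4\mid i_3$ when $c=2$, or $i_3\equiv2\pmod4$ when $c=-2$. Then $\lambda_j=c+2\operatorname{Re}\zeta^{i_1}+2\operatorname{Re}\zeta^{i_3}$. The cases $\beta=1,2$ are checked by hand: $\lambda_j=2,4$ in the first even case and $\lambda_j=-2,-4$ in the second. For $\beta\ge3$ your constant-coefficient argument closes. If $c=-2$, nothing reaches the constant term. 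If $c=2$, the constant vanishes only when $\zeta^{i_3}=-1$, which forces $\zeta^{i_1}+\zeta^{-i_1}=0$, i.e.\ $\zeta^{2i_1}=-1$; this is impossible for odd $i_1$ once $\beta\ge3$.
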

  \begin{proof}	
 Since $\omega$ is a primitive $2^{\alpha}$-th root of unity, we have $\omega^{2^{\alpha-1}} = -1$. Thus, we reduce the exponents modulo $2^{\alpha-1}$. For each $r \in \{1, 2, 3, 4\}$, assume that $i'_r$ be the unique integer satisfying $0 \le i'_r \le 2^{\alpha-1} - 1$ and $i_r j \equiv i'_r \pmod{2^{\alpha-1}}$. So
 \begin{equation}\label{eq:87.2}
 	i_r j - i'_r = 2^{\alpha-1} q_r, \quad 1 \le i_r j \le 2^\alpha - 1, \quad 0 \le i'_r \le 2^{\alpha-1} - 1
 \end{equation}
 for $r \in \{1, 2, 3, 4\}$.
  Estimating the upper and lower bounds of $i_r j - i'_r$, we get:
 \begin{equation*}
 	2 - 2^{\alpha-1} \le i_r j - i'_r \le 2^\alpha - 1
 \end{equation*}
 From relation~\eqref{eq:87.2}  we deduce that $-1 < q_r < 2$. Since $q_r$ is an integer, it follows that $q_r \in \{0, 1\}$. Consequently,
$
 	\omega^{i_r j} = \omega^{2^{\alpha-1} q_r} \cdot \omega^{i'_r} = (-1)^{q_r} \omega^{i'_r}.
$
 Therefore,
 \begin{equation}
 	\lambda_j = c + (-1)^{q_1}\omega^{i'_1} + (-1)^{q_2}\omega^{i'_2} + (-1)^{q_3}\omega^{i'_3} + (-1)^{q_4}\omega^{i'_4} \label{eq:88.2}
 \end{equation}
 where $0 \le i'_1, i'_2, i'_3, i'_4 \le 2^{\alpha-1} - 1$ and $q_1, q_2, q_3, q_4 \in \{0, 1\}$. We now proceed by contradiction. Suppose, on the contrary, that \(\lambda_j = 0\) for some \(j\).
 Suppose that \(\omega\) is a root of the polynomial
 \[
 f(x)=c+(-1)^{q_1}x^{i'_1}+(-1)^{q_2}x^{i'_2}+(-1)^{q_3}x^{i'_3}+(-1)^{q_4}x^{i'_4}
 \]
 By Remark~\ref{rem:cyclo}, we have:
 \begin{equation}\label{eq:87.3}
 (1+x^{2^{\alpha-1}}) \mid f(x)
 \end{equation}
 Since \(0 \le i'_r \le 2^{\alpha-1}-1\) for $r \in \{1, 2, 3, 4\}$, we have \(\deg f(x) \le 2^{\alpha-1}-1\), while \(\deg(1+x^{2^{\alpha-1}})=2^{\alpha-1}\). Therefore, relation~\eqref{eq:87.3} holds whenever $f(x)=0$. We now consider the following cases for the polynomial $f(x)$:

\textbf{Case 1.} Suppose that $i_1', i_2', i_3',$ and $i_4'$ are pairwise distinct. Then
\[
f(x)=c+(-1)^{q_1}x^{i_1'}+(-1)^{q_2}x^{i_2'}+(-1)^{q_3}x^{i_3'}+(-1)^{q_4}x^{i_4'},
\]
where $0\le i_1',i_2',i_3',i_4'\le 2^{\alpha-1}-1$ and $q_1,q_2,q_3,q_4\in\{0,1\}$, with $c\in\{-2,2\}$.
If none of the terms $x^{i_r'}$, where $r\in\{1,2,3,4\}$, is equal to $1$, then all the terms of $f(x)$ have distinct degrees. Hence, $f(x)$ is a nonzero polynomial.
If one of the exponents $i_r'$ is equal to $0$, then the corresponding term becomes a constant term. Consequently, the constant term of $f(x)$ is
$
c+(-1)^{q_r}.
$
For $c=2$, this is equal to either $1$ or $3$, while for $c=-2$, it is equal to either $-1$ or $-3$. Thus, in either case, the constant term is nonzero. The remaining three terms have distinct positive degrees, so no other constant terms occur. Therefore, $f(x)$ is again a nonzero polynomial.
\medskip

\textbf{Case 2.} Suppose that $i_1'=i_2'$, $i_3'=i_4'$, and $i_1'\neq i_3'$. Then
\[
f(x)=c+\left((-1)^{q_1}+(-1)^{q_2}\right)x^{i_1'}
+\left((-1)^{q_3}+(-1)^{q_4}\right)x^{i_3'}
\]
where $0\le i_1',i_3'\le2^{\alpha-1}-1$, $q_1,q_2,q_3,q_4\in\{0,1\}$, and $c\in\{-2,2\}$. In this case, the coefficients of the terms $x^{i_1'}$ and $x^{i_3'}$ belong to the set $\{-2,0,2\}$. Therefore, the polynomial $f(x)$ has the following possible forms:
\[
\begin{aligned}
	f_1(x)&=c+2x^{i_1'}+2x^{i_3'},&
	f_2(x)&=c+2x^{i_1'},&
	f_3(x)&=c+2x^{i_1'}-2x^{i_3'},\\
	f_4(x)&=c+2x^{i_3'},&
	f_5(x)&=c,&
	f_6(x)&=c-2x^{i_1'},\\
	f_7(x)&=c-2x^{i_1'}+2x^{i_3'},&
	f_8(x)&=c-2x^{i_3'},&
	f_9(x)&=c-2x^{i_1'}-2x^{i_3'}.
\end{aligned}
\]
When $c=-2$, only $f_2(x)$ and $f_4(x)$ may vanish. We first consider $f_2(x)$. Suppose that
$
f_2(x)=-2+2x^{i_1'},
$.
where $0\le i_1'\le2^{\alpha-1}-1$, $i_1'=i_2'$, and $q_1=q_2=0$. By relation~\eqref{eq:87.2}, we have
$
i_1j=i_1'$ and $i_2j=i_2'$.
Hence $i_1j=i_2j.$
Since $1\le j\le2^\alpha-1$, it follows that $i_1=i_2$, contradicting the assumption that $i_1$ and $i_2$ are distinct. Therefore, $f_2(x)\ne0$. The polynomial $f_4(x)$ can be treated similarly.

Now suppose that $c=2$. In this case, only $f_6(x)$ and $f_8(x)$ may vanish. We first consider $f_6(x)$. Suppose that
$
f_6(x)=2-2x^{i_1'},
$
where $0\le i_1'\le2^{\alpha-1}-1$, $i_1'=i_2'$, and $q_1=q_2=1$. By relation~\eqref{eq:87.2}, we have
$
i_1j-i_1'=2^{\alpha-1}$
and
$i_2j-i_2'=2^{\alpha-1}.$
Hence
\[
i_1j-i_1'=i_2j-i_2'
\quad\Longrightarrow\quad
i_1j=i_2j.
\]
Since $1\le j\le2^\alpha-1$, it follows that $i_1=i_2$, contradicting the assumption that $i_1$ and $i_2$ are distinct. Therefore, $f_6(x)\ne0$. The polynomial $f_8(x)$ can be treated similarly. Thus, in either case, $f(x)\ne0$.

 \medskip

\textbf{Case 3.} Suppose that three of the $i_t'$'s are equal and the remaining one is distinct. Without loss of generality, assume that $i_1'=i_2'=i_3'\neq i_4'$. Then
\[
f(x)=c+\left((-1)^{q_1}+(-1)^{q_2}+(-1)^{q_3}\right)x^{i_1'}
+(-1)^{q_4}x^{i_4'},
\]
where $0\le i_1',i_4'\le2^{\alpha-1}-1$, $q_1,q_2,q_3,q_4\in\{0,1\}$, and $c\in\{-2,2\}$. In this case, the coefficient of the term $x^{i_1'}$ belongs to the set $\{-3,-1,1,3\}$. Since the exponents $i_1'$ and $i_4'$ are distinct, at most one of these terms can become a constant term. If neither exponent is zero, then the constant term is $c$, which is nonzero. If $i_1'=0$, then the constant term is
$
c+(-1)^{q_1}+(-1)^{q_2}+(-1)^{q_3},
$
which is nonzero for both $c=2$ and $c=-2$. If $i_4'=0$, then the constant term is
$
c+(-1)^{q_4},
$
which is also nonzero for both $c=2$ and $c=-2$. In either case, the other term has a distinct positive degree and a nonzero coefficient. Therefore, $f(x)$ is a nonzero polynomial.

 \medskip

\textbf{Case 4.} Suppose that exactly one pair of the $i_t'$'s is equal, and the other two are distinct from each other and from that pair. Without loss of generality, assume $i_1'=i_2'$, $i_3'\neq i_4'$, $i_1'\neq i_3'$, and $i_1'\neq i_4'$. Then
\[
f(x)=c+\left((-1)^{q_1}+(-1)^{q_2}\right)x^{i_1'}
+(-1)^{q_3}x^{i_3'}
+(-1)^{q_4}x^{i_4'}
\]
where $0\le i_1',i_3',i_4'\le2^{\alpha-1}-1$, $q_1,q_2,q_3,q_4\in\{0,1\}$, and $c\in\{-2,2\}$. In this case, there are at least three distinct exponents. If the two terms of the same degree have opposite coefficients, they cancel each other, leaving two terms with distinct degrees. At most one of these terms can become a constant term, while the other has a nonzero degree. Therefore, $f(x)$ is a nonzero polynomial.

 \medskip

\textbf{Case 5.} Suppose that all four $i_t'$'s are equal, i.e., $i_1'=i_2'=i_3'=i_4'$. Then
\[
f(x)=c+\left((-1)^{q_1}+(-1)^{q_2}+(-1)^{q_3}+(-1)^{q_4}\right)x^{i_1'},
\]
where $0\le i_1'\le2^{\alpha-1}-1$ and
$q_1,q_2,q_3,q_4\in\{0,1\}$.
In this case, the coefficient of the term $x^{i_1'}$ belongs to the set
$\{-4,-2,0,2,4\}$. Therefore, the polynomial $f(x)$ has the following possible forms:
\[
\begin{aligned}
	f_1(x)&=c+4x^{i_1'},&
	f_2(x)&=c+2x^{i_1'},&
	f_3(x)&=c,\\
	f_4(x)&=c-2x^{i_1'},&
	f_5(x)&=c-4x^{i_1'}.
\end{aligned}
\]

When $c=-2$, the polynomial $f_2(x)$ may vanish. We have
$
f_2(x)=-2+2x^{i_1'},
$
where $0\le i_1'\le 2^{\alpha-1}-1$, $i_1'=i_2'=i_3'=i_4'$, and $q_1=q_2=q_3=0,\ q_4=1$. By relation~\eqref{eq:87.2}, we have
$
i_1j=i_1'$, $i_2j=i_2'$ and $i_3j=i_3'$.
Hence $i_1j=i_2j=i_3j$.
Since $1\le j\le2^\alpha-1$, it follows that $i_1=i_2=i_3$, contradicting the assumption that $i_1$, $i_2$ and $i_3$ are distinct. Therefore, $f_2(x)\ne0$.

Now suppose that $c=2$. In this case, only $f_4(x)$ may vanish. We have
$
f_4(x)=2-2x^{i_1'},
$
where $0\le i_1'\le2^{\alpha-1}-1$, $i_1'=i_2'=i_3'=i_4'$, and $q_1=q_2=q_3=1$ and $q_4=0$. By relation~\eqref{eq:87.2}, we have
$
i_1j-i_1'=2^{\alpha-1}$,
$i_2j-i_2'=2^{\alpha-1}$ and $i_3j-i_3'=2^{\alpha-1}$
Hence
\[
i_1j-i_1'=i_2j-i_2'=i_3j-i_3'
\quad\Longrightarrow\quad
i_1j=i_2j=i_3j.
\]
Since $1\le j\le2^\alpha-1$, it follows that $i_1=i_2=i_3$, contradicting the assumption that $i_1$, $i_2$ and $i_3$ are distinct. Therefore, $f_4(x)\ne0$.\\
Hence, in every possible case, $f(x) \neq 0$, contradicting the assumption that $\lambda_j = 0$.
Therefore, $\lambda_j \neq 0$ for every $1 \leq j \leq 2^\alpha - 1$. This completes the proof.
\hfill$\blacksquare$
  \end{proof}
\begin{lemA}\label{lem:nonzero2}
	Let $\alpha \ge 3$ be a positive integer. Let $\omega$ be a primitive $2^\alpha$-th root of unity, and $c\in\{-2,2\}$. Suppose that
    $\lambda_j=c+2\omega^{i_1j}+2\omega^{i_2j}$
	where $i_1,i_2$ are pairwise distinct integers and
	$1\le i_1,i_2,j\le 2^\alpha-1$.
	Then $\lambda_j\ne0$ for every $1\le j\le 2^\alpha-1$.
\end{lemA}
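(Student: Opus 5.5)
The plan is to avoid the case-by-case polynomial analysis used for Lemma~\ref{lem:nonzero} and instead exploit the special shape of $\lambda_j$: after dividing by $2$ it becomes a sum of three roots of unity. Fix $j$ with $1\le j\le 2^\alpha-1$ and put $\zeta_1=\omega^{i_1j}$, $\zeta_2=\omega^{i_2j}$. Both are $2^\alpha$-th roots of unity, although not necessarily primitive, since $j$ may share factors of $2$ with $2^\alpha$. Suppose, for contradiction, that $\lambda_j=0$. Since $c\in\{-2,2\}$, dividing by $2$ gives
\[
\varepsilon+\zeta_1+\zeta_2=0,\qquad \varepsilon=\tfrac{c}{2}\in\{-1,1\}.
\]

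The key step is a modulus computation. From $\zeta_1+\zeta_2=-\varepsilon$ we get $|\zeta_1+\zeta_2|=1$. Writing $\zeta_2=\zeta_1\eta$ with $\eta=\omega^{(i_2-i_1)j}$, this becomes $|1+\eta|=1$, that is, $2+\eta+\overline{\eta}=1$, so $\operatorname{Re}\eta=-\tfrac12$. Since $|\eta|=1$, it follows that $\eta=e^{\pm 2\pi\mathbf{i}/3}$. Hence $\eta$ is a primitive cube root of unity.

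On the other hand, $\eta$ is a power of $\omega$, so $\eta^{2^\alpha}=1$. The order of $\eta$ must then divide both $3$ and $2^\alpha$, so it equals $1$, which contradicts the fact that $\eta$ has order $3$. Therefore $\lambda_j\neq0$.

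This argument also covers the degenerate situations automatically. If $i_1j\equiv i_2j\pmod{2^\alpha}$, then $\eta=1$, so $|1+\eta|=2\neq1$. If $\zeta_2=-\zeta_1$, then $\lambda_j=c\neq0$. Consequently the distinctness of $i_1,i_2$ and the hypothesis $\alpha\ge3$ are not actually needed, and there is no real obstacle.

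To stay closer to the paper's cyclotomic style, one could instead pass to $\Phi_{2^{\alpha'}}$, where $2^{\alpha'}$ is the exact order of $\omega^j$, and invoke Remark~\ref{rem:cyclo}. That approach requires reducing exponents and sorting coefficient cases, so I would use the modulus argument as the main proof, noting only that all quantities lie in the cyclic group generated by $\omega$.
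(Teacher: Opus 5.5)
Your argument is correct, and it takes a different route from the paper's.

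The paper's route is algebraic. It reduces each exponent modulo $2^{\alpha-1}$ using $\omega^{2^{\alpha-1}}=-1$, and writes $\lambda_j=f(\omega)$ with $f(x)=c+2(-1)^{q_1}x^{i_1'}+2(-1)^{q_2}x^{i_2'}$, a polynomial of degree less than $2^{\alpha-1}$. Since the minimal polynomial $\Phi_{2^\alpha}(x)=1+x^{2^{\alpha-1}}$ has degree $2^{\alpha-1}$, a vanishing $\lambda_j$ would force $f$ to be the zero polynomial. The paper then rules this out by a case split on whether $i_1'=i_2'$ and whether one reduced exponent is $0$.

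Your route is geometric. You use the rigidity of a vanishing sum of three unit vectors: $|1+\eta|=1$ forces $\eta=e^{\pm 2\pi\mathbf{i}/3}$, which cannot lie in the cyclic $2$-group generated by $\omega$.

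Your version has several advantages. It is shorter and needs no exponent bookkeeping; the paper's bound $1\le i_rj\le 2^\alpha-1$ tacitly requires first reducing $i_rj$ modulo $2^\alpha$. It shows that the hypotheses that $i_1,i_2$ are distinct and that $\alpha\ge 3$ are superfluous. It also gives a more general fact: $\varepsilon+\zeta_1+\zeta_2\neq 0$ for any unimodular $\varepsilon$ and any $N$-th roots of unity $\zeta_1,\zeta_2$ with $3\nmid N$.

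What the paper's approach buys is uniformity. The same minimal-polynomial coefficient comparison is reused in Lemmas \ref{lem:nonzero}, \ref{lemb_0}, \ref{lemb_3} and \ref{lemb_5}. Those lemmas involve a constant plus several roots of unity with integer coefficients. There your modulus trick no longer suffices, because vanishing sums of four or more unit vectors are not rigid, so arithmetic information about $\omega$ is genuinely needed.
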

\begin{proof}
By an argument similar to the first part of the proof of Lemma~\ref{lem:nonzero}, we have
$
\omega^{i_rj}
=\omega^{2^{\alpha-1}q_r} \cdot \omega^{i'_r}
=(-1)^{q_r}\omega^{i'_r},
$
where $r\in\{1,2\}$, $q_r\in\{0,1\}$, and
$0\le i'_r\le 2^{\alpha-1}-1$.	
Consequently,
\begin{equation}\label{eq:6.1.4}
	\lambda_j
	=c+2(-1)^{q_1}\omega^{i'_1}
	+2(-1)^{q_2}\omega^{i'_2}.
\end{equation}
 where $1\le j\le 2^\alpha-1$. We now proceed by contradiction. Suppose, on the contrary, that \(\lambda_j = 0\) for some \(j\).
Suppose that \(\omega\) is a root of the polynomial
\[
f(x)=c+2(-1)^{q_1}x^{i'_1}+2(-1)^{q_2}x^{i'_2}
\]
By Remark~\ref{rem:cyclo}, we have:
\begin{equation}\label{eq:87.33}
	(1+x^{2^{\alpha-1}}) \mid f(x)
\end{equation}
Since \(0 \le i'_r \le 2^{\alpha-1}-1\) for $r \in \{1, 2\}$, we have \(\deg f(x) \le 2^{\alpha-1}-1\), while \(\deg(1+x^{2^{\alpha-1}})=2^{\alpha-1}\). Therefore, relation~\eqref{eq:87.33} holds whenever $f(x)=0$. We now consider the following cases for the polynomial $f(x)$:

\textbf{Case 1.} Suppose that $i'_1$ and $i'_2$ are distinct. Then
\[
f(x)=c+2(-1)^{q_1}x^{i'_1}+2(-1)^{q_2}x^{i'_2},
\]
where $0\le i'_1,i'_2\le 2^{\alpha-1}-1$ and $q_1,q_2\in\{0,1\}$.
If neither $i'_1$ nor $i'_2$ is equal to $0$, then the three terms of
$f(x)$ have distinct degrees. Since all coefficients are nonzero,
it follows that $f(x)$ is a nonzero polynomial. If one of the
exponents is equal to $0$, then, since $i'_1$ and $i'_2$ are distinct,
exactly one of them is $0$. Hence, the corresponding term becomes a
constant term, and the constant term of $f(x)$ is
$
c+2(-1)^{q_r}.
$
If $c=2$, this is equal to either $0$ or $4$, whereas if $c=-2$, it is
equal to either $-4$ or $0$. In either case, if the constant term is
zero, the remaining term has a distinct positive degree and a nonzero
coefficient. Therefore, $f(x)$ is a nonzero polynomial.

\textbf{Case 2.} Suppose that $i'_1=i'_2$. Then
\[
f(x)=c+2\big((-1)^{q_1}+(-1)^{q_2}\big)x^{i'_1},
\]
where $0\le i'_1\le 2^{\alpha-1}-1$ and $q_1,q_2\in\{0,1\}$.
The coefficient of $x^{i'_1}$ belongs to the set $\{-4,0,4\}$.
Thus, $f(x)$ has one of the following three forms:
\[
f_1(x)=c-4x^{i'_1},\qquad
f_2(x)=c,\qquad
f_3(x)=c+4x^{i'_1}.
\]
If $c=2$ or $c=-2$, then all three polynomials are nonzero.
Hence, in this case also, $f(x)\ne0$. Therefore,
$
\lambda_j\ne 0$ for $1\le j\le2^\alpha-1.
$
\hfill$\blacksquare$
\end{proof}
\begin{lemA}\label{lem:noregular}
 Let $n$, $k$, and $\ell$ be positive integers such that
 $k\equiv1\pmod{2}$ and $\ell\equiv1\pmod{2}$. If
 $
 \Gamma=\operatorname{Cay}\big(\mathbb{Z}_n,
 \{x^k,x^{-k},x^\ell,x^{-\ell}\}\big)
 $
 is a connected circulant quartic graph, then $\Gamma$ contains neither a
 $(1,3)$-regular set nor a $(3,1)$-regular set.
\end{lemA}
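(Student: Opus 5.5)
We argue by contradiction: assume $C$ is a $(1,3)$- or $(3,1)$-regular set. Let $2^\alpha$ be the exact power of $2$ dividing $n$. We will show that $C$ must meet every coset of $\langle x^{2^\alpha}\rangle$ in $n/2^{\alpha+1}$ elements, which is not an integer.

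\emph{Step 1: reduce to $\alpha\ge 3$.} Since $k$ and $\ell$ are odd, Proposition~\ref{prop:4-nkll}~(i) (for $(1,3)$) or~(iii) (for $(3,1)$) gives $8\mid n$. Hence $\alpha\ge 3$.

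\emph{Step 2: project onto $\mathbb{Z}_{2^\alpha}$.} Put $H=\langle x^{2^\alpha}\rangle$ and apply Remark~\ref{rem:7} with $p=2$. This gives the circulant system $A\,K^t=\frac{bn}{2^\alpha}\mathbf 1$, with $A=\mathrm{circ}(s_0,\dots,s_{2^\alpha-1})$. By Proposition~\ref{prop:even2}, $s_0=b-a=c\in\{2,-2\}$, and the nonzero $s_d$ take one of two forms.
\begin{itemize}
\item[(a)] If $2^\alpha\mid k-\ell$ or $2^\alpha\mid k+\ell$, then two of the $s_d$ equal $2$, at distinct positive indices $i_1,i_2$. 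This is part~(iv).
\item[(b)] Otherwise four of the $s_d$ equal $1$, at pairwise distinct positive indices $i_1,\dots,i_4$. This is part~(v), which applies since $\alpha\ge3$.
\end{itemize}
These two cases are exhaustive.

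\emph{Step 3: show $A$ is invertible.} By Definition~\ref{circulant}, the eigenvalues of $A$ are $\lambda_j=f(\omega^j)$, where $\omega$ is a primitive $2^\alpha$-th root of unity. For $j=0$ we get $\lambda_0=4+c\in\{2,6\}$, which is nonzero. For $1\le j\le 2^\alpha-1$:
\begin{itemize}
\item in case (a), $\lambda_j=c+2\omega^{i_1j}+2\omega^{i_2j}$, which is nonzero by Lemma~\ref{lem:nonzero2};
\item in case (b), $\lambda_j=c+\sum_{t=1}^4\omega^{i_tj}$, which is nonzero by Lemma~\ref{lem:nonzero}.
\end{itemize}
So $\det A\neq0$ and the system has a unique solution.

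\emph{Step 4: compute the solution.} The constant vector with
\[
t_h=\frac{bn}{2^\alpha(4+b-a)}
\]
solves the system, exactly as in Theorem~\ref{khi3}, so it is the only solution. For $(a,b)=(1,3)$ this equals $\frac{3n}{6\cdot2^\alpha}=\frac{n}{2^{\alpha+1}}$. For $(a,b)=(3,1)$ it equals $\frac{n}{2\cdot 2^\alpha}=\frac{n}{2^{\alpha+1}}$. Each $t_h$ is an integer, so $2^{\alpha+1}\mid n$, contradicting the maximality of $\alpha$.

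The main obstacle is Step 3, the nonvanishing of $\lambda_j$ for every $j\neq 0$. This is exactly what Lemmas~\ref{lem:nonzero} and~\ref{lem:nonzero2} supply. What remains to check is that their hypotheses hold: $c\in\{\pm2\}$, the indices $i_t$ are pairwise distinct, and they lie in $[1,2^\alpha-1]$. All three are guaranteed by Proposition~\ref{prop:even2}.
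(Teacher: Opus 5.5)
Your argument is the paper's proof essentially step for step: $8\mid n$ via Proposition~\ref{prop:4-nkll}, projection modulo the exact power $2^\alpha$ of $2$ dividing $n$, the two coset patterns from Proposition~\ref{prop:even2}, invertibility of the circulant, and $t_h=n/2^{\alpha+1}\notin\mathbb{Z}$. Your Step~4 is slightly cleaner, since you evaluate $t_h$ separately for $(1,3)$ and $(3,1)$.

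The gap is case~(b) of Step~3, and the paper's proof has exactly the same defect. Lemma~\ref{lem:nonzero} is false as stated, so checking $c=\pm2$ and that the $i_t$ are pairwise distinct in $[1,2^\alpha-1]$ proves nothing. For a counterexample take $\alpha=3$, $c=2$, $(i_1,i_2,i_3,i_4)=(1,7,2,6)$ and $j=2$: then $\lambda_2=2+(\omega^{2}+\omega^{14})+(\omega^{4}+\omega^{12})=2+0-2=0$. This is not contrived. It is precisely the circulant that Remark~\ref{rem:7} attaches to a $(1,3)$-regular set of $\operatorname{Cay}(\mathbb{Z}_8,\{x,x^{-1},x^{2},x^{-2}\})$, and $\{e,x,x^{4},x^{5}\}$ is such a set by Theorem~\ref{thm:13}(i). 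So no argument that uses only those hypotheses can give invertibility. The proof of Lemma~\ref{lem:nonzero} breaks where it asserts $1\le i_rj\le 2^\alpha-1$ in order to get $q_r\in\{0,1\}$.

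The missing ingredient is parity. Since $k$ and $\ell$ are odd, in case~(b) the indices are the residues of $\pm k,\pm\ell$ modulo $2^\alpha$, so all of them are odd.
Write $j=2^{s}j'$ with $j'$ odd and $0\le s\le\alpha-1$. Put $m=2^{\alpha-s}\ge 2$ and $\zeta=\omega^{2^{s}}$, a primitive $m$-th root of unity, so that $\omega^{i_tj}=\zeta^{i_tj'}$ with $i_tj'$ odd.
\begin{itemize}
\item If $m=2$, all four terms equal $-1$ and $\lambda_j=c-4\neq0$.
\item If $4\mid m$, the minimal polynomial of $\zeta$ is $x^{m/2}+1$, so $1,\zeta,\dots,\zeta^{m/2-1}$ is a $\mathbb{Q}$-basis of $\mathbb{Q}(\zeta)$. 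Because $m/2$ is even, every odd power of $\zeta$ equals $\pm\zeta^{e}$ with $e$ odd and $1\le e<m/2$. Hence the coordinate of $\lambda_j$ along $1$ is $c\neq0$, and $\lambda_j\neq0$.
\end{itemize}

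Case~(a) is fine as you wrote it, because Lemma~\ref{lem:nonzero2} is true. Indeed, $c+2\omega^{u}+2\omega^{v}=0$ would give $|1+\omega^{v-u}|=1$, making $\omega^{v-u}$ a primitive cube root of unity, which is impossible for a $2^\alpha$-th root of unity. With the parity argument replacing the appeal to Lemma~\ref{lem:nonzero}, the rest of your proof goes through unchanged.
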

\begin{proof}
Suppose, by contradiction, that $\Gamma$ contains an $(a,b)$-regular
set $C$, where
$
(a,b)\in\{(3,1),(1,3)\}.
$
Set $u:=b-a\in\{-2,2\}$. By parts~(i) and~(iii) of
Proposition~\ref{prop:4-nkll}, corresponding to $(a,b)=(1,3)$ and
$(a,b)=(3,1)$, respectively, we obtain $8\mid n.$  Assume that, there exists an integer \(\alpha\ge 3\) such that
$
2^{\alpha}\mid n$
and
$2^{\alpha+1}\nmid n.$ By Remark~\ref{rem:7}, it follows that $ S_d := x^d \langle x^{2^{\alpha}} \rangle \cap S$ for $ 0 \le d \le 2^{\alpha}-1, s_0:=\left|\left\langle x^{2^{\alpha}}\right\rangle\cap S\right|+u,\ 	s_d:=\left|x^d\left\langle x^{2^{\alpha}}\right\rangle\cap S\right| $ for $ 1\le d\le 2^{\alpha}-1\ and\ t_h:=\left|x^h\left\langle x^{2^{\alpha}}\right\rangle\cap C\right|$ for $0\le h\le 2^{\alpha}-1.$ Let
$
H=\langle x^{2^{\alpha}}\rangle.$ By part~(i) of Proposition~\ref{prop:even2}, the sets $\{x^k,x^{-k}\}$ and $\{x^\ell,x^{-\ell}\}$ do not have their two elements in the same coset of $H$. We consider the following cases according to Proposition~\ref{prop:even2}.

\textbf{Case 1.} Suppose that $2^\alpha\mid k-\ell$. Then, by part~(ii) Proposition~\ref{prop:even2}, the two elements of each of the sets $\{x^k,x^\ell\}$ and $\{x^{-k},x^{-\ell}\}$ belong to the same coset of $H$. Hence, by part~(iv) of this proposition, we have $s_0=u$, $s_{i_1}=s_{i_2}=2$, where $i_1$ and $i_2$ are distinct integers satisfying $1\le i_1,i_2\le 2^\alpha-1$.

\textbf{Case 2.} Suppose that $2^\alpha\mid k+\ell$. Then, by part~(iii) Proposition~\ref{prop:even2}, the two elements of each of the sets $\{x^k,x^{-\ell}\}$ and $\{x^{-k},x^\ell\}$ belong to the same coset of $H$. Hence, by part~(iv) of this proposition, we have $s_0=u$, $s_{i_1}=s_{i_2}=2$, where $i_1$ and $i_2$ are distinct integers satisfying $1\le i_1,i_2\le 2^\alpha-1$.

\textbf{Case 3.} Suppose that $2^\alpha\nmid k-\ell$ and $2^\alpha\nmid k+\ell$. Then, by parts~(ii) and (iii) Proposition~\ref{prop:even2}, no two elements of any of the sets $\{x^k,x^\ell\}$, $\{x^{-k},x^{-\ell}\}$, $\{x^k,x^{-\ell}\}$, and $\{x^{-k},x^\ell\}$ belong to the same coset of $H$. Therefore, by part~(v) of this proposition, we have $s_0=u$ and $s_{i_1}=s_{i_2}=s_{i_3}=s_{i_4}=1$, where $i_1,i_2,i_3,i_4$ are pairwise distinct integers satisfying $1\le i_1,i_2,i_3,i_4\le 2^\alpha-1$.

We first consider Cases 1 and 2. In both cases, we have $s_0=u$, $s_{i_1}=s_{i_2}=2$, where $i_1$ and $i_2$ are distinct integers satisfying $1\le i_1,i_2\le 2^\alpha-1$. By Definition~\ref{circulant}, We obtain
\[
\lambda_j
=
u+2\omega^{i_1j}+2\omega^{i_2j},
\qquad
0\le j\le 2^{\alpha}-1
\]
where $\omega$ is a primitive $2^{\alpha}$-th root of unity.
Clearly,
$
\lambda_0=u+4\neq0.
$
Moreover, by Lemma~\ref{lem:nonzero2}, we have $\lambda_j\ne0$ for every $1\le j\le 2^\alpha-1$.

We now consider Case~3. In this case,
$
s_0=u,$
$s_{i_1}=s_{i_2}=s_{i_3}=s_{i_4}=1,$
where \(i_1,i_2,i_3,\) and \(i_4\) are pairwise distinct integers satisfying
$
1\le i_1,i_2,i_3,i_4\le 2^{\alpha}-1.
$
By Definition~\ref{circulant}, it follows that
\[
\lambda_j
=
u+\omega^{i_1j}+\omega^{i_2j}+\omega^{i_3j}+\omega^{i_4j},
\qquad
0\le j\le 2^{\alpha}-1
\]
where $\omega$ is a primitive $2^{\alpha}$-th root of unity. Clearly,
$
\lambda_0=
u+4\neq0.
$
Moreover, by Lemma~\ref{lem:nonzero}, we have \(\lambda_j\neq0\) for every \(1\le j\le2^{\alpha}-1\).

Therefore, by considering all three cases, we conclude that
$
\lambda_j\neq0$ for $0\le j\le2^{\alpha}-1.
$
Hence, the determinant of the circulant matrix is nonzero. Therefore, the circulant matrix is invertible, and the corresponding system has the unique solution
\[
t_0=t_1=\cdots=t_{2^{\alpha}-1}
=\frac{3n}{6\cdot2^{\alpha}}
=\frac{n}{2^{\alpha+1}}
\]
Since
$
t_h=\left|x^h\left\langle x^{2^{\alpha}}\right\rangle\cap C\right|,$ for
$0\le h\le2^{\alpha}-1,
$
each \(t_h\) must be an integer. Hence,
$
2^{\alpha+1}\mid n,
$
which contradicts the assumption that
$
2^{\alpha+1}\nmid n.
$
Therefore, $\Gamma$ contains neither a $(1,3)$-regular set nor a $(3,1)$-regular set.   	\hfill$\blacksquare$		\end{proof}

 \begin{remA}\label{rem:odd}
 	Suppose that $k$ is odd. Then $k=2p+1$ for some $p\in\mathbb{Z}$. We consider the following two cases.
 	
 	\textup{(i)} If $p=2t$ for some $t\in\mathbb{Z}$, then $k=2(2t)+1=4t+1$. Hence, $k\stackrel{4}{\equiv}1$.
 	
 	\textup{(ii)} If $p=2t+1$ for some $t\in\mathbb{Z}$, then $k=2(2t+1)+1=4t+3$. Hence, $k\stackrel{4}{\equiv}3$.
 \end{remA}
  \begin{thmA}\label{thm:13}
  	Let $n, k, \ell$ be positive integers, and $\Gamma = \mathrm{Cay}(\mathbb{Z}_n, \{x^k, x^{-k}, x^\ell, x^{-\ell}\})$ be a connected circulant quartic graph. Then the following statements hold:
  	\begin{enumerate}
  		\item[(i)] If $k$ is odd and \(\ell \equiv 2 \pmod{4}\), or $\ell$ is odd and \(k \equiv 2 \pmod{4}\), then the graph \(\Gamma\) contains a \((1,3)\)-regular set if and only if \(n\) is a multiple of \(4\).
  		\item[(ii)] If $k$ is odd and \(\ell \equiv 0 \pmod{4}\), or $\ell$ is odd and \(k \equiv 0 \pmod{4}\), then the graph \(\Gamma\) contains no \((1,3)\)-regular set.
  		\item[(iii)] If \(k \equiv 1 \pmod{2}\) and \(\ell \equiv 1 \pmod{2}\), then the graph \(\Gamma\) contains no \((1,3)\)-regular set.
  	\end{enumerate}
  \end{thmA}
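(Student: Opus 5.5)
Part~(iii) is exactly Lemma~\ref{lem:noregular}, so only parts~(i) and~(ii) need work. Throughout, $C$ is a $(1,3)$-regular set, so Lemma~\ref{Z}~(iv) gives $(\overline{S}+2)\,\overline{C}=3\,\overline{\mathbb{Z}_n}$. Applying the trivial character yields $2|C|=n$.

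\textbf{Part~(i).} By symmetry take $k$ odd and $\ell\equiv 2\pmod 4$.

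For necessity, use the homomorphism $z\mapsto z^{n/2}$, which is well defined since $n$ is even. Put $X=x^{n/2}$. Then $\overline{S}^{\varphi}=2X+2$, and comparing coefficients in $(2X+4)(c'+c''X)=\tfrac{3n}{2}(1+X)$ gives $c'=c''=n/4$. Hence $4\mid n$, exactly as in the proof of Proposition~\ref{prop:4-nkll}.

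For sufficiency, assume $4\mid n$ and take $C=\langle x^4\rangle\cup x\langle x^4\rangle$. Because $4\mid n$, the product $\overline{S}\,\overline{C}$ is a combination of the coset sums $x^h\overline{\langle x^4\rangle}$, and it depends only on the residues of $\pm k,\pm \ell$ modulo $4$. These residues are $\{1,3\}$ for $\pm k$ (by Remark~\ref{rem:odd}, independently of whether $k\equiv1$ or $3$) and $\{2,2\}$ for $\pm\ell$. So working in $\mathbb{Z}[Y]/(Y^4-1)$ with $Y$ the image of $x$, it suffices to check
\[
(Y+Y^3+2Y^2+2)(1+Y)=3(1+Y+Y^2+Y^3),
\]
which is a one-line expansion. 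Lifting back gives $(\overline{S}+2)\overline{C}=3\overline{\mathbb{Z}_n}$, so $C$ is $(1,3)$-regular by Lemma~\ref{Z}.

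\textbf{Part~(ii).} By symmetry take $k$ odd and $4\mid \ell$. Proposition~\ref{prop:4-nkll}~(ii) gives $8\mid n$. Let $2^\alpha\,\|\, n$ with $\alpha\ge 3$, and apply Remark~\ref{rem:7} with $p=2$. The $t_h$ then satisfy a circulant system whose eigenvalues are
\[
\lambda=2+\zeta^{k}+\zeta^{-k}+\zeta^{\ell}+\zeta^{-\ell},
\]
where $\zeta$ ranges over the $2^\alpha$-th roots of unity. This form holds whatever the coset placement of $x^{\pm\ell}$, since the $s_d$ simply record the images of $S$.

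The main step is to show every such $\lambda$ is nonzero. For $\zeta=1$, $\lambda=6$. Otherwise let $\zeta$ be a primitive $2^\beta$-th root with $1\le\beta\le\alpha$.
\begin{itemize}
\item If $\beta=1$, then $\zeta=-1$ and $\lambda=2-2+2=2$.
\item If $\beta=2$, then $\zeta^{\pm k}=\pm i$ cancel, while $\zeta^{\ell}=1$ because $4\mid\ell$. So $\lambda=4$.
\item If $\beta\ge 3$, follow the proof of Lemma~\ref{lem:nonzero}. Reduce exponents modulo $2^{\beta-1}$ using $\zeta^{2^{\beta-1}}=-1$, obtaining a polynomial $f$ of degree $<2^{\beta-1}$ with $f(\zeta)=0$. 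By Remark~\ref{rem:cyclo}, $1+x^{2^{\beta-1}}$ divides $f$, so $f=0$.
\end{itemize}

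In the case $\beta\ge3$, since $k$ is odd, the terms $\pm x^{k'}$ coming from $\zeta^{\pm k}$ have exponents $\not\equiv 0 \pmod{2^{\beta-1}}$. They are congruent only to each other, and then $\zeta^{-k}=\zeta^{k}$ would force $\zeta^{2k}=1$, which is impossible. So the constant $2$ must be cancelled by $\zeta^{\ell}=\zeta^{-\ell}=-1$. Then $\lambda=\zeta^k+\zeta^{-k}=2\cos(2\pi k'/2^\beta)$ with $k'$ odd and $\beta\ge3$, and this is nonzero, a contradiction.

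This eigenvalue check is the expected obstacle; in particular the $\beta=2$ case relies on $4\mid\ell$. Once all eigenvalues are nonzero, the circulant matrix is invertible, so the system has the unique solution $t_h=\frac{3n}{6\cdot 2^\alpha}=\frac{n}{2^{\alpha+1}}$. This is not an integer because $2^{\alpha+1}\nmid n$, a contradiction that proves~(ii).
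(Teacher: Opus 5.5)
Your proof is correct. Its skeleton is the paper's:
\begin{itemize}
\item the same homomorphism $z\mapsto z^{n/2}$ for necessity in (i);
\item the same set $C=\langle x^4\rangle\cup x\langle x^4\rangle$ for sufficiency, where your check in $\mathbb{Z}[Y]/(Y^4-1)$ merges the paper's two sub-cases $k\equiv1,3\pmod 4$;
\item the same reduction in (ii) to invertibility of the $2^\alpha$-circulant, followed by the integrality contradiction $t_h=n/2^{\alpha+1}$;
\item the same appeal to Lemma~\ref{lem:noregular} for (iii).
\end{itemize}

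The genuine difference is how you show the eigenvalues in (ii) are nonzero. The paper splits according to where $x^{\pm\ell}$ lie modulo $\langle x^{2^\alpha}\rangle$: either $2^\alpha\mid\ell$, or $2^{\alpha-1}$ is the exact power of $2$ dividing $\ell$, or $2^{\alpha-1}\nmid\ell$. This gives three $s_d$-patterns, which it handles by the triangle inequality, an ad hoc congruence argument forcing $k$ even, and Lemma~\ref{lem:nonzero}. You instead observe that $\sum_d s_d\zeta^d=2+\zeta^{k}+\zeta^{-k}+\zeta^{\ell}+\zeta^{-\ell}$ whatever the placement, and you stratify by the order of $\zeta$. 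Only the oddness of $k$ and $4\mid\ell$ are then used.

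Your route buys three things:
\begin{itemize}
\item One uniform computation replaces three.
\item Since you also treat $\beta=1,2$, the argument works for every $\alpha\ge1$, so the preliminary $8\mid n$ step via Proposition~\ref{prop:4-nkll} is unnecessary.
\item It avoids Lemma~\ref{lem:nonzero}, whose statement is too strong as written. For $\alpha=3$, $c=2$, $(i_1,i_2,i_3,i_4)=(1,3,5,2)$ and $j=4$ the hypotheses hold, yet $\lambda_4=2-1-1-1+1=0$. So the paper's Case~3 is valid only because of the parity structure your argument makes explicit.
\end{itemize}

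The same stratification also proves (iii) directly. When $k$ and $\ell$ are both odd, all four exponents are odd. This gives $\lambda=-2$ for $\beta=1$, $\lambda=2$ for $\beta=2$, and an uncancellable constant $2$ for $\beta\ge3$. That would free you from Lemma~\ref{lem:noregular}, which is itself built on Lemma~\ref{lem:nonzero}.

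One small slip in the $\beta\ge3$ case: if the two reduced $k$-exponents coincided you would get $\zeta^{-k}=\pm\zeta^{k}$, not $\zeta^{-k}=\zeta^{k}$. This is harmless, because that sentence is not needed. Once $\zeta^{\pm\ell}=-1$ is forced, $\lambda=2\,\mathrm{Re}(\zeta^{k})\neq0$ closes the case.
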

   \begin{proof}
\textbf{(i)} We first show the necessity. Suppose that the graph $\Gamma$ contains a $(1,3)$-regular set $C$. By Lemma~\ref{Z}~(iv), we have
\begin{equation}\label{eq:42}
	\overline{S}\,\overline{C}+2\overline{C}
	=3\overline{\mathbb{Z}}_{n}
\end{equation}
 By applying the trivial character to the relation \eqref{eq:42}, we obtain
\[
|S||C|+2|C|=3n \Longrightarrow 6|C|=3n  \Longrightarrow 2|C|=n
\]
Now consider the group homomorphism	\[
\begin{aligned}
	\varphi :\; \mathbb{Z}_n &\to \langle x^{\frac{n}{2}} \rangle \\
	z &\mapsto z^{\frac{n}{2}}
\end{aligned}
\]
is well defined. By the assumption of the theorem, $k$ is odd and $\ell\stackrel{4}{\equiv}2$, or $\ell$ is odd and $k\stackrel{4}{\equiv}2$. In both cases, the set $S$ can be written as
$S=\{x^{2p+1},x^{-2p-1},x^{4q+2},x^{-4q-2}\}$,
for some $p,q\in\mathbb{Z}$. Applying the above homomorphism to both sides of~\eqref{eq:42}, we obtain
\begin{equation}\label{eq:44}
	\begin{aligned}
		\overline{S}^{\,\varphi}\,\overline{C}^{\,\varphi}+2\overline{C}^{\,\varphi} =
		3\overline{\mathbb{Z}}_{n}^{\,\varphi}
		&\Longrightarrow
		\left(2\cdot x^{\frac{n}{2}}+2\cdot 1 \right)\left(c_1\cdot 1+c_2\cdot x^{\frac{n}{2}}\right)
		+2\left(c_1\cdot 1+c_2\cdot x^{\frac{n}{2}}\right) \\
		&=
		\frac{3n}{2}\left(1+x^{\frac{n}{2}}\right)
	\end{aligned}
\end{equation}
From \eqref{eq:44}, we obtain
\begin{equation}\label{eq:51}
	c_1=c_2=\frac{n}{4}
	\Longrightarrow
	|C|=c_1+c_2=2c_1
\end{equation}
 Using \(2|C|=n\) together with~\eqref{eq:51}, we obtain
	$4c_1=n$.
Hence, $4\mid n$.

We now prove the sufficiency. By the assumption of the theorem, either $k$ or $\ell$ is odd. Hence, by Remark~\ref{rem:odd}, either $k\equiv1,3\pmod{4}$ or $\ell\equiv1,3\pmod{4}$. Now we show that
$C=\langle x^{4}\rangle \cup x\langle x^{4}\rangle$
is a $(1,3)$-regular set of $\Gamma$ in the cases
$k\stackrel{4}{\equiv}1$, $\ell\stackrel{4}{\equiv}2$,
or $\ell\stackrel{4}{\equiv}1$, $k\stackrel{4}{\equiv}2$,
and
$k\stackrel{4}{\equiv}3$, $\ell\stackrel{4}{\equiv}2$,
or $\ell\stackrel{4}{\equiv}3$, $k\stackrel{4}{\equiv}2$.
We first consider the cases
$k\stackrel{4}{\equiv}1$, $\ell\stackrel{4}{\equiv}2$,
or $\ell\stackrel{4}{\equiv}1$, $k\stackrel{4}{\equiv}2$.
In both cases, we have
$S=\{x^{4p+1},x^{-4p-1},x^{4q+2},x^{-4q-2}\}$, where \(p,q\in\mathbb{Z}\). Substituting $C$ and $S$ into~\eqref{eq:42}, we show that the left-hand side of~\eqref{eq:42} equals $3\overline{\mathbb{Z}}_{n}$.
We have
\[
\left(x^{4p+1}+x^{-4p-1}+x^{4q+2}+x^{-4q-2}\right)
\left(\overline{\langle x^{4}\rangle}+x\overline{\langle x^{4}\rangle}\right)
+2\left(\overline{\langle x^{4}\rangle}+x\overline{\langle x^{4}\rangle}\right)
\]
After simplification, we obtain
\[
\begin{aligned}
	&x\overline{\langle x^{4}\rangle}
	+x^{2}\overline{\langle x^{4}\rangle}
	+x^{3}\overline{\langle x^{4}\rangle}
	+\overline{\langle x^{4}\rangle}
	+x^{2}\overline{\langle x^{4}\rangle}+x^{3}\overline{\langle x^{4}\rangle}+x^{2}\overline{\langle x^{4}\rangle}
	+x^{3}\overline{\langle x^{4}\rangle} \\
	&\quad
	+2\overline{\langle x^{4}\rangle}
	+2x\overline{\langle x^{4}\rangle}
	=3\overline{\langle x^{4}\rangle}
	+3x\overline{\langle x^{4}\rangle}
	+3x^{2}\overline{\langle x^{4}\rangle}
	+3x^{3}\overline{\langle x^{4}\rangle}
\end{aligned}
\]
Since
$
\overline{\mathbb{Z}}_{n}
=
\overline{\langle x^{4}\rangle}
+x\overline{\langle x^{4}\rangle}
+x^{2}\overline{\langle x^{4}\rangle}
+x^{3}\overline{\langle x^{4}\rangle},
$
the left-hand side of~\eqref{eq:42} is equal to
$3\overline{\mathbb{Z}}_{n}$.
Hence, $C$ is a $(1,3)$-regular set of $\Gamma$ in the cases $k\stackrel{4}{\equiv}1$, $\ell\stackrel{4}{\equiv}2$,
or $\ell\stackrel{4}{\equiv}1$, $k\stackrel{4}{\equiv}2$.
Now consider the cases $k\stackrel{4}{\equiv}3$, $\ell\stackrel{4}{\equiv}2$, or $\ell\stackrel{4}{\equiv}3$, $k\stackrel{4}{\equiv}2$. In both cases, we have $S=\{x^{4p+3},x^{-4p-3},x^{4q+2},x^{-4q-2}\}$. Substituting $S$ and $C$ into~\eqref{eq:42}, we show that the left-hand side of~\eqref{eq:42} is equal to $3\overline{\mathbb{Z}}_{n}$. We have
\[
\left(x^{4p+3}+x^{-4p-3}+x^{4q+2}+x^{-4q-2}\right)
\left(\overline{\langle x^{4}\rangle}+x\overline{\langle x^{4}\rangle}\right)
+2\left(\overline{\langle x^{4}\rangle}+x\overline{\langle x^{4}\rangle}\right)
\]
After simplification, we obtain
\[
\begin{aligned}
	&x^3\overline{\langle x^{4}\rangle}
	+\overline{\langle x^{4}\rangle}
	+x\overline{\langle x^{4}\rangle}
	+x^{2}\overline{\langle x^{4}\rangle}
	+x^{2}\overline{\langle x^{4}\rangle}+x^{3}\overline{\langle x^{4}\rangle}+x^{2}\overline{\langle x^{4}\rangle}
	+x^{3}\overline{\langle x^{4}\rangle} \\
	&\quad
	+2\overline{\langle x^{4}\rangle}
	+2x\overline{\langle x^{4}\rangle}
	=3\overline{\langle x^{4}\rangle}
	+3x\overline{\langle x^{4}\rangle}
	+3x^{2}\overline{\langle x^{4}\rangle}
	+3x^{3}\overline{\langle x^{4}\rangle}
\end{aligned}
\]
Similarly, $C$ is a $(1,3)$-regular set of $\Gamma$ in
the cases $k\stackrel{4}{\equiv}3$, $\ell\stackrel{4}{\equiv}2$, or $\ell\stackrel{4}{\equiv}3$, $k\stackrel{4}{\equiv}2$.

\textbf{(ii)}
Suppose, by contradiction, that \(C\) is a \((1,3)\)-regular set in \(\Gamma\). By part~(ii) of Proposition~\ref{prop:4-nkll}, it follows that \(8\mid n\). Assume that, there exists an integer \(\alpha\ge 3\) such that
$
2^{\alpha}\mid n$
and
$2^{\alpha+1}\nmid n.$ By Remark~\ref{rem:7}, it follows that $ S_d := x^d \langle x^{2^{\alpha}} \rangle \cap S$ for $0 \le d \le 2^{\alpha}-1, s_0:=\left|\left\langle x^{2^{\alpha}}\right\rangle\cap S\right|+2,\ 	s_d:=\left|x^d\left\langle x^{2^{\alpha}}\right\rangle\cap S\right|$ for $1\le d\le 2^{\alpha}-1\ and\ t_h:=\left|x^h\left\langle x^{2^{\alpha}}\right\rangle\cap C\right|$ for $0\le h\le 2^{\alpha}-1.$ Let
$
H=\langle x^{2^{\alpha}}\rangle.
$ Without loss of generality, we assume that \(k\) is odd and
\(\ell\equiv0\pmod4\). Since the assumptions are symmetric in
\(k\) and \(\ell\), the proof for the case where \(\ell\) is odd
and \(k\equiv0\pmod4\) follows by interchanging the roles of
\(k\) and \(\ell\).
By Corollary~\ref{power2}, we consider the following cases.

\textbf{Case 1.}   Suppose that \(2^{\alpha}\mid \ell\). Then
$
\{x^{\ell},x^{-\ell}\}\subseteq S_0.
$
Moreover, by assumption, \(k\) is odd. Hence,
$
2^{\alpha-1}\nmid k
$
and
\[
\nexists\, d\in\{0,1,\ldots,2^{\alpha}-1\} :
\{x^{k},x^{-k}\}\subseteq S_d.
\]
Therefore, \(s_0=4\), \(s_{i_1}=s_{i_2}=1\), where \(i_1\) and \(i_2\) are distinct integers and \(1\le i_1,i_2\le 2^{\alpha}-1\).
 	
\medskip

\textbf{Case 2.} Suppose that $2^{\alpha-1} \mid \ell$ and $2^{\alpha} \nmid \ell$. Then
\begin{align*}
	2^{\alpha-1} \mid \ell &\implies \exists d \in \{1, \dots, 2^{\alpha}-1\} : \{x^{\ell}, x^{-\ell}\} \subseteq S_d, \\
	2^{\alpha} \nmid \ell &\implies \{x^{\ell}, x^{-\ell}\} \not\subseteq S_0, \\
    &\nexists d \in \{0, \dots, 2^{\alpha}-1\} : \{x^k, x^{-k}\} \subseteq S_d.
\end{align*}
Therefore, $s_0=s_{i_1}=2$ and $s_{i_2} = s_{i_3} = 1$,
where \(i_1\), \(i_2\), and \(i_3\) are pairwise distinct integers and
\(1\le i_1,i_2,i_3\le 2^{\alpha}-1\). In this case, we show that both \(x^{\ell}\) and \(x^{-\ell}\) belong to the coset
\(x^{2^{\alpha-1}}H\). Since
\(2^{\alpha-1}\mid \ell\) and \(2^{\alpha}\nmid \ell\), there exists an integer
\(z_1\in\mathbb{Z}\) such that
$
\ell=2^{\alpha-1}z_1,
$
where \(z_1\) is odd. Hence, there exists an integer
\(z_2\in\mathbb{Z}\) such that
$
z_1=2z_2+1.
$
Therefore,
\[
x^{\ell}
=x^{2^{\alpha-1}(2z_2+1)}
=x^{2^{\alpha}z_2+2^{\alpha-1}}
=x^{2^{\alpha-1}}\left(x^{2^{\alpha}}\right)^{z_2} \Longrightarrow
x^{\ell}\in x^{2^{\alpha-1}}\langle x^{2^{\alpha}}\rangle
\]
Similarly,
$
x^{-\ell}\in x^{2^{\alpha-1}}\langle x^{2^{\alpha}}\rangle.
$
Hence,
$
i_1=2^{\alpha-1}.
$	
\medskip

\textbf{Case 3.} Suppose that $2^{\alpha-1} \nmid \ell$. Then
\begin{align*}
	2^{\alpha-1} \nmid \ell &\implies \nexists d \in \{0, \dots, 2^{\alpha}-1\} : \{x^{\ell}, x^{-\ell}\} \subseteq S_d, \\
	&\nexists d \in \{0, \dots, 2^{\alpha}-1\} : \{x^k, x^{-k}\} \subseteq S_d.
\end{align*}
Therefore, $s_0=2$ and $s_{i_1} = s_{i_2} = s_{i_3} = s_{i_4} = 1$, where $i_1, i_2, i_3, i_4$ are pairwise distinct integers and $1 \le i_1, i_2, i_3, i_4 \le 2^{\alpha}-1$.

Since $k$ is odd and $\ell$ is a multiple of $4$, each of the integers \(k-\ell\), \(k+\ell\), \(-k-\ell\), and \(-k+\ell\) is odd. Hence,
$
x^{k-\ell},\; x^{k+\ell},\; x^{-k-\ell},\; x^{-k+\ell}\notin H.
$
Therefore, by Lemma~\ref{samecoset2}, for each of the sets $\{x^{k},x^{\ell}\}$, $\{x^{k},x^{-\ell}\}$, $\{x^{-k},x^{\ell}\}$, and $\{x^{-k},x^{-\ell}\}$, the two elements do not belong to the same coset of $H$.

We first consider Case~1. In this case,
$
s_0=4,$ $s_{i_1}=s_{i_2}=1,
$ where \(i_1\) and \(i_2\) are distinct integers and \(1\le i_1,i_2\le 2^{\alpha}-1\). We consider the circulant matrix of order \(2^{\alpha}\) with first row
$
(s_0,s_1,\ldots,s_{i_1},s_{i_2},\ldots,s_{2^{\alpha}-1})
=
(4,0,\ldots,1,1,\ldots,0)
$.
By Definition~\ref{circulant}, we have
\[
\lambda_j=f(\omega^j)
=s_0+s_{i_1}\omega^{i_1j}+s_{i_2}\omega^{i_2j}
=4+\omega^{i_1j}+\omega^{i_2j},
\qquad
0\le j\le 2^{\alpha}-1
\]
where $\omega$ is a primitive $2^{\alpha}$-th root of unity. Clearly,
$
\lambda_0=6\neq0.
$
Hence, it remains to show that
$
\lambda_j\neq0,$ for
$1\le j\le 2^{\alpha}-1.$ Suppose, to the contrary, that there exists
\(j\in\{1,\ldots,2^{\alpha}-1\}\) such that
\begin{equation}\label{eq:lambda-zero}
	\lambda_j
	=
	4+\omega^{i_1j}+\omega^{i_2j}
	=
	0.
\end{equation}
By \eqref{eq:lambda-zero}, we have
\[
\omega^{i_1j}+\omega^{i_2j}=-4
\Longrightarrow
\left\|\omega^{i_1j}+\omega^{i_2j}\right\|
=\|-4\|
=4
\]
On the other hand,
\[
\left\|\omega^{i_1j}+\omega^{i_2j}\right\|
\le
\left\|\omega^{i_1j}\right\|
+
\left\|\omega^{i_2j}\right\|
=2.
\]
Therefore,
$
4\le2,
$
which is a contradiction. Hence,
$
\lambda_j\neq0.
$

We next consider Case~2. In this case,
$
s_0=s_{2^{\alpha-1}}=2$,
$s_{i_2}=s_{i_3}=1,
$
where \(i_2\) and \(i_3\) are distinct integers and \(1\le i_2,i_3\le 2^{\alpha}-1\). By Definition~\ref{circulant}, We obtain
\[
\lambda_j
=2+2\omega^{2^{\alpha-1}j}
+\omega^{i_2j}
+\omega^{i_3j},
\qquad
0\le j\le 2^{\alpha}-1.
\]
Since
\[
\omega^{2^{\alpha-1}j}
=
e^{\frac{2 \pi \mathbf{i} \,2^{\alpha-1}j}{2^{\alpha}}}
=
e^{\pi \mathbf{i} j}
=
(-1)^j
\]
Therefore,
\[
\lambda_j
=
2+2(-1)^j
+\omega^{i_2j}
+\omega^{i_3j},
\qquad
0\le j\le 2^{\alpha}-1
\]
where $\omega$ is a primitive $2^{\alpha}$-th root of unity. Clearly,
$
\lambda_0=6\neq0.
$
Hence, it remains to show that
$
\lambda_j\neq0,$
for
$1\le j\le 2^{\alpha}-1.
$
Suppose, to the contrary, that there exists
\(j\in\{1,\ldots,2^{\alpha}-1\}\) such that
\begin{equation}\label{eq:lambda-case2}
	\lambda_j
	=
	2+2(-1)^j
	+\omega^{i_2j}
	+\omega^{i_3j}
	=
	0
\end{equation}
We first consider the case where \(j\) is even. Then
\[
\lambda_j
=
4+\omega^{i_2j}+\omega^{i_3j}
=
0,
\qquad
1\le j\le 2^{\alpha}-1.
\]
Since this is exactly the same as in Case~1, we obtain
$
4\le2,
$
which is a contradiction. Therefore,
$
\lambda_j\neq0.
$
Next, assume that \(j\) is odd. Then
\[
\lambda_j
=
\omega^{i_2j}+\omega^{i_3j}
=
0.
\]
for $1\le j\le 2^{\alpha}-1$. We have
\[
\omega^{i_3j}
=
-\omega^{i_2j} \Longrightarrow
\omega^{i_3j}\cdot\omega^{-i_2j}=-1 \Longrightarrow
\omega^{(i_3-i_2)j}
=
-1
\]
Clearly
$
\omega^{2^{\alpha-1}}=-1.
$
We obtain
\[
\omega^{(i_3-i_2)j}
=
\omega^{2^{\alpha-1}}\Longrightarrow
(i_3-i_2)j \stackrel{2^{\alpha}}{\equiv} 2^{\alpha-1}
\]
Therefore
\begin{equation}\label{eq:oddcase1}
(i_3-i_2)j=2^{\alpha}z_6+2^{\alpha-1}
\end{equation}
Since \(j\) is odd, we have \(\gcd(2^{\alpha},j)=1\). Hence, the multiplicative inverse of \(j\), denoted by \(j^{*}\), exists in \(\mathbb{Z}_{n}^{*}\) and satisfies
$
jj^{*}\equiv1\pmod{2^{\alpha}}.
$
Multiplying both sides of \eqref{eq:oddcase1} by \(j^{*}\), we obtain $(i_3-i_2)jj^{*}
=
2^{\alpha-1}\left(2z_6j^{*}+j^{*}\right)$, and it follows that
\begin{equation}\label{eq:diff}
2^{\alpha-1}\mid(i_3-i_2)
\end{equation}
Assume that $x^k \in x^{i_2} H$ and $x^{-k} \in x^{i_3} H$, where $1 \le i_2, i_3 \le 2^\alpha - 1$ and $i_2 \neq i_3$.
Since $x^k \in x^{i_2} H$, there exists an integer $z_3 \in \mathbb{Z}$ such that
$
x^k = x^{i_2} \left(x^{2^\alpha}\right)^{z_3}.
$ We have
\begin{align*}
	k\stackrel{2^{\alpha}z_{4}}{\equiv} i_{2}+2^{\alpha}z_{3}
	&\Longrightarrow
	2^{\alpha}z_4\mid\left(k-i_2-2^{\alpha}z_3\right) \\
	&\Longrightarrow
	2^{\alpha}z_4z_5=k-i_2-2^{\alpha}z_3\\
   &\Longrightarrow
	i_2=k-2^{\alpha}\left(z_4z_5+z_3\right)
\end{align*}
where \(z_3,z_4,z_5\in\mathbb{Z}\). Let \(m'=z_4z_5+z_3\). Therefore,
\begin{equation}\label{eq:i2}
	i_2=k-2^{\alpha}m'
\end{equation}
Similarly, we obtain
\begin{equation}\label{eq:i_3}
	i_{3}=-k-2^{\alpha}m''
\end{equation}
where $m''\in\mathbb{Z}$. Substituting \eqref{eq:i2} and \eqref{eq:i_3} into \eqref{eq:diff}, we obtain
\begin{equation*}
	\begin{aligned}
		2^{\alpha-1}z_6
		&=-k-2^{\alpha}m''-k+2^{\alpha}m'\\
		&=-2k+2^{\alpha}(-m''+m').
	\end{aligned}
\end{equation*}
So, $2k=-2^{\alpha-1}z_6+2^{\alpha}(-m''+m')$, and
\begin{equation}\label{eq:second}
	k=-2^{\alpha-2}z_6+2^{\alpha-1}(-m''+m')
\end{equation}
Since the right-hand side of \eqref{eq:second} is even for \(\alpha\ge3\), it follows that \(k\) must also be even. This contradicts the assumption that \(k\) is odd. Hence, \(\lambda_j\neq0\) for every odd integer \(j\) with \(1\le j\le 2^{\alpha}-1\).

We now consider Case~3. In this case,
$
s_0=2,$
$s_{i_1}=s_{i_2}=s_{i_3}=s_{i_4}=1,$
where \(i_1,i_2,i_3,\) and \(i_4\) are pairwise distinct integers satisfying
$
1\le i_1,i_2,i_3,i_4\le 2^{\alpha}-1.
$
Hence,
\[
\lambda_j
=
2+\omega^{i_1j}+\omega^{i_2j}+\omega^{i_3j}+\omega^{i_4j}.
\]
where $\omega$ is a primitive $2^{\alpha}$-th root of unity. Clearly,
$
\lambda_0=
6\neq0.
$
Moreover, by Lemma~\ref{lem:nonzero}, we have \(\lambda_j\neq0\) for every \(1\le j\le2^{\alpha}-1\).

Therefore, by considering all three cases, we conclude that
$
\lambda_j\neq0$ for $0\le j\le2^{\alpha}-1.
$
Hence, the determinant of the circulant matrix is nonzero. Therefore, the circulant matrix is invertible, and the corresponding system has the unique solution
\[
t_0=t_1=\cdots=t_{2^{\alpha}-1}
=\frac{3n}{6\cdot2^{\alpha}}
=\frac{n}{2^{\alpha+1}}
\]
Since
$
t_h=\left|x^h\left\langle x^{2^{\alpha}}\right\rangle\cap C\right|,$ for
$0\le h\le2^{\alpha}-1,
$
each \(t_h\) must be an integer. Hence,
$
2^{\alpha+1}\mid n,
$
which contradicts the assumption that
$
2^{\alpha+1}\nmid n.
$
Therefore, \(\Gamma\) contains no \((1,3)\)-regular set.

\textbf{(iii)} This follows immediately from Lemma~\ref{lem:noregular}.
\hfill$\blacksquare$
   	\end{proof}	
   	
   \begin{thmA}\label{thm:31}
 	Let $n, k, \ell$ be positive integers, and $\Gamma = \mathrm{Cay}(\mathbb{Z}_n, \{x^k, x^{-k}, x^\ell, x^{-\ell}\})$ be a connected circulant quartic graph. Then the following statements hold:
 	\begin{enumerate}
 		\item[(i)] If $k$ is odd and $\ell\equiv 0 \pmod{4}$, or $\ell$ is odd and $k\equiv 0 \pmod{4}$, then the graph $\Gamma$ contains a $(3,1)$-regular set if and only if $n$ is a multiple of $4$.
 		\item[(ii)] If $k$ is odd and $\ell\equiv 2 \pmod{4}$, or $\ell$ is odd and $k\equiv 2 \pmod{4}$, then the graph $\Gamma$ contains no $(3,1)$-regular set.
 		\item[(iii)] If \(k \equiv 1 \pmod{2}\) and \(\ell \equiv 1 \pmod{2}\), then the graph \(\Gamma\) contains no \((3,1)\)-regular set.
 	\end{enumerate}
 \end{thmA}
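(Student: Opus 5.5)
Part~(iii) follows at once from Lemma~\ref{lem:noregular}, so the work is in parts (i) and (ii). The plan is to mirror the proof of Theorem~\ref{thm:13}, with the roles of $\ell\equiv0$ and $\ell\equiv2\pmod 4$ interchanged. Throughout we use Lemma~\ref{Z}(iv) in the form $(\overline{S}-2)\overline{C}=\overline{\mathbb{Z}}_n$. By Lemma~\ref{primes}, a $(3,1)$-regular set is the complement of a $(3,1)$-regular set, so no reduction to the $(1,3)$ case is available; we argue directly.

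\textbf{Part (i), necessity.} Applying the trivial character gives $2|C|=n$. We then apply $\varphi:z\mapsto z^{n/2}$. With $k$ odd and $4\mid \ell$ we get $\overline{S}^{\varphi}=2+2X$, so the equation becomes $2X(c_1+c_2X)=\frac n2(1+X)$. This forces $c_1=c_2=n/4$, hence $4\mid n$.

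\textbf{Part (i), sufficiency.} Assume $4\mid n$ and take $C=\langle x^4\rangle\cup x\langle x^4\rangle$. For $k\equiv1\pmod4$, multiplying $\overline S=x^{k}+x^{-k}+x^{\ell}+x^{-\ell}$ by $\overline C$ modulo $\langle x^4\rangle$ gives
\[
(x+x^{3})(1+x)+2(1+x)=3+3x+x^{2}+x^{3}.
\]
Subtracting $2\overline C$ yields $1+x+x^{2}+x^{3}$ times $\overline{\langle x^4\rangle}$, which is $\overline{\mathbb{Z}}_n$. The case $k\equiv3\pmod4$ gives the same product, since $x^{k}+x^{-k}$ reduces to $x+x^{3}$ either way, and the case with $k,\ell$ interchanged is symmetric.

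\textbf{Part (ii).} Suppose $C$ is a $(3,1)$-regular set. Proposition~\ref{prop:4-nkll}(iv) gives $8\mid n$. Let $2^{\alpha}\| n$ with $\alpha\ge3$, set $H=\langle x^{2^\alpha}\rangle$, and use Remark~\ref{rem:7} with $u=b-a=-2$. Take $k$ odd and $\ell=2\ell'$ with $\ell'$ odd. Because $\alpha\ge3$, Corollary~\ref{power2} shows that neither $\{x^k,x^{-k}\}$ nor $\{x^\ell,x^{-\ell}\}$ lies in a single coset. Since $k\pm\ell$ is odd, Lemma~\ref{samecoset2} shows that no mixed pair does either. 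Hence $S\cap H=\varnothing$, the four elements of $S$ lie in pairwise distinct nontrivial cosets, and $s_0=-2$ with four distinct $s_{i_t}=1$. The eigenvalues are $\lambda_0=2\neq0$ and $\lambda_j=-2+\sum_{t=1}^4\omega^{i_tj}$ for $j\ne0$, and these are nonzero by Lemma~\ref{lem:nonzero} with $c=-2$. So the circulant matrix is invertible and the solution is unique: $t_h=\frac{n}{2\cdot2^{\alpha}}=\frac{n}{2^{\alpha+1}}$. Since each $t_h$ is an integer, $2^{\alpha+1}\mid n$, a contradiction.

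\textbf{Main obstacle.} The delicate step is the coset bookkeeping in part (ii). The nonvanishing argument relies on Lemma~\ref{lem:nonzero}, which needs the four indices $i_t$ to be pairwise distinct. That distinctness depends on $\alpha\ge3$: when $\ell\equiv2\pmod4$, $2^{\alpha-1}\nmid\ell$ holds only because $\alpha\ge3$. This is exactly why the reduction to $8\mid n$ through Proposition~\ref{prop:4-nkll} has to come first.
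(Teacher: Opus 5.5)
Your proposal is essentially the paper's proof: part (iii) from Lemma~\ref{lem:noregular}, part (i) via $z\mapsto z^{n/2}$ and the explicit set $\langle x^4\rangle\cup x\langle x^4\rangle$, and part (ii) via $8\mid n$ from Proposition~\ref{prop:4-nkll}(iv), the coset count $s_0=-2$ with four distinct $s_{i_t}=1$, nonvanishing of the circulant eigenvalues, and the integrality contradiction $t_h=n/2^{\alpha+1}$. One caveat you inherit from the paper is that Lemma~\ref{lem:nonzero} is false for arbitrary distinct indices (take $\alpha=3$, $c=-2$, indices $1,2,4,6$ and $j=4$), so the step $\lambda_j\neq0$ is better checked directly: since $\Phi_{2^\alpha}(x)=1+x^{2^{\alpha-1}}$, the sum $-2+\omega^{kj}+\omega^{-kj}+\omega^{\ell j}+\omega^{-\ell j}$ can vanish only if its coefficients at the exponents $0$ and $2^{\alpha-1}$ agree, and as $k$ is odd this forces $\ell j\equiv0\pmod{2^\alpha}$, i.e.\ $j=2^{\alpha-1}$, where in fact $\lambda_j=-2$.
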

 \begin{proof}
 \textbf{(i)} We first show the necessity. Suppose that the graph $\Gamma$ contains a $(3,1)$-regular set $C$. By Lemma~\ref{Z}~(iv), we have
\begin{equation}\label{eq:(3,1)}
	\overline{S}\,\overline{C}-2\overline{C}
	=\overline{\mathbb{Z}}_{n}
\end{equation}
 By applying the trivial character to the relation \eqref{eq:(3,1)}, we obtain
 \[
 |S||C|-2|C|=n \Longrightarrow  2|C|=n
 \]
 Now consider the group homomorphism
 	\[
 \begin{aligned}
 	\varphi :\; \mathbb{Z}_n &\to \langle x^{\frac{n}{2}} \rangle \\
 	z &\mapsto z^{\frac{n}{2}}
 \end{aligned}
 \]
 which is well defined. By the assumption of the theorem, $k$ is odd and $\ell\stackrel{4}{\equiv}0$, or $\ell$ is odd and $k\stackrel{4}{\equiv}0$. In both cases, we have $S=\{x^{2p+1},x^{-2p-1},x^{4q},x^{-4q}\}$.  Applying the above homomorphism to both sides of~\eqref{eq:(3,1)}, we obtain
 \begin{equation}\label{eq:53}
 	\begin{aligned}
 		\overline{S}^{\,\varphi}\,\overline{C}^{\,\varphi}-2\overline{C}^{\,\varphi} =
 		\overline{\mathbb{Z}}_{n}^{\,\varphi}
 		&\Longrightarrow
 		\left(2\cdot x^{\frac{n}{2}}+2\cdot 1 \right)\left(c_1\cdot 1+c_2\cdot x^{\frac{n}{2}}\right)-
 		2\left(c_1\cdot 1+c_2\cdot x^{\frac{n}{2}}\right) \\
 		&=
 		\frac{n}{2}\left(1+x^{\frac{n}{2}}\right)
 	\end{aligned}
 \end{equation}
 From \eqref{eq:53}, we obtain
 \begin{equation}\label{eq:52}
 	c_1=c_2=\frac{n}{4}
 	\Longrightarrow
 	|C|=c_1+c_2=2c_1
 \end{equation}
 Using \(2|C|=n\) together with~\eqref{eq:52}, we obtain
 $
 4c_1=n.
 $
 Hence, \(4\mid n\).

 We now prove the sufficiency. By the assumption of the theorem, either $k$ or $\ell$ is odd. Hence, by Remark~\ref{rem:odd}, either $k\equiv1,3\pmod{4}$ or $\ell\equiv1,3\pmod{4}$. Now we show that
 $
 C=\langle x^{4}\rangle \cup x\langle x^{4}\rangle
 $
 is a $(3,1)$-regular set of the graph $\Gamma$ in the cases
 $k\stackrel{4}{\equiv}1$, $\ell\stackrel{4}{\equiv}0$, and
 $k\stackrel{4}{\equiv}3$, $\ell\stackrel{4}{\equiv}0$, and
 $\ell\stackrel{4}{\equiv}1$, $k\stackrel{4}{\equiv}0$, and
 $\ell\stackrel{4}{\equiv}3$, $k\stackrel{4}{\equiv}0$.
 First, we consider the cases
 $k\stackrel{4}{\equiv}1$, $\ell\stackrel{4}{\equiv}0$, and
 $\ell\stackrel{4}{\equiv}1$, $k\stackrel{4}{\equiv}0$.
 In both cases, it is observed that
 $S=\{x^{4p+1},x^{-4p-1},x^{4q},x^{-4q}\}$, where $p,q\in\mathbb{Z}$.
 Substituting $C$ and $S$ into~\eqref{eq:(3,1)}, we show that the left-hand side of~\eqref{eq:(3,1)} is equal to $\overline{\mathbb{Z}}_{n}$. We have
\[
\left(\left(x^{4p+1}+x^{-4p-1}+x^{4q}+x^{-4q}\right)-2\right)
\left(\overline{\langle x^{4}\rangle}+x\overline{\langle x^{4}\rangle}\right)
\]
 After simplification, we obtain
 \[
 x\overline{\langle x^{4}\rangle}
 +x^{2}\overline{\langle x^{4}\rangle}
 +x^{3}\overline{\langle x^{4}\rangle}
 +\overline{\langle x^{4}\rangle}
 \]
 Hence, the above expression is equal to $\overline{\mathbb{Z}}_{n}$. Therefore, $C$ is a $(3,1)$-regular set of the graph $\Gamma$ in this case.
 Now consider the cases
 $k\stackrel{4}{\equiv}3$, $\ell\stackrel{4}{\equiv}0$, and
 $\ell\stackrel{4}{\equiv}3$, $k\stackrel{4}{\equiv}0$.
 In both cases, it is observed that
 $S=\{x^{4p+3},x^{-4p-3},x^{4q},x^{-4q}\}$, where $p,q\in\mathbb{Z}$.
 Substituting the sets $C$ and $S$ into~\eqref{eq:(3,1)}, we show that the left-hand side of~\eqref{eq:(3,1)} is equal to $\overline{\mathbb{Z}}_{n}$. We have
\[
\left(\left(x^{4p+3}+x^{-4p-3}+x^{4q}+x^{-4q}\right)-2\right)
\left(\overline{\langle x^{4}\rangle}+x\overline{\langle x^{4}\rangle}\right)
\]
 After simplification, we obtain
 \[
 x^{3}\overline{\langle x^{4}\rangle}
 +\overline{\langle x^{4}\rangle}
 +x\overline{\langle x^{4}\rangle}
 +x^{2}\overline{\langle x^{4}\rangle}
 \]
 Hence, the above expression is equal to $\overline{\mathbb{Z}}_{n}$. Therefore, $C$ is also a $(3,1)$-regular set of the graph $\Gamma$ in this case.

\textbf{(ii)}
Suppose, by contradiction, that \(C\) is a \((3,1)\)-regular set in \(\Gamma\). By part~(iv) of Proposition~\ref{prop:4-nkll}, it follows that \(8\mid n\). Assume that, there exists an integer \(\alpha\ge 3\) such that
$
2^{\alpha}\mid n$
and
$2^{\alpha+1}\nmid n.$ By Remark~\ref{rem:7}, it follows that $ S_d := x^d \langle x^{2^{\alpha}} \rangle \cap S$ for $ 0 \le d \le 2^{\alpha}-1, s_0:=\left|\left\langle x^{2^{\alpha}}\right\rangle\cap S\right|-2,\ 	s_d:=\left|x^d\left\langle x^{2^{\alpha}}\right\rangle\cap S\right|$ for $ 1\le d\le 2^{\alpha}-1$ and $ t_h:=\left|x^h\left\langle x^{2^{\alpha}}\right\rangle\cap C\right|$ for $ 0\le h\le 2^{\alpha}-1.$ Let
$
H=\langle x^{2^{\alpha}}\rangle.
$ Without loss of generality, we assume that \(k\) is odd and
$
\ell\stackrel{4}{\equiv}2.$ Since the assumptions are symmetric in
\(k\) and \(\ell\), the proof for the case where \(\ell\) is odd
and $
k\stackrel{4}{\equiv}2$, follows by interchanging the roles of
\(k\) and \(\ell\). Since $k$ is odd, we have
\[
\nexists\, d\in\{0,1,\ldots,2^{\alpha}-1\} :
\{x^{k},x^{-k}\}\subseteq S_d
\]
Since $
\ell\stackrel{4}{\equiv}2$, we can write $\ell=4r+2=2(2r+1)$ for some $r\in\mathbb{Z}$. Since $\alpha\ge3$, we have $\alpha-1\ge2$ and hence $2^{\alpha-1}\ge4$. On the other hand, $\ell$ has exactly one factor of $2$, and therefore $2^{\alpha-1}\nmid\ell$. By Lemma~\ref{power2}, the elements $x^\ell$ and $x^{-\ell}$ cannot belong to the same coset of $H$. Moreover, each of the integers $k-\ell$, $k+\ell$, $-k-\ell$, and $-k+\ell$ is odd. Hence,
$
x^{k-\ell},\; x^{k+\ell},\; x^{-k-\ell},\; x^{-k+\ell}\notin H.
$
Consequently, $s_0=-2$ and $s_{i_1}=s_{i_2}=s_{i_3}=s_{i_4}=1$,
where $i_1$, $i_2$, $i_3$, and $i_4$ are pairwise distinct integers satisfying $1\le i_1,i_2,i_3,i_4\le 2^\alpha-1$.
By Definition~\ref{circulant}, we have
\[
\lambda_j=-2+\omega^{i_1j}+\omega^{i_2j}+\omega^{i_3j}+\omega^{i_4j},
\qquad 0\le j\le 2^\alpha-1
\]
where $\omega$ is a primitive $2^{\alpha}$-th root of unity. Clearly,
$
\lambda_0=2\neq0.
$ Moreover, by Lemma~\ref{lem:nonzero}, we have \(\lambda_j\neq0\) for every \(1\le j\le2^{\alpha}-1\). Hence, the determinant of the circulant matrix is nonzero. Therefore, the circulant matrix is invertible, and the corresponding system has the unique solution
\[
t_0=t_1=\cdots=t_{2^{\alpha}-1}
=\frac{3n}{6\cdot2^{\alpha}}
=\frac{n}{2^{\alpha+1}}
\]
Since
$
t_h=\left|x^h\left\langle x^{2^{\alpha}}\right\rangle\cap C\right|,$ for
$0\le h\le2^{\alpha}-1,
$
each \(t_h\) must be an integer. Hence,
$
2^{\alpha+1}\mid n,
$
which contradicts the assumption that
$
2^{\alpha+1}\nmid n.
$
Therefore, \(\Gamma\) contains no \((3,1)\)-regular set.

 \textbf{(iii)}This follows immediately from Lemma~\ref{lem:noregular}.
\hfill$\blacksquare$
 \end{proof}
 	
   \subsection{$(2,1)$ and $(0,2)$-regular sets}
    \begin{lemA}\label{lem:nonzero1}
   	Let $p$ be a prime and let $\alpha$ be a positive integer. Suppose that
   	$
   	i_rj\equiv i_r' \pmod{p^\alpha},
   	$
   	for $1\le r\le m$, where $p\nmid i_r$ and
   	$
   	1\le i_r,j\le p^\alpha-1.
   	$
   	Then $i_r'\neq0$ for every $1\le r\le m$.
   \end{lemA}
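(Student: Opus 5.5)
We argue by contradiction, one index at a time; fix $r\in\{1,\ldots,m\}$ and interpret $i_r'$ as the least nonnegative residue of $i_rj$ modulo $p^\alpha$, so that $i_r'=0$ exactly when $p^\alpha\mid i_rj$. Suppose, to the contrary, that $i_r'=0$. Then $i_rj\equiv 0\pmod{p^\alpha}$, that is, $p^\alpha$ divides the product $i_rj$.

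The key step is to transfer this divisibility to $j$ alone. Since $p\nmid i_r$ and the only prime dividing $p^\alpha$ is $p$, we have $\gcd(i_r,p^\alpha)=1$. Hence, by Euclid's lemma (equivalently, since $i_r$ is invertible modulo $p^\alpha$ and we may multiply the congruence $i_rj\equiv0$ by an inverse $i_r^{*}$ with $i_ri_r^{*}\equiv1\pmod{p^\alpha}$, exactly as the inverse $j^{*}$ was used in the proof of Theorem~\ref{thm:13}), we obtain $p^\alpha\mid j$.

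Finally, we compare with the range of $j$. By hypothesis $1\le j\le p^\alpha-1$, so $j$ is a positive integer strictly smaller than $p^\alpha$ and cannot be divisible by $p^\alpha$. This contradiction shows $i_r'\neq0$. Since $r$ was arbitrary, the conclusion holds for every $1\le r\le m$.

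There is no real obstacle here. The only point needing care is noting that the hypothesis $p\nmid i_r$ is exactly what makes $i_r$ a unit modulo $p^\alpha$. Without it the statement could fail: for example, if $p\mid i_r$ we could take $j=p^{\alpha-1}$ and get $i_rj\equiv0\pmod{p^\alpha}$. Notably, no condition on $j$ beyond $1\le j\le p^\alpha-1$ is required; in particular $j$ may itself be divisible by $p$.
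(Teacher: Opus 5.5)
Your proof is correct and is essentially the paper's argument: assume $i_r'=0$, deduce $p^\alpha\mid i_rj$, use $\gcd(i_r,p^\alpha)=1$ (from $p\nmid i_r$) to get $p^\alpha\mid j$, and contradict $1\le j\le p^\alpha-1$. Your side remark that $p\nmid i_r$ is needed is also right, with the small caveat that such a counterexample exists only when $\alpha\ge 2$, since for $\alpha=1$ the range $1\le i_r\le p-1$ already forces $p\nmid i_r$.
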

   \begin{proof}
   	Suppose, to the contrary, that $i_r'=0$ for some $1\le r\le m$. Then
   	$
   	i_rj\equiv0\pmod{p^\alpha},
   	$
   	and hence
   	$
   	p^\alpha\mid i_rj.
   	$
   	Since $p\nmid i_r$, we have
   	$
   	\gcd(i_r,p^\alpha)=1.
   	$
   	Therefore,
   	$
   	p^\alpha\mid j.
   	$
   	However,
   	$
   	1\le j\le p^\alpha-1,
   	$
   	which is impossible. Hence, $i_r'\neq0$ for every $1\le r\le m$. This completes the proof.
   	\hfill$\blacksquare$
   	\begin{remA}\label{rem:indices-congruence}
   		Let $\alpha \ge 2$ be a positive integer, let $p$ be an odd prime, and let
   		$i_1,i_2,i_3,i_4$ be integers satisfying
   		$
   		1\le i_1,i_2,i_3,i_4\le p^\alpha-1,
   		$
   		where $i_3$ and $i_4$ are divisible by $p$, whereas $i_1$ and $i_2$ are not divisible by $p$. Suppose that
   		\[
   		i_t'\stackrel{p^\alpha}{\equiv}i_tj \qquad  	1\le t\le 4
   		\]
   		We first show that,
   		$
   		i_r' \ne i_s'
   		$
   		for every $r\in\{1,2\}$ and $s\in\{3,4\}$.
   		Assume, to the contrary, that $i_r'=i_s'$. Then
   		$
   		i_rj\stackrel{p^\alpha}{\equiv}i_sj
   		$
   		and hence
   		$
   		p^\alpha\mid(i_r-i_s)j.
   		$
   		Since $p\nmid(i_r-i_s)$, it follows that
   		$
   		p^\alpha\mid j
   		$
   		contradicting the assumption that
   		$
   		1\le j\le p^\alpha-1.
   		$
   		Therefore, $i_r'\ne i_s'$. In other words,
   		$
   		i_1'\ne i_3',
   		i_1'\ne i_4',
   		i_2'\ne i_3',
   		i_2'\ne i_4'.
   		$
   		We now consider the conditions under which $i_1'=i_2'$. Since $i_1$ and $i_2$ are not divisible by $p$, this equality can occur only if $i_1$ and $i_2$ belong to the same congruence class modulo $p$ and $j$ is chosen appropriately. Since $i_3-i_4$ is divisible by $p$, in this case as well, by an appropriate choice of $j$, the relation
   		$
   		p^\alpha\mid(i_3-i_4)j
   		$
   		holds, and consequently,
   		$
   		i_3'=i_4'.
   		$
   	\end{remA}
 \begin{lemA}\label{lem:three_case}
	Let $n$, $m$, $k$, $\ell$ be positive integers with $n=3m$ and $\gcd(k,\ell)=1$. Then
	\[
	\mathrm{Cay}(\mathbb{Z}_n,\{x^k, x^{-k}, x^\ell, x^{-\ell}\}) \cong
	\mathrm{Cay}(\mathbb{Z}_n,\{x^{k_1}, x^{-k_1}, x^{\ell_1}, x^{-\ell_1}\})
	\]
	where $k_1 \equiv 1 \pmod{3}$ and $\ell_1 \equiv 0,1 \pmod{3}$.
\end{lemA}
\begin{proof}
	Since $\gcd(k,\ell)=1$, it follows that either $3\nmid k$ or $3\nmid \ell$. Without loss of generality, assume that $3\nmid k$. By Proposition~\ref{gcd}, there exist positive integers $z=3$ and $s=k$ such that $3 \mid 3m$ and $\gcd(k,3)=1$. Therefore, there exists a positive integer $y \in \mathbb{Z}_n$ such that
	$\gcd(y,n)=1$ and $ky \equiv 1 \pmod{3}$.
	We define:
	\[
	\begin{aligned}
		\varphi :\; \mathbb{Z}_n &\to \mathbb{Z}_n \\
		g &\mapsto g^{y}
	\end{aligned}
	\]
	Clearly, $\varphi$ is an isomorphism. Suppose that $k_1 = ky$ and $\ell_1 = \ell y$. We have
	\[
	\operatorname{Cay}(\mathbb{Z}_n,\{x^k, x^{-k}, x^{\ell}, x^{-\ell}\}^{\varphi})
	\cong
	\operatorname{Cay}(\mathbb{Z}_n,\{x^{k_1}, x^{-k_1}, x^{\ell_1}, x^{-\ell_1}\})
	\]
	such that $k_1 = ky \equiv 1 \pmod{3}$. We show that $\ell_1= \ell y\equiv 0,1 \pmod{3}$. Since
	\begin{equation}\label{eq:isomorphism3}
		\operatorname{Cay}\left(\mathbb{Z}_n,\{x^{k_1},x^{-k_1},x^{\ell_1},x^{-\ell_1}\}\right)
		\cong
		\operatorname{Cay}\left(\mathbb{Z}_n,\{x^{k_1},x^{-k_1},x^{3m-\ell_1},x^{-3m+\ell_1}\}\right)
	\end{equation}	
	By relation  \eqref{eq:isomorphism3}, the case $\ell_1 \equiv 2 \pmod{3}$ is equivalent to the case $\ell_1 \equiv 1 \pmod{3}$. Indeed,
	$
	3m-2 \stackrel{3}{\equiv} -2 \stackrel{3}{\equiv} 1
$.
	Therefore
	\[
	\operatorname{Cay}(\mathbb{Z}_n,\{x^k, x^{-k}, x^{\ell}, x^{-\ell}\})
	\cong
	\operatorname{Cay}(\mathbb{Z}_n,\{x^{k_1}, x^{-k_1}, x^{\ell_1}, x^{-\ell_1}\})
	\]
	such that $k_1 \equiv 1 \pmod{3}$ and  $\ell_1 \equiv 0,1 \pmod{3}$. This completes the proof.
	\hfill$\blacksquare$
\end{proof}    	
   \end{proof}
  \begin{thmA}\label{thm:21}
 	Let $n, k, \ell$ be positive integers, and $\Gamma = \mathrm{Cay}(\mathbb{Z}_n, \{x^k, x^{-k}, x^\ell, x^{-\ell}\})$ be a connected circulant quartic graph. Then the following statements hold:
 	\begin{enumerate}
 		\item[(i)] If \(k\equiv 1 \pmod{3}\) and \(\ell\equiv 0 \pmod{3}\), or \(\ell\equiv 1 \pmod{3}\) and \(k\equiv 0 \pmod{3}\), then the graph \(\Gamma\) contains a \((2,1)\)-regular set if and only if $n$ is a multiple of $3$.
 		\item[(ii)] If \(k \equiv 1 \pmod{3}\) and \(\ell \equiv 1 \pmod{3}\), then the graph \(\Gamma\) contains no \((2,1)\)-regular set.
 	\end{enumerate}
 \end{thmA}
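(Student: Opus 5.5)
For (i), the necessity comes from a counting argument and the sufficiency from the explicit set $C=\langle x^3\rangle$. For (ii), I plan a descent on the exact power of $3$ dividing $n$, using the circulant-matrix technique of Remark~\ref{rem:7} together with Lemma~\ref{lemb_0}.

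\textbf{(i), necessity.} Suppose $C$ is a $(2,1)$-regular set. Lemma~\ref{Z}~(iv) gives $(\overline{S}-1)\,\overline{C}=\overline{\mathbb{Z}}_n$. Applying the trivial character yields $3|C|=n$, so $3\mid n$.

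\textbf{(i), sufficiency.} By the symmetry between $k$ and $\ell$, I take $k\equiv1\pmod 3$ and $\ell\equiv 0\pmod 3$, so that
$$S=\{x^{3p+1},x^{-3p-1},x^{3q},x^{-3q}\}.$$
Set $C=\langle x^3\rangle$. Then $x^{\pm3q}\overline{C}=\overline{C}$, while $x^{3p+1}\overline{C}=x\overline{C}$ and $x^{-3p-1}\overline{C}=x^2\overline{C}$. Hence
$$(\overline{S}-1)\overline{C}=2\overline{C}+x\overline{C}+x^2\overline{C}-\overline{C}=\overline{C}+x\overline{C}+x^2\overline{C}=\overline{\mathbb{Z}}_n .$$
By Lemma~\ref{Z}, $C$ is a $(2,1)$-regular set.

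\textbf{(ii), first step.} Suppose $k\equiv\ell\equiv1\pmod 3$ and $C$ is a $(2,1)$-regular set. As in (i), $3\mid n$ and $|C|=n/3$. Here $b-a=-1\notin\{-4,2\}$, so Corollary~\ref{cor:coset-size2}~(ii) with $p=3$ applies. It gives $|x^h\langle x^3\rangle\cap C|=n/9$ for each $h$, hence $9\mid n$.

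\textbf{(ii), setup for the descent.} Let $\alpha\ge2$ be maximal with $3^\alpha\mid n$, and put $H=\langle x^{3^\alpha}\rangle$. Proposition~\ref{prop:podd} (with $s_0=b-a=-1$) gives two possibilities:
\begin{itemize}
\item[(a)] if $3^\alpha\mid k-\ell$, then $s_0=-1$ and $s_{i_1}=s_{i_2}=2$ for distinct $i_1,i_2$;
\item[(b)] otherwise, $s_0=-1$ and $s_{i_1}=\dots=s_{i_4}=1$ for pairwise distinct $i_r$.
\end{itemize}
In both cases every index satisfies $3\nmid i_r$, because the indices are the residues of $\pm k,\pm\ell$ modulo $3^\alpha$ (Lemma~\ref{nomulp}). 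The eigenvalues of the circulant matrix are then $\lambda_j=-1+\sum_r s_{i_r}\omega^{i_rj}$, where $\omega$ is a primitive $3^\alpha$-th root of unity. We have $\lambda_0=3\neq0$.

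\textbf{Main obstacle: $\lambda_j\ne0$ for $1\le j\le 3^\alpha-1$.} Lemma~\ref{lemb_0} needs a primitive root of the full order, but $\omega^j$ need not be primitive of order $3^\alpha$. I plan to handle this as follows.
\begin{itemize}
\item Write $\zeta=\omega^j$, which is a primitive $3^\beta$-th root of unity for some $1\le\beta\le\alpha$.
\item Reduce each exponent $i_r$ modulo $3^\beta$. Since $3\nmid i_r$, every reduced exponent is nonzero; this is the argument of Lemma~\ref{lem:nonzero1}.
\item After collecting equal powers, $\lambda_j=-1+\sum_t b_t\zeta^{e_t}$ with $e_t\in[1,3^\beta-1]$ and all $b_t$ positive integers.
\item Lemma~\ref{lemb_0}, applied with $p=3$ and exponent $\beta$, now gives $\lambda_j\ne0$.
\end{itemize}
This is the step I would check most carefully: that reduced exponents which coincide still carry positive coefficients, and that no exponent reduces to $0$.

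\textbf{(ii), conclusion.} With all eigenvalues nonzero, the circulant matrix is invertible, as in Theorem~\ref{khi3}. The system then has the unique solution
$$t_h=\frac{bn}{3^\alpha(4+b-a)}=\frac{n}{3^{\alpha+1}}\qquad(0\le h\le 3^\alpha-1).$$
Each $t_h$ is an integer, so $3^{\alpha+1}\mid n$. This contradicts the maximality of $\alpha$, so $\Gamma$ has no $(2,1)$-regular set.
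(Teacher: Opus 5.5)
Your proof is correct and follows essentially the same route as the paper. For (i) you use the counting argument and the explicit set $\langle x^3\rangle$; for (ii) you use Corollary~\ref{cor:coset-size2}(ii) to get $9\mid n$, then Proposition~\ref{prop:podd}, Lemma~\ref{nomulp} and Lemma~\ref{lemb_0} to make the circulant system modulo $3^\alpha$ uniquely solvable, which forces $3^{\alpha+1}\mid n$. The only difference is cosmetic. The paper keeps $\omega$ primitive of order $3^\alpha$ and reduces the exponents $i_rj$ modulo $3^\alpha$; these are nonzero by Lemma~\ref{lem:nonzero1}, so Lemma~\ref{lemb_0} applies directly, and your passage to $\zeta=\omega^j$ of order $3^\beta$ is valid but not needed.
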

 \begin{proof}
\textbf{(i)} We first show the necessity. Suppose that the graph $\Gamma$ contains a $(2,1)$-regular set $C$. By Lemma~\ref{Z}~(iv), we have
\begin{equation}\label{eq:(2,1)}
	\overline{S}\,\overline{C}-\overline{C}
	=\overline{\mathbb{Z}}_{n}
\end{equation}
By applying the trivial character to the relation \eqref{eq:(2,1)}, we obtain
\[
|S||C|-|C|=n \Longrightarrow  4|C|-|C|=n \Longrightarrow 3|C|=n
\]

 We now prove the sufficiency. By the assumption of the theorem,
 \(k\equiv 1 \pmod{3}\) and \(\ell\equiv 0 \pmod{3}\), or \(\ell\equiv 1 \pmod{3}\) and \(k\equiv 0 \pmod{3}\). In both cases, we have $S=\{x^{3p+1},x^{-3p-1},x^{3q},x^{-3q}\}$, where \(p,q\in\mathbb{Z}\).
 Now we show that
$C=\langle x^{3}\rangle$
is a $(2,1)$-regular set for the graph $\Gamma$.
Substituting $C$ and $S$ into~\eqref{eq:(2,1)}, we show that the left-hand side of~\eqref{eq:(2,1)} is equal to $\overline{\mathbb{Z}}_{n}$. We have
\[
\left(\left(x^{3p+1}+x^{-3p-1}+x^{3q}+x^{-3q}\right)-1\right)
\left(\overline{\langle x^{3}\rangle}\right)
\]
After simplification, we obtain
\[
x\overline{\langle x^{3}\rangle}
+x^{2}\overline{\langle x^{4}\rangle}
+\overline{\langle x^{3}\rangle}
+\overline{\langle x^{3}\rangle}
-\overline{\langle x^{3}\rangle}
\]
Since
$
\overline{\mathbb{Z}}_{n}
=
\overline{\langle x^{3}\rangle}
+x\overline{\langle x^{3}\rangle}
+x^{2}\overline{\langle x^{3}\rangle},
$
the left-hand side of~\eqref{eq:(2,1)} is equal to
$\overline{\mathbb{Z}}_{n}$.
Hence, $C$ is a $(2,1)$-regular set for $\Gamma$.

\textbf{(ii)} Suppose, by contradiction, that \(C\) is a \((2,1)\)-regular set in \(\Gamma\). Applying the trivial character to \eqref{eq:(2,1)}, we obtain \(3|C|=n\). Since \(3\mid n\) and, by the assumptions of the theorem, \(3\nmid k\) and \(3\nmid \ell\), it follows from part~(ii) of Corollary~\ref{cor:coset-size2} that \(\left|x^{h}\langle x^{3}\rangle\cap C\right|=\frac{n}{9}\) for every \(h\in\{0,1,2\}\). Hence, \(9\mid n\). Assume that, there exists an integer \(\alpha\ge 2\) such that
$
3^{\alpha}\mid n$
and
$3^{\alpha+1}\nmid n.$ By Remark~\ref{rem:7}, it follows that $ S_d := x^d \langle x^{3^{\alpha}} \rangle \cap S$ for $0 \le d \le 3^{\alpha}-1, s_0:=\left|\left\langle x^{3^{\alpha}}\right\rangle\cap S\right|-1,\ 	s_d:=\left|x^d\left\langle x^{3^{\alpha}}\right\rangle\cap S\right|$ for $1\le d\le 3^{\alpha}-1\ and\ t_h:=\left|x^h\left\langle x^{3^{\alpha}}\right\rangle\cap C\right|$ for $0\le h\le 3^{\alpha}-1.$ Let
$
H=\langle x^{3^{\alpha}}\rangle.
$ By part~(i) of Proposition~\ref{prop:podd}, the sets $\{x^k,x^{-k}\}$, $\{x^\ell,x^{-\ell}\}$, $\{x^k,x^{-\ell}\}$, and $\{x^{-k},x^{\ell}\}$ do not have their two elements in the same coset of $H$. We consider the following cases according to Proposition~\ref{prop:podd}.

\textbf{Case 1.}  Suppose that $3^\alpha\mid k-\ell$. Then, by part~(ii) Proposition~\ref{prop:podd}, the two elements of each of the sets $\{x^k,x^\ell\}$ and $\{x^{-k},x^{-\ell}\}$ belong to the same coset of $H$. Hence, by part~(iii) of this proposition, we have $s_0=-1$, $s_{i_1}=s_{i_2}=2$, where $i_1$ and $i_2$ are distinct integers satisfying $1\le i_1,i_2\le 3^\alpha-1$.

\textbf{Case 2.} Suppose that $3^\alpha\nmid k-\ell$. Then, by part~(ii) Proposition~\ref{prop:podd}, no two elements of any of the sets $\{x^k,x^\ell\}$, $\{x^{-k},x^{-\ell}\}$, belong to the same coset of $H$. Therefore, by part~(vi) of this proposition, we have $s_0=-1$ and $s_{i_1}=s_{i_2}=s_{i_3}=s_{i_4}=1$, where $i_1,i_2,i_3,i_4$ are pairwise distinct integers satisfying $1\le i_1,i_2,i_3,i_4\le 3^\alpha-1$.

We first consider Case 1. In this case, we have $s_0=-1$, $s_{i_1}=s_{i_2}=2$, where $i_1$ and $i_2$ are distinct integers satisfying $1\le i_1,i_2\le 3^\alpha-1$. By Definition~\ref{circulant}, we have
\[
\lambda_j=f(\omega^j)
=s_0+s_{i_1}\omega^{i_1j}+s_{i_2}\omega^{i_2j}
=-1+2\omega^{i_1j}+2\omega^{i_2j},
\qquad
0\le j\le 3^{\alpha}-1
\]
where $\omega$ is a primitive $3^{\alpha}$-th root of unity. Clearly, $\lambda_0=3\neq0.$ Let $i_rj\stackrel{3^\alpha}{\equiv} i'_r$ for $r\in\{1,2\}$, where $i'_r\in\{0,\ldots,3^\alpha-1\}$. By Remark~\ref{rem:7}, we have $S_d=x^d\langle x^{3^\alpha}\rangle\cap S$ for $0\le d\le 3^\alpha-1$. By the assumption of the theorem, $k\stackrel{3}{\equiv}1$ and $\ell\stackrel{3}{\equiv}1$. Hence, by Lemma~\ref{nomulp}, the elements $x^k$, $x^{-k}$, $x^\ell$, and $x^{-\ell}$ belong to cosets of the subgroup $H$ satisfying $d\stackrel{3}{\equiv}1,2$. Therefore, $3\nmid i_r$ for every $r\in\{1,2\}$. Consequently, by Lemma~\ref{lem:nonzero1}, we have $i'_r\neq0$ for every $r\in\{1,2\}$. We have
\[
\lambda_j=-1+2\omega^{i'_1}+2\omega^{i'_2},
\]
where $i'_1,i'_2\in\{1,\ldots,3^\alpha-1\}.$ By Lemma~\ref{lemb_0}, it follows that \(\lambda_j\neq0\) for every \(j\in\{1,\ldots,3^\alpha-1\}\).

We now consider Case~2. In this case,
$
s_0=-1,$
$s_{i_1}=s_{i_2}=s_{i_3}=s_{i_4}=1,$
where \(i_1,i_2,i_3,\) and \(i_4\) are pairwise distinct integers satisfying
$
1\le i_1,i_2,i_3,i_4\le 3^{\alpha}-1.
$
Hence,
\[
\lambda_j
=
-1+\omega^{i_1j}+\omega^{i_2j}+\omega^{i_3j}+\omega^{i_4j}, \qquad
0\le j\le 3^{\alpha}-1
\]
where $\omega$ is a primitive $3^{\alpha}$-th root of unity. Clearly,
$\lambda_0=3\neq0.$ Let $i_rj\stackrel{3^\alpha}{\equiv} i'_r$ for $r\in\{1,2,3,4\}$, where $i'_r\in\{0,\ldots,3^\alpha-1\}$. Similarly to the argument in Case~1, we obtain that $i'_r\neq 0$ for $r\in\{1,2,3,4\}$. Hence,
\[
\lambda_j=-1+\omega^{i'_1}+\omega^{i'_2}+\omega^{i'_3}+\omega^{i'_4},
\]
where $i'_1,i'_2,i'_3,i'_4\in\{1,\ldots,3^\alpha-1\}$. By Lemma~\ref{lemb_0}, it follows that $\lambda_j\neq 0$ for every $j\in\{1,\ldots,3^\alpha-1\}$.

Therefore, by considering both cases, we conclude that
$
\lambda_j\neq0$ for $0\le j\le3^{\alpha}-1.
$
Hence, the determinant of the circulant matrix is nonzero. Therefore, the circulant matrix is invertible, and the corresponding system has the unique solution
\[
t_0=t_1=\cdots=t_{3^{\alpha}-1}
=\frac{n}{3\cdot3^{\alpha}}
=\frac{n}{3^{\alpha+1}}
\]
Since
$
t_h=\left|x^h\left\langle x^{3^{\alpha}}\right\rangle\cap C\right|,$ for
$0\le h\le3^{\alpha}-1,
$
each \(t_h\) must be an integer. Hence,
$
3^{\alpha+1}\mid n,
$
which contradicts the assumption that
$
3^{\alpha+1}\nmid n.
$
Therefore, \(\Gamma\) contains no \((2,1)\)-regular set.\hfill$\blacksquare$
 	\end{proof}
\begin{remA}\label{rem:32}
	By Lemma~\ref{primes}, for the graph $\Gamma=\operatorname{Cay}(\mathbb{Z}_n,\{x^k,x^{-k},x^\ell,x^{-\ell}\}),$
	the complement of any \((2,1)\)-regular set is a \((3,2)\)-regular set, and conversely. Consequently, every result concerning the existence or nonexistence of \((2,1)\)-regular sets extends directly to \((3,2)\)-regular sets. In other words, if either \(k\equiv 1 \pmod{3}\) and \(\ell\equiv 0 \pmod{3}\), or \(k\equiv 0 \pmod{3}\) and \(\ell\equiv 1 \pmod{3}\), then \(\Gamma\) contains a \((3,2)\)-regular set if and only if \(3\mid n\). Moreover, the obtained \((3,2)\)-regular set is \(C=x\langle x^3\rangle \cup x^2\langle x^3\rangle\), and if \(k\equiv 1 \pmod{3}\) and \(\ell\equiv 1 \pmod{3}\), then \(\Gamma\) contains no \((3,2)\)-regular set.
	\end{remA}	
\begin{lemA}\label{lemb_3}
Let $\alpha$ be a positive integer, $\omega$ be a primitive $3^\alpha$-th root of unity, and $\lambda=b_0+\sum_{t=1}^{4}\omega^{i_t}$, where $b_0\in\mathbb{Z}\setminus\{0,1\}$ and $i_t\in[1,3^\alpha-1]$. Suppose that there do not exist distinct indices $r_1,r_2,r_3,r_4\in\{1,2,3,4\}$ such that $i_{r_1}=i_{r_2}$, $i_{r_3}=i_{r_4}$, and $i_{r_1}\neq i_{r_3}$. Then $\lambda\neq0$.
\end{lemA}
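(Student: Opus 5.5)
The plan is to argue by contradiction and reduce everything to Lemma~\ref{lemb_00}, using only the constant term and the coefficients of $x^{3^{\alpha-1}}$ and $x^{2\cdot 3^{\alpha-1}}$. Set $m:=3^{\alpha-1}$ and suppose $\lambda=0$.

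First, consider the integer polynomial
\[
f(x)=b_0+x^{i_1}+x^{i_2}+x^{i_3}+x^{i_4},
\]
obtained after collecting equal exponents. Then $f(\omega)=\lambda=0$. Since every $i_t\ge 1$, the constant term of $f$ is exactly $b_0\neq 0$, so $f$ is not the zero polynomial. Its non-constant exponents all lie in $[1,3^\alpha-1]$, so $f$ has precisely the shape required in Lemma~\ref{lemb_00}.

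Second, by Remark~\ref{rem:cyclo}, the polynomial $\Phi_{3^\alpha}(x)=1+x^{m}+x^{2m}$ divides $f(x)$. Write $f(x)=\Phi_{3^\alpha}(x)\Psi(x)$ with $\Psi\in\mathbb{Z}[x]$. Lemma~\ref{lemb_00} then says that the coefficients of $x^{m}$ and $x^{2m}$ in $f$ are both equal to the constant term $b_0$.

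Third, read off these coefficients directly from the definition of $f$:
\[
[x^m]f=\bigl|\{t: i_t=m\}\bigr|, \qquad [x^{2m}]f=\bigl|\{t: i_t=2m\}\bigr|.
\]
Both are nonnegative integers, and since $m\neq 2m$ they count disjoint sets of indices, so their sum is at most $4$. Equating each with $b_0$ gives $0\le b_0$ and $2b_0\le 4$, i.e.\ $b_0\in\{0,1,2\}$. The hypothesis $b_0\notin\{0,1\}$ forces $b_0=2$.

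In that case exactly two indices $r_1,r_2$ satisfy $i_{r_1}=i_{r_2}=m$, and the remaining two $r_3,r_4$ satisfy $i_{r_3}=i_{r_4}=2m\neq m$. This is exactly the configuration excluded by the hypothesis, which is the contradiction, so $\lambda\neq 0$.

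The only delicate point is the applicability of Lemma~\ref{lemb_00}, which needs $f$ to be a genuine product $\Phi_{3^\alpha}\Psi$ with all non-constant exponents in $[1,3^\alpha-1]$. This is guaranteed because $i_t\le 3^\alpha-1$ and $b_0\neq0$ keeps $f\neq0$. As a cross-check, I would also note the underlying structural fact: $\{\omega^r\}_{0\le r<m}$ is a basis of $\mathbb{Q}(\omega)$ over $\mathbb{Q}(\omega^{m})$. Hence $f(\omega)=0$ forces the coefficients of $x^{r},x^{r+m},x^{r+2m}$ to be equal for each $r$, and the case $r=0$ is precisely the computation used above.
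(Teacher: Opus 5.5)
Your proof is correct and follows the paper's route: you get divisibility by $\Phi_{3^\alpha}$ from Remark~\ref{rem:cyclo}, then use Lemma~\ref{lemb_00} to force the coefficients of $x^{3^{\alpha-1}}$ and $x^{2\cdot 3^{\alpha-1}}$ in $f$ to equal $b_0$. The paper treats $b_0<0$ separately via Lemma~\ref{lemb_0} and then runs a four-case analysis on the multiplicity pattern of the $i_t$; your observation that these two coefficients count disjoint sets of indices (so $0\le b_0$ and $2b_0\le 4$) replaces both steps at once and leaves $b_0=2$ with exponents $(m,m,2m,2m)$, which is exactly the configuration the hypothesis excludes.
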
 	
\begin{proof}
	Suppose, for contradiction, that $\lambda = 0$. Assume that $\omega$ is a root of the polynomial
	$
	f(x)=b_0+x^{i_1}+x^{i_2}+x^{i_3}+x^{i_4},
	$
	where $i_1,i_2,i_3,i_4\in[1,3^\alpha-1]$. By Remark~\ref{rem:cyclo}, we have
	\[
	1+x^{3^{\alpha-1}}+x^{2\cdot3^{\alpha-1}}
	\mid
	b_0+x^{i_1}+x^{i_2}+x^{i_3}+x^{i_4}
	\]
	Assume that
	$
	\Phi_{3^\alpha}(x)\Psi(x)	=f(x),
	$
	where
	$
	\Psi(x)=a_0+a_1x+\cdots+a_fx^f.
	$
	Therefore,
	\begin{equation}\label{eq:case31}
		\begin{aligned}
			\left(1+x^{3^{\alpha-1}}+x^{2\cdot3^{\alpha-1}}\right)
			\left(a_0+a_1x+\cdots+a_fx^f\right)
			=b_0+x^{i_1}+x^{i_2}+x^{i_3}+x^{i_4}
		\end{aligned}
	\end{equation}
First, suppose that $b_0<0$. Then, by Lemma~\ref{lemb_0}, it follows that $\lambda\neq0$. Hence, we may assume that $b_0>0$ and $b_0\neq1$. We consider the following cases.

By Lemma~\ref{lemb_00}, the coefficients of both $x^{3^{\alpha-1}}$ and $x^{2\cdot3^{\alpha-1}}$ on the left-hand side of \eqref{eq:case31} are $b_0>1$. Thus, it remains to compare these coefficients with those on the right-hand side in each case.

\medskip

\textbf{Case 1.} Suppose that $i_1$, $i_2$, $i_3$, and $i_4$ are pairwise distinct. Since the $i_t$'s are pairwise distinct, the coefficients of $x^{3^{\alpha-1}}$ and $x^{2\cdot3^{\alpha-1}}$ on the right-hand side of \eqref{eq:case31} are each either $0$ or $1$. Therefore, the coefficients on the two sides of \eqref{eq:case31} do not agree, which is a contradiction.

\medskip

\textbf{Case 2.} Suppose that three of the $i_t$'s are equal and the remaining one is distinct. Without loss of generality, assume $i_1=i_2=i_3\neq i_4$. Then
\[
f(x)=b_0+3x^{i_1}+x^{i_4},
\]
where $1\le i_1,i_4\le 3^\alpha-1$. The coefficients of these two terms on the right-hand side of \eqref{eq:case31} cannot be equal to $b_0$. Indeed, if $\{i_1,i_4\}=\{3^{\alpha-1},2\cdot3^{\alpha-1}\}$, then these coefficients belong to $\{1,3\}$. If only one of $i_1$ and $i_4$ is equal to one of $3^{\alpha-1}$ and $2\cdot3^{\alpha-1}$, then one of these coefficients is either $1$ or $3$, and the other one is $0$. Finally, if neither $i_1$ nor $i_4$ is equal to these values, then both coefficients are $0$. Therefore, the coefficients on the two sides of \eqref{eq:case31} do not agree, which is a contradiction.

\medskip

\textbf{Case 3.} Suppose that exactly one pair of the $i_t$'s is equal, and the other two are distinct from each other and from that pair. Without loss of generality, assume $i_1=i_2$, $i_3\neq i_4$, $i_1\neq i_3$, and $i_1\neq i_4$. Then
\[
f(x)=b_0+2x^{i_1}+x^{i_3}+x^{i_4},
\]
where $1\le i_1,i_3,i_4\le 3^\alpha-1$.
If $i_3,i_4\in\{3^{\alpha-1},2\cdot3^{\alpha-1}\}$, then the coefficients of $x^{3^{\alpha-1}}$ and $x^{2\cdot3^{\alpha-1}}$ on the right-hand side of \eqref{eq:case31} are both equal to $1$. On the other hand, if $i_1,i_3\in\{3^{\alpha-1},2\cdot3^{\alpha-1}\}$ or $i_1,i_4\in\{3^{\alpha-1},2\cdot3^{\alpha-1}\}$, then the corresponding coefficients are $2$ and $1$. Finally, if neither of these conditions holds, at least one of the coefficients of $x^{3^{\alpha-1}}$ and $x^{2\cdot3^{\alpha-1}}$ on the right-hand side is $0$. Hence, in all cases, these two coefficients cannot both be equal to $b_0>1$, which is a contradiction.

\medskip

\textbf{Case 4.} Suppose that all four $i_t$'s are equal, i.e., $i_1=i_2=i_3=i_4$. Then
\[
f(x)=b_0+4x^{i_1},
\]
where $1\le i_1\le 3^\alpha-1$. At most one of the terms $x^{3^{\alpha-1}}$ and $x^{2\cdot3^{\alpha-1}}$ appears on the right-hand side of \eqref{eq:case31}; its coefficient is $4$, while the other term has coefficient $0$. Therefore, the coefficients on the two sides of \eqref{eq:case31} do not agree, which is a contradiction. Hence, in each of Cases~1--4, we obtain $\lambda\neq0$. This completes the proof.
\hfill$\blacksquare$
	\end{proof}
	
\begin{remA}\label{rem:circulant_graph_spec}
Let $n, k, \ell$ be positive integers, and $\Gamma = \mathrm{Cay}(\mathbb{Z}_n, \{x^k, x^{-k}, x^\ell, x^{-\ell}\})$ be a connected circulant quartic graph such that $k$ is not a multiple of $3$.
Assume that $3^\alpha \mid n$ and $3^{\alpha+1} \nmid n$, and
$H = \langle x^{3^\alpha} \rangle$ be the cyclic subgroup of $\mathbb{Z}_n$.
Suppose that $\alpha$ is a positive integer and $\omega$ is a primitive
$3^\alpha$-th root of unity.
Let	$\lambda_j = 2 + \omega^{i'_1} + \omega^{i'_2} + \omega^{i'_3} + \omega^{i'_4}$	be an eigenvalue of the circulant matrix, where
\begin{equation}\label{eq:casee1}
i'_r\equiv i_rj\pmod{3^\alpha},\qquad 1\le r\le4
\end{equation}	
and \(i_3\) and \(i_4\) are divisible by \(3\), while \(i_1\) and \(i_2\) are not divisible by \(3\), and
$1 \le i'_r,i_1,i_2,i_3,i_4,j \le 3^\alpha - 1$.
Moreover, suppose that
\begin{equation}\label{eq:casee2}
i'_1=i'_2\neq i'_3=i'_4
\end{equation}
We show that $\lambda_j\neq 0$ for $j\in\{1,\ldots,3^\alpha-1\}$. By \eqref{eq:casee2}, we have
\[
\lambda_j=2+2\omega^{i'_1}+2\omega^{i'_3},
\]
where $i'_1,i'_3\in\{1,\ldots,3^\alpha-1\}$. For a contradiction, suppose that there exists some $j\in\{1,\ldots,3^\alpha-1\}$, such that $\lambda_j=0$.  Assume that $\omega$ is a root of the polynomial
$
f(x)=2+2x^{i'_1}+2x^{i'_3}$, and by Remark~\ref{rem:cyclo}, we have
\begin{equation}\label{eq:casee3}
1+x^{3^{\alpha-1}}+x^{2\cdot3^{\alpha-1}}
\mid
2+2x^{i'_1}+2x^{i'_3}
\end{equation}
By \eqref{eq:casee3}, we have $\{i'_1,i'_3\}=\{3^{\alpha-1},2\cdot 3^{\alpha-1}\}$. We consider the following two cases:

\textbf{Case 1.} Assume that $i'_1=3^{\alpha-1}$. In this case, by \eqref{eq:casee1}, we have
\begin{equation}\label{eq:casee4}
3^\alpha\mid i_1j-3^{\alpha-1}
\Longrightarrow
i_1j=3^{\alpha-1}(3z_1+1)
\end{equation}
where $z_1\in\mathbb{Z}$.  Since $3z_1+1\stackrel{3}{\not\equiv}0$ and \(3\nmid i_1\), it follows from \eqref{eq:casee4} that \(3^{\alpha-1}\mid j\). Since \(1\leq j\leq 3^\alpha-1\), we obtain
$
j\in\{3^{\alpha-1},\,2\cdot 3^{\alpha-1}\}.$ We consider the following two cases:

$\bullet$ Suppose that \(j=3^{\alpha-1}\). Then, by substituting into \eqref{eq:casee4}, we obtain
\[
i_1=3z_1+1 \Longrightarrow i_1 \stackrel{3}\equiv 1
\]
Using \(i_1=3z_1+1\) and \(j=3^{\alpha-1}\) in the relation \(i_1'=i_2'\), we obtain
\[
i_1j\stackrel{3^\alpha}{\equiv} i_2j
\Longrightarrow
3^\alpha\mid (i_1-i_2)j
\Longrightarrow
i_2=3(z_1-z_2)+1
\]
where \(z_1,z_2\in\mathbb{Z}\). Hence, $i_2 \stackrel{3}\equiv 1$.

$\bullet$ Suppose that \(j=2 \cdot 3^{\alpha-1}\). Then, by substituting into \eqref{eq:casee4}, we obtain $i_1=\frac{3z_3+1}{2},$
where \(z_3\in\mathbb{Z}\), and \(i_1\) is a positive integer, it follows that \(3z_3+1\) must be even.
Assume that \(z_3=2r_1+1\). Therefore,
\[
i_1=\frac{3(2r_1+1)+1}{2}=3r_1+2
\Longrightarrow
i_1\stackrel{3}{\equiv}2
\]
where \(r_1\geq 0\). Using \(i_1=3r_1+2\) and \(j=2 \cdot3^{\alpha-1}\) in the relation \(i_1'=i_2'\), we obtain
\[
3^\alpha \mid (i_1 - i_2)j \implies 3z_4 = 2((3r_1 + 2) - i_2) \implies 2i_2 = 3(2r_1 - z_4) + 4
\]
Assume that \(2r_1-z_4=z_5\), where \(z_5\in\mathbb{Z}\), then \(i_2=\frac{3z_5}{2}+2\). Since \(i_2\) is an integer, we assume that \(z_5=2r_2\), where \(r_2\geq0\). Hence, \(i_2=\frac{3(2r_2)}{2}+2=3r_2+2\), which implies that \(i_2\stackrel{3}{\equiv}2\).

\textbf{Case 2.} Assume that \(i_1'=2\cdot 3^{\alpha-1}\). In this case, by \eqref{eq:casee1}, we have
\begin{equation}\label{eq:casee5}
	3^\alpha\mid i_1j-2\cdot3^{\alpha-1}
	\Longrightarrow
	i_1j=3^{\alpha-1}(3z_6+2)
\end{equation}
where \(z_6\in\mathbb{Z}\). Since \(3\nmid i_1\), we have \(j=3^{\alpha-1}\) or \(j=2\cdot3^{\alpha-1}\). We consider the following two cases:

$\bullet$ Suppose that \(j=3^{\alpha-1}\). Then, by substituting into \eqref{eq:casee5}, we obtain, $i_1\stackrel{3}{\equiv}2$. Substituting the obtained values of \(i_1\) and \(j\) into the relation \(i_1'=i_2'\), we obtain:
\[
i_1j\stackrel{3^\alpha}{\equiv} i_2j
\Longrightarrow
3^\alpha\mid (i_1-i_2)j
\Longrightarrow
i_2\stackrel{3}{\equiv}2
\]

$\bullet$ Suppose that \(j=2\cdot3^{\alpha-1}\). Then, by substituting into \eqref{eq:casee5}, we obtain, $i_1=\frac{3z_7}{2}+1,$
where \(z_7\in\mathbb{Z}\). Since \(i_1\) is a positive integer, let \(z_7=2r_3\), where \(r_3\geq0\). Then \(i_1=3r_3+1\), and hence $i_1\stackrel{3}{\equiv}1$. Substituting the obtained values of \(i_1\) and \(j\) into the relation \(i_1'=i_2'\), we obtain:
\[
i_1j\stackrel{3^\alpha}{\equiv} i_2j
\Longrightarrow
3^\alpha\mid (i_1-i_2)j
\Longrightarrow
i_2=\frac{3z_8}{2}+1
\]
where \(z_8\in\mathbb{Z}\). Since \(i_2\) must be a positive integer, let \(z_8=2r_4\), where \(r_4\geq0\). Then \(i_2=3r_4+1\), and hence $i_2\stackrel{3}{\equiv}1$.

By the assumption, \(3\nmid k\). Hence, if \(x^k\in x^d\langle x^{3^\alpha}\rangle\), then, by Proposition~\ref{prop:inverse_coset}, it follows that \(x^{-k}\in x^{3^\alpha-d}H\) for \(1\leq d\leq3^\alpha-1\). By Remark~\ref{rem:7}, we obtain
$S_d = x^d \langle x^{3^\alpha} \rangle \cap S$ for
$0 \le d \le 3^\alpha - 1$. Therefore, using the definition of \(S_d\), we obtain \(i_1=d\) and \(i_2=3^\alpha-d\), where \(1\leq d\leq3^\alpha-1\). In Cases 1 and 2, it was observed that \(i_1\) and \(i_2\) lie in the same congruence class modulo \(3\), which is a contradiction, because, by Corollary~\ref{cor:inverse_coset1}, if \(i_1\stackrel{3}{\equiv}1\), then \(i_2\stackrel{3}{\equiv}2\), and if \(i_1\stackrel{3}{\equiv}2\), then \(i_2\stackrel{3}{\equiv}1\). Therefore, \(\lambda_j\neq0\) for \(1\leq j\leq3^\alpha-1\).
\end{remA}		

 \begin{thmA}\label{thm:02}
	Let $n, k, \ell$ be positive integers, and $\Gamma = \mathrm{Cay}(\mathbb{Z}_n, \{x^k, x^{-k}, x^\ell, x^{-\ell}\})$ be a connected circulant quartic graph. Then the following statements hold:
	\begin{enumerate}
		\item[(i)]  If \(k \equiv 1 \pmod{3}\) and \(\ell \equiv 1 \pmod{3}\), then  the graph \(\Gamma\) contains a \((0,2)\)-regular set if and only if $n$ is a multiple of $3$.

		\item[(ii)] If \(k\equiv 1 \pmod{3}\) and \(\ell\equiv 0 \pmod{3}\), or \(\ell\equiv 1 \pmod{3}\) and \(k\equiv 0 \pmod{3}\), then the graph \(\Gamma\) contains no \((0,2)\)-regular set.
	\end{enumerate}
\end{thmA}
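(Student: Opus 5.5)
For the necessity in both parts, I would apply the trivial character to Lemma~\ref{Z}~(iv). For a $(0,2)$-regular set $C$ this reads $(\overline{S}+2)\overline{C}=2\,\overline{\mathbb{Z}}_n$, which gives $6|C|=2n$, hence $3|C|=n$ and $3\mid n$.

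\textbf{Sufficiency in (i).} Assuming $3\mid n$, I would show that $C=\langle x^3\rangle$ works. Since $k\equiv\ell\equiv1\pmod 3$, we have $x^{k},x^{\ell}\in x\langle x^3\rangle$ and $x^{-k},x^{-\ell}\in x^2\langle x^3\rangle$. Therefore
\[
\overline{S}\,\overline{\langle x^3\rangle}=2x\overline{\langle x^3\rangle}+2x^2\overline{\langle x^3\rangle}.
\]
Adding $2\overline{\langle x^3\rangle}$ gives exactly $2\overline{\mathbb{Z}}_n$. This is the same kind of direct group-algebra check used in Theorem~\ref{thm:21}.

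\textbf{Part (ii): reduction.} By symmetry I take $k\equiv1$, $\ell\equiv0\pmod 3$, and argue by contradiction using the same descent as in Theorems~\ref{thm:13} and~\ref{thm:21}.
\begin{itemize}
\item Since $3\mid n$, $3\nmid k$, $3\mid\ell$ and $b-a=2\notin\{-4,-1\}$, Corollary~\ref{cor:coset-size}~(ii) gives $|x^h\langle x^3\rangle\cap C|=n/9$, so $9\mid n$.
\item Fix $\alpha\ge2$ with $3^\alpha\mid n$ and $3^{\alpha+1}\nmid n$, put $H=\langle x^{3^\alpha}\rangle$, and use the notation of Remark~\ref{rem:7} with $b-a=2$.
\item By Proposition~\ref{prop:podd2}, no element of $\{x^{\pm k}\}$ shares a coset of $H$ with an element of $\{x^{\pm\ell}\}$, and $x^{k},x^{-k}$ lie in distinct cosets $i_1,i_2$ with $3\nmid i_1,i_2$ (Lemma~\ref{nomulp}).
\end{itemize}
I would then split into two cases. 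Once every eigenvalue $\lambda_j$ of the circulant matrix is shown to be nonzero, the system has the unique solution $t_h=2n/(6\cdot3^\alpha)=n/3^{\alpha+1}$, contradicting $3^{\alpha+1}\nmid n$.

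\textbf{Case A: $3^\alpha\mid\ell$.} Here $s_0=4$ and $s_{i_1}=s_{i_2}=1$, so $\lambda_j=4+\omega^{i_1j}+\omega^{i_2j}$. This is nonzero by the triangle inequality, as in Case~1 of Theorem~\ref{thm:13}~(ii).

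\textbf{Case B: $3^\alpha\nmid\ell$.} By Corollary~\ref{oddprime}, $x^{\pm\ell}$ lie in distinct cosets $i_3,i_4$, both divisible by $3$, with $i_4\equiv -i_3\pmod{3^\alpha}$. Then $s_0=2$ and there are four entries equal to $1$, so
\[
\lambda_j=2+\sum_{t=1}^4\omega^{i'_t},\qquad i'_t\equiv i_tj \pmod{3^\alpha}.
\]
I would handle $\lambda_j$ in four sub-steps.
\begin{enumerate}
\item By Lemma~\ref{lem:nonzero1}, $i'_1,i'_2\neq0$.
\item $i'_3=0$ if and only if $i'_4=0$, because $i_4\equiv-i_3$. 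In that case $\lambda_j=4+\omega^{i'_1}+\omega^{i'_2}\ne0$ by the modulus argument.
\item Otherwise all $i'_t\in[1,3^\alpha-1]$. By Remark~\ref{rem:indices-congruence}, $i'_r\ne i'_s$ for $r\in\{1,2\}$, $s\in\{3,4\}$, so the only coincidence pattern not covered by Lemma~\ref{lemb_3} (with $b_0=2$) is $i'_1=i'_2\ne i'_3=i'_4$.
\item That remaining pattern is exactly the situation treated in Remark~\ref{rem:circulant_graph_spec}, which gives $\lambda_j\neq0$.
\end{enumerate}
Finally, $\lambda_0=6\ne0$ in both cases.

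The main obstacle is this eigenvalue non-vanishing in Case B. In particular, one must be careful that the reduced exponents $i'_3,i'_4$ can vanish, which Lemma~\ref{lemb_3} does not allow, and that the paired-exponent pattern is routed to Remark~\ref{rem:circulant_graph_spec} rather than to Lemma~\ref{lemb_3}.
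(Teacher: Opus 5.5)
Your proposal is correct and follows the paper's proof essentially step for step. It uses the trivial-character count, the explicit set $C=\langle x^3\rangle$, Corollary~\ref{cor:coset-size}(ii) to get $9\mid n$, and the $3^\alpha$-descent split into $3^\alpha\mid\ell$ (triangle inequality) and $3^\alpha\nmid\ell$ (Lemma~\ref{lemb_3} plus Remark~\ref{rem:circulant_graph_spec}). Your observation that $i'_4\equiv -i'_3 \pmod{3^\alpha}$ is a small genuine streamlining: it empties the paper's sub-cases in which exactly one of $i'_3,i'_4$ vanishes, and likewise $i'_2\equiv -i'_1$ with $i'_1\neq 0$ forces $i'_1\neq i'_2$ and $i'_3\neq i'_4$ when all are nonzero, so the appeal to Remark~\ref{rem:circulant_graph_spec} is not actually needed.
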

\begin{proof}
 \textbf{(i)} We first show the necessity. Suppose that the graph $\Gamma$ contains a $(0,2)$-regular set $C$. By Lemma~\ref{Z}~(iv), we have
\begin{equation}\label{eq:(0,2)}
	\overline{S}\,\overline{C}+2\overline{C}
	=2\overline{\mathbb{Z}}_{n}
\end{equation}
By applying the trivial character to the relation \eqref{eq:(0,2)}, we obtain
\[
|S||C|+2|C|=2n \Longrightarrow
4|C|+2|C|=2n \Longrightarrow 3|C|=n
\]

We now prove the sufficiency. By the assumption of the theorem,
\(k\equiv 1 \pmod{3}\) and \(\ell\equiv 1 \pmod{3}\). We have $S=\{x^{3p+1},x^{-3p-1},x^{3q+1},x^{-3q-1}\}$, where \(p,q\in\mathbb{Z}\).
Now we show that
$C=\langle x^{3}\rangle$
is a $(0,2)$-regular set for the graph $\Gamma$.
Substituting $C$ and $S$ into~\eqref{eq:(0,2)}, we show that the left-hand side of~\eqref{eq:(0,2)} is equal to $2\overline{\mathbb{Z}}_{n}$. We have
\[
\left(\left(x^{3p+1}+x^{-3p-1}+x^{3q+1}+x^{-3q-1}\right)+2\right)
\left(\overline{\langle x^{3}\rangle}\right)
\]
After simplification, we obtain
\[
x\overline{\langle x^{3}\rangle}
+x^{2}\overline{\langle x^{4}\rangle}
+x\overline{\langle x^{3}\rangle}
+x^2\overline{\langle x^{3}\rangle}
+2\overline{\langle x^{3}\rangle}
\]
Since
$
\overline{\mathbb{Z}}_{n}
=
\overline{\langle x^{3}\rangle}
+x\overline{\langle x^{3}\rangle}
+x^{2}\overline{\langle x^{3}\rangle},
$
the left-hand side of~\eqref{eq:(0,2)} is equal to
$2\overline{\mathbb{Z}}_{n}$.
Hence, $C$ is a $(0,2)$-regular set for $\Gamma$.	
	
\textbf{(ii)}  Suppose, by contradiction, that \(C\) is a \((0,2)\)-regular set in \(\Gamma\). Applying the trivial character to \eqref{eq:(0,2)}, we obtain \(3|C|=n\). Since \(3\mid n\) and, by the assumptions of the theorem, $3\nmid k$ and $3\mid \ell$, or $3\mid k$ and $3\nmid \ell$, it follows from part~(ii) of Corollary~\ref{cor:coset-size} that \(\left|x^{h}\langle x^{3}\rangle\cap C\right|=\frac{n}{9}\) for every \(h\in\{0,1,2\}\). Hence, \(9\mid n\). Assume that, there exists an integer \(\alpha\ge 2\) such that
$
3^{\alpha}\mid n$
and
$3^{\alpha+1}\nmid n.$ By Remark~\ref{rem:7}, it follows that $ S_d := x^d \langle x^{3^{\alpha}} \rangle \cap S$ for $0 \le d \le 3^{\alpha}-1, s_0:=\left|\left\langle x^{3^{\alpha}}\right\rangle\cap S\right|+2,\ 	s_d:=\left|x^d\left\langle x^{3^{\alpha}}\right\rangle\cap S\right|$ for $1\le d\le 3^{\alpha}-1\ and\ t_h:=\left|x^h\left\langle x^{3^{\alpha}}\right\rangle\cap C\right|$ for $0\le h\le 3^{\alpha}-1.$ Let
$
H=\langle x^{3^{\alpha}}\rangle.
$ By the assumptions of the theorem, we first consider the case \(k \stackrel{3}{\equiv} 1\) and \(\ell \stackrel{3}{\equiv} 0\). The second case is proved similarly to the first case by interchanging the roles of \(k\) and \(\ell\). Therefore, all the results obtained in the first case remain valid in the second case.

 By parts~(i) and ~(ii)  of Proposition~\ref{prop:podd2}, the sets $\{x^k,x^\ell\}$, $\{x^k,x^{-\ell}\}$, $\{x^{-k},x^{\ell}\}$, $\{x^{-k},x^{-\ell}\}$, and $\{x^{k},x^{-k}\}$  do not have their two elements in the same coset of $H$. We consider the following cases:

 \textbf{Case 1.}  Suppose that $3^\alpha\mid \ell$. Then, by  Corollary~\ref{oddprime}, the two elements of the set $\{x^\ell,x^{-\ell}\}$ belong to the same coset of $H$. Hence, by part~(iii) of Proposition~\ref{prop:podd2}, we have $s_0=4$, $s_{i_1}=s_{i_2}=1$, where $i_1$ and $i_2$ are distinct integers satisfying $1\le i_1,i_2\le 3^\alpha-1$.

 \textbf{Case 2.} Suppose that $3^\alpha\nmid \ell$. Then, by Corollary~\ref{oddprime}, the two elements of the set $\{x^\ell,x^{-\ell}\}$ do not belong to the same coset of $H$. Hence, by part~(iv) of Proposition~\ref{prop:podd2}, we have $s_0=2$ and $s_{i_1}=s_{i_2}=s_{i_3}=s_{i_4}=1$, where $i_1,i_2,i_3,i_4$ are pairwise distinct integers satisfying $1\le i_1,i_2,i_3,i_4\le 3^\alpha-1$.

 We first consider Case 1. In this case, we have $s_0=4$, $s_{i_1}=s_{i_2}=1$, where $i_1$ and $i_2$ are distinct integers satisfying $1\le i_1,i_2\le 3^\alpha-1$. By Definition~\ref{circulant}, we have
 \[
 \lambda_j=f(\omega^j)
 =s_0+s_{i_1}\omega^{i_1j}+s_{i_2}\omega^{i_2j}
 =4+\omega^{i_1j}+\omega^{i_2j},
 \qquad
 0\le j\le 3^{\alpha}-1
 \]
 where $\omega$ is a primitive $3^{\alpha}$-th root of unity. Clearly, $\lambda_0=6\neq0.$ Assume, to the contrary, that $\lambda_j=0$. Then
 \[
 4+\omega^{i_1j}+\omega^{i_2j}=0
 \quad\Longrightarrow\quad
 \omega^{i_1j}+\omega^{i_2j}=-4
 \]
 On the other hand
 \[
 \left\|\omega^{i_1j}+\omega^{i_2j}\right\|
 \le
 \left\|\omega^{i_1j}\right\|
 +
 \left\|\omega^{i_2j}\right\|
 =2.
 \]
 It follows that
 $
 4=\left\|\omega^{i_1j}+\omega^{i_2j}\right\|\le2,
 $
 which is a contradiction. Therefore,
 $
 \lambda_j\neq0.
 $

 We now consider Case~2. In this case,
 $
 s_0=2,$
 $s_{i_1}=s_{i_2}=s_{i_3}=s_{i_4}=1,$
 where \(i_1,i_2,i_3,\) and \(i_4\) are pairwise distinct integers satisfying
 $
 1\le i_1,i_2,i_3,i_4\le 3^{\alpha}-1.
 $
 Hence,
 \begin{equation}\label{eq:lambdaj911}
 \lambda_j
 =
 2+\omega^{i_1j}+\omega^{i_2j}+\omega^{i_3j}+\omega^{i_4j}, \qquad
 0\le j\le 3^{\alpha}-1
\end{equation}
 where $\omega$ is a primitive $3^{\alpha}$-th root of unity. Clearly,
 $\lambda_0=6\neq0.$ By the assumption of the theorem, $\; k\stackrel{3}{\equiv}1$. Assume that $x^k\in x^{i_1}H$ and $x^{-k}\in x^{i_2}H$. By Lemma~\ref{nomulp}, $i_{r_1}$ is not divisible by $3$ for $r_1\in\{1,2\}$. Moreover, by the assumption, $\ell\stackrel{3}{\equiv}0$. Assume that $x^\ell\in x^{i_3}H$ and $x^{-\ell}\in x^{i_4}H$. Then $i_{r_2}$ is divisible by $3$ for $r_2\in\{3,4\}$. In \eqref{eq:lambdaj911}, assume that for each $r\in\{1,2,3,4\}$,
 $	i_rj \equiv i_r' \pmod{3^\alpha}$
 where $0\le i_r'\le 3^\alpha-1$. Consequently,
\[
	\lambda_j=2+\omega^{i_1'}+\omega^{i_2'}+\omega^{i_3'}+\omega^{i_4'}
\]
 By Lemma~\ref{lem:nonzero1}, $i_{r_1}'\neq 0$ for each $r_1\in\{1,2\}$.
 By Remark~\ref{rem:indices-congruence}, we have $i_{r_1}'\neq i_{r_2}'$ for every $r_1\in\{1,2\}$ and $r_2\in\{3,4\}$. Since $i_{r_2}'$ may be equal to $0$ for $r_2\in\{3,4\}$, we consider the following cases:

 $\bullet$
 Suppose that $i_3'=i_4'=0$ and $i_1'\neq i_2'$. Then
 $
 \lambda_j=4+\omega^{i_1'}+\omega^{i_2'},
 $
 where
 $
 1\le i_1',\,i_2'\le 3^\alpha-1.
 $ Assume, to the contrary, that $\lambda_j=0$. Then
 \[
 4+\omega^{i_1'}+\omega^{i_2'}=0
 \quad\Longrightarrow\quad
 \omega^{i_1'}+\omega^{i_2'}=-4
 \]
 On the other hand
 \[
 \left\|\omega^{i_1'}+\omega^{i_2'}\right\|
 \le
 \left\|\omega^{i_1'}\right\|
 +
 \left\|\omega^{i_2'}\right\|
 =2.
 \]
 It follows that
 $
 4=\left\|\omega^{i_1'}+\omega^{i_2'}\right\|\le2,
 $
 which is a contradiction. Therefore,
 $
 \lambda_j\neq0.
 $

 \medskip

 $\bullet$
 Suppose that $i_3'=i_4'=0$ and $i_1'=i_2'$. Then
 $
 \lambda_j=4+2\omega^{i_1'},
 $
 where $1\le i_1'\le 3^\alpha-1$. Assume, to the contrary, that $\lambda_j=0$. Then
 \[
 4+2\omega^{i_1'}=0
 \quad\Longrightarrow\quad
 \omega^{i_1'}=-2
 \]
 Hence $
 \left\|\omega^{i_1'}\right\|=1\neq\left\|-2\right\|=2,
 $
 which is a contradiction. Therefore,
 $
 \lambda_j\neq0
 $

 \medskip

  $\bullet$  Suppose that either $i_3'=0$, $i_4'\neq0$, and $i_1'\neq i_2'$, or $i_3'\neq0$, $i_4'=0$, and $i_1'\neq i_2'$. Without loss of generality, we consider the case where $i_3'\neq0$, $i_4'=0$, and $i_1'\neq i_2'$. Then
 $
 \lambda_j=3+\omega^{i_1'}+\omega^{i_2'}+\omega^{i_3'},
 $
 where $1\le i_1',\,i_2',\,i_3',\,j\le3^\alpha-1$. Assume, to the contrary, that there exists $j\in\{1,\ldots,3^\alpha-1\}$ such that $\lambda_j=0$, and $\omega$ is a root of the polynomial
 $
 f(x)=3+x^{i_1'}+x^{i_2'}+x^{i_3'}
 $. By Remark~\ref{rem:cyclo}, we have
 \[
 1+x^{3^{\alpha-1}}+x^{2\cdot3^{\alpha-1}}
 \mid
 3+x^{i_1'}+x^{i_2'}+x^{i_3'}
 \]
 Assume that
 $
 \Phi_{3^\alpha}(x)\Psi(x)	=f(x),
 $
 where
 $
 \Psi(x)=a_0+a_1x+\cdots+a_fx^f.
 $
 Therefore,
 \begin{equation}\label{eq:ccasee}
 	\begin{aligned}
 		&\left(1+x^{3^{\alpha-1}}+x^{2\cdot3^{\alpha-1}}\right)
 		\left(a_0+a_1x+\cdots+a_fx^f\right)
 		=3+x^{i_1'}+x^{i_2'}+x^{i_3'}
 	\end{aligned}
 \end{equation}
 Substituting $x=0$ into \eqref{eq:ccasee}, we obtain $a_0=3$. By Lemma~\ref{lemb_00}, the terms $x^{3^{\alpha-1}}$, $x^{2\cdot3^{\alpha-1}}$, on the left-hand side of \eqref{eq:ccasee} arise only from the product of $a_0$ with the corresponding terms. Hence, the coefficients of these terms are all equal to $3$. On the other hand, the coefficients of these terms on the right-hand side of \eqref{eq:ccasee} are each either $0$ or $1$. Therefore, the corresponding coefficients are not equal, and the two polynomials cannot be identical. Consequently, $\lambda_j\neq0$.

 \medskip

  $\bullet$  Suppose that either $i_3'=0$, $i_4'\neq0$, and $i_1'= i_2'$, or $i_3'\neq0$, $i_4'=0$, and $i_1'= i_2'$. Without loss of generality, we consider the case where $i_3'\neq0$, $i_4'=0$, and $i_1'= i_2'$. Then
 $
 \lambda_j=3+2\omega^{i_1'}+\omega^{i_3'},
 $
 where $1\le i_1',\,i_3',\,j\le3^\alpha-1$. Similarly to Case 3, we have
 \begin{equation}\label{eq:5casee}
 	\begin{aligned}
 		&\left(1+x^{3^{\alpha-1}}+x^{2\cdot3^{\alpha-1}}\right)
 		\left(a_0+a_1x+\cdots+a_fx^f\right)
 		=3+2x^{i_1'}+x^{i_3'}
 	\end{aligned}
 \end{equation} 	
 Substituting $x=0$ into \eqref{eq:5casee}, we obtain $a_0=3$. By Lemma~\ref{lemb_00}, the terms $x^{3^{\alpha-1}}$, $x^{2\cdot3^{\alpha-1}}$ on the left-hand side of \eqref{eq:5casee} arise only from the product of $a_0$ with the corresponding terms. Therefore, the coefficients of these terms are all equal to $3$. On the other hand, the coefficients of the terms $x^{3^{\alpha-1}}$ and $x^{2\cdot3^{\alpha-1}}$ on the right-hand side of \eqref{eq:5casee} are each either $0$, $1$, or $2$. Therefore, the corresponding coefficients on the two sides of \eqref{eq:5casee} do not agree, which is a contradiction. Consequently, $\lambda_j\neq0$.

 Now suppose that $1\le i_1',\,i_2',\,i_3',\,i_4'\le3^\alpha-1$. By Remark~\ref{rem:indices-congruence}, we consider the following cases:

 $\bullet$
 Suppose that $i_1', i_2', i_3',$ and $i_4'$ are pairwise distinct. Then 	$
 \lambda_j=2+\omega^{i_1'}+\omega^{i_2'}+\omega^{i_3'}+\omega^{i_4'}
 $, where 	$
 1\le i_1',i_2',i_3',i_4',j\le3^\alpha-1
 $.

 \medskip

 $\bullet$
 Suppose that exactly one pair of the $i_t'$'s is equal, and the other two are distinct from each other and from that pair. Without loss of generality, assume $i_1'=i_2'$, $i_3'\neq i_4'$, $i_1'\neq i_3'$, and $i_1'\neq i_4'$. Then
 $
 \lambda_j=2+2\omega^{i_1'}+\omega^{i_3'}+\omega^{i_4'},
 $
 where $1\le i_1',i_3',i_4'\le 3^\alpha-1$.
 \medskip

Both of the above cases satisfy the assumptions of Lemma~\ref{lemb_3}. Therefore, $\lambda_j\neq0$.

 $\bullet$
 Suppose that $i_1'=i_2'$, $i_3'=i_4'$, and $i_1'\neq i_3'$. Then
 $
 \lambda_j=2+2\omega^{i_1'}+2\omega^{i_3'},
 $
 where $1\le i_1',i_3'\le 3^\alpha-1$. By Remark~\ref{rem:circulant_graph_spec}, it follows that $\lambda_j\neq 0$.

 Therefore, by considering both cases, we conclude that
 $
 \lambda_j\neq0$ for $0\le j\le3^{\alpha}-1.
 $
 Hence, the determinant of the circulant matrix is nonzero. Therefore, the circulant matrix is invertible, and the corresponding system has the unique solution
 \[
 t_0=t_1=\cdots=t_{3^{\alpha}-1}
 =\frac{2\,n}{3^{\alpha}\cdot6}
 =\frac{n}{3^{\alpha+1}}
 \]
 Since
 $
 t_h=\left|x^h\left\langle x^{3^{\alpha}}\right\rangle\cap C\right|,$ for
 $0\le h\le3^{\alpha}-1,
 $
 each \(t_h\) must be an integer. Hence,
 $
 3^{\alpha+1}\mid n,
 $
 which contradicts the assumption that
 $
 3^{\alpha+1}\nmid n.
 $
 Therefore, \(\Gamma\) contains no \((0,2)\)-regular set.\hfill$\blacksquare$
 \end{proof}   	
\begin{remA}\label{rem:24}
	By Lemma~\ref{primes}, for the graph $\Gamma=\operatorname{Cay}(\mathbb{Z}_n,\{x^k,x^{-k},x^\ell,x^{-\ell}\}),$
	the complement of any \((0,2)\)-regular set is a \((2,4)\)-regular set, and conversely. Consequently, every result concerning the existence or nonexistence of \((0,2)\)-regular sets extends directly to \((2,4)\)-regular sets. In other words, if \(k\equiv 1 \pmod{3}\) and \(\ell\equiv 1 \pmod{3}\), \(\Gamma\) contains a \((2,4)\)-regular set if and only if \(3\mid n\). Moreover, the obtained \((2,4)\)-regular set is \(C=x\langle x^3\rangle \cup x^2\langle x^3\rangle\), and if either \(k\equiv 1 \pmod{3}\) and \(\ell\equiv 0 \pmod{3}\), or \(k\equiv 0 \pmod{3}\) and \(\ell\equiv 1 \pmod{3}\), then \(\Gamma\) contains no \((2,4)\)-regular set.
\end{remA}	
   \subsection{$(1,2)$-regular sets}
   \begin{lemA}\label{lemb_5}
   	Let $\alpha$ be a positive integer, and let $\omega$ be a primitive $5^\alpha$-th root of unity. Suppose that
   	$
   	\lambda = b_0 + \sum_{t=1}^{4} \omega^{i_t},
   	$
   	where $b_0 \in \mathbb{Z} \setminus \{0, 1\}$ and $i_t \in [1,5^{\alpha}-1]$. Then $\lambda \neq 0$.
   \end{lemA}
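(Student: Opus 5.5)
Suppose, for contradiction, that $\lambda=0$. Then $\omega$ is a root of the integer polynomial $f(x)=b_0+x^{i_1}+x^{i_2}+x^{i_3}+x^{i_4}$. By Remark~\ref{rem:cyclo}, $\Phi_{5^\alpha}(x)=1+x^{5^{\alpha-1}}+x^{2\cdot5^{\alpha-1}}+x^{3\cdot5^{\alpha-1}}+x^{4\cdot5^{\alpha-1}}$ divides $f(x)$. We write $f(x)=\Phi_{5^\alpha}(x)\Psi(x)$ with $\Psi\in\mathbb{Z}[x]$. Since the $i_t$ are not assumed distinct, $f$ has the shape $b_0+\sum_t b_t x^{e_t}$, where the $e_t$ are the distinct values among $i_1,\dots,i_4$ and the $b_t\ge1$ satisfy $\sum_t b_t=4$. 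Every exponent lies in $[1,5^\alpha-1]$, so Lemma~\ref{lemb_00} applies.

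\textbf{Negative $b_0$.} If $b_0<0$, then $f$ has the form $-|b_0|+\sum b_tx^{e_t}$ with positive $b_t$, and Lemma~\ref{lemb_0} gives $\lambda\neq0$ directly. Hence we may assume $b_0\ge2$.

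\textbf{Positive $b_0\ge2$.} By Lemma~\ref{lemb_00}, in $\Phi_{5^\alpha}(x)\Psi(x)$ the coefficient of each of the four monomials $x^{j\cdot 5^{\alpha-1}}$, $1\le j\le4$, equals $b_0$. So in $f$ all four of these monomials must occur, each with coefficient exactly $b_0\ge2$. The sum of the coefficients of $f$ on those four monomials is therefore $4b_0\ge8$. However, the total of all non-constant coefficients of $f$ is $\sum_t b_t=4$. This contradiction gives $\lambda\neq0$.

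This counting argument is cleaner than the case split used in Lemma~\ref{lemb_3}. It works here because $p-1=4$ equals the number of exponents, so the sum of the $b_t$ is too small. The only step needing care is verifying the hypotheses of Lemma~\ref{lemb_00}, namely that all non-constant exponents lie in $[1,5^\alpha-1]$, and noting that coincident $i_t$ merely merge coefficients without changing their total. The argument for $b_0\ge2$ works for every $\alpha\ge1$, including $\alpha=1$, where the relevant monomials are $x,x^2,x^3,x^4$. I expect no serious obstacle. The main point is to make the bookkeeping of repeated exponents explicit, since that is the one place where the argument could be misread.
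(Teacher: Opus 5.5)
Your proof is correct and follows the paper's route: you reduce to $\Phi_{5^\alpha}\mid f$ via Remark~\ref{rem:cyclo}, dispose of $b_0<0$ with Lemma~\ref{lemb_0}, and use Lemma~\ref{lemb_00} to force the coefficient $b_0$ on each of the four monomials $x^{j\cdot 5^{\alpha-1}}$. The only difference is the finish: the paper splits into cases according to which $i_t$ coincide (coefficients $0$ or $1$ against $b_0\ge 2$ when the $i_t$ are distinct, at most three non-constant monomials otherwise), whereas you handle all configurations at once with $4b_0\ge 8>4=\sum_t b_t$, which is valid because every non-constant coefficient of $f$ is nonnegative.
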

   \begin{proof}
   	Suppose, for contradiction, that $\lambda = 0$. Assume that $\omega$ is a root of the polynomial
   	$
   	f(x)=b_0+x^{i_1}+x^{i_2}+x^{i_3}+x^{i_4},
   	$
   	where $i_1,i_2,i_3,i_4\in[1,5^\alpha-1]$. By Remark~\ref{rem:cyclo}, we have
   	\[
   	1+x^{5^{\alpha-1}}+x^{2\cdot5^{\alpha-1}}+x^{3\cdot5^{\alpha-1}}+x^{4\cdot5^{\alpha-1}}
   	\mid
   	b_0+x^{i_1}+x^{i_2}+x^{i_3}+x^{i_4}
   	\]
   	Assume that
   	$
   \Phi_{5^\alpha}(x)\Psi(x)	=f(x),
   	$
   	where
   	$
   	\Psi(x)=a_0+a_1x+\cdots+a_fx^f.
   	$
   	Therefore,
   	\begin{equation}\label{eq:case1}
   		\begin{aligned}
   			&\left(1+x^{5^{\alpha-1}}+x^{2\cdot5^{\alpha-1}}+x^{3\cdot5^{\alpha-1}}+x^{4\cdot5^{\alpha-1}}\right)
   			\left(a_0+a_1x+\cdots+a_fx^f\right)\\
   			&\hspace{4cm}=b_0+x^{i_1}+x^{i_2}+x^{i_3}+x^{i_4}
   		\end{aligned}
   	\end{equation}
   	First, suppose that $b_0<0$. Then, by Lemma~\ref{lemb_0}, it follows that $\lambda\neq0$. Hence, we may assume that $b_0>0$ and $b_0\neq1$. We consider the following cases:
   	
   	\textbf{Case 1.}
   	Suppose that $i_1,i_2,i_3$, and $i_4$ are pairwise distinct. By Lemma~\ref{lemb_00}, the coefficient of each term $x^{j\cdot5^{\alpha-1}}$ for $1\leq j\leq4$ on the left-hand side of relation~\eqref{eq:case1} is equal to $b_0$. Since $b_0>0$ and $b_0\neq1$, we have $b_0\geq2$. On the other hand, the $i_t$'s are pairwise distinct, so the corresponding coefficient on the right-hand side is either $0$ or $1$. Hence, the two sides of \eqref{eq:case1} cannot be equal, which is a contradiction.
   	
   	   	\medskip
   	   	
   	\textbf{Case 2.}
   	Suppose that $i_1=i_2$, $i_3=i_4$, and $i_1\neq i_3$. Then
   	\[
   	f(x)=b_0+2x^{i_1}+2x^{i_3},
   	\]
   	where $1\le i_1,i_3\le 5^\alpha-1$.
   	
   	\medskip
   	
   	\textbf{Case 3.}
   	Suppose that three of the $i_t$'s are equal and the remaining one is distinct. Without loss of generality, assume $i_1=i_2=i_3\neq i_4$. Then
   	\[
   	f(x)=b_0+3x^{i_1}+x^{i_4},
   	\]
   	where $1\le i_1,i_4\le 5^\alpha-1$.
   	
   	\medskip
   	
   	\textbf{Case 4.}
   	Suppose that exactly one pair of the $i_t$'s is equal, and the other two are distinct from each other and from that pair. Without loss of generality, assume $i_1=i_2$, $i_3\neq i_4$, $i_1\neq i_3$, and $i_1\neq i_4$. Then
   	\[
   	f(x)=b_0+2x^{i_1}+x^{i_3}+x^{i_4},
   	\]
   	where $1\le i_1,i_3,i_4\le 5^\alpha-1$.
   	
   	\medskip
   	
   	\textbf{Case 5.}
   	Suppose that all four $i_t$'s are equal, i.e., $i_1=i_2=i_3=i_4$. Then
   	\[
   	f(x)=b_0+4x^{i_1},
   	\]
   	where $1\le i_1\le 5^\alpha-1$.
   	
  \noindent Substituting the expression for $f(x)$ obtained in each of Cases~2--5 into the right-hand side of relation~\eqref{eq:case1} and using Lemma~\ref{lemb_00}, we conclude that the coefficients of the terms
   	$x^{5^{\alpha-1}}$, $x^{2\cdot5^{\alpha-1}}$, $x^{3\cdot5^{\alpha-1}}$, and $x^{4\cdot5^{\alpha-1}}$
   	on the left-hand side of relation~\eqref{eq:case1} are all equal to $b_0$, where $b_0\neq0$. On the other hand, in Cases~2--5, the numbers of nonconstant terms on the right-hand side of relation~\eqref{eq:case1} are $2$, $2$, $3$, and $1$, respectively. Hence, none of these cases contains more than three nonconstant terms. Therefore, the left-hand side contains at least four nonconstant terms, whereas the right-hand side contains at most three nonconstant terms. This contradicts relation~\eqref{eq:case1}. Hence, in each of Cases~2--5, we obtain $\lambda\neq0$. This completes the proof.\hfill$\blacksquare$
   \end{proof}
  \begin{lemA}\label{lemb_6}
  	Let $\alpha$ be a positive integer, and let $\omega$ be a primitive $5^\alpha$-th root of unity. Suppose that
  	$
  	\lambda=b_0+\sum_{t=1}^{4}\omega^{i_t},
  	$
  	where $b_0\in\mathbb{Z}\setminus\{0\}$, the $i_t$'s are not pairwise distinct, and $i_t\in[1,5^\alpha-1]$. Then $\lambda\neq0$.
  \end{lemA}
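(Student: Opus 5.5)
The plan is to reduce to the coefficient-comparison argument already used in Lemma~\ref{lemb_5}, splitting by the sign of $b_0$. If $b_0<0$, write $\lambda=-|b_0|+\sum_{t=1}^4\omega^{i_t}$ with $i_t\in[1,5^\alpha-1]$. After collecting equal exponents, all coefficients of the nonconstant powers are positive integers, so Lemma~\ref{lemb_0} gives $\lambda\neq0$ directly. It therefore remains to treat $b_0\geq1$, and here I would give a single argument covering every $b_0\ge 1$; it also reproves Lemma~\ref{lemb_5} in the case of a repeated exponent.

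Suppose $b_0\ge1$ and $\lambda=0$. Then $\omega$ is a root of $f(x)=b_0+x^{i_1}+x^{i_2}+x^{i_3}+x^{i_4}$, which has integer coefficients and degree at most $5^\alpha-1$. By Remark~\ref{rem:cyclo}, $\Phi_{5^\alpha}(x)=\sum_{j=0}^{4}x^{j5^{\alpha-1}}$ divides $f(x)$; write $f=\Phi_{5^\alpha}\Psi$ with $\Psi\in\mathbb{Z}[x]$. Collect equal exponents so that $f(x)=b_0+\sum_{e\in E}m_e x^{e}$. Here $E$ is the set of distinct values among the $i_t$, and $m_e\ge1$ is the multiplicity of $e$, so that $\sum_{e\in E} m_e=4$. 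Since all exponents lie in $[1,5^\alpha-1]$, Lemma~\ref{lemb_00} applies: for each $1\le j\le4$, the coefficient of $x^{j5^{\alpha-1}}$ in $f$ equals $b_0$.

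Because $b_0\neq0$, all four exponents $5^{\alpha-1},2\cdot5^{\alpha-1},3\cdot5^{\alpha-1},4\cdot5^{\alpha-1}$ must lie in $E$, and each must carry multiplicity $m_e=b_0$. This gives
\[
4b_0\le\sum_{e\in E}m_e=4 \quad\text{and}\quad |E|\ge4 .
\]
The hypothesis that the $i_t$ are not pairwise distinct means $|E|\le3$, which contradicts $|E|\ge4$. (The inequality $4b_0\le 4$ would in any case force $b_0=1$, with all four $i_t$ distinct and equal to the four multiples of $5^{\alpha-1}$, which is again excluded by hypothesis.) Hence $\lambda\ne0$.

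The only delicate step is checking that Lemma~\ref{lemb_00} applies after like terms are merged. Its proof needs the nonconstant exponents of $f$ to lie in $[1,5^\alpha-1]$, so that $\deg\Psi<5^{\alpha-1}$ and the coefficients at the multiples of $5^{\alpha-1}$ come only from $a_0=b_0$. Merging like terms changes neither the exponent range nor the constant term, since every $i_t\ge1$, so this holds. With that verified, the remaining case analysis is just the counting argument above, and no separate treatment of the repetition patterns (as in Cases~2--5 of Lemma~\ref{lemb_5}) is needed.
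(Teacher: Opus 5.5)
Your proof is correct and is essentially the paper's argument. The case $b_0<0$ is handled by Lemma~\ref{lemb_0}, and for $b_0>0$ Lemma~\ref{lemb_00} forces the four exponents $j\cdot 5^{\alpha-1}$ ($1\le j\le 4$) to occur in $f$ with nonzero coefficient, which is impossible when there are at most three distinct exponents; your bound $|E|\le 3$ is exactly the paper's observation that in Cases~2--5 of Lemma~\ref{lemb_5} the right-hand side has at most three nonconstant terms, only without enumerating the repetition patterns (and since that step uses only $b_0\neq 0$, your separate treatment of $b_0<0$ is not actually needed).
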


  \begin{proof}
  	The proof is identical to that of Lemma~\ref{lemb_5}. The case $b_0<0$ follows from Lemma~\ref{lemb_0}, while the assumption that the $i_t$'s are not pairwise distinct excludes Case~1. Hence, only Cases~2--5 remain, each leading to a contradiction. Therefore, $\lambda\neq0$.
  	\hfill$\blacksquare$
  \end{proof}
 \begin{remA}\label{rem:lan}
 Let
 $\Gamma=\operatorname{Cay}(\mathbb{Z}_n,\{x^{k},x^{-k},x^{\ell},x^{-\ell}\})$
 be a connected circulant quartic graph, where
 $
k\stackrel{5}{\equiv}1, \ell\stackrel{5}{\equiv}1.
 $
 Suppose that $\alpha$ is a positive integer such that $5^\alpha\mid n$, and let $\omega$ be a primitive $5^\alpha$-th root of unity, and
 \begin{equation}\label{eq:lambdaj}
 	\lambda_j=
 	1+\omega^{i_{1}j}+\omega^{i_{2}j}+\omega^{i_{3}j}+\omega^{i_{4}j}
 \end{equation}	
 where $i_1$, $i_2$, $i_3$, and $i_4$ are pairwise distinct and are not divisible by $5$, and
 $
 1\le i_1,i_2,i_3,i_4,j\le 5^\alpha-1.
 $ We define $i_rj\stackrel{5^\alpha}{\equiv} i'_r$ for $r\in\{1,2,3,4\}$, where $0\le i'_r\le 5^\alpha-1$, $i'_1$, $i'_2$, $i'_3$, and $i'_4$ are pairwise distinct, therefore
 $
 \lambda_j
 =
 1+\omega^{i'_1}
 +\omega^{i'_2}
 +\omega^{i'_3}
 +\omega^{i'_4}.
 $
By Lemma~\ref{lem:nonzero1}, it follows that
$
i_r'\neq0$, for every $r\in\{1,2,3,4\}$.

 We now show that $\lambda_j\neq0$ for every $j\in[1,5^\alpha-1]$. Suppose, for contradiction, that there exists $j\in\{1,\ldots,5^\alpha-1\}$ such that $\lambda_j=0$. By Remark~\ref{rem:cyclo}, we have
\begin{equation}\label{eq:divisibility}
1+x^{5^{\alpha-1}}+x^{2\cdot5^{\alpha-1}}+x^{3\cdot5^{\alpha-1}}+x^{4\cdot5^{\alpha-1}} \mid
1+x^{i_1'}+x^{i_2'}+x^{i_3'}+x^{i_4'}
\end{equation}
From the divisibility relation \eqref{eq:divisibility}, it follows that
 $
  \{i_1',\,i_2',\,i_3',\,i_4'\}=\{5^{\alpha-1},\,2\cdot5^{\alpha-1},\,3\cdot5^{\alpha-1},\,4\cdot5^{\alpha-1}\}.
 $ We define $i_r'=t_r5^{\alpha-1}$
 where $r,t_r\in\{1,2,3,4\}$. We obtain
 \begin{equation}\label{eq:divisibility1}
  i_r j \overset{5^\alpha}{\equiv} t_r 5^{\alpha-1}
 \end{equation}	
 Since $i_r$ is not divisible by $5$ and $\gcd(5^\alpha,i_r)=1$, by the definition of the multiplicative inverse, there exists $i_r^{*}\in\mathbb{Z}_{n}^{*}$ such that
 $
i_r i_r^{*}\overset{5^\alpha}{\equiv} 1 ,
 $
 for $r\in\{1,2,3,4\}$. Therefore, multiplying both sides of \eqref{eq:divisibility1} by the inverse of $i_r$ yields
 \[
 i_r^{*} i_r j \overset{5^\alpha}{\equiv}i_r^{*}  t_r 5^{\alpha-1}
\quad\Longrightarrow\quad
j \overset{5^\alpha}{\equiv}i_r^{*}
  t_r 5^{\alpha-1}
 \]
 Since $\gcd(5^\alpha,i_r^{*})=1$ and $t_r\in\{1,2,3,4\}$, It follows that
 $
 5^{\alpha-1}\mid j.
 $
 Since
 $
 1\le j\le 5^\alpha-1,
 $
 we conclude that
 $
 j\in\left\{5^{\alpha-1},\,2\cdot5^{\alpha-1},\,3\cdot5^{\alpha-1},\,4\cdot5^{\alpha-1}\right\}.
 $
 We define
 \begin{equation}\label{eq:j}
 	j=e \cdot 5^{\alpha-1}
 \end{equation}
 where $e\in\{1,2,3,4\}$.
By Remark~\ref{rem:7}, we have $S_d=x^d\langle x^{5^\alpha}\rangle\cap S$ for each $0\le d\le 5^\alpha-1$. Since $k\stackrel{5}{\equiv}1$ and $x^k\in x^{d_1}\langle x^{5^\alpha}\rangle$, we show that $i_1=d_1\stackrel{5}{\equiv}1$. In fact, $x^k=x^{d_1}\left(x^{5^\alpha}\right)^{t_2}$ for some integer $t_2$, so $k\equiv d_1+5^\alpha t_2\pmod{5^\alpha r}$, which yields $k\stackrel{5}{\equiv}d_1$. Hence, $i_1=d_1\stackrel{5}{\equiv}1$. By Proposition~\ref{prop:inverse_coset}, $x^{-k}\in x^{5^\alpha-d_1}\langle x^{5^\alpha}\rangle$. Hence, $i_2=5^\alpha-d_1\stackrel{5}{\equiv}4$. Similarly, since $\ell\stackrel{5}{\equiv}1$, we have $x^{\ell}\in x^{d_2}\langle x^{5^\alpha}\rangle$. Therefore, $i_3=d_2\stackrel{5}{\equiv}1$. Moreover, $x^{-\ell}\in x^{5^\alpha-d_2}\langle x^{5^\alpha}\rangle$, and hence $i_4=5^\alpha-d_2\stackrel{5}{\equiv}4$. Consequently, $i_1\stackrel{5}{\equiv}i_3\stackrel{5}{\equiv}1$ and $i_2\stackrel{5}{\equiv}i_4\stackrel{5}{\equiv}4$. Substituting the value of $j$ from \eqref{eq:j} into \eqref{eq:lambdaj}, we obtain

\[
\lambda_{e\cdot5^{\alpha-1}}
=
1+\left(\omega^{e\cdot5^{\alpha-1}}\right)^{i_1}
+\left(\omega^{e\cdot5^{\alpha-1}}\right)^{i_2}
+\left(\omega^{e\cdot5^{\alpha-1}}\right)^{i_3}
+\left(\omega^{e\cdot5^{\alpha-1}}\right)^{i_4}
\]
Put
$
\omega'=\omega^{5^{\alpha-1}},
$
where $\omega'$ is a primitive fifth root of unity. Therefore,
\[
\lambda_{e\cdot5^{\alpha-1}}
=
1+(\omega')^{ei_1}
+(\omega')^{ei_2}
+(\omega')^{ei_3}
+(\omega')^{ei_4}
\]
Sice
$
i_1\stackrel{5}{\equiv}i_3\stackrel{5}{\equiv}1
$
we have
$
(\omega')^{ei_1}=(\omega')^{ei_3}=(\omega')^{e}.
$
Also
$
i_2\stackrel{5}{\equiv}i_4\stackrel{5}{\equiv}4
$
it follows that
$
(\omega')^{ei_2}=(\omega')^{ei_4}=(\omega')^{4e}.
$
Hence,
\[
\lambda_{e5^{\alpha-1}}=
1+2(\omega')^{e}+2(\omega')^{-e}
\]
Suppose that,
$
z=(\omega')^e .
$
Since $e\in\{1,2,3,4\}$ and $\gcd(e,5)=1$, the element $z$ is also a primitive $5$-th root of unity. Hence,
\[
\lambda_{e\cdot5^{\alpha-1}}
=
1+2(z+z^{-1})
\]
Thus,
\[
1+2z+2z^{-1}=0 \quad\Longrightarrow\quad
2z^2+z+2=0
\]
By Definition~\ref{poly} and Remark~\ref{rem:cyclo}, the minimal polynomial of $z$ is
$
\Phi_5(x)=1+x+x^2+x^3+x^4.
$
Hence,
\begin{equation}
	1+x+x^2+x^3+x^4\mid 2x^2+x+2.
	\label{eq:div}
\end{equation}
Since $2x^2+x+2$ is a nonzero polynomial whose degree is less than the degree of the minimal polynomial of $z$, the divisibility relation \eqref{eq:div} cannot hold. Therefore,
$
\lambda_{e\cdot5^{\alpha-1}}\neq0,
$
which contradicts the assumption that
$
\lambda_j=0$.
Hence,
$
\lambda_j\neq0
$ for
$j\in\left\{1,\ldots,5^\alpha-1\right\}.
$
 	\end{remA}

  \begin{lemA}\label{lemb_7}
 	Let $\alpha$ be a positive integer, and let $\omega$ be a primitive $5^\alpha$-th root of unity. Let $i_1$, $i_2$, $i_3$, and $i_4$ are pairwise distinct, $i_1$ and $i_2$ are not divisible by $5$, $i_3$ and $i_4$ are divisible by $5$, and
 	$
 	1\le i_1,i_2,i_3,i_4,j\le5^\alpha-1.
 	$
   If $i_r' \equiv i_rj \pmod{5^\alpha}$, where $i_r'$ for $r\in\{1,2,3,4\}$ are pairwise distinct and satisfy $1\le i_r'\le5^\alpha-1$,
 	then
 	$
 	\lambda_j=1+\omega^{i_1'}+\omega^{i_2'}+\omega^{i_3'}+\omega^{i_4'}\neq0
 	$
 	for every $j\in\{1,2,\ldots,5^\alpha-1\}$.
  	\end{lemA}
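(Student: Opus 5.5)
We argue by contradiction, following the pattern of Remark~\ref{rem:lan}. Suppose $\lambda_j=0$ for some $j\in\{1,\ldots,5^\alpha-1\}$. Then $\omega$ is a root of
\[
f(x)=1+x^{i_1'}+x^{i_2'}+x^{i_3'}+x^{i_4'},
\]
whose exponents $i_r'$ are pairwise distinct and lie in $[1,5^\alpha-1]$. By Remark~\ref{rem:cyclo}, $\Phi_{5^\alpha}(x)\mid f(x)$; write $f(x)=\Phi_{5^\alpha}(x)\Psi(x)$. Lemma~\ref{lemb_00} applies with $b_0=1$. It says that, in $f$, each monomial $x^{t\cdot 5^{\alpha-1}}$ with $1\le t\le 4$ has coefficient $b_0=1$. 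Since $f$ has exactly four nonconstant monomials, all with coefficient $1$ and distinct exponents, this forces
\[
\{i_1',i_2',i_3',i_4'\}=\{5^{\alpha-1},\,2\cdot5^{\alpha-1},\,3\cdot5^{\alpha-1},\,4\cdot5^{\alpha-1}\}.
\]
So $i_r'=t_r5^{\alpha-1}$ with $t_r\in\{1,2,3,4\}$. This is the same reduction as in \eqref{eq:divisibility}.

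Next we use the index $r=1$, where $5\nmid i_1$. From $i_1j\equiv t_1 5^{\alpha-1}\pmod{5^\alpha}$ we multiply by the inverse $i_1^*$ of $i_1$ modulo $5^\alpha$. This gives $j\equiv i_1^*t_15^{\alpha-1}\pmod{5^\alpha}$. Since $i_1^*t_1$ is prime to $5$, we get $j=e\cdot5^{\alpha-1}$ with $e\in\{1,2,3,4\}$, exactly as in \eqref{eq:j}.

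Now we use the index $r=3$, where $5\mid i_3$. Then
\[
i_3j=i_3\,e\,5^{\alpha-1}\equiv 0\pmod{5^\alpha},
\]
so $i_3'=0$. This contradicts $i_3'=t_35^{\alpha-1}\ge 5^{\alpha-1}\ge1$, which is also part of the hypothesis $1\le i_3'$. Hence $\lambda_j\neq0$. Notice that the hypothesis on $i_4$ and the pairwise distinctness of the $i_r$ are not even needed beyond what the $i_r'$ already provide.

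Two minor points remain to check. First, when $\alpha=1$ the statement is vacuous: no $i_3\in[1,4]$ is divisible by $5$. Second, we should confirm that Lemma~\ref{lemb_00} applies, i.e. that its hypotheses on the exponents range are satisfied, which they are here.

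The main step to get right is the first one: deducing the exact exponent set from the coefficient comparison. That rests on $f$ having $0/1$ coefficients with only four nonconstant terms, which exactly matches the four forced monomials $x^{t5^{\alpha-1}}$. The divisibility conclusions after that are routine.
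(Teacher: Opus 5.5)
Your proof is correct and follows the paper's argument. You use the cyclotomic divisibility to force $\{i_1',i_2',i_3',i_4'\}=\{5^{\alpha-1},2\cdot5^{\alpha-1},3\cdot5^{\alpha-1},4\cdot5^{\alpha-1}\}$, then $5\nmid i_1$ to get $5^{\alpha-1}\mid j$, then $5\mid i_3$ to get $i_3'\equiv 0 \pmod{5^\alpha}$, which is a contradiction. Your appeal to Lemma~\ref{lemb_00} for the exponent-set step also fills in a deduction that the paper only asserts.
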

  	\begin{proof}
  		Assume, to the contrary, that there exists $j\in\{1,\ldots,5^\alpha-1\}$such that $\lambda_j=0$. Suppose that $\omega$ is a root of the polynomial
  		$
  		f(x)=1+x^{i_1'}+x^{i_2'}+x^{i_3'}+x^{i_4'} ,
  		$
  		where $1\le i_r'\le5^\alpha-1$ for $r\in\{1,2,3,4\}$. By Remark~\ref{rem:cyclo}, we have
    \begin{equation}\label{eq:divisibility2}
    	1+x^{5^{\alpha-1}}+x^{2\cdot5^{\alpha-1}}+x^{3\cdot5^{\alpha-1}}+x^{4\cdot5^{\alpha-1}} \mid
    	1+x^{i_1'}+x^{i_2'}+x^{i_3'}+x^{i_4'}
    \end{equation}
   From the divisibility relation \eqref{eq:divisibility2}, it follows that $
   \{i_1',\,i_2',\,i_3',\,i_4'\}=\{5^{\alpha-1},\,2\cdot5^{\alpha-1},\,3\cdot5^{\alpha-1},\,4\cdot5^{\alpha-1}\}.
   $ Therefore, for each $r\in\{1,2,3,4\}$, there exists $t_r\in\{1,2,3,4\}$ such that
   \begin{equation}\label{eq:...}
   	i_r'=t_r5^{\alpha-1}.
   \end{equation}
   Consequently, for an integer $h_r$, we can write
   $
   i_rj=5^{\alpha-1}(t_r+5h_r).
   $
   By the assumption, $5\nmid i_1$, and
   $
   i_1j=5^{\alpha-1}(t_1+5h_1),
   $
   which implies that
   $
   5^{\alpha-1}\mid j.
   $ Hence, for some integer $p$, $j=5^{\alpha-1}p$. By assumption, $5\mid i_3$, so we may write $i_3=5q$ for some integer $q$. Consequently, $i_3j=5^{\alpha}pq$, and hence
   \begin{equation}\label{eq:i3j}
 i_3 j \overset{5^\alpha}{\equiv} 0
   \end{equation}
 By assumption, $i_3j \overset{5^\alpha}{\equiv} i_3'$. Moreover, by \eqref{eq:i3j}, we have $i_3' \overset{5^\alpha}{\equiv} 0$, which implies that $i_3'$ is a multiple of $5^\alpha$. However, by assumption, $1\le i_3'\le 5^\alpha-1$, and there is no multiple of $5^\alpha$ in this interval. This contradiction completes the proof and $\lambda_j\neq 0$.  \hfill$\blacksquare$
  		\end{proof}
 \begin{corA}
 	Let the assumptions of Lemma~\ref{lemb_7} hold. If there exist distinct indices $r,s\in\{1,2,3,4\}$ such that $5\mid i_r$ and $5\nmid i_s$, then
 	$
 	\lambda_j\neq0
 	$
 	for every $1\le j\le5^\alpha-1$.
 \end{corA}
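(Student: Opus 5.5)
The plan is to reduce the statement to the argument of Lemma~\ref{lemb_7} by relabelling. The expression
\[
\lambda_j=1+\omega^{i_1'}+\omega^{i_2'}+\omega^{i_3'}+\omega^{i_4'}
\]
is symmetric in the four indices. All hypotheses of Lemma~\ref{lemb_7} that are actually used are also symmetric: the $i_t$ are pairwise distinct, the $i_t'$ are pairwise distinct and lie in $[1,5^\alpha-1]$, and $i_t'\equiv i_tj\pmod{5^\alpha}$. So I would permute the labels so that the index $s$ with $5\nmid i_s$ becomes position $1$ and the index $r$ with $5\mid i_r$ becomes position $3$. The proof of Lemma~\ref{lemb_7} only ever invokes $5\nmid i_1$ and $5\mid i_3$. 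The conditions on $i_2$ and $i_4$ are never used, so they may be dropped.

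For completeness I would rerun the core argument directly in terms of $r$ and $s$. Suppose $\lambda_j=0$ for some $1\le j\le 5^\alpha-1$. By Remark~\ref{rem:cyclo}, $\Phi_{5^\alpha}(x)$ divides $1+x^{i_1'}+x^{i_2'}+x^{i_3'}+x^{i_4'}$. The exponents are pairwise distinct and nonzero, so by Lemma~\ref{lemb_00} (comparing the coefficients of $x^{t\cdot5^{\alpha-1}}$, which must equal the constant term $1$) we get $\{i_1',i_2',i_3',i_4'\}=\{5^{\alpha-1},2\cdot5^{\alpha-1},3\cdot5^{\alpha-1},4\cdot5^{\alpha-1}\}$. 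In particular $i_s'=t_s5^{\alpha-1}$ with $5\nmid t_s$, so
\[
i_sj=5^{\alpha-1}(t_s+5h_s)\quad\text{for some integer } h_s .
\]
Since $\gcd(i_s,5)=1$, this forces $5^{\alpha-1}\mid j$. Writing $j=5^{\alpha-1}p$ and $i_r=5q$ gives $i_rj=5^\alpha pq\equiv 0\pmod{5^\alpha}$. Hence $i_r'\equiv0\pmod{5^\alpha}$, which contradicts $1\le i_r'\le 5^\alpha-1$.

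The only step needing care is the coefficient comparison that pins down the set of $i_t'$; I would take it exactly as it is used in Lemma~\ref{lemb_7} and Remark~\ref{rem:lan}, and otherwise the corollary is immediate.
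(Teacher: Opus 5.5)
Your proof is correct and is essentially the argument the paper intends. The paper reads the corollary off from Lemma~\ref{lemb_7}, whose proof uses only $5\nmid i_1$ and $5\mid i_3$: the first gives $5^{\alpha-1}\mid j$ once the cyclotomic divisibility forces $\{i_1',i_2',i_3',i_4'\}=\{5^{\alpha-1},2\cdot5^{\alpha-1},3\cdot5^{\alpha-1},4\cdot5^{\alpha-1}\}$, and the second then gives $i_3'\equiv0\pmod{5^\alpha}$. So relabelling $s\mapsto1$, $r\mapsto3$ as you do yields the result, and your appeal to Lemma~\ref{lemb_00} (each $x^{t\cdot5^{\alpha-1}}$ must have coefficient $1$, and $f$ has exactly four nonconstant monomials) correctly fills in the set identity that the paper states without detail.
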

  \begin{lemA}\label{lemb_8}
  	Let $\alpha$ be a positive integer, and let $\omega$ be a primitive $5^\alpha$-th root of unity. Suppose that
  	$
  	\lambda_j=1+\omega^{i_1j}+\omega^{i_2j}+\omega^{i_3j}+\omega^{i_4j},
  	$
  	where $i_1,i_2,i_3$, and $i_4$ are pairwise distinct, $i_3$ and $i_4$ are divisible by $5$, whereas $i_1$ and $i_2$ are not divisible by $5$, and
  	$
  	1\le i_1,i_2,i_3,i_4\le 5^\alpha-1.
  	$
  	Then
  	$
  	\lambda_j\neq0
  	$
  	for every
  	$
  	1\le j\le 5^\alpha-1.
  	$
  \end{lemA}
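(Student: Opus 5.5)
The plan is to reduce the exponents modulo $5^\alpha$ and split into cases according to which reduced exponents vanish or coincide. In each case I either invoke one of the lemmas already proved in this subsection or repeat the coefficient comparison of Lemma~\ref{lemb_00}. Throughout, fix $j\in\{1,\ldots,5^\alpha-1\}$ and write $i_r'$ for the residue of $i_rj$ modulo $5^\alpha$, with $0\le i_r'\le 5^\alpha-1$. Then
\[
\lambda_j=1+\omega^{i_1'}+\omega^{i_2'}+\omega^{i_3'}+\omega^{i_4'}.
\]

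First I record the structural facts about the $i_r'$.
\begin{enumerate}
\item[(a)] Since $5\nmid i_1$ and $5\nmid i_2$, Lemma~\ref{lem:nonzero1} (with $p=5$) gives $i_1'\neq0$ and $i_2'\neq0$.
\item[(b)] For $r\in\{1,2\}$ and $s\in\{3,4\}$, the difference $i_r-i_s$ is prime to $5$. The argument of Remark~\ref{rem:indices-congruence}, with $p=3$ replaced by $p=5$, then shows $i_r'\neq i_s'$: otherwise $5^\alpha\mid(i_r-i_s)j$ forces $5^\alpha\mid j$.
\end{enumerate}
So the only possible coincidences among the $i_r'$ are $i_1'=i_2'$ and $i_3'=i_4'$. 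The only exponents that can vanish are $i_3'$ and $i_4'$.

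The cases are as follows.
\begin{enumerate}
\item[(1)] All $i_r'$ are nonzero and pairwise distinct. This is exactly Lemma~\ref{lemb_7}, so $\lambda_j\neq0$.
\item[(2)] All $i_r'$ are nonzero, but $i_1'=i_2'$ or $i_3'=i_4'$. The exponents are in $[1,5^\alpha-1]$ and are not pairwise distinct, and $b_0=1\neq0$. Lemma~\ref{lemb_6} gives $\lambda_j\neq0$.
\item[(3)] $i_3'=i_4'=0$. Then $\lambda_j=3+\omega^{i_1'}+\omega^{i_2'}$. Vanishing would force $|\omega^{i_1'}+\omega^{i_2'}|=3>2$, which contradicts the triangle inequality, exactly as in Case~1 of Theorem~\ref{thm:02}(ii).
\item[(4)] Exactly one of $i_3',i_4'$ is zero, say $i_4'=0$. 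Then $\lambda_j=2+\omega^{i_1'}+\omega^{i_2'}+\omega^{i_3'}$, where these three exponents are nonzero and $i_3'\neq i_1',i_2'$ by (b). If $\lambda_j=0$, Remark~\ref{rem:cyclo} gives $\Phi_{5^\alpha}(x)\mid f(x)=2+x^{i_1'}+x^{i_2'}+x^{i_3'}$. Writing $f=\Phi_{5^\alpha}\Psi$, Lemma~\ref{lemb_00} forces the four coefficients of $x^{t\cdot5^{\alpha-1}}$ for $1\le t\le4$ to equal the constant term $2$. But $f$ has at most three nonconstant monomials, and each has coefficient at most $2$; if $i_1'=i_2'$, only two nonconstant monomials remain. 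So at least one of those four positions has coefficient $0$ on the right, a contradiction.
\end{enumerate}

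Combining (1)--(4) gives $\lambda_j\neq0$ for every $1\le j\le 5^\alpha-1$. The step I expect to need most care is Case~(4). There one must make sure the count of nonconstant monomials in $f$ stays below four in every coincidence pattern, so that the comparison from Lemma~\ref{lemb_00} applies. One must also check that the degree bound in that lemma holds, which it does because all exponents lie in $[1,5^\alpha-1]$. The remaining cases are direct citations.
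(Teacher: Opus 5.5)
Your proposal is correct and follows essentially the same route as the paper. Both reduce $i_rj$ modulo $5^\alpha$ and use Lemma~\ref{lem:nonzero1} and Remark~\ref{rem:indices-congruence} to restrict the coincidences; that remark is already stated for any odd prime, so no substitution $p=3\to5$ is needed. Both then settle $i_3'=i_4'=0$ by the triangle inequality, exactly one vanishing exponent by the coefficient comparison of Lemma~\ref{lemb_00}, and no vanishing exponent by Lemmas~\ref{lemb_6} and~\ref{lemb_7}. The only difference is that you merge the paper's subcases $i_1'=i_2'$ versus $i_1'\neq i_2'$, which is valid because your triangle-inequality and monomial-counting arguments do not depend on that distinction.
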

  \begin{proof}
Assume that, for each $r\in\{1,2,3,4\}$,
$	i_rj \equiv i_r' \pmod{5^\alpha}$
where $0\le i_r'\le 5^\alpha-1$. Consequently,
\begin{equation}\label{eq:lambdaj9}
	\lambda_j=1+\omega^{i_1'}+\omega^{i_2'}+\omega^{i_3'}+\omega^{i_4'}
\end{equation}
By Lemma~\ref{lem:nonzero1}, $i_r'\neq 0$ for each $r\in\{1,2\}$.
By Remark~\ref{rem:indices-congruence}, we have $i_r'\neq i_s'$ for every $r\in\{1,2\}$ and $s\in\{3,4\}$. Since $i_s'$ may be equal to $0$ for $s\in\{3,4\}$, we consider the following cases:

\textbf{Case 1.}
Suppose that $i_3'=i_4'=0$ and $i_1'\neq i_2'$. Then
$
\lambda_j=3+\omega^{i_1'}+\omega^{i_2'},
$
where
$
1\le i_1',\,i_2'\le 5^\alpha-1.
$ Assume, to the contrary, that $\lambda_j=0$. Then
\[
3+\omega^{i_1'}+\omega^{i_2'}=0
\quad\Longrightarrow\quad
\omega^{i_1'}+\omega^{i_2'}=-3
\]
On the other hand
\[
\left\|\omega^{i_1'}+\omega^{i_2'}\right\|
\le
\left\|\omega^{i_1'}\right\|
+
\left\|\omega^{i_2'}\right\|
=2.
\]
It follows that
$
3=\left\|\omega^{i_1'}+\omega^{i_2'}\right\|\le2,
$
which is a contradiction. Therefore,
$
\lambda_j\neq0.
$

\medskip

\textbf{Case 2.} Suppose that $i_3'=i_4'=0$ and $i_1'=i_2'$. Then
$
\lambda_j=3+2\omega^{i_1'},
$
where $1\le i_1'\le 5^\alpha-1$. Assume, to the contrary, that $\lambda_j=0$. Then
\[
3+2\omega^{i_1'}=0
\quad\Longrightarrow\quad
\omega^{i_1'}=-\frac{3}{2}
\]
Hence $
\left\|\omega^{i_1'}\right\|=1\neq\left\|-\frac{3}{2}\right\|=\frac{3}{2},
$
which is a contradiction. Therefore,
$
\lambda_j\neq0
$

\medskip

\textbf{Case 3.} Suppose that either $i_3'=0$, $i_4'\neq0$, and $i_1'\neq i_2'$, or $i_3'\neq0$, $i_4'=0$, and $i_1'\neq i_2'$. Without loss of generality, we consider the case where $i_3'\neq0$, $i_4'=0$, and $i_1'\neq i_2'$. Then
$
\lambda_j=2+\omega^{i_1'}+\omega^{i_2'}+\omega^{i_3'},
$
where $1\le i_1',\,i_2',\,i_3',\,j\le5^\alpha-1$. Assume, to the contrary, that there exists $j\in\{1,\ldots,5^\alpha-1\}$ such that $\lambda_j=0$, and $\omega$ is a root of the polynomial
$
f(x)=2+x^{i_1'}+x^{i_2'}+x^{i_3'}
$. By Remark~\ref{rem:cyclo}, we have
\[
1+x^{5^{\alpha-1}}+x^{2\cdot5^{\alpha-1}}+x^{3\cdot5^{\alpha-1}}+x^{4\cdot5^{\alpha-1}}
\mid
2+x^{i_1'}+x^{i_2'}+x^{i_3'}
\]
Assume that
$
\Phi_{5^\alpha}(x)\Psi(x)	=f(x),
$
where
$
\Psi(x)=a_0+a_1x+\cdots+a_fx.
$
Therefore,
\begin{equation}\label{eq:case4}
	\begin{aligned}
		&\left(1+x^{5^{\alpha-1}}+x^{2\cdot5^{\alpha-1}}+x^{3\cdot5^{\alpha-1}}+x^{4\cdot5^{\alpha-1}}\right)
		\left(a_0+a_1x+\cdots+a_fx^f\right)\\
		&\hspace{4cm}=2+x^{i_1'}+x^{i_2'}+x^{i_3'}
	\end{aligned}
\end{equation}
Substituting $x=0$ into \eqref{eq:case4}, we obtain $a_0=2$. By Lemma~\ref{lemb_00}, the terms $x^{5^{\alpha-1}}$, $x^{2\cdot5^{\alpha-1}}$, $x^{3\cdot5^{\alpha-1}}$, and $x^{4\cdot5^{\alpha-1}}$ on the left-hand side of \eqref{eq:case4} arise only from the product of $a_0$ with the corresponding terms. Hence, the coefficients of these terms are all equal to $2$. On the other hand, the right-hand side of \eqref{eq:case4} contains at most three terms with coefficient $1$. Therefore, the corresponding coefficients are not equal, and the two polynomials cannot be identical. Consequently, $\lambda_j\neq0$.

\medskip

\textbf{Case 4.} Suppose that either $i_3'=0$, $i_4'\neq0$, and $i_1'= i_2'$, or $i_3'\neq0$, $i_4'=0$, and $i_1'= i_2'$. Without loss of generality, we consider the case where $i_3'\neq0$, $i_4'=0$, and $i_1'= i_2'$. Then
$
\lambda_j=2+2\omega^{i_1'}+\omega^{i_3'},
$
where $1\le i_1',\,i_2',\,i_3',\,j\le5^\alpha-1$. Similarly to Case 3, we have
\begin{equation}\label{eq:case5}
	\begin{aligned}
		&\left(1+x^{5^{\alpha-1}}+x^{2\cdot5^{\alpha-1}}+x^{3\cdot5^{\alpha-1}}+x^{4\cdot5^{\alpha-1}}\right)
		\left(a_0+a_1x+\cdots+a_fx^f\right)\\
		&\hspace{4cm}=2+2x^{i_1'}+x^{i_3'}
	\end{aligned}
\end{equation} 	
Substituting $x=0$ into \eqref{eq:case5}, we obtain $a_0=2$. By Lemma~\ref{lemb_00}, the terms $x^{5^{\alpha-1}}$, $x^{2\cdot5^{\alpha-1}}$, $x^{3\cdot5^{\alpha-1}}$, and $x^{4\cdot5^{\alpha-1}}$ on the left-hand side of \eqref{eq:case5} arise only from the product of $a_0$ with the corresponding terms. Therefore, the coefficients of these terms are all equal to $2$. On the other hand, the right-hand side of \eqref{eq:case5} contains at most two nonconstant terms whose coefficients belong to $\{1,2\}$. Hence, the corresponding coefficients are not equal, and therefore the two polynomials cannot be identical. Consequently, $\lambda_j\neq0$.

Now suppose that $1\le i_1',\,i_2',\,i_3',\,i_4'\le5^\alpha-1$. By Remark~\ref{rem:indices-congruence}, we consider the following cases:

\textbf{Case 1.}
Suppose that exactly one pair of the $i_t'$'s is equal, and the other two are distinct from each other and from that pair. Without loss of generality, assume $i_1'=i_2'$, $i_3'\neq i_4'$, $i_1'\neq i_3'$, and $i_1'\neq i_4'$. Then
$
\lambda_j=1+2\omega^{i_1'}+\omega^{i_3'}+\omega^{i_4'},
$
where $1\le i_1',i_3',i_4'\le 5^\alpha-1$.

\medskip

\textbf{Case 2.}
Suppose that $i_1'=i_2'$, $i_3'=i_4'$, and $i_1'\neq i_3'$. Then
$
\lambda_j=1+2\omega^{i_1'}+2\omega^{i_3'},
$
where $1\le i_1',i_3'\le 5^\alpha-1$.
\medskip

\noindent Cases 1 and 2 satisfy the assumptions of Lemma~\ref{lemb_6}. Therefore, $\lambda_j\neq0$.

\textbf{Case 3.}
Suppose that $i_1', i_2', i_3',$ and $i_4'$ are pairwise distinct. Then 	$
\lambda_j=1+\omega^{i_1'}+\omega^{i_2'}+\omega^{i_3'}+\omega^{i_4'}
$, where 	$
1\le i_1',i_2',i_3',i_4',j\le5^\alpha-1
$. By Lemma~\ref{lemb_7}, it follows that $\lambda_j\neq 0$.  This completes the proof.
\hfill$\blacksquare$
  \end{proof}
 \begin{lemA}\label{lem:five_case}
 	Let $n$, $m$, $k$, $\ell$ be positive integers with $n=5m$ and $\gcd(k,\ell)=1$. Then
 	\[
 	\mathrm{Cay}(\mathbb{Z}_n,\{x^k, x^{-k}, x^\ell, x^{-\ell}\}) \cong
 	\mathrm{Cay}(\mathbb{Z}_n,\{x^{k_1}, x^{-k_1}, x^{\ell_1}, x^{-\ell_1}\})
 	\]
 	where $k_1 \equiv 1 \pmod{5}$ and $\ell_1 \equiv 0,1,2 \pmod{5}$.
 \end{lemA}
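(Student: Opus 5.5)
The plan is to follow Lemma~\ref{lem:three_case} and Lemma~\ref{lem:even_case}: first normalize one generator to be $\equiv 1 \pmod 5$ by a group automorphism of $\mathbb{Z}_n$, and then reduce the residue of the other generator modulo $5$ by replacing it with its inverse.

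First, since $\gcd(k,\ell)=1$, the prime $5$ cannot divide both $k$ and $\ell$. After possibly interchanging the roles of $k$ and $\ell$ (the generating set $S$ is symmetric in them), we may assume $5\nmid k$. We apply Proposition~\ref{gcd} with $z=5$, which divides $n=5m$, and $s=k$, which satisfies $\gcd(k,5)=1$. This gives $y$ with $\gcd(y,n)=1$ and $ky\equiv 1\pmod 5$. The map $\varphi: g\mapsto g^{y}$ is then an automorphism of $\mathbb{Z}_n$, and any automorphism induces a Cayley graph isomorphism $\mathrm{Cay}(\mathbb{Z}_n,S)\cong\mathrm{Cay}(\mathbb{Z}_n,S^{\varphi})$. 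Setting $k_1=ky$ and $\ell_1'=\ell y$, we obtain
\[
\mathrm{Cay}(\mathbb{Z}_n,\{x^k,x^{-k},x^\ell,x^{-\ell}\})\cong \mathrm{Cay}(\mathbb{Z}_n,\{x^{k_1},x^{-k_1},x^{\ell_1'},x^{-\ell_1'}\})
\]
with $k_1\equiv 1\pmod 5$.

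Second, we reduce the residue of $\ell_1'$ modulo $5$. The generating set contains both $x^{\ell_1'}$ and $x^{-\ell_1'}=x^{n-\ell_1'}$, so the exponent $\ell_1'$ may be replaced by $\ell_1=n-\ell_1'=5m-\ell_1'$ without changing the set $S$ at all. This is the same observation as relation~\eqref{eq:isomorphism3}. Since $5m-\ell_1'\equiv -\ell_1'\pmod 5$, the residues $3$ and $4$ are sent to $2$ and $1$, respectively, while the residues $0,1,2$ are kept as they are. Hence we may take $\ell_1\equiv 0,1,2\pmod 5$, and $\ell_1$ remains a positive integer because $0<\ell_1'<n$ after reducing modulo $n$.

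The only point that needs care is that the second step must not disturb the condition $k_1\equiv 1\pmod 5$. It does not, because only the $\ell$-exponent is replaced by its negative, and $S$ is literally the same set. The main obstacle, to the extent there is one, is making sure that the isomorphism from Proposition~\ref{gcd} preserves the form of the generating set, that is, that $S^{\varphi}=\{x^{\pm k_1},x^{\pm \ell_1'}\}$ still consists of four distinct elements. This holds because $\varphi$ is a bijection, so it sends the four distinct elements of $S$ to four distinct elements.
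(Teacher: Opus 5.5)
Your proposal is correct and follows essentially the same route as the paper: normalize to $k_1=ky\equiv 1\pmod 5$ via the automorphism $g\mapsto g^{y}$ supplied by Proposition~\ref{gcd}, then replace $\ell_1$ by $5m-\ell_1$, which leaves $S$ unchanged and sends residues $3,4$ to $2,1$. Your extra step of reducing $\ell y$ modulo $n$ before forming $n-\ell_1'$ is a small but useful refinement. It guarantees that the new exponent is a positive integer, a point the paper leaves implicit.
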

 \begin{proof}
Since $\gcd(k,\ell)=1$, it follows that either $5\nmid k$ or $5\nmid \ell$. Without loss of generality, assume that $5\nmid k$. By Proposition~\ref{gcd}, there exist positive integers $z=5$ and $s=k$ such that $5 \mid 5m$ and $\gcd(k,5)=1$. Therefore, there exists a positive integer $y \in \mathbb{Z}_n$ such that
 $\gcd(y,n)=1$ and $ky \equiv 1 \pmod{5}$.
 We define:
 \[
 \begin{aligned}
 	\varphi :\; \mathbb{Z}_n &\to \mathbb{Z}_n \\
 	g &\mapsto g^{y}
 \end{aligned}
 \]
 Clearly, $\varphi$ is an isomorphism. Suppose that $k_1 = ky$ and $\ell_1 = \ell y$. We have
 \[
 \operatorname{Cay}(\mathbb{Z}_n,\{x^k, x^{-k}, x^{\ell}, x^{-\ell}\}^{\varphi})
 \cong
 \operatorname{Cay}(\mathbb{Z}_n,\{x^{k_1}, x^{-k_1}, x^{\ell_1}, x^{-\ell_1}\})
 \]
 such that $k_1 = ky \equiv 1 \pmod{5}$. We show that $\ell_1= \ell y\equiv 0,1,2 \pmod{5}$. Since
 \begin{equation}\label{eq:isomorphism5}
 	\operatorname{Cay}\left(\mathbb{Z}_n,\{x^{k_1},x^{-k_1},x^{\ell_1},x^{-\ell_1}\}\right)
 	\cong
 	\operatorname{Cay}\left(\mathbb{Z}_n,\{x^{k_1},x^{-k_1},x^{5m-\ell_1},x^{-5m+\ell_1}\}\right)
 \end{equation}	
 By relation  \eqref{eq:isomorphism5}, the cases $\ell_1 \equiv 3,4 \pmod{5}$ are equivalent to the cases $\ell_1 \equiv 2,1 \pmod{5}$, respectively. Indeed,
 \[
 5m-3 \stackrel{5}{\equiv} -3 \stackrel{5}{\equiv} 2
 \quad\text{,}\quad
 5m-4 \stackrel{5}{\equiv} -4 \stackrel{5}{\equiv} 1
 \]
  Therefore
 \[
 \operatorname{Cay}(\mathbb{Z}_n,\{x^k, x^{-k}, x^{\ell}, x^{-\ell}\})
 \cong
 \operatorname{Cay}(\mathbb{Z}_n,\{x^{k_1}, x^{-k_1}, x^{\ell_1}, x^{-\ell_1}\})
 \]
 such that $k_1 \equiv 1 \pmod{5}$ and  $\ell_1 \equiv 0,1,2 \pmod{5}$. This completes the proof.
 \hfill$\blacksquare$
 	\end{proof}
\begin{thmA}\label{thm:12}
	Let $n, k, \ell$ be positive integers, and $\Gamma = \mathrm{Cay}(\mathbb{Z}_n, \{x^k, x^{-k}, x^\ell, x^{-\ell}\})$ be a connected circulant quartic graph. Then the following statements hold:
	\begin{enumerate}
		\item[(i)] If \(k\equiv 1 \pmod{5}\) and \(\ell\equiv 2 \pmod{5}\), or \(\ell\equiv 1 \pmod{5}\) and \(k\equiv 2 \pmod{5}\), then the graph \(\Gamma\) contains a \((1,2)\)-regular set if and only if $n$ is a multiple of $5$.
		\item[(ii)] If \(k \equiv 1 \pmod{5}\) and \(\ell \equiv 1 \pmod{5}\), then the graph \(\Gamma\) contains no \((1,2)\)-regular set.
		\item[(iii)] If \(k\equiv 1 \pmod{5}\) and \(\ell\equiv 0 \pmod{5}\), or \(\ell\equiv 1 \pmod{5}\) and \(k\equiv 0 \pmod{5}\), then the graph \(\Gamma\) contains no \((1,2)\)-regular set.
	\end{enumerate}
\end{thmA}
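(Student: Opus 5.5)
The proof starts from Lemma~\ref{Z}~(iv) with $(a,b)=(1,2)$, which gives
\[
(\overline{S}+1)\,\overline{C}=2\,\overline{\mathbb{Z}}_n .
\]
Applying the trivial character yields $5|C|=2n$, so $5\mid n$ in every case. This is the necessity in part~(i), and it is also the starting point for (ii) and (iii).

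\textbf{Sufficiency in (i).} Assume $k\equiv1$ and $\ell\equiv2 \pmod 5$ (the symmetric case is identical). I would take $C=\langle x^5\rangle\cup x\langle x^5\rangle$. The exponents of $S\cup\{e\}$ are $\{0,\pm1,\pm2\}$ modulo $5$, which is a complete residue system, so $(\overline{S}+1)\overline{\langle x^5\rangle}=\overline{\mathbb{Z}}_n$. Multiplying by $1+x$ then gives $(\overline{S}+1)\overline{C}=2\overline{\mathbb{Z}}_n$, which is exactly the required identity.

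\textbf{Part (ii): $k\equiv\ell\equiv1\pmod 5$.} Here $b-a=1\neq-4$, so Corollary~\ref{cor:coset-size2}~(iii) with $r=1$ gives $|x^h\langle x^5\rangle\cap C|=2n/25$, hence $25\mid n$. Let $\alpha\ge2$ be maximal with $5^\alpha\mid n$ and set $H=\langle x^{5^\alpha}\rangle$. I would use Remark~\ref{rem:7} (with $s_0=1$) together with Proposition~\ref{prop:podd}, and split into two cases.
\begin{itemize}
\item If $5^\alpha\mid k-\ell$, then $\lambda_j=1+2\omega^{i_1j}+2\omega^{i_2j}$. By Lemma~\ref{nomulp} we have $5\nmid i_1,i_2$. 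Reducing $i_rj$ modulo $5^\alpha$ gives nonzero exponents $i_r'$ by Lemma~\ref{lem:nonzero1}. Written as a sum of four roots of unity, these exponents are not pairwise distinct, so Lemma~\ref{lemb_6} (with $b_0=1$) gives $\lambda_j\neq0$.
\item If $5^\alpha\nmid k-\ell$, then there are four pairwise distinct $i_r$, all prime to $5$. If the reduced exponents collide, Lemma~\ref{lemb_6} applies again; if they stay distinct, Remark~\ref{rem:lan} gives $\lambda_j\neq0$.
\end{itemize}
In both cases $\lambda_0=5\neq0$, so the circulant matrix is invertible. The unique solution is then $t_h=2n/5^{\alpha+1}$. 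Since $\gcd(2,5)=1$, integrality of $t_h$ forces $5^{\alpha+1}\mid n$, contradicting the maximality of $\alpha$.

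\textbf{Part (iii): $k\equiv1$, $\ell\equiv0\pmod5$.} Corollary~\ref{cor:coset-size}~(iii) gives coset size $2n/25$, so again $25\mid n$. Take $\alpha\ge2$ maximal and apply Proposition~\ref{prop:podd2}, again in two cases.
\begin{itemize}
\item If $5^\alpha\mid\ell$, then $s_0=3$ and $\lambda_j=3+\omega^{i_1j}+\omega^{i_2j}$. This is nonzero by the triangle inequality, since $|\omega^{i_1j}+\omega^{i_2j}|\le2<3$.
\item If $5^\alpha\nmid\ell$, then $s_0=1$ and there are four distinct indices: $i_1,i_2$ prime to $5$ (Lemma~\ref{nomulp}) and $i_3,i_4$ divisible by $5$. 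Lemma~\ref{lemb_8} gives $\lambda_j\neq0$.
\end{itemize}
The same integrality argument, $t_h=2n/5^{\alpha+1}$, then yields the contradiction.

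The delicate step is the nonvanishing of eigenvalues in (ii) when the reduced exponents $i_r'$ are distinct. Divisibility by $\Phi_{5^\alpha}$ alone does not exclude vanishing there. One has to use the residues $i_1\equiv i_3\equiv1$ and $i_2\equiv i_4\equiv4 \pmod 5$ to reduce to $1+2(z+z^{-1})\neq0$ for a primitive fifth root $z$; this is exactly what Remark~\ref{rem:lan} does, and I would rely on it there.
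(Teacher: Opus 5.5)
Your proposal is correct and follows the paper's proof essentially step for step. It uses the same trivial-character count $5|C|=2n$ and the same set $C=\langle x^5\rangle\cup x\langle x^5\rangle$ for sufficiency in (i); your complete-residue-system check $(\overline{S}+1)\overline{\langle x^5\rangle}=\overline{\mathbb{Z}}_n$ is just a tidier verification of that construction. For (ii) and (iii) it has the same reduction to an invertible circulant system on $\mathbb{Z}_{5^\alpha}$, with the same case splits and the same nonvanishing tools: Corollaries~\ref{cor:coset-size2}(iii) and \ref{cor:coset-size}(iii), Propositions~\ref{prop:podd} and \ref{prop:podd2}, Lemma~\ref{lemb_6}, Remark~\ref{rem:lan}, Lemma~\ref{lemb_8}, and the triangle inequality.
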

\begin{proof}
\textbf{(i)} We first show the necessity. Suppose that the graph $\Gamma$ contains a $(1,2)$-regular set $C$. By Lemma~\ref{Z}~(iv), we have
\begin{equation}\label{eq:(1,2)}
	\overline{S}\,\overline{C}+\overline{C}
	=2\overline{\mathbb{Z}}_{n}
\end{equation}
By applying the trivial character to the relation \eqref{eq:(0,2)}, we obtain
\[
|S||C|+|C|=2n \Longrightarrow
4|C|+|C|=2n \Longrightarrow 5|C|=2n
\]

We now prove the sufficiency. By the assumption of the theorem,
\(k\equiv 1 \pmod{5}\) and \(\ell\equiv 2 \pmod{5}\), or \(\ell\equiv 1 \pmod{5}\) and \(k\equiv 2 \pmod{5}\). In both cases, we have $S=\{x^{5p+1},x^{-5p-1},x^{5q+2},x^{-5q-2}\}$, where \(p,q\in\mathbb{Z}\).
Now we show that
$C=\langle x^{5}\rangle \cup x \langle x^{5}\rangle  $
is a $(1,2)$-regular set for the graph $\Gamma$.
Substituting $C$ and $S$ into~\eqref{eq:(1,2)}, we show that the left-hand side of~\eqref{eq:(1,2)} is equal to $2\,\overline{\mathbb{Z}}_{n}$. We have
\[
\left(\left(x^{5p+1}+x^{-5p-1}+x^{5q+2}+x^{-5q-2}\right)+1\right)
\left(\overline{\langle x^{5}\rangle}
+x\overline{\langle x^{5}\rangle}\right)
\]
After simplification, we obtain
\[
\begin{aligned}
	&x\overline{\langle x^{5}\rangle}
	+x^{2}\overline{\langle x^{5}\rangle}
	+x^4\overline{\langle x^{5}\rangle}
	+\overline{\langle x^{5}\rangle}
	+x^{2}\overline{\langle x^{5}\rangle}\\
	&x^{3}\overline{\langle x^{5}\rangle}
	+x^{3}\overline{\langle x^{5}\rangle}
	+x^{4}\overline{\langle x^{5}\rangle}
	+\overline{\langle x^{5}\rangle}
	+x\overline{\langle x^{5}\rangle}
\end{aligned}
\]

Since
$
\overline{\mathbb{Z}}_{n}
=
\overline{\langle x^{5}\rangle}
+x\overline{\langle x^{5}\rangle}
+x^{2}\overline{\langle x^{5}\rangle}+x^{3}\overline{\langle x^{5}\rangle}+x^{4}\overline{\langle x^{5}\rangle},
$
the left-hand side of~\eqref{eq:(1,2)} is equal to
$2\,\overline{\mathbb{Z}}_{n}$.
Hence, $C$ is a $(1,2)$-regular set for $\Gamma$.		

\textbf{(ii)} Suppose, by contradiction, that \(C\) is a \((1,2)\)-regular set in \(\Gamma\). Applying the trivial character to \eqref{eq:(1,2)}, we obtain \(5|C|=2n\). Since \(5\mid n\) and, by the assumptions of the theorem, $
k\stackrel{5}{\equiv}1, \ell\stackrel{5}{\equiv}1$, it follows from part~(iii) of Corollary~\ref{cor:coset-size2} that \(\left|x^{h}\langle x^{5}\rangle\cap C\right|=\frac{2n}{25}\) for every \(h\in\{0,1,2,3,4\}\). Hence, \(25\mid n\). Assume that, there exists an integer \(\alpha\ge 2\) such that
$
5^{\alpha}\mid n$
and
$5^{\alpha+1}\nmid n.$ By Remark~\ref{rem:7}, it follows that $ S_d := x^d \langle x^{5^{\alpha}} \rangle \cap S$ for $0 \le d \le 5^{\alpha}-1, s_0:=\left|\left\langle x^{5^{\alpha}}\right\rangle\cap S\right|+1,\ 	s_d:=\left|x^d\left\langle x^{5^{\alpha}}\right\rangle\cap S\right|$ for $1\le d\le 5^{\alpha}-1\ and\ t_h:=\left|x^h\left\langle x^{5^{\alpha}}\right\rangle\cap C\right|$ for $0\le h\le 5^{\alpha}-1.$ Let
$
H=\langle x^{5^{\alpha}}\rangle.
$ By part~(i) of Proposition~\ref{prop:podd}, the sets $\{x^k,x^{-k}\}$, $\{x^\ell,x^{-\ell}\}$, $\{x^k,x^{-\ell}\}$, and $\{x^{-k},x^{\ell}\}$ do not have their two elements in the same coset of $H$. We consider the following cases according to Proposition~\ref{prop:podd}.

 \textbf{Case 1.}  Suppose that $5^\alpha\mid k-\ell$. Then, by part~(ii) Proposition~\ref{prop:podd}, the two elements of each of the sets $\{x^k,x^\ell\}$ and $\{x^{-k},x^{-\ell}\}$ belong to the same coset of $H$. Hence, by part~(iii) of this proposition, we have $s_0=1$, $s_{i_1}=s_{i_2}=2$, where $i_1$ and $i_2$ are distinct integers satisfying $1\le i_1,i_2\le 5^\alpha-1$.

\textbf{Case 2.} Suppose that $5^\alpha\nmid k-\ell$. Then, by part~(ii) Proposition~\ref{prop:podd}, no two elements of any of the sets $\{x^k,x^\ell\}$, $\{x^{-k},x^{-\ell}\}$, belong to the same coset of $H$. Therefore, by part~(vi) of this proposition, we have $s_0= s_{i_1}=s_{i_2}=s_{i_3}=s_{i_4}=1$, where $i_1,i_2,i_3,i_4$ are pairwise distinct integers satisfying $1\le i_1,i_2,i_3,i_4\le 5^\alpha-1$.

We first consider Case 1. In this case, we have $s_0=1$, $s_{i_1}=s_{i_2}=2$, where $i_1$ and $i_2$ are distinct integers satisfying $1\le i_1,i_2\le 5^\alpha-1$. By Definition~\ref{circulant}, we have
\[
\lambda_j=f(\omega^j)
=s_0+s_{i_1}\omega^{i_1j}+s_{i_2}\omega^{i_2j}
=1+2\omega^{i_1j}+2\omega^{i_2j},
\qquad
0\le j\le 5^{\alpha}-1
\]
where $\omega$ is a primitive $5^{\alpha}$-th root of unity. Clearly, $\lambda_0=5\neq0.$ Now let \(1\le j\le 5^\alpha-1\).
By assumption, $\; k\stackrel{5}{\equiv}1$ and $\ell\stackrel{5}{\equiv}1$. Assume that $\{x^k,x^\ell\}\subseteq S_{i_1}$ and $\{x^{-k},x^{-\ell}\}\subseteq S_{i_2}$, where $1\leq i_1,i_2\leq 5^\alpha-1$. By Lemma~\ref{nomulp}, we have $5\nmid i_1$ and $5\nmid i_2$. Let $i_rj\stackrel{5^\alpha}{\equiv} i'_r$ for $r\in\{1,2\}$, where $i'_r\in\{0,\ldots,5^\alpha-1\}$. By Lemma~\ref{lem:nonzero1}, it follows that $i'_{r}\neq0$ for every $r\in\{1,2\}$. Therefore,
\[
\lambda_j=1+2\omega^{i'_1}+2\omega^{i'_2}
\]
where $i'_1,i'_2\in\{1,\ldots,5^\alpha-1\}$. By Lemma~\ref{lemb_6}, we obtain $\lambda_j\neq0$ for every $j\in\{1,\ldots,5^\alpha-1\}$.

We now consider Case~2. In this case,
$
s_0=
s_{i_1}=s_{i_2}=s_{i_3}=s_{i_4}=1,$
where \(i_1,i_2,i_3,\) and \(i_4\) are pairwise distinct integers satisfying
$
1\le i_1,i_2,i_3,i_4\le 5^{\alpha}-1.
$
Hence,
\[
\lambda_j
=
1+\omega^{i_1j}+\omega^{i_2j}+\omega^{i_3j}+\omega^{i_4j}, \qquad
0\le j\le 5^{\alpha}-1
\]
where $\omega$ is a primitive $5^{\alpha}$-th root of unity. Clearly,
$\lambda_0=5\neq0.$ Now let \(1\le j\le 5^\alpha-1\). By the assumption of the theorem, $k\stackrel{5}{\equiv}1$ and $\ell\stackrel{5}{\equiv}1$. Suppose that $x^k\in x^{i_1}H$, $x^{-k}\in x^{i_2}H$, $x^\ell\in x^{i_3}H$, and $x^{-\ell}\in x^{i_4}H$. By Lemma~\ref{nomulp}, $i_r$ is not divisible by $5$ for every $r\in\{1,2,3,4\}$. 	Define $i_rj\stackrel{5^\alpha}{\equiv}i_r'$ for $r\in\{1,2,3,4\}$, where $0\le i_r'\le 5^\alpha-1$. By Lemma~\ref{lem:nonzero1}, we have $i_r'\neq 0$ for every $r\in\{1,2,3,4\}$. Therefore, $\lambda_j=1+\omega^{i_1'}+\omega^{i_2'}+\omega^{i_3'}+\omega^{i_4'}$.	
Suppose, to the contrary, that $\lambda_j=0$ for some
$
j\in\{1,2,\ldots,5^\alpha-1\}.
$
Assume that $\omega$ is a root of the polynomial
$
f(x)=1+x^{i_1'}+x^{i_2'}+x^{_3'}+x^{i_4'},
$
where
$
1\le i_1',i_2',i_3',i_4'\le 5^\alpha-1.
$ By Remark~\ref{rem:cyclo},we have
\[
1+x^{5^{\alpha-1}}+x^{2\cdot5^{\alpha-1}}+x^{3\cdot5^{\alpha-1}}+x^{4\cdot5^{\alpha-1}}
\mid
1+x^{i_1'}+x^{i_2'}+x^{i_3'}+x^{i_4'}
\]

We now consider the following cases:

$\bullet$
Suppose that $i_1'=i_2'$, $i_3'=i_4'$, and $i_1'\neq i_3'$. Then
$
f(x)=1+2x^{i_1'}+2x^{i_3'},
$
where $1\le i_1',i_3'\le 5^\alpha-1$.

\medskip

$\bullet$
Suppose that three of the $i_t'$'s are equal and the remaining one is distinct. Without loss of generality, assume $i_1'=i_2'=i_3'\neq i_4'$. Then
$
f(x)=1+3x^{i_1'}+x^{i_4'},
$
where $1\le i_1',i_4'\le 5^\alpha-1$.

\medskip

$\bullet$
Suppose that exactly one pair of the $i_t'$'s is equal, and the other two are distinct from each other and from that pair. Without loss of generality, assume $i_1'=i_2'$, $i_3'\neq i_4'$, $i_1'\neq i_3'$, and $i_1'\neq i_4'$. Then
$
f(x)=1+2x^{i_1'}+x^{i_3'}+x^{i_4'},
$
where $1\le i_1',i_3',i_4'\le 5^\alpha-1$.

\medskip

$\bullet$
Suppose that all four $i_t'$'s are equal, i.e., $i_1'=i_2'=i_3'=i_4'$. Then
$
f(x)=1+4x^{i_1'},
$
where $1\le i_1'\le 5^\alpha-1$.

By Lemma~\ref{lemb_6}, we have $\lambda_j \neq 0$ in all four cases.

$\bullet$
Suppose that $i_1', i_2', i_3',$ and $i_4'$ are pairwise distinct. Then, by Remark~\ref{rem:lan}, it follows that $\lambda_j\neq 0$.

\medskip

Therefore, in each of the five cases, the assumption $\lambda_j=0$ leads to a contradiction. Hence, $\lambda_j\neq0$. Therefore, the circulant matrix is invertible in Cases 1 and 2, and the corresponding system has the unique solution
\[
t_0=\cdots=t_{5^\alpha-1}=\frac{2n}{5^{\alpha+1}}
\]
Since
$
t_h=\left|x^h\left\langle x^{5^{\alpha}}\right\rangle\cap C\right|,$ for
$0\le h\le5^{\alpha}-1,
$
each \(t_h\) must be an integer. Hence,
$
5^{\alpha+1}\mid n,
$
which contradicts the assumption that
$
5^{\alpha+1}\nmid n.
$
Therefore, \(\Gamma\) contains no \((1,2)\)-regular set.
 	
\textbf{(iii)}  Suppose, by contradiction, that \(C\) is a \((1,2)\)-regular set in \(\Gamma\). Applying the trivial character to \eqref{eq:(1,2)}, we obtain \(5|C|=2\,n\). Since \(5\mid n\) and, by the assumptions of the theorem, $5\nmid k$ and $5\mid \ell$, or $5\mid k$ and $5\nmid \ell$, it follows from part~(iii) of Corollary~\ref{cor:coset-size} that \(\left|x^{h}\langle x^{5}\rangle\cap C\right|=\frac{2n}{25}\) for every \(h\in\{0,1,2,3,4\}\). Hence, \(25\mid n\). Assume that, there exists an integer \(\alpha\ge 2\) such that
$
5^{\alpha}\mid n$
and
$5^{\alpha+1}\nmid n.$ By Remark~\ref{rem:7}, it follows that $ S_d := x^d \langle x^{5^{\alpha}} \rangle \cap S$ for $0 \le d \le 5^{\alpha}-1, s_0:=\left|\left\langle x^{5^{\alpha}}\right\rangle\cap S\right|+1,\ 	s_d:=\left|x^d\left\langle x^{5^{\alpha}}\right\rangle\cap S\right|$ for $1\le d\le 5^{\alpha}-1\ and\ t_h:=\left|x^h\left\langle x^{5^{\alpha}}\right\rangle\cap C\right|$ for $0\le h\le 5^{\alpha}-1.$ Let
$
H=\langle x^{5^{\alpha}}\rangle.
$ By the assumptions of the theorem, we first consider the case \(k \stackrel{5}{\equiv} 1\) and \(\ell \stackrel{5}{\equiv} 0\). The second case is proved similarly to the first case by interchanging the roles of \(k\) and \(\ell\). Therefore, all the results obtained in the first case remain valid in the second case.

By parts~(i) and ~(ii)  of Proposition~\ref{prop:podd2}, the sets $\{x^k,x^\ell\}$, $\{x^k,x^{-\ell}\}$, $\{x^{-k},x^{\ell}\}$, $\{x^{-k},x^{-\ell}\}$, and $\{x^{k},x^{-k}\}$  do not have their two elements in the same coset of $H$. We consider the following cases:

\textbf{Case 1.}  Suppose that $5^\alpha\mid \ell$. Then, by  Corollary~\ref{oddprime}, the two elements of the set $\{x^\ell,x^{-\ell}\}$ belong to the same coset of $H$. Hence, by part~(iii) of Proposition~\ref{prop:podd2}, we have $s_0=3$, $s_{i_1}=s_{i_2}=1$, where $i_1$ and $i_2$ are distinct integers satisfying $1\le i_1,i_2\le 5^\alpha-1$.

\textbf{Case 2.} Suppose that $5^\alpha\nmid \ell$. Then, by Corollary~\ref{oddprime}, the two elements of the set $\{x^\ell,x^{-\ell}\}$ do not belong to the same coset of $H$. Hence, by part~(iv) of Proposition~\ref{prop:podd2}, we have $s_0=s_{i_1}=s_{i_2}=s_{i_3}=s_{i_4}=1$, where $i_1,i_2,i_3,i_4$ are pairwise distinct integers satisfying $1\le i_1,i_2,i_3,i_4\le 5^\alpha-1$.

 We first consider Case 1. In this case, we have $s_0=3$, $s_{i_1}=s_{i_2}=1$, where $i_1$ and $i_2$ are distinct integers satisfying $1\le i_1,i_2\le 5^\alpha-1$. By Definition~\ref{circulant}, we have
\[
\lambda_j=f(\omega^j)
=s_0+s_{i_1}\omega^{i_1j}+s_{i_2}\omega^{i_2j}
=3+\omega^{i_1j}+\omega^{i_2j},
\qquad
0\le j\le 5^{\alpha}-1
\]
where $\omega$ is a primitive $5^{\alpha}$-th root of unity. Clearly, $\lambda_0=5\neq0.$ Assume, to the contrary, that $\lambda_j=0$. Then
\[
3+\omega^{i_1j}+\omega^{i_2j}=0
\quad\Longrightarrow\quad
\omega^{i_1j}+\omega^{i_2j}=-3
\]
On the other hand
\[
\left\|\omega^{i_1j}+\omega^{i_2j}\right\|
\le
\left\|\omega^{i_1j}\right\|
+
\left\|\omega^{i_2j}\right\|
=2.
\]
It follows that
$
3=\left\|\omega^{i_1j}+\omega^{i_2j}\right\|\le2,
$
which is a contradiction. Therefore,
$
\lambda_j\neq0.
$

We now consider Case~2. In this case,
$
s_0=s_{i_1}=s_{i_2}=s_{i_3}=s_{i_4}=1,$
where \(i_1,i_2,i_3,\) and \(i_4\) are pairwise distinct integers satisfying
$
1\le i_1,i_2,i_3,i_4\le 5^{\alpha}-1.
$ Hence,
\begin{equation}\label{eq:lambdaj45}
	\lambda_j
	=
	1+\omega^{i_1j}+\omega^{i_2j}+\omega^{i_3j}+\omega^{i_4j}, \qquad
	0\le j\le 3^{\alpha}-1
\end{equation}
where $\omega$ is a primitive $5^{\alpha}$-th root of unity. Clearly,
$\lambda_0=5\neq0.$  By the assumption of the theorem, $\; k\stackrel{5}{\equiv}1$. Assume that $x^k\in x^{i_1}H$ and $x^{-k}\in x^{i_2}H$. By Lemma~\ref{nomulp}, $i_{r_1}$ is not divisible by $5$ for $r_1\in\{1,2\}$. Moreover, by the assumption, $\ell\stackrel{5}{\equiv}0$. Assume that $x^\ell\in x^{i_3}H$ and $x^{-\ell}\in x^{i_4}H$. Then $i_{r_2}$ is divisible by $5$ for $r_2\in\{3,4\}$.
By Lemma~\ref{lemb_8}, it follows that $\lambda_j\neq0$ for $j\in[1,5^\alpha-1]$. Therefore, the circulant matrix is invertible in Cases 1 and 2, and the corresponding system has the unique solution
\[
t_0=\cdots=t_{5^\alpha-1}=\frac{2n}{5^{\alpha+1}}
\]
Since
$
t_h=\left|x^h\left\langle x^{5^{\alpha}}\right\rangle\cap C\right|,$ for
$0\le h\le5^{\alpha}-1,
$
each \(t_h\) must be an integer. Hence,
$
5^{\alpha+1}\mid n,
$
which contradicts the assumption that
$
5^{\alpha+1}\nmid n.
$
Therefore, \(\Gamma\) contains no \((1,2)\)-regular set.\hfill$\blacksquare$
	\end{proof}
\begin{remA}\label{rem:12}
	By Lemma~\ref{primes}, for the graph $\Gamma=\operatorname{Cay}(\mathbb{Z}_n,\{x^k,x^{-k},x^\ell,x^{-\ell}\}),$
	the complement of any \((1,2)\)-regular set is a \((2,3)\)-regular set, and conversely. Consequently, every result concerning the existence or nonexistence of \((1,2)\)-regular sets extends directly to \((2,3)\)-regular sets. In other words, if either \(k\equiv 1 \pmod{5}\) and \(\ell\equiv 2 \pmod{5}\), or \(k\equiv 2 \pmod{5}\) and \(\ell\equiv 1 \pmod{5}\),  \(\Gamma\) contains a \((2,3)\)-regular set if and only if \(5\mid n\). Moreover, the obtained \((2,3)\)-regular set is \(C=x^2\langle x^5\rangle\cup x^3\langle x^5\rangle\cup x^4\langle x^5\rangle\). If either \(k\equiv1\pmod{5}\) and \(\ell\equiv0\pmod{5}\), or \(k\equiv0\pmod{5}\) and \(\ell\equiv1\pmod{5}\), then \(\Gamma\) contains no \((2,3)\)-regular set. In the remaining case, namely \(k\equiv1\pmod{5}\) and \(\ell\equiv1\pmod{5}\), \(\Gamma\) also contains no \((2,3)\)-regular set.
\end{remA}


\end{document}